\newcommand{\tikznode}[3][inner sep=0pt]{\tikz[remember
picture,baseline=(#2.base)]{\node(#2)[#1]{$#3$};}}
\numberwithin{equation}{section}
\newtheorem{theorem}{Theorem}[section]
\newtheorem{proposition}[theorem]{Proposition}
\newtheorem{lemma}[theorem]{Lemma}
\newtheorem{corollary}[theorem]{Corollary}
\newtheorem{conj}[theorem]{Conjecture}
\theoremstyle{definition} 
\newtheorem{eg}[theorem]{Example}
\theoremstyle{remark}
\newtheorem{rem}[theorem]{Remark}
\newcommand{\beq}{\begin{equation}}
\newcommand{\eeq}{\end{equation}}
\newcommand{\be}{\begin{equation*}}
\newcommand{\ee}{\end{equation*}}
\newcommand{\bs}{\boldsymbol}
\newcommand{\C}{\mathbb{C}}
\newcommand{\Z}{\mathbb{Z}}
\newcommand{\mc}{\mathcal}
\newcommand{\cD}{\mathcal{D}}
\newcommand{\cT}{\mathcal{T}}
\newcommand{\cP}{\mathcal{P}}
\newcommand{\g}{\mathfrak{g}}
\newcommand{\gl}{\mathfrak{gl}}
\newcommand{\h}{\mathfrak{h}}
\newcommand{\fkS}{\mathfrak{S}}
\newcommand{\End}{\mathrm{End}}
\newcommand{\str}{{\mathrm{str}}} 
\newcommand{\tl}{\tilde}
\newcommand{\gge}{\geqslant}
\newcommand{\lle}{\leqslant}
\newcommand{\la}{\lambda}
\newcommand{\glMN}{\mathfrak{gl}_{m|n}}
\newcommand{\UglMN}{\mathrm{U}(\mathfrak{gl}_{m|n})}
\newcommand{\YglMN}{\mathrm{Y}(\mathfrak{gl}_{m|n})}
\newcommand{\YglN}{\mathrm{Y}(\mathfrak{gl}_{N})}
\newcommand{\YslMN}{\mathrm{Y}(\mathfrak{sl}_{m|n})}
\newcommand{\bmx}{\begin{pmatrix}}    
\newcommand{\emx}{\end{pmatrix}}
\begin{document}
\pagestyle{myheadings}
\setcounter{page}{1}

\title[Jacobi-Trudi identity and super Yangian]{Jacobi-Trudi identity and Drinfeld functor for super Yangian}

\author{Kang Lu and Evgeny Mukhin}
\address{K.L.: Department of Mathematics, University of Denver, 
\newline
\strut\kern\parindent 2390 S. York St., Denver, CO 80208, USA}\email{kang.lu@du.edu}
\address{E.M.: Department of Mathematical Sciences,
Indiana University-Purdue University\newline
\strut\kern\parindent Indianapolis, 402 N.Blackford St., LD 270,
Indianapolis, IN 46202, USA}\email{emukhin@iupui.edu}

\begin{abstract} 
We show that the quantum Berezinian which gives a generating function of the integrals of motions of XXX spin chains associated to super Yangian $\YglMN$ can be written as a ratio of two difference operators of orders $m$ and $n$ whose coefficients are ratios of transfer matrices corresponding to explicit skew Young diagrams.

In the process, we develop several missing parts of the representation theory of $\YglMN$ such as $q$-character theory, Jacobi-Trudi identity, Drinfeld functor, extended T-systems, Harish-Chandra map.
		
		\medskip 
		
		\noindent
		{\bf Keywords:} super Yangian, Jacobi-Trudi identity, degenerate affine Hecke algebra, Drinfeld functor, T-systems, transfer matrices.  
\end{abstract}

\maketitle


\thispagestyle{empty}
\section{Introduction}	
This paper deals with the representation theory of $\YglMN$, the Yangian associated with the general Lie superalgebra $\glMN$. This theory has been actively developed in recent years, see \cite{Gow05,Gow07,Peng,Tsy,Zhh16,Zhh18}. Many results have resemblance to the even case and are proved by similar methods. However, there is a number of important and non-trivial differences.

In fact, we have an additional important motivation for this study which comes from the theory of integrable systems. The super Yangian $\YglMN$ contains a commutative subalgebra (called the \textit{Bethe subalgebra}) given by the expansion of a certain quantum Berezinian, see \cite{MR14,Naz} and \eqref{ber}.
The coefficients of the expansion are transfer matrices related to representations associated to single-column Young diagrams, see \eqref{eq expand}. The study of the spectra of transfer matrices is the central topic in the theory of integrable systems for the last half of the century. The main method used is called the Bethe ansatz which allows to construct eigenvectors from solutions of a system of algebraic equations called the Bethe ansatz equations, see e.g. \cite{BR08,HLM,LM19,MTV06,Tsu:1997}.

Besides an eigenvector, to each solution of the Bethe ansatz equations one associates a fundamental object, called an ``oper". The set of opers is easier to study compared to the set of solutions.  In the case of the super Yangian $\YglMN$, the opers are written as a ratio of two difference scalar operators of orders $m$ and $n$, \cite{HLM,HMVY,LM19}. Based on the experience with even cases, it is natural to expect the existence of the ``universal oper" with coefficients in the Bethe subalgebra which produces the scalar opers when the coefficients are replaced by their eigenfunctions on a given eigenvector.

That leads to a conjecture that the quantum Berezinian can be compactly written in the form $\mathfrak D_1\mathfrak D_2^{-1}$ where $\mathfrak D_1$ and $\mathfrak D_2$ are difference operators of orders $m$ and $n$,  respectively, with coefficients in the Bethe subalgebra. We show that this is indeed the case and, moreover, that the $i$-th coefficients of $\mathfrak D_1$ and $\mathfrak D_2$ are given by transfer matrices corresponding to skew Young diagrams given on Figure \ref{young fig}
\begin{figure}[!htb]
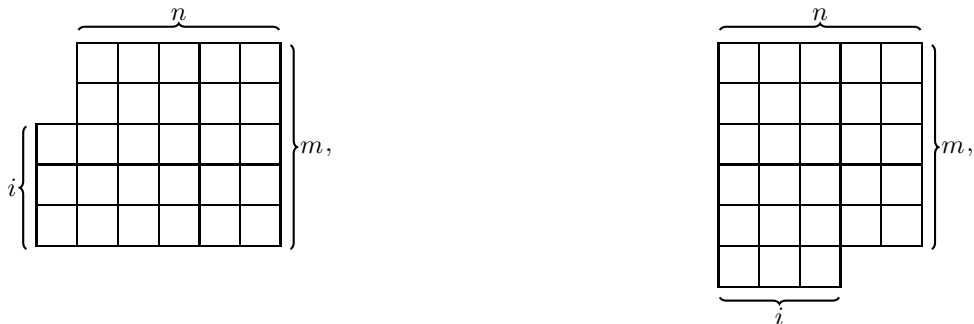
 \label{young fig}
    \centering
    \begin{minipage}{.5\textwidth}
        \centering
$$
\ytableausetup{nosmalltableaux,centertableaux}
\begin{ytableau}
       \none  & \tikznode{e4}{~} & ~ & ~ & ~ & \tikznode{e1}{~} \\
       \none  & ~ & ~ & ~ & ~ & ~ \\
       \tikznode{e2}{~}  & ~ & ~ & ~ & ~ & ~ \\
       ~ & ~ & ~ & ~ & ~ & ~ \\
       \tikznode{e3}{~}  & ~ & ~ & ~ & ~ & \tikznode{e5}{~}
\end{ytableau}\quad \ \ \, ,
$$
\tikz[overlay,remember picture]{%
\draw[decorate,decoration={brace},thick] 
([yshift=5mm,xshift=-2mm]e4.north west) -- 
([yshift=5mm,xshift=2mm]e1.north east) node[midway,above]{$n$};

\draw[decorate,decoration={brace},thick] 
([yshift=3mm,xshift=3.5mm]e1.north east) -- 
([yshift=-2mm,xshift=3.5mm]e5.south east) node[midway,right]{$m$};

\draw[decorate,decoration={brace},thick] 
([yshift=-2mm,xshift=-3.5mm]e3.south west) -- 
([yshift=3mm,xshift=-3.5mm]e2.north west) node[midway,left]{$i$};
}
    \end{minipage}%
    \begin{minipage}{0.5\textwidth}
        \centering
$$
\begin{ytableau}
       \none  & \none & \none & \none & \none \\
        \tikznode{s4}{~} & ~ & ~ & ~ & \tikznode{s1}{~} \\
       ~ & ~ & ~ & ~ & ~ \\
        ~ & ~ & ~ & ~ & ~ \\
       ~ & ~ & ~ & ~ & ~ \\
       ~ & ~ & ~ & ~ & \tikznode{s5}{~} \\
       \tikznode{s3}{~}  & ~ & \tikznode{s2}{~}  & \none & \none
\end{ytableau}\quad \ \ \, ,
$$
\tikz[overlay,remember picture]{%
\draw[decorate,decoration={brace},thick] 
([yshift=5mm,xshift=-2mm]s4.north west) -- 
([yshift=5mm,xshift=2mm]s1.north east) node[midway,above]{$n$};

\draw[decorate,decoration={brace},thick] 
([yshift=3mm,xshift=3.5mm]s1.north east) -- 
([yshift=-2mm,xshift=3.5mm]s5.south east) node[midway,right]{$m$};

\draw[decorate,decoration={brace},thick] 
([yshift=-3mm,xshift=2mm]s2.south east) -- 
([yshift=-3mm,xshift=-2mm]s3.south west) node[midway,below]{$i$};
}
    \end{minipage}
    \caption{Skew Young diagrams corresponding to the coefficients of the difference operators.} 
\end{figure}
divided by the transfer matrix corresponding to the $m\times n$ rectangle, see Theorem \ref{thm ratio}. 

An important goal is to understand the kernels of
operators $\mathfrak D_1$ and $\mathfrak D_2$. The two kernels are joined together to form a superspace consisting of rational functions, which is called a $\glMN$ space, \cite{HMVY}. Our results make a step towards understanding the $\glMN$ spaces, see the discussion in Section \ref{sec spectra}.

\medskip

The proof of the above result is obtained by a direct computation with the use of the Jacobi-Trudi identity for the $\YglMN$-modules related to the participating skew Young diagrams.
Thus a part of this paper is used to prepare the necessary background and develop all the missing parts.

More generally, we study irreducible $\YglMN$-modules $L(\la/\mu)$ corresponding to arbitrary skew Young diagrams, called skew representations. Skew representations of the Yangian and quantum affine algebra of type A in the even case were introduced in \cite{Che87b,Che89} and studied in \cite{MY12a,Naz04,NT98,NT02}. We establish the formulas for the $q$-characters (that is the joint generalized eigenvalues of the Gelfand-Tsetlin subalgebra) of such modules in terms of the semi-standard Young tableaux, see Theorem \ref{thm character}, and prove the Jacobi-Trudi identity for the $q$-characters. The ways to attack the Jacobi-Trudi identity are well-known. Here we use the Lindstr\"{o}m-Gessel-Viennot method with the appropriate modifications, see Theorem \ref{thm Jacobi-Trudi}.

We take the opportunity to define the Drinfeld functor which constructs a $\YglMN$-module from a representation of the degenerate affine Hecke algebra $\mc H_l$. The key fact is that the Jacobi-Trudi formula does not depend on $m$ and $n$. It implies that the $\YglMN$-module $L(\la/\mu)$ comes from the same $\mc H_l$-module for all $m$ and $n$.  Since the Drinfeld functor is exact and an equivalence of categories in the even case, see \cite{CP,Dri86} or Theorem \ref{thm equivalence}, we obtain a tool to translate the information about representations of even Yangian $\YglN$ to super Yangian $\YglMN$. 

We give a couple of such examples, describing sufficient conditions for tensor products of evaluation $\YglMN$-modules to be irreducible, see Theorem \ref{thm irr suff cond} and establishing the extended T-systems, see Corollary \ref{cor T super}. 

Section \ref{sec: Drinfeld} is dedicated solely to the Drinfeld functor and its applications, it is not used in the proof of our main Theorem \ref{thm ratio}. In our opinion the findings of this section have their own merits and we plan to use them in our future work.

\medskip

The paper is constructed as follows. We start by organizing known facts about the general Lie superalgebra $\glMN$ and the super Yangian $\YglMN$ in Section \ref{sec:super yangian}. In addition we compute some information about the coproduct, see Proposition \ref{prop:cop}, which allows us to introduce the $q$-character ring homomorphism in Section \ref{sec:q-char}. Section \ref{sec skew} is devoted to the study of $\YglMN$-modules related to skew Young diagrams. The Drinfeld functor is defined and studied in Section \ref{sec: Drinfeld}. The applications in the form of irreducibility conditions for tensor products and the extended T-systems are given in Sections \ref{sec:irr} and \ref{sec:T}, respectively.
Section \ref{sec transfer matrix} deals with transfer matrices and, in particular, with quantum Berezinians. We study the Harish-Chandra map which connects the transfer matrices to $q$-characters, see Section \ref{sec:HC}. The main result of this section is Theorem \ref{thm ratio}.

\medskip

{\bf Acknowledgments.} We thank V. Tarasov for stimulating discussions. This work was partially supported by a grant from the Simons Foundation \#353831.

\section{Super Yangian $\YglMN$}\label{sec:super yangian}
\subsection{Lie superalgebra $\glMN$}\label{sec glmn}Throughout the paper, we work over $\C$. In this section, we recall the basics of the Lie superalgebra $\glMN$, see e.g. \cite{CW12} for more detail. We simply write $\gl_m$ for $\gl_{m|0}$.

A \emph{vector superspace} $W = W_{\bar 0}\oplus W_{\bar 1}$ is a $\Z_2$-graded vector space. We call elements of $W_{\bar 0}$ \emph{even} and elements of
$W_{\bar 1}$ \emph{odd}. We write $|w|\in\{\bar 0,\bar 1\}$ for the parity of a homogeneous element $w\in W$. Set $(-1)^{\bar 0}=1$ and $(-1)^{\bar 1}=-1$.

Fix $m,n\in \Z_{\gge 0}$. Set $I:=\{1,2,\dots,m+n-1\}$ and $\bar I:=\{1,2,\dots,m+n\}$. We also set $|i|=\bar 0$ for $1\lle i\lle m$ and $|i|=\bar 1$ for $m< i\lle m+n$. Define $s_i=(-1)^{|i|}$ for $i\in \bar I$.

The Lie superalgebra $\glMN$ is generated by elements $e_{ij}$, $i,j\in \bar I$, with the supercommutator relations
\[
[e_{ij},e_{kl}]=\delta_{jk}e_{il}-(-1)^{(|i|+|j|)(|k|+|l|)}\delta_{il}e_{kj},
\]
where the parity of $e_{ij}$ is $|i|+|j|$. Denote by $\UglMN$ the universal enveloping superalgebra of $\glMN$. The superalgebra $\UglMN$ is a Hopf superalgebra with the coproduct given by $\Delta(x)=1\otimes x+x\otimes 1$ for all $x\in \glMN$. 

The \emph{Cartan subalgebra $\h$} of $\glMN$ is spanned by $e_{ii}$, $i\in \bar I$. Let $\epsilon_i$, $i\in \bar I$, be a basis of $\h^*$ (the dual space of $\h$) such that $\epsilon_i(e_{jj})=\delta_{ij}$. There is a bilinear form $(\ ,\ )$ on $\h^*$ given by $(\epsilon_i,\epsilon_j)=s_i\delta_{ij}$. The \emph{root system $\bf{\Phi}$} is a subset of $\h^*$ given by
\[
{\bf \Phi}:=\{\epsilon_i-\epsilon_j~|~i,j\in \bar I \text{ and }i\ne j\}.
\]
We call a root $\epsilon_i-\epsilon_j$ \emph{even} (resp. \emph{odd}) if $|i|=|j|$ (resp. $|i|\ne |j|$).

Set $\alpha_i:=\epsilon_i-\epsilon_{i+1}$ for $i\in I$. Denote by ${\bf P}:=\oplus_{i\in \bar I}\Z \epsilon_i$, ${\bf Q}:=\oplus_{i\in I}\Z \alpha_i$, and ${\bf Q}_{\gge 0}:=\oplus_{i\in I}\Z_{\gge 0} \alpha_i$ the \emph{weight lattice}, the \emph{root lattice}, and the \emph{cone of positive roots}, respectively. Define a partial ordering $\gge$ on $\h^*$: $\alpha\gge \beta$ if $\alpha-\beta\in {\bf Q}_{\gge 0}$. There is a natural $\Z_2$-grading on ${\bf P}$ such that the parity $p_{\alpha}$ of $\alpha\in {\bf P}$ is given by
\beq\label{eq:weight-parity}
p_{\alpha}:=\sum_{i\in \bar I}(\alpha,\epsilon_i)|i|.
\eeq

A module $M$ over a superalgebra $\mathcal A$ is a vector superspace $M$ with a homomorphism of superalgebras $\mathcal A\to \End(M)$. A $\glMN$-module is a module over $\mathrm{U}(\glMN)$.

Let $\alpha\in {\bf P}$. We call a nonzero vector $v$ in a $\glMN$-module $M$ a \emph{singular vector of weight} $\alpha$ if $v$ satisfies
\[
e_{ii}v=\alpha(e_{ii})v,\quad e_{jk}v=0,
\]
for $i\in \bar I$ and $1\lle j< k\lle m+n$. Denote by $L^p(\alpha)$ the irreducible $\glMN$-module generated by a 
singular vector of weight $\alpha$ and of parity $p$. We simply write $L(\alpha)$ for $L^{p_{\alpha}}(\alpha)$.

For a $\glMN$-module $M$, define the \emph{weight subspace of weight} $\alpha$ by
\[
(M)_{\alpha}:=\{v\in M\ |\ e_{ii}v=\alpha(e_{ii})v,\ i\in \bar I\}.
\]
If $(M)_{\alpha}\ne 0$, we call $\alpha$ a {\it weight} of $M$. Denote by $\mathrm{wt}(M)$ the set of all weights of $M$. If $v\in (M)_{\alpha}$ and $v$ is non-zero, then we write $\mathrm{wt}(v)=\alpha$. We focus on the $\glMN$-modules $M$ such that $(M)_{\alpha}=0$ unless $\alpha\in\bf P$. We say that $M$ is \emph{$\bf P$-graded}. We call a vector $v\in M$ \emph{singular} if $e_{ij}v=0$ for $1\lle i<j\lle m+n$.

Let $V:=\C^{m|n}$ be the vector superspace with a basis $v_i$, $i\in \bar I$, such that $|v_i|=|i|$. Let $E_{ij}\in\End(V)$ be the linear operators such that $E_{ij}v_k=\delta_{jk}v_i$. The map $\rho_V:\glMN\to \End(V),\ e_{ij}\mapsto E_{ij}$ defines a $\glMN$-module structure on $V$. As a $\glMN$-module, $V$ is isomorphic to $L({\epsilon_1})$. The vector $v_i$ has weight $\epsilon_i$. The highest weight vector is $v_1$ and the lowest weight vector is $v_{m+n}$. We call it the \emph{vector representation} of $\glMN$.

Let $\gl_{n|m}$ be the Lie superalgebra defined in the same way as $\glMN$ with $m$ and $n$ interchanged. There exists a Lie superalgebra isomorphism between $\gl_{m|n}$ and $\gl_{n|m}$ given by the map 
\beq\label{eq change mn}
\varsigma_{m|n}: e_{ij}\mapsto e_{m+n+1-i,m+n+1-j}.
\eeq

Fix $m',n'\in\Z_{\gge 0}$ and consider the Lie superalgebra $\gl_{m'|n'}$. For this algebra we also choose the standard parity, namely, we set $|i|=\bar 0$ if and only if $1\lle i\lle m'$.

We also consider a larger Lie superalgebra $\gl_{m'+m|n'+n}$. 
For  $\gl_{m'+m|n'+n}$, we fix the parity by
\beq\label{eq parity}
|i|=\begin{cases}
\bar 0, &\text{ if }1\lle i\lle m' \text{ or } m'+n'+1\lle i\lle m'+n'+m,\\
\bar 1, &\text{ if }m'+1\lle i\lle m'+n' \text{ or } m'+n'+m+1\lle i \lle m'+n'+m+n.
\end{cases}  
\eeq
Clearly, we have the embeddings of Lie superalgebras given by
\[
\gl_{m'|n'}\to \gl_{m'+m|n'+n},\quad e_{ij}\mapsto e_{ij},
\]
\[
\gl_{m|n}\to \gl_{m'+m|n'+n},\quad e_{ij}\mapsto e_{m'+n'+i,m'+n'+j}.
\]

Define the \emph{supertrace} $\str:\End(\C^{m|n})\to \C$,
\[
\str(E_{ij})=s_j\delta_{ij}.
\]
The supertrace is supercyclic, that is 
$$\str([E_{ij},E_{rs}])=0.$$
Here $[\cdot,\cdot]$ is the supercommutator of linear operators.

Denote by $\mathfrak{sl}_{m|n}$ the Lie subalgebra of $\gl_{m|n}$ consisting of all elements acting on $V$ as matrices with zero supertrace.

Define the \emph{supertranspositions} $t$ and $\top$,
\beq\label{eq trans}
t:\End(V)\to \End(V),\quad E_{ij}^t= (-1)^{|i||j|+|j|}E_{ji},
\eeq
\beq\label{eq trans 2}
\top:\End(V)\to \End(V),\quad E_{ij}^\top= (-1)^{|i||j|+|i|}E_{ji}.
\eeq
Both supertranspositions are anti-homomorphisms and respect the supertrace,
\beq\label{eq transpose cyclic}
(AB)^*=(-1)^{|A||B|}B^*A^*,\qquad \str(A)=\str(A^*), 
\eeq
for all $(m+n)\times (m+n)$ matrices $A$ and $B$, where $*$ is either $t$ or $\top$. We also have $t=\top^3$ and $t^4=\top^4=1$.

\subsection{Hook partitions, skew Young diagrams, and polynomial modules}\label{sec:hook-p}
Let $\la=(\la_1\gge \la_2\gge \cdots)$ be a partition of $\ell$: $\la_i\in\Z_{\gge 0}$, $\la_i=0$ if $i\gg 0$, and $|\la|:=\sum_{i=1}^\infty \la_i=\ell$. We denote by $\la'$ the \emph{conjugate} of the partition $\la$. The number $\la_1'$ is the length of the partition $\la$, namely the number of nonzero parts of $\la$. Let $\mu=(\mu_1\gge \mu_2\gge \cdots)$ be another partition such that $\mu_i\lle \la_i$ for all $i=1,2,\dots$. Consider the \emph{skew Young diagram} $\la/ \mu$ which is defined as the set of pairs
\[
\{(i,j)\in\Z^2~|~i\gge 1,\ \la_i\gge j > \mu_i\}.
\]
When $\mu$ is the zero partition, then $\la/ \mu$ is the usual Young diagram corresponding to $\la$.

We use the standard representation of skew Young diagrams on the coordinate plane $\mathbb R^2$ with coordinates $(x,y)$. Here we use the convention that $x$ increases from north to south while $y$ increases from west to east. Moreover, the pair $(i,j)\in \la/ \mu$ is represented by the unit box whose south-eastern corner has coordinate $(i,j)\in\mathbb Z^2$. We also define the \emph{content} of the box corresponding to $(i,j)\in \la/ \mu$ by $c(i,j)=j-i$. 

A \textit{semi-standard Young tableau} of shape $\la/\mu$ is the skew Young diagram $\la/\mu$ with an element from $\{1,2,\dots,m+n\}$ inserted in each box such that the following conditions are satisfied:
\begin{enumerate}
    \item the numbers in boxes are weakly increasing along rows and columns;
    \item the numbers from $\{1,2,\dots,m\}$ are strictly increasing along columns;
    \item the numbers from $\{m+1,m+2,\dots,m+n\}$ are strictly increasing along rows.
\end{enumerate}
For a semi-standard Young tableau $\mathcal T$ of shape $\la/\mu$, denote by $\mathcal T(i,j)$ the number in the box representing the pair $(i,j)\in \la/\mu$. 
\begin{eg}
Let $\la=(5,3,3,3,3)$, $\mu=(3,3,2,2)$, $m=n=2$, then the skew Young diagram $\la/\mu$ has the shape as one of the following. 
$$
\begin{ytableau}
\none & \none & \none & 1 & 2\\
 \none & \none & \none\\
 \none & \none & 3\\
 \none & \none & 3\\
 2 & 3 &4	
\end{ytableau}
\qquad\qquad \qquad\qquad\qquad
\begin{ytableau}
\none & \none & \none & 3 & 4\\
 \none & \none & \none\\
 \none & \none & 0\\
 \none & \none & -1\\
 -4 & -3 & -2	
\end{ytableau}
$$
In the picture above, the left one is an example of a semi-standard Young tableau of shape $\la/\mu$. We have $\mc T(1,4)=1$, $\mc T(1,5)=\mc T(5,1)=2$, $\mc T(3,3)=\mc T(4,3)=\mc T(5,2)=3$, and $\mc T(5,3)=4$. In the right, we wrote in each box its content.  
\end{eg}

A \textit{standard Young tableau} of shape $\la/\mu$ is the skew Young diagram $\la/\mu$ with an element from $\{1,\dots,|\la|-|\mu|\}$ inserted in each box such that the numbers in boxes are strictly increasing along rows and columns. 

\begin{eg}
There is a distinguished standard Young tableau obtained by filling numbers along rows from left to right and top to bottom. We call it the {\it row tableau}. Similarly, one defines the {\it column tableau}. Here are the row (on the left) and the column (on the right) tableaux for the skew Young diagram $\la/\mu$ in the previous example.
\[
\begin{ytableau}
\none & \none & \none & 1 & 2\\
 \none & \none & \none\\
 \none & \none & 3\\
 \none & \none & 4\\
 5 & 6 &7	
\end{ytableau}
\qquad\qquad \qquad\qquad\qquad
\begin{ytableau}
\none & \none & \none & 6 & 7\\
 \none & \none & \none\\
 \none & \none & 3\\
 \none & \none & 4\\
 1 & 2 & 5	
\end{ytableau}
\]
\end{eg}

Recall that $V=\C^{m|n}$ denotes the vector representation of $\gl_{m|n}$.
A $\gl_{m|n}$-module is called a \emph{polynomial module} if it is a submodule of $V^{\otimes l}$ for some $l\in\Z_{\gge 0}$. We call a partition $\la$ an $(m|n)$-{\it hook partition} if $\la_{m+1}\lle n$. Let $\mathscr P_l(m|n)$ be the set of all $(m|n)$-hook partitions of $l$ and $\mathscr P(m|n)$ the set of all $(m|n)$-hook partitions. In particular, $\mathscr P_l(m):=\mathscr P_l(m|0)$ is the set of all partitions of $l$ with length $\lle m$. It is well-known that irreducible polynomial $\gl_{m|n}$-modules are parameterized by $\mathscr P(m|n)$.

For $\la\in\mathscr P(m|n)$, define the $\gl_{m|n}$-weight $\la^\natural$ by
\beq\label{eq natural}
\la^\natural=\sum_{i=1}^m \la_i\epsilon_i+\sum_{j=1}^n \max\{\la_j'-m,0\}\epsilon_{m+j}.
\eeq
We sometimes use the notation $\la^{\natural}_{[m|n]}$ to stress the dependence of $\la^\natural$ on $m$ and $n$.

Let $\mc P\in \End(V \otimes V)$ be the super flip operator,
\[
\mathcal P=\sum_{i,j\in \bar I} s_jE_{ij}\otimes E_{ji}.
\]
Let $\mathfrak S_l$ be the symmetric group permuting $\{1,2,\dots,l\}$. The symmetric group $\mathfrak S_l$ acts naturally on $V^{\otimes l}$, where the simple transposition $\sigma_k=(k,k+1)$ acts as 
\beq\label{eq perm}
\mathcal P^{(k,k+1)}=\sum_{i,j\in \bar I} s_j E_{ij}^{(k)}E_{ji}^{(k+1)}\in \End(V^{\otimes l}),
\eeq
where we use the standard notation
\[
E_{ij}^{(k)}=1^{\otimes (k-1)}\otimes E_{ij}\otimes 1^{\otimes (l -k)}\in \End(V^{\otimes l}),\qquad k=1,\dots,l.
\]

Let $\mathcal S(\la)$ be the finite-dimensional irreducible representation of $\mathfrak S_l$ corresponding to the partition $\la$.
\begin{theorem}[Schur-Sergeev duality \cite{Ser85}]\label{thm schur}
The $\mathfrak S_l$-action and $\gl_{m|n}$-action on $V^{\otimes l}$ commute. Moreover, as a $\mathrm{U}(\glMN)\otimes \C[\mathfrak S_l]$-module, we have
\[
V^{\otimes l}\cong\bigoplus_{\la\in\mathscr P_l(m|n)}L(\la^\natural)\otimes \mathcal S(\la).
\]	
\end{theorem}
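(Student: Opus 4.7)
The plan is to prove this in three stages: commutation of the two actions, a super double-centralizer argument, and identification of the summands.

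First, for the commutation. It suffices to check that each simple transposition $\sigma_k=(k,k+1)$, acting by $\mathcal P^{(k,k+1)}$ from \eqref{eq perm}, supercommutes with the iterated coproduct action of each $e_{ab}$, $a,b\in\bar I$. Since $\mathcal P^{(k,k+1)}$ acts trivially on tensor factors outside positions $k$ and $k+1$, the verification collapses to the two-fold case on $V\otimes V$. Expanding $\mathcal P=\sum_{i,j\in\bar I} s_j E_{ij}\otimes E_{ji}$ and using $E_{ab}E_{ij}=\delta_{bi}E_{aj}$ together with the sign rule $|E_{ij}|=|i|+|j|$, the eight resulting terms in $[E_{ab}\otimes 1+1\otimes E_{ab},\mathcal P]$ cancel in pairs after a re-indexing $i\leftrightarrow b$ in half of them; the sign $s_j$ combined with $(-1)^{(|a|+|b|)(|i|+|j|)}$ precisely accounts for the $\Z_2$-grading.

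Second, the decomposition. Granting commutation, I would invoke a super double-centralizer argument. Let $B_l$ denote the image of $\C[\mathfrak S_l]$ in $\End(V^{\otimes l})$ and $A_l$ the image of $\UglMN$. The two claims to establish are that $A_l$ and $B_l$ are each other's centralizers in $\End(V^{\otimes l})$, and that $V^{\otimes l}$ is semisimple over $A_l$ (over $B_l$ it is semisimple automatically since $\C[\mathfrak S_l]$ is). For the centralizer equality, I would use a super polarization argument: $A_l$ contains all symmetrized products $\sum_{\sigma\in\mathfrak S_l}\pm E_{i_{\sigma(1)}j_{\sigma(1)}}\otimes\cdots\otimes E_{i_{\sigma(l)}j_{\sigma(l)}}$ obtained by expanding $\Delta^{(l)}(e_{a_1b_1})\cdots\Delta^{(l)}(e_{a_lb_l})$ and extracting the multilinear part; these span exactly the $\mathfrak S_l$-invariants in $\End(V)^{\otimes l}=\End(V^{\otimes l})$, which is the centralizer of $B_l$. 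The reverse inclusion follows by the dual computation. The double-centralizer theorem then decomposes $V^{\otimes l}\cong \bigoplus_{\la}M_\la\otimes \mathcal S(\la)$, summing over those $\la\vdash l$ for which $M_\la:=\mathrm{Hom}_{\mathfrak S_l}(\mathcal S(\la),V^{\otimes l})$ is nonzero, each $M_\la$ irreducible over $\glMN$.

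Third, the identification. For a partition $\la\vdash l$, let $c_\la\in\C[\mathfrak S_l]$ be the Young symmetrizer; then $M_\la=0$ iff $c_\la V^{\otimes l}=0$. When $\la$ is not an $(m|n)$-hook, the diagram $\la$ contains the $(m+1)\times(n+1)$ rectangle, forcing some column antisymmetrization over $m+1$ indices combined with row symmetrization over $n+1$ indices that must both repeat a value in $\bar I$: antisymmetrizing over two copies of an odd index, or symmetrizing over two copies of an even index, yields zero by the super sign rule. Conversely, for $\la\in\mathscr P_l(m|n)$, I would produce an explicit singular vector in $c_\la V^{\otimes l}$ by placing $v_i$ in the boxes filled by $i$ in the semi-standard tableau of $\la/\emptyset$ whose rows of length $\le m$ use indices $1,\dots,m$ strictly increasing down the column, and whose columns below row $m$ use indices $m+1,\dots,m+n$ strictly increasing along the row. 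Computing its weight row-by-row and column-by-column reproduces \eqref{eq natural}, so $M_\la\cong L(\la^\natural)$.

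The main obstacle I expect is the centralizer equality $A_l=\End_{\C[\mathfrak S_l]}(V^{\otimes l})$, and its dual: the classical density arguments must be carried out with consistent super signs, and the polarization identity that converts sums of $(m+n)^l$ monomials in the diagonal $\glMN$-action into $\mathfrak S_l$-invariant tensors is where the bookkeeping is most delicate. The vanishing argument for non-hook $\la$ is conceptually clean but also genuinely uses the $\Z_2$-grading, and must be phrased as a statement about the row/column stabilizers of $c_\la$ acting on weight vectors.
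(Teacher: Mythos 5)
The paper does not give a proof of this theorem; it is quoted from Sergeev \cite{Ser85} as a known result. So the task is to assess whether your proposed proof hangs together on its own.

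The commutation check and the double-centralizer/polarization outline are the standard route and are broadly plausible, although the centralizer equality and the passage to semisimplicity over the image of $\mathrm{U}(\glMN)$ are only sketched. The concrete problem is in your vanishing argument. You write that ``antisymmetrizing over two copies of an odd index, or symmetrizing over two copies of an even index, yields zero by the super sign rule.'' This is backwards. From \eqref{eq perm}, $\mathcal P(v_a\otimes v_a)=s_a\,v_a\otimes v_a$, so
\[
\tfrac12(1-\mathcal P)(v_a\otimes v_a)=\tfrac12(1-s_a)\,v_a\otimes v_a,
\qquad
\tfrac12(1+\mathcal P)(v_a\otimes v_a)=\tfrac12(1+s_a)\,v_a\otimes v_a.
\]
The first vanishes exactly when $a$ is \emph{even} and the second exactly when $a$ is \emph{odd}: column antisymmetrization kills a repeated even index, and row symmetrization kills a repeated odd index. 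This is precisely what underlies conditions (ii) and (iii) in the paper's definition of semi-standard Young tableaux in Section \ref{sec:hook-p}, and you have the parities swapped.

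Even after correcting the signs, the pigeonhole step is not complete as stated. The corrected rule says that in a filling surviving $a_\la$ and $b_\la$, each row of length $\gge n+1$ contains at least one even entry and each column of length $\gge m+1$ contains at least one odd entry, but these two observations do not by themselves contradict each other, so you have not shown $c_\la V^{\otimes l}=0$ for non-hook $\la$. In the literature the vanishing is obtained indirectly, e.g.\ by showing that the multiplicity space has a basis indexed by $(m|n)$-semi-standard Young tableaux of shape $\la$ (so that it vanishes iff no such tableau exists), which is a character computation rather than a direct pigeonhole. Relatedly, even once you exhibit the highest-weight vector for hook $\la$ and read off the weight $\la^\natural$ from \eqref{eq natural}, you still need the multiplicity-one/dimension count to conclude $M_\la\cong L(\la^\natural)$ rather than a larger module containing $L(\la^\natural)$. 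These gaps are all fillable, but they are the bulk of the actual proof.
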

 
For $\la\in\mathscr P(m|n)$, we have $\la'\in \mathscr P(n|m)$. The $\gl_{m|n}$-module obtained by pulling back the $\gl_{n|m}$-module $L((\la')_{[n|m]}^{\natural})$ through the isomorphism $\varsigma:\glMN\to\gl_{n|m}$, see \eqref{eq change mn}, is isomorphic to $L(\la_{[m|n]}^\natural)$.

For $\la\in\mathscr P(m'+m|n'+n)$, define a $\gl_{m'+m|n'+n}$-weight $\la^\circ$ by
\begin{align}
\la^\circ=\sum_{i=1}^{m'} \la_i\epsilon_i+\sum_{j=1}^{n'} \max\{\la_j'-m',0\}\epsilon_{m'+j}+&\sum_{i=m'+1}^{m'+m} \max\{\la_i-n',0\}\epsilon_{n'+i}\nonumber\\+&\sum_{j=n'+1}^{n'+n} \max\{\la_j'-m'-m,0\}\epsilon_{m'+m+j}.\label{eq circ}
\end{align}
This definition is dictated by the choice of parity \eqref{eq parity}, see \cite{BR83}.
We will use $\la^\circ$ in Section \ref{sec skew} to define skew representations of super Yangian.

\subsection{Super Yangian $\YglMN$}\label{sec rtt}
We recall the definition of the super Yangian $\YglMN$ from \cite{Naz}. 

The super Yangian $\YglMN$ is the $\Z_2$-graded unital associative algebra over $\C$ with generators $\{t_{ij}^{(r)}\ |\ i,j\in \bar I, \, r\gge 1\}$ and defining relations
\beq\label{eq:comm-generators}
[t_{ij}^{(r)},t_{kl}^{(s)}]=(-1)^{|i||j|+|i||k|+|j||k|}\sum_{a=0}^{\min(r,s)-1}(t_{kj}^{(a)}t_{il}^{(r+s-1-a)}-t_{kj}^{(r+s-1-a)}t_{il}^{(a)}),
\eeq
where the generators $t_{ij}^{(r)}$ have parities $|i|+|j|$.

The super Yangian $\YglMN$ has the RTT presentation as follows. Define the rational R-matrix $R(u)\in \End(V \otimes  V)$ by $R(u)=1-\mathcal P/u$. The rational R-matrix satisfies the quantum Yang-Baxter equation
\beq\label{eq yang-baxter}
R_{12}(u_1-u_2)R_{13}(u_1-u_3)R_{23}(u_2-u_3)=R_{23}(u_2-u_3)R_{13}(u_1-u_3)R_{12}(u_1-u_2).
\eeq
Define the generating series
\[
t_{ij}(u)=\delta_{ij}+\sum_{k=1}^\infty t_{ij}^{(k)}u^{-k}
\]
and the operator $T(u)\in \End(V)\otimes\YglMN[[u^{-1}]] $,
\[
T(u)=\sum_{i,j\in \bar I} (-1)^{|i||j|+|j|}E_{ij}\otimes t_{ij}(u).
\]
Denote by
\beq\label{eq matrix notation}
T_k(u)=\sum_{i,j\in \bar I} (-1)^{|i||j|+|j|}E_{ij}^{(k)}\otimes t_{ij}(u)\in\End(V^{\otimes l}) \otimes\YglMN[[u^{-1}]] .
\eeq
Then defining relations \eqref{eq:comm-generators} can be written as
\[
R(u_1-u_2)T_1(u_1)T_2(u_2)=T_2(u_2)T_1(u_1)R(u_1-u_2)\in \End(V^{\otimes 2}) \otimes\YglMN[[u^{-1}]].
\]
In terms of generating series, defining relations \eqref{eq:comm-generators} are equivalent to
\beq\label{eq:comm-series}
(u_1-u_2)[t_{ij}(u_1),t_{kl}(u_2)]=(-1)^{|i||j|+|i||k|+|j||k|}(t_{kj}(u_1)t_{il}(u_2)-t_{kj}(u_2)t_{il}(u_1)).
\eeq

The super Yangian $\YglMN$ is a Hopf superalgebra with coproduct, antipode, counit given by
\beq\label{eq Hopf}
\Delta: t_{ij}(u)\mapsto \sum_{k\in \bar I} t_{ik}(u)\otimes t_{kj}(u),\qquad S: T(u)\mapsto T(u)^{-1},\qquad \varepsilon: T(u)\mapsto 1.
\eeq
Let $\Delta^{\rm op}$ be the opposite coproduct of $\YglMN$,
\beq\label{eq coop}
\Delta^{\rm op}(t_{ij}(u))=\sum_{k\in \bar I} (-1)^{(|i|+|k|)(|j|+|k|)}t_{kj}(u)\otimes t_{ik}(u).
\eeq

For $z\in\C$ there exists an isomorphism of Hopf superalgebras,
\begin{align}
&\tau_z:\YglMN\to\YglMN, && t_{ij}(u)\mapsto t_{ij}(u-z).\label{eq tau z}
\end{align}

For any $\YglMN$-module $M$, denote by $M_z$ the $\YglMN$-module obtained from pulling back $M$ through the isomorphism $\tau_z$.

The super Yangian $\YglMN$ has a weight decomposition ($\bf P$-grading) with respect to Cartan subalgebra of $\mathrm{U}(\glMN)\subset \YglMN$. The generator $t_{ij}^{(k)}$ has weight $\epsilon_i-\epsilon_j$. 

We have the standard PBW theorem.
\begin{theorem}[\cite{Gow07}]\label{thm PBW}
Fix some ordering on the generators $t_{ij}^{(k)}$, $i,j\in \bar I$ and $k\in \Z_{>0}$, for the super Yangian $\YglMN$. Then the ordered monomials of these generators, with at most power 1 for odd generators, form a basis of $\YglMN$.
\end{theorem}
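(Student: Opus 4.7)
The proof is by the standard method of associated graded algebras, and my plan is to introduce a natural filtration on $\YglMN$ whose associated graded coincides with the universal enveloping algebra of a current Lie superalgebra, and then to reduce the PBW statement to the classical PBW theorem for $\mathrm{U}(\gl_{m|n}[t])$.

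Equip $\YglMN$ with the filtration $F_0 \subset F_1 \subset \cdots$ obtained by setting $\deg t_{ij}^{(r)} = r - 1$ and extending multiplicatively. Inspecting the relation \eqref{eq:comm-generators}, the summand with $a = 0$ on the right-hand side contributes $\delta_{kj}\, t_{il}^{(r+s-1)} - \delta_{il}\, t_{kj}^{(r+s-1)}$, of filtered degree $r+s-2$, which matches the degree of the product $t_{ij}^{(r)} t_{kl}^{(s)}$, while every summand with $a \gge 1$ sits in filtered degree $r+s-3$. Consequently, in $\mathrm{gr}\,\YglMN$ the classes $\bar t_{ij}^{(r)}$ satisfy exactly the commutation relations of the current Lie superalgebra $\gl_{m|n}[t]$ under the identification $\bar t_{ij}^{(r)} \leftrightarrow e_{ij}\, t^{r-1}$, yielding a canonical surjective superalgebra homomorphism
\be
\varphi: \mathrm{U}(\gl_{m|n}[t]) \twoheadrightarrow \mathrm{gr}\,\YglMN.
\ee
Spanning of $\YglMN$ by ordered monomials (with odd generators occurring at most once) then follows by induction on the filtration degree, using the classical PBW theorem for $\mathrm{U}(\gl_{m|n}[t])$ to re-order generators modulo strictly lower-degree terms.

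The main obstacle is to show that $\varphi$ is injective, equivalently that the ordered monomials in the $t_{ij}^{(r)}$ are linearly independent. My strategy here is representation-theoretic. I would use the evaluation homomorphism $\mathrm{ev}: \YglMN \to \UglMN$, $t_{ij}(u) \mapsto \delta_{ij} + (-1)^{|i|} e_{ij}\, u^{-1}$, together with the shift automorphisms $\tau_z$ of \eqref{eq tau z} and the iterated coproducts $\Delta^{(N)}$ of \eqref{eq Hopf}, to construct a family of algebra homomorphisms $\YglMN \to \UglMN^{\otimes N}$ depending on distinct spectral parameters $z_1,\dots,z_N$. A hypothetical nontrivial linear relation among the ordered monomials in $\YglMN$ would, upon applying these homomorphisms and expanding in powers of the $z_k^{-1}$, yield a polynomial identity in the $z_k$; a Vandermonde-type argument then forces a nontrivial relation among ordered PBW monomials in $\UglMN^{\otimes N}$ for some $N$, contradicting the classical PBW theorem for $\UglMN$. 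This establishes injectivity of $\varphi$ and completes the proof.
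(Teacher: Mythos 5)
Your proposal is correct and follows essentially the same strategy as the cited source [Gow07] (the present paper only states the theorem and refers to Gow's proof, so the comparison is with that). Both proofs hinge on the same two ideas: (i) the degree filtration $\deg t_{ij}^{(r)} = r-1$, which makes the defining relation \eqref{eq:comm-generators} degenerate to the bracket of the polynomial current superalgebra $\gl_{m|n}[t]$ in the associated graded, yielding a surjection $\mathrm{U}(\gl_{m|n}[t]) \twoheadrightarrow \mathrm{gr}\,\YglMN$ and hence spanning by ordered monomials; and (ii) a faithfulness argument via evaluation homomorphisms $\pi_{m|n}$ composed with shifts $\tau_{z_k}$ and iterated coproducts, forcing linear independence by a Vandermonde-type degree count. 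Your observation that injectivity of the surjection $\varphi$ is \emph{equivalent} to linear independence of the ordered monomials (once spanning is established) is correct and is the standard reduction.

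Two places where your sketch needs tightening if you were to write a full proof. First, in identifying $\mathrm{gr}\,\YglMN$ with a quotient of $\mathrm{U}(\gl_{m|n}[t])$, the right normalization is $\bar t_{ij}^{(r)} \leftrightarrow s_i\, e_{ij}\, t^{r-1}$ (with $s_i = (-1)^{|i|}$, matching the embedding $e_{ij} \mapsto s_i t_{ij}^{(1)}$); the naive identification $\bar t_{ij}^{(r)} \leftrightarrow e_{ij}\, t^{r-1}$ does not quite reproduce the signed bracket of $\gl_{m|n}$. You should also check, as part of confirming that the super-PBW theorem applies, that odd classes $\bar t_{ij}^{(r)}$ square to zero in the associated graded; this follows because the $a=0$ term in $[t_{ij}^{(r)},t_{ij}^{(r)}]$ vanishes when $i\ne j$, leaving only contributions of filtration degree $\leq 2r-3 < 2(r-1)$. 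Second, the Vandermonde step is currently a one-sentence gesture: the real work is to verify that, for any finite collection of ordered monomials of bounded filtration degree, the composite $\mathrm{ev}^{\otimes N}\circ(\tau_{z_1}\otimes\cdots\otimes\tau_{z_N})\circ\Delta^{(N-1)}$ separates them for $N$ sufficiently large and generic $z_k$ — one must extract, from the $z$-expansion, coefficients whose images in $\UglMN^{\otimes N}$ are ordered PBW monomials in bijection with the original ones. This is standard (it is the same bookkeeping as in the even case), but it is the only genuinely nontrivial step and deserves to be written out.
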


Set $\mathcal B:=1+u^{-1}\C[[u^{-1}]]$. For any series $\vartheta(u)\in \mathcal B$, the map
\beq\label{eq mul auto}
\Gamma_{\vartheta}: T(u)\mapsto \vartheta(u)T(u)
\eeq
defines an automorphism of $\YglMN$. Denote by $\YslMN$ the subalgebra of $\YglMN$ which consists of all elements that are fixed under automorphisms $\Gamma_{\vartheta}$ for all $\vartheta(u)\in \mathcal B$.

Let $\mathfrak z_{m|n}$ be the center of super Yangian $\YglMN$. If $m\ne n$, then we have an isomorphism of algebras
\[
\YglMN\cong \mathfrak z_{m|n}\otimes \YslMN,
\]
see \cite[Proposition 8.1]{Gow07}.

Let $\mathrm{Y}(\gl_{n|m})$ be the super Yangian defined in the same way as $\YglMN$ by interchanging $m$ and $n$. 

Let $\eta_{m|n}$ be the automorphism of $\YglMN$ given by
\beq\label{eq iso antipode}
\eta_{m|n}: T(u)\mapsto T(-u)^{-1}.
\eeq
Define an isomorphism of superalgebras $\varrho_{m|n}:\YglMN\mapsto \mathrm{Y}(\gl_{n|m})$ by
\[
\varrho_{m|n}:t_{ij}(u)\mapsto t_{m+n+1-i,m+n+1-j}(-u).
\]
Denote by $\hat \varsigma$ the composition of isomorphisms of superalgebras
\beq\label{eq inter mn yangian}
\hat \varsigma_{m|n}=\varrho_{m|n}\circ \eta_{m|n}, \qquad \hat \varsigma_{m|n}:\YglMN\mapsto \mathrm{Y}(\gl_{n|m}).
\eeq

Finally, for fixed $m',n'\in\Z_{\gge 0}$, one also defines a larger super Yangian $\mathrm Y(\gl_{m'+m|n'+n})$ following the choice of parities as in \eqref{eq parity}. 

\subsection{Gauss decomposition}\label{sec:Gauss}
The Gauss decomposition of $\YglMN$, see \cite{Gow07,Peng}, gives generating series
\[
e_{ij}(u)=\sum_{r\gge 1}e_{ij}^{(r)}u^{-r},\quad f_{ji}(u)=\sum_{r\gge 1}f_{ji}^{(r)}u^{-r},\quad d_k(u)=1+\sum_{r\gge 1}d_{k}^{(r)}u^{-r},
\]
where $1\lle i< j\lle m+n$ and $k\in \bar I$, such that
\begin{align*}
t_{ii}(u)&=d_i(u)+\sum_{k<i}f_{ik}(u)d_k(u)e_{ki}(u),\\
t_{ij}(u)&=d_i(u)e_{ij}(u)+\sum_{k<i}f_{ik}(u)d_k(u)e_{kj}(u),\\
t_{ji}(u)&=f_{ji}(u)d_i(u)+\sum_{k<i}f_{jk}(u)d_k(u)e_{ki}(u).
\end{align*}

For $i\in I$ and $k\in \bar I$, let
\[
e_{i}(u)=e_{i,i+1}(u)=\sum_{r\gge 1}e_i^{(r)}u^{-r},\quad f_{i}(u)=f_{i+1,i}(u)=\sum_{r\gge 1}f_i^{(r)}u^{-r},
\]
\[
d_{k}'(u)=(d_k(u))^{-1}=1+\sum_{r\gge 1}d_k'^{(r)}u^{-r}.
\]
We use the convention $d_k^{(0)}=d_k'^{(0)}=1$.

The parities of $e_{ij}^{(r)}$ and $f_{ji}^{(r)}$ are the same as that of $t_{ij}^{(r)}$ while all $d_k^{(r)}$ and $d_k'^{(r)}$ are even. The super Yangian $\YglMN$ is generated by $e_i^{(r)}$, $f_i^{(r)}$, $d_k^{(r)}$, $d_k'^{(r)}$, where $i\in I$ and $k\in \bar I$, and $r\gge 1$. The full defining relations are described in \cite[Lemma 4 or Theorem 3]{Gow07}. Here we only write down the following relations. Let $\phi_i(u)=d_i'(u)d_{i+1}(u)=1+\sum_{r\gge 1}\phi_i^{(r)}u^{-r}$. Then one has $[d_{i}^{(r)}, d_{j}^{(s)}]=0$,
\begin{align}
&[d_{i}^{(r)}, e_{j}^{(s)}]=(\epsilon_i,\alpha_j) \sum_{t=0}^{r-1} d_{i}^{(t)} e_{j}^{(r+s-1-t)},\quad [d_{i}^{(r)}, f_{j}^{(s)}]=-(\epsilon_i,\alpha_j)\sum_{t=0}^{r-1} f_{j}^{(r+s-1-t)} d_{i}^{(t)}, \label{eq com de}\\
&[e_{j}^{(r)}, f_{k}^{(s)}]=-s_{j+1}\delta_{jk}\sum_{t=0}^{r+s-1} d_{j}^{\prime(t)} d_{j+1}^{(r+s-1-t)}=-s_{j+1}\delta_{jk}\phi_j^{(r+s-1)}.\label{eq com ef}
\end{align}
Moreover, the subalgebra $\YslMN$ is generated by the coefficients of the series $\phi_i(u)$, $e_i(u)$, $f_i(u)$ for $i\in I$.

Let $\mathrm{Y}_{m|n}^+$, $\mathrm{Y}_{m|n}^-$, and $\mathrm{Y}_{m|n}^0$ be the subalgebras of $\YglMN$ generated by coefficients of the series $e_i(u)$, $f_i(u)$, and $d_j(u)$, respectively. It is known from \cite[proof of Theorem 3]{Gow07} that 
$$
\YglMN\cong \mathrm{Y}_{m|n}^-\otimes \mathrm{Y}_{m|n}^0 \otimes \mathrm{Y}_{m|n}^+
$$ 
as vector spaces and $d_i^{(r)}$ are algebraically free generators of $\mathrm{Y}_{m|n}^0$.

The Gauss decomposition for super Yangian associated to non-standard parity sequences is studied in \cite{Peng}. In particular, one obtains generating series $e_i(u)$, $f_i(u)$, $d_i(u)$, $d_{i}^{\prime}(u)$ and generators $e_i^{(r)}$, $f_i^{(r)}$, $d_i^{(r)}$, $d_{i}^{\prime(r)}$ for $\mathrm{Y}(\gl_{n|m})$ with standard parities and for $\mathrm{Y}(\gl_{m'+m|n'+n})$ with parities in \eqref{eq parity}. We refer the reader to \cite{Peng,Tsy} for the explicit relations of $\mathrm{Y}(\gl_{m'+m|n'+n})$ in these generators. 

We conclude this section with the following lemma used in Section \ref{sec div}.

\begin{lemma}[{\cite[Proposition 4.2]{Gow07}}]\label{lem iso mn inter}
For the isomorphism $\hat\varsigma_{m|n}:\YglMN\mapsto \mathrm{Y}(\gl_{n|m})$ defined in \eqref{eq inter mn yangian}, we have
\[
\hat\varsigma_{m|n}: d_i(u)\mapsto (d_{m+n+1-i}(u))^{-1},\ e_{j}(u)\mapsto -f_{m+n-j}(u),\, f_j(u)\mapsto -e_{m+n-j}(u),
\]
for $i\in \bar I$ and $j\in I$.
\end{lemma}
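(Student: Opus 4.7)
Plan. My plan is to derive the formulas by expressing $\hat\varsigma_{m|n}$ as a matrix operation on $T(u)$ and then invoking the uniqueness of the Gauss decomposition in $\YglNM$.

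First, I would introduce the anti-diagonal permutation $P\in\End(\C^{m+n})$ characterized by $PE_{ij}P=E_{m+n+1-i,m+n+1-j}$, and denote by $\wt T(u)=\wt F(u)\wt D(u)\wt E(u)$ the Gauss decomposition of the generating matrix of $\YglNM$ in terms of its Gauss generators $\wt d_i(u)$, $\wt e_i(u)$, $\wt f_i(u)$. The key observation is that the index reversal $i\mapsto m+n+1-i$ swaps the parity assignments of $\glMN$ and $\gl_{n|m}$, so the sign factors $(-1)^{|i||j|+|j|}$ occurring in the definition of $T(u)$ and $\wt T(u)$ match up under this relabeling. A direct comparison of coefficients then yields the matrix identity
$$\varrho_{m|n}(T(-u))=P\,\wt T(u)\,P,$$
and composing with $\eta_{m|n}$ gives
$$\hat\varsigma_{m|n}(T(u))=P\,\wt T(u)^{-1}\,P.$$

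Next, I would invert the Gauss decomposition of $\wt T(u)$ to get $\wt T(u)^{-1}=\wt E(u)^{-1}\wt D(u)^{-1}\wt F(u)^{-1}$, and insert $P^2=I$ between the factors to write
$$\hat\varsigma_{m|n}(T(u))=\bigl(P\wt E(u)^{-1}P\bigr)\bigl(P\wt D(u)^{-1}P\bigr)\bigl(P\wt F(u)^{-1}P\bigr).$$
Since $P$ reverses indices, conjugation by $P$ reverses the order of diagonal entries and swaps upper-unitriangular and lower-unitriangular matrices. Hence the three factors on the right are, respectively, lower-unitriangular, diagonal with $i$-th entry $\wt d_{m+n+1-i}(u)^{-1}$, and upper-unitriangular.

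Then I would apply $\hat\varsigma_{m|n}$ directly to the Gauss decomposition $T(u)=F(u)D(u)E(u)$. This produces another factorization of $\hat\varsigma_{m|n}(T(u))$ into lower-unitriangular, diagonal, and upper-unitriangular matrices, since $\hat\varsigma_{m|n}$ is an algebra homomorphism that preserves the zero and unit entries. By the uniqueness of the Gauss decomposition in $\YglNM$, the corresponding factors agree. Matching diagonals gives $\hat\varsigma_{m|n}(d_i(u))=(\wt d_{m+n+1-i}(u))^{-1}$, and matching the first sub- and super-diagonals of the two sides, together with the standard fact that the sub- or super-diagonal entries of the inverse of a unitriangular matrix acquire a minus sign, yields $\hat\varsigma_{m|n}(e_j(u))=-\wt f_{m+n-j}(u)$ and $\hat\varsigma_{m|n}(f_j(u))=-\wt e_{m+n-j}(u)$.

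The main obstacle will be the careful bookkeeping of the parity signs $(-1)^{|i||j|+|j|}$ that enter the definition of $T(u)$, which are computed with the parity of $\glMN$ on the source side and the parity of $\gl_{n|m}$ on the target side. One must verify that the index reversal $i\mapsto m+n+1-i$ exchanges these two parity assignments, so the matrix identity $\varrho_{m|n}(T(-u))=P\wt T(u)P$ holds without spurious sign factors. Once this sign-matching is established, the remainder of the argument is formal.
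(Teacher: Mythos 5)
Your overall strategy---derive a matrix identity for $\hat\varsigma_{m|n}$ and then invoke uniqueness of the Gauss decomposition---is sound and gives a clean route to the formulas, arguably more conceptual than the quasi-determinant computations in the cited source \cite{Gow07}. However, the crucial sign-matching claim you make (and flag as the main obstacle) is in fact \emph{false}. Under the index reversal one has $|m+n+1-i|_{n|m}=1-|i|_{m|n}$, so the exponent $|i||j|+|j|$ appearing in $T(u)$ transforms into $(1-|i|)(1-|j|)+(1-|j|)\equiv |i||j|+|i|\pmod 2$, which differs from the exponent $|i||j|+|j|$ appearing in $\wt T(u)$ by $|i|+|j|$. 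Thus $\varrho_{m|n}(T(-u))$ and $P\,\wt T(u)\,P$, read as arrays with the $(-1)^{|i||j|+|j|}$ factors included, disagree entrywise by $(-1)^{|i|+|j|}$, and the identity you write does \emph{not} hold ``without spurious sign factors.''

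The argument is nonetheless repairable, precisely because you should not carry the sign factors at all. Those factors are built into the definition of $T(u)$ so that the super-tensor-product multiplication of $T$-type elements reduces to the \emph{naive} (ungraded) matrix product of the bare arrays $(t_{ij}(u))_{i,j}$; in particular $\eta_{m|n}(t_{ij}(u))$ is, by definition of $T(-u)^{-1}$, the $(i,j)$-entry of the ordinary inverse of the bare matrix $(t_{kl}(-u))$. Applying the algebra homomorphism $\varrho_{m|n}$ entrywise to this bare matrix produces exactly $P(\wt t_{kl}(u))P$ with no sign discrepancy, and $\varrho_{m|n}$ commutes with matrix inversion, so $\hat\varsigma_{m|n}(t_{ij}(u))=\bigl(P(\wt t_{kl}(u))^{-1}P\bigr)_{ij}$. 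Since the Gauss components $f_{ji},d_i,e_{ij}$ are by definition the entries of the LDU factors of the \emph{bare} matrix (this is precisely how the relations $t_{ij}(u)=d_i(u)e_{ij}(u)+\dots$ are stated), the remainder of your argument---$P$-conjugation swaps the triangular factors and reverses the diagonal, uniqueness of the noncommutative LDU factorization (valid because the $d_i(u)$ are invertible power series), and the observation that the first off-diagonal of the inverse of a unitriangular matrix is the negative of the original---goes through exactly as you describe and yields the stated formulas.
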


\subsection{Highest and lowest $\ell$-weight representations}\label{sec high rep}
Recall $\mathcal B=1+u^{-1}\C[[u^{-1}]]$ and set $\mathfrak B:=\mathcal B^{\bar I}\times \Z_2$.
We call an element $\bm\zeta\in \mathfrak B$ an \emph{$\ell$-weight}. We write $\ell$-weights in the form $\bm\zeta=(\zeta_i(u))^{p(\bs \zeta)}_{i\in \bar I}$, where $p(\bm\zeta)\in \Z_2$ and $\zeta_i(u)\in \mathcal B$ for all $i\in \bar I$.

Clearly $\mathfrak B$ is an abelian group with respect to the point-wise multiplication of the tuples and the addition of the parities. Let $\Z[\mathfrak B]$ be the group ring of $\mathfrak B$ whose elements are finite $\Z$-linear combinations of the form $\sum a_{\bm \zeta}[\bm\zeta]$, where $a_{\bm\zeta}\in \Z$.

Let $M$ be a $\YglMN$-module. We say that a nonzero $\Z_2$-homogeneous vector $v\in M$ is \emph{of $\ell$-weight $\bm \zeta$} if $d_{i}(u)v=\zeta_i(u)v$ for $i\in \bar I$ and the parity of $v$ is given by $p(\bs\zeta)$. We say that a vector $v\in M$ of $\ell$-weight $\bs\zeta$ is a \emph{highest (resp. lowest) $\ell$-weight vector of $\ell$-weight $\bs\zeta$} if $e_{ij}(u)v=0$ (resp. $f_{ji}(u)v=0$) for all $1\lle i< j\lle m+n$. The module $M$ is called a \emph{highest (resp. lowest) $\ell$-weight module of $\ell$-weight $\bm \zeta$} if $M$ is generated by a highest (resp. lowest) $\ell$-weight vector of $\ell$-weight $\bm \zeta$.

We say that two highest $\ell$-weight modules $M_1, M_2$ are \emph{isomorphic up to a parity} if either $M_1\cong M_2$ or 
$M_1\cong M_2^-$ where $M_2^-$ is obtained from $M_2$ by changing the parity of the highest $\ell$-weight vector.

In general, $\ell$-weight vectors do not need to be eigenvectors of $t_{ii}(u)$. However, from the Gauss decomposition one can deduce that $v$ is a highest $\ell$-weight vector of $\ell$-weight $\bs \zeta $ if and only if $v$ is of parity $p(\bs\zeta)$ and
\beq\label{eq:t-singular}
t_{ij}(u)v=0,\quad t_{kk}(u)v=\zeta_k(u)v,\quad 1\lle i<j\lle m+n,\ k\in \bar I.
\eeq
Note similar formulas do not hold for a lowest $\ell$-weight vector $v$ of $\ell$-weight $\bm \zeta$. 

A $\YglMN$-module $M$ is called \emph{thin} if $M$ has a basis consisting of $\ell$-weight vectors with distinct $\ell$-weights.

Let $v$ and $v'$ be highest $\ell$-weight vectors of $\ell$-weights $\bs \zeta$ and $\bs \vartheta$, respectively. Then, by \eqref{eq:t-singular} and \eqref{eq Hopf}, 
we have
\[
t_{ij}(u)(v\otimes v')=0,\quad t_{kk}(u)(v\otimes v')=\zeta_k(u)\vartheta_k(u)(v\otimes v'), \quad 1\lle i<j\lle m+n,\ k\in \bar I.
\]
Hence $v\otimes v'$ is a highest $\ell$-weight vector of $\ell$-weight $\bs \zeta \bs \vartheta$. In particular, we have
\beq\label{eq:char tensor}
e_{i}(u)(v\otimes v')=0,\quad d_{j}(u)(v\otimes v')=\zeta_j(u)\vartheta_j(u)(v\otimes v'), \quad i\in I,\ j\in \bar I.
\eeq
This formula will be used to obtain information about $\Delta(d_j(u))$.

Every finite-dimensional irreducible $\YglMN$-module is a highest $\ell$-weight module. Let $\bm\zeta \in \mathfrak B$ be an $\ell$-weight. There exists a unique irreducible highest $\ell$-weight $\YglMN$-module of highest $\ell$-weight $\bm\zeta$. We denote it by $L(\bm \zeta)$. The criterion for $L(\bm \zeta)$ to be finite-dimensional is as follows.

\begin{theorem}[\cite{Z96}]The irreducible $\YglMN$-module $L(\bm \zeta)$ is finite-dimensional if and only if there exist monic polynomials $g_i(u)$, $i\in \bar I$, such that
\[
\frac{\zeta_i(u)}{\zeta_{i+1}(u)}=\frac{g_i(u+s_i)}{g_i(u)},\quad \frac{\zeta_m(u)}{\zeta_{m+1}(u)}=\frac{g_m(u)}{g_{m+n}(u)}, \quad i\in I,\ i\ne m,	
\]
and $\deg g_m=\deg g_{m+n}$. 
\end{theorem}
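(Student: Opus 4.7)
The plan is to adapt Drinfeld's classification of finite-dimensional irreducible modules of $\YglN$ to the super setting. The argument naturally splits into necessity and sufficiency.

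For necessity, I would assume $L(\bs\zeta)$ is finite-dimensional with highest $\ell$-weight vector $v$ and treat each simple root index $i \in I$ separately. When $i \ne m$, the coefficients of $e_i(u)$, $f_i(u)$, $d_i(u)$, $d_{i+1}(u)$ generate a rank-one subalgebra isomorphic, up to a parity-dependent reparametrization dictated by the signs $s_{i+1}=\pm 1$ appearing in \eqref{eq com ef}, to the even Yangian $\mathrm{Y}(\gl_2)$. The cyclic submodule generated by $v$ under this subalgebra is a finite-dimensional highest $\ell$-weight $\mathrm{Y}(\gl_2)$-module, so Drinfeld's classical classification supplies a monic polynomial $g_i(u)$ with $\zeta_i(u)/\zeta_{i+1}(u) = g_i(u+s_i)/g_i(u)$. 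The case $i=m$ is special because the analogous rank-one subalgebra is $\Yone$, corresponding to the isotropic odd simple root, and the resulting relation involves two polynomials $g_m$ and $g_{m+n}$. For this I would directly analyze the $\Yone$-action on the cyclic submodule $\Yone\cdot v$: the elements $\phi_m^{(r)}$ of \eqref{eq com ef} act through the coefficients of $\zeta_m(u)/\zeta_{m+1}(u)$, and the finite-dimensionality of $\Yone\cdot v$ forces this ratio to be a rational function whose numerator and denominator have equal degree, which are then labeled $g_m$ and $g_{m+n}$.

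For sufficiency, given monic polynomials $(g_i)_{i \in \bar I}$ satisfying the ratio identities and the degree equality, I would factor each $g_i$ into linear factors and form a suitable tensor product of shifts $V_{z_j}$ of the vector representation (and, where necessary, of its Schur-functorial relatives $L(\la^\natural)$ from Theorem \ref{thm schur}). Using \eqref{eq:char tensor} iteratively, the eigenvalues of the $d_k(u)$ on the tensor product of highest weight vectors reproduce the prescribed ratios, and a final central adjustment via the automorphism $\Gamma_\vartheta$ of \eqref{eq mul auto} matches $\bs\zeta$ on the nose. The irreducible quotient of the cyclic $\YglMN$-submodule generated by this tensor product of highest weight vectors is then a finite-dimensional realization of $L(\bs\zeta)$.

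The most delicate step is the $i=m$ case of necessity. Unlike the even situation, the pair $(g_m, g_{m+n})$ is not uniquely determined by $\zeta_m(u)/\zeta_{m+1}(u)$---one may multiply both polynomials by any common monic factor without altering the ratio---and finite-dimensionality is precisely the degree-matching condition rather than a polynomial identity. To pin this equality down cleanly, I would exploit the fact that a finite-dimensional $\YglMN$-module also admits a lowest $\ell$-weight vector; the corresponding lowest-weight data can be transported to highest-weight data via the isomorphism $\hat\varsigma_{m|n}$ of \eqref{eq inter mn yangian} together with the explicit formulas of Lemma \ref{lem iso mn inter}, and comparing the two sets of Drinfeld data forces $\deg g_m = \deg g_{m+n}$.
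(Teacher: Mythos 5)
The paper does not contain a proof of this theorem: it is stated with a citation to [Z96] and used as a known result, so there is no ``paper's own proof'' to compare against. What follows is therefore an assessment of your sketch on its own merits.

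Your overall strategy---restriction to rank-one subalgebras for necessity, tensor products of evaluation modules for sufficiency---is the standard outline and is sound in spirit. For $i\ne m$ the reduction to $\mathrm Y(\gl_2)$ (or $\mathrm Y(\gl_{0|2})$, which is isomorphic to $\mathrm Y(\gl_2)$ via $\hat\varsigma$-type maps) and Drinfeld's classification is correct. Note however that the $i=m$ step reduces the problem to the classification of finite-dimensional irreducible $\Yone$-modules, which is itself the nontrivial content of [Z96]; so if the goal is to give an independent proof, that classification must be established rather than invoked.

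The substantive gap is in your ``most delicate step.'' You assert that finite-dimensionality at the odd root \emph{is} the degree-matching condition and then devise a lowest-$\ell$-weight plus $\hat\varsigma_{m|n}$ argument to extract it. This is unnecessary and rests on a misreading of the condition. Since $\zeta_m,\zeta_{m+1}\in\mathcal B=1+u^{-1}\C[[u^{-1}]]$, the ratio $\zeta_m(u)/\zeta_{m+1}(u)$ also lies in $\mathcal B$. If it equals $g_m(u)/g_{m+n}(u)$ for monic polynomials of degrees $d$ and $e$, then this quotient expands as $u^{d-e}\bigl(1+O(u^{-1})\bigr)$, and agreement with a series of the form $1+O(u^{-1})$ forces $d=e$ automatically. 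So the degree equality is a consequence of the ratio being a quotient of monic polynomials, not an independent constraint; the genuine content of the odd-root condition is the rationality of $\zeta_m/\zeta_{m+1}$, which is exactly what the $\Yone$ analysis must deliver. Your proposed detour through lowest $\ell$-weights adds nothing and misdirects attention from where the real work lies.
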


Finite-dimensional irreducible $\YglMN$-modules stay irreducible under restriction to $\YslMN$. Every irreducible finite-dimensional $\YslMN$-module is a restriction of an irreducible finite-dimensional $\YglMN$-module. The restrictions of two 
finite-dimensional irreducible $\YglMN$-modules  are isomorphic $\YslMN$-modules if and only if one of these modules is obtained from the other by a twist by the automorphism $\Gamma_{\vartheta}$ for $\vartheta(u)\in \mathcal B$.

We finish this section by proving the following technical proposition. Define the length function $\ell:{\bf Q}_{\gge 0}\to \Z_{\gge 0}$ by $\ell(\sum_{i\in I}n_i\alpha_i)=\sum_{i\in I}n_i$.
\begin{proposition}\label{prop:cop}
For $i\in I$, $j\in \bar I$, $k\in \Z_{>0}$, we have
\begin{align}
&\Delta({d_j^{(k)}}) -\sum_{l=0}^{k}d_{j}^{(l)}\otimes d_j^{(k-l)}\in \sum_{\ell(\alpha)>0} (\YglMN)_{\alpha}\otimes (\YglMN)_{-\alpha},\label{eq copro d}\\
&\Delta(e_i^{(k)})-1\otimes e_i^{(k)}\in \sum_{\ell(\alpha)>0} (\YglMN)_{\alpha}\otimes (\YglMN)_{\alpha_i-\alpha},\label{eq copro e} \\
&\Delta(f_i^{(k)})-f_i^{(k)}\otimes 1\in \sum_{\ell(\alpha)>0} (\YglMN)_{\alpha-\alpha_i}\otimes (\YglMN)_{-\alpha} \label{eq copro f}.
\end{align}
\end{proposition}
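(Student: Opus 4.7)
The plan combines the RTT coproduct formula $\Delta(t_{ij}(u)) = \sum_{k \in \bar I} t_{ik}(u) \otimes t_{kj}(u)$ with the Gauss decomposition, tracking the weight grading on $\YglMN$ under which $t_{ij}^{(r)}, e_i^{(r)}, f_i^{(r)}, d_j^{(r)}$ have weights $\epsilon_i - \epsilon_j, \alpha_i, -\alpha_i, 0$ respectively. The strategy is to first establish the claims for $d_1, e_1, f_1$ by a direct calculation and then extend to arbitrary $j, i$ via the recursive structure of the Gauss decomposition.

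For the base case, since $d_1(u) = t_{11}(u)$ and $t_{1k}(u) = d_1(u) e_{1k}(u)$, $t_{k1}(u) = f_{k1}(u) d_1(u)$ for $k > 1$, one computes directly
\[
\Delta(d_1(u)) = d_1(u) \otimes d_1(u) + \sum_{k > 1} d_1(u) e_{1k}(u) \otimes f_{k1}(u) d_1(u),
\]
each extra term having bi-weight $(\epsilon_1 - \epsilon_k, \epsilon_k - \epsilon_1)$ with $\ell(\epsilon_1 - \epsilon_k) = k - 1 > 0$. This yields \eqref{eq copro d} for $j = 1$. For $i = 1$ (assuming $m + n \geq 2$), the identities $e_1(u) = d_1(u)^{-1} t_{12}(u)$ and $f_1(u) = t_{21}(u) d_1(u)^{-1}$, combined with the Neumann-series expansion of $\Delta(d_1(u))^{-1}$ (whose leading term is $d_1(u)^{-1} \otimes d_1(u)^{-1}$ and whose remainder lies in the positive sector, by the above), and the Gauss decomposition of every $t_{ab}(u)$ appearing in $\Delta(t_{12}(u))$ and $\Delta(t_{21}(u))$, isolate the leading pieces $1 \otimes e_1(u)$ and $f_1(u) \otimes 1$, and exhibit the remaining summands as lying in the required bi-weight sectors.

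For the inductive step $j > 1$ (resp. $i > 1$), I would use the iterated Gauss decomposition: the residual entries $\tilde t_{ab}(u) := t_{ab}(u) - t_{a1}(u) t_{11}(u)^{-1} t_{1b}(u)$ for $a, b \geq 2$ satisfy the RTT relations of a super Yangian of smaller rank (cf.\ \cite{Gow07,Peng}), and their Gauss generators identify with the $d_\ell, e_\ell, f_\ell$ of $\YglMN$ for $\ell \geq 2$. Computing $\Delta(\tilde t_{ab}(u))$ from the known $\Delta(t_{\ast\ast}(u))$'s together with $\Delta(t_{11}(u))^{-1}$, and invoking the base case to control bi-weights, an induction on $j$ (resp.\ $i$) completes the argument.

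The main obstacle is the asymmetry required by the proposition: the error sector for \eqref{eq copro d} contains \emph{only} terms of left-positive, right-negative bi-weight, yet $\Delta(t_{jj}(u)) = \sum_k t_{jk}(u) \otimes t_{kj}(u)$ naively also produces left-negative, right-positive terms (for $k < j$). These unwanted contributions must cancel exactly against terms coming from $\Delta(f_{jl}(u))\Delta(d_l(u))\Delta(e_{lj}(u))$ through the Gauss formula $d_j(u) = t_{jj}(u) - \sum_{l<j} f_{jl}(u) d_l(u) e_{lj}(u)$. Verifying this cancellation precisely, especially with the sign subtleties of the super setting, is the most delicate aspect of the proof, but it is structurally forced by the compatibility of the Gauss decomposition with the RTT relations.
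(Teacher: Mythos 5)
Your proposal takes a genuinely different route from the paper. The paper does not perform rank induction through the iterated Gauss decomposition; instead it (i) constructs the element $h_i^{(2)}=d_i^{(2)}-\tfrac12(d_i^{(1)})^2-\tfrac12 d_i^{(1)}$ which, via \eqref{eq com de}, acts by $[h_i^{(2)},e_i^{(s)}]=c_i e_i^{(s+1)}$, and uses this adjoint action to raise the degree index $k$ in \eqref{eq copro e}--\eqref{eq copro f} (inducting on $k$, not on $j$); and (ii) once \eqref{eq copro e}--\eqref{eq copro f} are known, writes $\phi_i^{(k)}=-(\epsilon_{i+1},\epsilon_{i+1})[e_i^{(k)},f_i^{(1)}]$ to show that $\Delta(\phi_i^{(k)})$ has the right error sector, and then pins down the weight-zero part by the algebra-homomorphism / character-separation argument on the free polynomial algebra $A_i\otimes A_i$ applied to tensor products of highest $\ell$-weight vectors. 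This design completely sidesteps the combinatorial cancellation that dominates your proposal.

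Your base case ($j=1$: $d_1$, $e_1$, $f_1$) is correctly handled; the Neumann series for $\Delta(d_1(u))^{-1}$ and the factorizations $t_{1k}=d_1 e_{1k}$, $t_{k1}=f_{k1}d_1$ do give the asserted bi-weight control, and in fact the paper uses the same observation for $d_1$. The genuine gap is in the inductive step. You correctly identify that $\Delta(t_{jj}(u))=\sum_k t_{jk}(u)\otimes t_{kj}(u)$ contains terms with left-negative bi-weight for $k<j$, and that these must cancel against terms from $\Delta\big(\sum_{l<j}f_{jl}d_le_{lj}\big)$. But you merely assert this cancellation is ``structurally forced'' without proving it. It is not automatic: the residual matrix $\tilde t_{ab}(u)=t_{ab}(u)-t_{a1}(u)t_{11}(u)^{-1}t_{1b}(u)$ does satisfy the RTT relations of a lower-rank super Yangian, but the coproduct $\Delta$ of $\YglMN$ does \emph{not} restrict on these generators to the coproduct of that smaller Yangian. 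Thus $\Delta(\tilde t_{ab}(u))$ has to be computed from scratch, and one must verify both that all bi-weights $(\beta,-\beta)$ with $\beta\notin{\bf Q}_{\gge0}$ cancel \emph{and} that the weight-zero component is exactly $\sum_l d_j^{(l)}\otimes d_j^{(k-l)}$. Moreover, carrying out the cancellation requires $\Delta(e_{lj})$ and $\Delta(f_{jl})$ for all $l<j$ and all $j$, not just for the simple-root generators, so the induction is much more entangled than your outline suggests. In short: the strategy is plausible and the base case is fine, but the crux of the argument --- the cancellation and the identification of the diagonal part --- is asserted rather than proved, and this is precisely the point where the paper's argument (raising operator $h_i^{(2)}$ plus the free-polynomial character trick) is doing the real work.
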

\begin{proof}
We simply write $\mathrm Y_{\alpha}$ for $(\YglMN)_{\alpha}$. Let $\mathrm N_i$ be the subalgebra of $\YglMN$ generated by $e_{j}^{(r)}$ for $r\in \Z_{>0}$, $j\in I\setminus \{i\}$. Let $A_i$ be the unital subalgebra of $\YglMN$ generated by $\phi_i^{(r)}$, $r\in\Z_{>0}$. Let $$h_i^{(2)}=d_i^{(2)}-\frac{1}{2}(d_i^{(1)})^2-\frac{1}{2}d_i^{(1)},$$
then by \eqref{eq com de}, we have $[h_i^{(2)},e_i^{(s)}]=c_i e_i^{(s+1)}$ for some $c_i\in\C^\times$. Note that $d_i^{(2)}=t_{ii}^{(2)}-\sum_{j<i}t_{ij}^{(1)}t_{ji}^{(1)}$ and $d_i^{(1)}=t_{ii}^{(1)}$. Direct computation implies that
\begin{align*}
\Delta(h_i^{(2)})\in h_i^{(2)}\otimes 1+1\otimes h_i^{(2)}+&\ \C^\times e_{i-1}^{(1)}\otimes f_{i-1}^{(1)}\\+&\ \C^\times e_{i}^{(1)}\otimes f_{i}^{(1)} + \sum_{\ell(\alpha)>1}(\mathrm N_i)_\alpha\otimes \mathrm Y_{-\alpha}+\sum_{\ell(\alpha-\alpha_i)>0}(\mathrm N_i)_\alpha\otimes \mathrm Y_{-\alpha}.
\end{align*}
Note that $\Delta(e_i^{(1)})=1\otimes e_i^{(1)}+e_i^{(1)}\otimes 1$. Using $[h_i^{(2)},e_i^{(s)}]=c_i e_i^{(s+1)}$ and $[e_{j}^{(1)}\otimes f_{j}^{(1)},1\otimes e_{i}^{(k)}]=0$ for $j\ne i$ and $k\in\Z_{>0}$, one shows inductively that
\[
\Delta(e_i^{(k)})-1\otimes e_i^{(k)}\in \sum_{s=1}^k e_i^{(s)}\otimes A_i+\sum_{\ell(\alpha)>1}(\mathrm N_i)_\alpha\otimes \mathrm Y_{\alpha_i-\alpha}+\sum_{\ell(\alpha-\alpha_i)>0}(\mathrm N_i)_\alpha\otimes \mathrm Y_{\alpha_i-\alpha}.
\]
In particular, we obtain \eqref{eq copro e}. Similarly, one shows \eqref{eq copro f}.

We then show \eqref{eq copro d}. Since $\phi_i^{(k)}=-(\epsilon_{i+1},\epsilon_{i+1})[e_{i}^{(k)},f_i^{(1)}]$ and $f_i^{(1)}$ supercommutes with $\mathrm N_i$, we have 
$$
\Delta(\phi_i^{(k)})\in A_i\otimes A_i+\sum_{\ell(\alpha)>0}\mathrm Y_\alpha\otimes \mathrm Y_{-\alpha}.
$$ 
Note that $d_1(u)=t_{11}(u)$, we have $\Delta(d_1(u))=d_1(u)\otimes d_1(u)+\sum_{\ell(\alpha)>0}\mathrm Y_\alpha\otimes\mathrm Y_{-\alpha}[[u^{-1}]]$. Hence it suffices to show that
\[
\Delta(\phi_i(u))\in \phi_i(u)\otimes \phi_i(u)+\sum_{\ell(\alpha)>0}\mathrm Y_\alpha\otimes\mathrm Y_{-\alpha}[[u^{-1}]].
\]
Let $\Delta_i(\phi_i^{(k)})\in A_i\otimes A_i$ be such that $\Delta(\phi_i^{(k)})-\Delta_i(\phi_i^{(k)})\in \sum_{\ell(\alpha)>0}\mathrm Y_\alpha\otimes\mathrm Y_{-\alpha}$. Clearly, $A_i=\C[\phi_i^{(k)}]_{k>0}$ is a free polynomial algebra, so is $A_i\otimes A_i$. The elements of a free polynomial algebra are separated by their characters. Therefore, any $x\in A_i\otimes A_i$ is determined by the data $(\chi_1\otimes \chi_2)(x)$ where $\chi_1,\chi_2$ run over all algebra homomorphisms $A_i\to \C$.  Note that $\sum_{\ell(\alpha)>0}\mathrm Y_\alpha\otimes\mathrm Y_{-\alpha}$ annihilates tensor products of highest $\ell$-weight vectors. Therefore, we have
\[
(\chi_1\otimes \chi_2)(\Delta_i(\phi_i^{(k)}))=\sum_{s=0}^k \chi_1(\phi_i^{(s)})\chi_2(\phi_i^{(k-s)})=(\chi_1\otimes \chi_2)\Big(\sum_{s=0}^k \phi_i^{(s)}\otimes \phi_i^{(k-s)}\Big),
\]
where the first equality follows from \eqref{eq:char tensor}. Therefore $\Delta_i(\phi_i^{(k)})=\sum_{s=0}^k \phi_i^{(s)}\otimes \phi_i^{(k-s)}$, completing the proof of \eqref{eq copro d}.
\end{proof}
For quantum affine superalgebra $\mathrm{U}_q(\widehat{\gl}_{m|n})$, the proposition is contained in \cite[Proposition 3.6]{Zhh16}.
A similar (but not the same) statement for Yangians in the even case is proved in \cite[Proposition 2.8]{CP91}.

\subsection{Evaluation maps}
The universal enveloping superalgebra $\mathrm U(\glMN)$ is a Hopf subalgebra of $\YglMN$ via the embedding $e_{ij}\mapsto s_it_{ij}^{(1)}$. The left inverse of this embedding is the \emph{evaluation homomorphism} $\pi_{m|n}: \YglMN\to \UglMN$ given by
\beq\label{eq:evaluation-map}
\pi_{m|n}: t_{ij}(u)\mapsto \delta_{ij}+s_ie_{ij}u^{-1}.
\eeq

The evaluation homomorphism is an algebra homomorphism but not a Hopf algebra homomorphism.
For any $\glMN$-module $M$, it is naturally a $\YglMN$-module obtained by pulling back $M$ through the evaluation homomorphism $\pi_{m|n}$. We denote the corresponding $\YglMN$-module by the same letter $M$ and call it an \emph{evaluation module}.

Following \cite{Naz04}, define the {\it modified evaluation map} of $\YglMN$ by
\[
\pi^\iota_{m|n}:t_{ij}(u)\mapsto \delta_{ij}+(-1)^{(|i|+1)(|j|+1)}e_{ji}u^{-1}.
\]
Given a $\glMN$-module $M$, we call the $\YglMN$-module obtained by 
pulling back  through the modified evaluation map $\pi^\iota_{m|n}$ a {\it modified evaluation module} and denote it by
$\mathbb M$.

The evaluation map and modified one are related as follows. Let $\iota:\YglMN \to \YglMN^{\rm op}$ be the isomorphism of Hopf superalgebras defined by
\beq\label{eq iota}
\iota: t_{ij}(u)\mapsto (-1)^{|i||j|+|i|}t_{ji}(-u),
\eeq
where $\YglMN^{\rm op}$ is the Hopf superalgebra with opposite coproduct \eqref{eq coop}. For a $\YglMN$-module $M$, denote by $M^\iota$ the pull back of $M$ through $\iota$.

Clearly, one has $\pi_{m|n}^\iota=\pi_{m|n}\circ \iota$. 
Therefore, the modified evaluation module can be thought of as the pull back of an evaluation module through the isomorphism $\iota$, namely for a $\glMN$-module $M$, $\mathbb M=M^\iota$. Note that, more generally, for $z\in\C$, we have $\mathbb M_{-z}= (M_z)^\iota$.

Define also the {\it second modified evaluation map} $\pi^\vee_{m|n}:\YglMN\to\UglMN$ 
by
\[
\pi^\vee_{m|n}=\varsigma _{n|m}\circ \pi_{n|m} \circ \hat\varsigma_{m|n}, 
\]
where $\varsigma_{n|m}$, $\pi_{n|m}$, and $\hat\varsigma_{m|n}$ are defined in \eqref{eq change mn}, \eqref{eq:evaluation-map}, and \eqref{eq inter mn yangian}, respectively. The second modified evaluation map will be used in Section \ref{sec div}.

Given a $\glMN$-module $M$, we call the $\YglMN$-module obtained by 
pulling back  through the second modified evaluation map $\pi^\vee_{m|n}$ a {\it second modified evaluation module} and denote it by
$\mathsf M$.

Note that if $v\in M$ is a $\glMN$ singular vector of a given weight, then in the evaluation $\YglMN$-module $M$ and second modified evaluation module $\mathsf M$, $v$ is a highest $\ell$-weight vector, while in the modified evaluation module $\mathbb M$, $v$ is a lowest $\ell$-weight vector.

\subsection{Category $\mathcal C$ and $q$-character map}\label{sec:q-char}
Let $\mathcal C$ be the category of finite-dimensional $\YglMN$-modules. The category $\mathcal C$ is abelian and monoidal. 

Let $M\in \mathcal C$ be a finite-dimensional $\YglMN$-module and $\bm\zeta\in \mathfrak B$ an $\ell$-weight. Let
\[
\zeta_i(u)=1+\sum_{j=1}^\infty \zeta_i^{(j)}u^{-j},\qquad \zeta_i^{(j)}\in \C.
\]
Denote by $M_{\bm \zeta}$ the \emph{generalized $\ell$-weight space} corresponding to the $\ell$-weight $\bm\zeta$,
\[
M_{\bm\zeta}:=\{v\in M~|~  (d_i^{(j)}-\zeta_i^{(j)})^{\dim M} v=0 \text{ for all }i\in \bar I, \ j\in \Z_{>0}, \text{ and } |v|=p(\bs\zeta) \}.
\]

For a finite-dimensional $\YglMN$-module $M$, define the $q$-{\it character} (or \emph{Yangian character}) of $M$ by the element 
\[
\chi(M):=\sum_{\bm\zeta\in \mathfrak B}\dim(M_{\bm \zeta})[\bm\zeta]\in \Z[\mathfrak B].
\]
This definition is a straightforward generalization of the even case, see \cite{Kni:1995}. It is also called the {\it Gelfand-Tsetlin character}, see \cite[Definition 8.5.7]{Mol07}, since the commutative subalgebra generated by coefficients of $d_i(u)$ for all $i\in \bar I$ is usually called the {\it Gelfand-Tsetlin subalgebra} of $\YglMN$.

Let $\mathscr Rep(\mathcal C)$ be the Grothendieck ring of $\mathcal C$, then $\chi$ induces a $\Z$-linear map from $\mathscr Rep(\mathcal C)$ to $\Z[\mathfrak B]$.

Define the map $\varpi:\mathfrak B\to \h^*,\ \bm\zeta\mapsto \varpi(\bm\zeta)$ by $\varpi(\bm\zeta)(e_{ii})=s_i
\zeta_i^{(1)}.$
\begin{lemma}\label{lem chi morphism}
The map $\chi:\mathscr Rep(\mathcal C)\to \Z[\mathfrak B]$ is an injective ring homomorphism. 
\end{lemma}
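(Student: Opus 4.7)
The plan is to verify the two assertions separately: that $\chi$ respects the tensor product (ring homomorphism) and that it has trivial kernel (injectivity).

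For the ring homomorphism property, I would fix $M,N\in\mathcal C$ and decompose each into the direct sum of its generalized $\ell$-weight spaces, $M=\bigoplus_{\bs\zeta} M_{\bs\zeta}$ and $N=\bigoplus_{\bs\vartheta} N_{\bs\vartheta}$. The goal is to show $M_{\bs\zeta}\otimes N_{\bs\vartheta}\subseteq (M\otimes N)_{\bs\zeta\bs\vartheta}$, which then gives $\chi(M\otimes N)=\chi(M)\chi(N)$ as well as additivity on short exact sequences. The key input is Proposition \ref{prop:cop}: modulo the "error" $E_{j,k}:=\sum_{\ell(\alpha)>0}\mathrm{Y}_\alpha\otimes \mathrm{Y}_{-\alpha}$, we have $\Delta(d_j^{(k)})\equiv\sum_{l=0}^k d_j^{(l)}\otimes d_j^{(k-l)}$. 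I would order the finitely many weights occurring in $M$ by any refinement of the partial order $\gge$ that makes it a total order $\mu_1 > \mu_2 > \cdots > \mu_r$, and for each $\bs\vartheta$ define a filtration $F_s(M\otimes N_{\bs\vartheta})=\bigoplus_{i\ge s}(M)_{\mu_i}\otimes N_{\bs\vartheta}$ of the weight-$\mathrm{wt}(\bs\vartheta)+\mu_s$ piece. Since an element of $E_{j,k}$ applied to a vector in $M_{\bs\zeta}\otimes N_{\bs\vartheta}$ sends it into $(M)_{\mathrm{wt}(\bs\zeta)+\alpha}\otimes (N)_{\mathrm{wt}(\bs\vartheta)-\alpha}$ with $\alpha>0$, the filtration is preserved and $E_{j,k}$ acts as zero on associated graded. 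On the graded pieces, $\Delta(d_j^{(k)})$ reduces to $\sum_l d_j^{(l)}\otimes d_j^{(k-l)}$, which has $\sum_{l}\zeta_j^{(l)}\vartheta_j^{(k-l)}=(\bs\zeta\bs\vartheta)_j^{(k)}$ as the sole generalized eigenvalue on $M_{\bs\zeta}\otimes N_{\bs\vartheta}$. Parity is additive under tensor product, matching $p(\bs\zeta\bs\vartheta)=p(\bs\zeta)+p(\bs\vartheta)$. This yields $\chi(M\otimes N)=\chi(M)\chi(N)$.

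For injectivity, I would use the fact that $\mathscr{R}ep(\mathcal C)$ has a $\Z$-basis given by the classes of finite-dimensional irreducible modules, each of which is a highest $\ell$-weight module $L(\bs\zeta)$ for a unique $\bs\zeta\in\mathfrak B$ (up to parity, which is encoded in $\bs\zeta$). Assume $\chi\bigl(\sum_i a_i\,[L(\bs\zeta_i)]\bigr)=0$ with the $\bs\zeta_i$ pairwise distinct and some $a_i\ne 0$. Pick $i_0$ with $\varpi(\bs\zeta_{i_0})$ maximal among $\{\varpi(\bs\zeta_i):a_i\ne 0\}$ with respect to $\gge$. The highest $\ell$-weight vector of $L(\bs\zeta_{i_0})$ has weight $\varpi(\bs\zeta_{i_0})$ and spans the entire weight space of that weight (since $L(\bs\zeta_{i_0})$ is generated from it by lowering operators, which strictly decrease the weight by a positive root). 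Hence $[\bs\zeta_{i_0}]$ appears in $\chi(L(\bs\zeta_{i_0}))$ with coefficient $1$. For $i\ne i_0$: if $\varpi(\bs\zeta_i)<\varpi(\bs\zeta_{i_0})$, then every $\ell$-weight occurring in $L(\bs\zeta_i)$ has weight $\le \varpi(\bs\zeta_i)<\varpi(\bs\zeta_{i_0})$, ruling out $\bs\zeta_{i_0}$; if $\varpi(\bs\zeta_i)=\varpi(\bs\zeta_{i_0})$, then an appearance of $\bs\zeta_{i_0}$ in $L(\bs\zeta_i)$ would have to sit in the one-dimensional top weight space of $L(\bs\zeta_i)$ spanned by its highest $\ell$-weight vector of $\ell$-weight $\bs\zeta_i$, forcing $\bs\zeta_{i_0}=\bs\zeta_i$, a contradiction. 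Therefore the coefficient of $[\bs\zeta_{i_0}]$ in $\chi(\sum_i a_i[L(\bs\zeta_i)])$ is $a_{i_0}\ne 0$, contradicting the assumption.

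The main obstacle is the ring-homomorphism step, specifically the bookkeeping around generalized (rather than honest) $\ell$-weight eigenspaces and the verification that the error terms in Proposition \ref{prop:cop} respect the chosen filtration; once the upper-triangular picture is in place, both multiplicativity under $\otimes$ and injectivity via the maximal-highest-$\ell$-weight argument are standard.
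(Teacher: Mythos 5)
Your proposal follows essentially the same route as the paper's proof: the paper cites Proposition~\ref{prop:cop} and Knight's argument for the ring-homomorphism property, and proves injectivity by observing that $[\bs\zeta]$ is the unique leading term of $\chi(L(\bs\zeta))$ with respect to $\varpi$; you have simply unfolded the Knight-style filtration/nilpotency argument that the paper cites. One small technical slip in the ring-homomorphism step: with the ordering $\mu_1 > \mu_2 > \cdots > \mu_r$ the error terms $\sum_{\ell(\alpha)>0}\mathrm{Y}_\alpha\otimes\mathrm{Y}_{-\alpha}$ raise the weight of the first tensor factor, hence \emph{decrease} the index $i$, and they also move a vector out of $M\otimes N_{\bs\vartheta}$; so instead of $F_s(M\otimes N_{\bs\vartheta})=\bigoplus_{i\ge s}(M)_{\mu_i}\otimes N_{\bs\vartheta}$ you should filter the whole tensor product by $F_s=\bigoplus_{i\le s}(M)_{\mu_i}\otimes N$, for which the diagonal part of $\Delta(d_j^{(k)})$ preserves each graded piece and the error terms map $F_s$ into $F_{s-1}$, giving the desired nilpotency. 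With that fix the argument is correct.
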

\begin{proof}
The fact that $\chi:\mathscr{R}ep(\mathcal C)\to \Z[\mathfrak B]$ is an ring homomorphism follows from Proposition \ref{prop:cop}, see e.g. \cite[Theorem 2]{Kni:1995}. Since $L(\bm \zeta)$ is of highest $\ell$-weight, by Theorem \ref{thm PBW}, $\chi(L(\bm\zeta))$ is equal to $[\bm\zeta]$ plus $\ell$-weights of form $[\bm\xi]$ such that $\varpi(\bm\xi)$ is strictly smaller than $\varpi(\bm\zeta)$ with respect to the partial ordering on $\h^*$. Therefore $[\bm\zeta]$ is the leading term in $\chi(L(\bm\zeta))$. Now the injectivity of $\chi$  is clear. 
\end{proof}
In particular, we obtain the following.
\begin{corollary}\label{cor gro-ring-comm}
The Grothendieck ring $\mathscr Rep(\mathcal C)$ is commutative.
\end{corollary}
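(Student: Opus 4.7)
The corollary is an immediate consequence of Lemma \ref{lem chi morphism}, and my plan is to simply unpack the general principle that an injective ring homomorphism into a commutative ring forces the source to be commutative.

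First I would observe that $\mathfrak B = \mathcal B^{\bar I}\times \Z_2$ is an abelian group, since both factors are abelian (the multiplicative group $\mathcal B = 1+u^{-1}\C[[u^{-1}]]$ is commutative, and $\Z_2$ is additive). Consequently, the group ring $\Z[\mathfrak B]$ is a commutative ring.

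Next, I would invoke Lemma \ref{lem chi morphism}, which says $\chi: \mathscr Rep(\mathcal C)\to \Z[\mathfrak B]$ is an injective ring homomorphism. For any two classes $[M], [N]\in \mathscr Rep(\mathcal C)$ we then have
\[
\chi([M]\cdot [N]) = \chi([M])\,\chi([N]) = \chi([N])\,\chi([M]) = \chi([N]\cdot [M]),
\]
and injectivity of $\chi$ forces $[M]\cdot [N]=[N]\cdot [M]$.

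There is no real obstacle here: all the nontrivial work was absorbed into Lemma \ref{lem chi morphism}, which in turn rests on the coproduct information from Proposition \ref{prop:cop}. The only point worth double-checking is that the $\Z_2$-parity component of $\mathfrak B$ does not spoil commutativity of $\Z[\mathfrak B]$, but since the parities are recorded as a separate additive factor (not as a sign rule on the multiplication of basis elements $[\bs\zeta]$), the group ring is genuinely commutative. Hence $\mathscr Rep(\mathcal C)$ is commutative.
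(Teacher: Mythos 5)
Your proof is correct and is exactly the paper's intended argument: the paper derives the corollary as an immediate consequence of Lemma \ref{lem chi morphism}, and you have simply spelled out the standard fact that an injective ring homomorphism into the commutative ring $\Z[\mathfrak B]$ forces the source to be commutative.
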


\section{Skew representations and Jacobi-Trudi identity}\label{sec skew}
\subsection{Skew representations}
Consider the embedding of $\gl_{m'|n'}$ into $\gl_{m'+m|n'+n}$ sending $e_{ij}$ to $e_{ij}$ for $i,j=1,2,\dots,m'+n'$. Here $\gl_{m'|n'}$ has the standard parity and $\gl_{m'+m|n'+n}$ has parity \eqref{eq parity}.

Let $\la$ and $\mu$ be an $(m'+m|n'+n)$-hook partition and an $(m'|n')$-hook partition, respectively. Suppose further that $\la_i\gge \mu_i$ for all $i\in \Z_{>0}$. Consider the skew Young diagram $\la/\mu$.

Let $\mu^\natural$ be the $\gl_{m'|n'}$-weight corresponding to $\mu$, see  \eqref{eq natural}, and let $\la^\circ$ be the $\gl_{m'+m|n'+n}$-weight corresponding to $\la$, see \eqref{eq circ}. We have the finite-dimensional irreducible $\gl_{m'+m|n'+n}$-module $L(\la^\circ)$.  Consider $L(\la^\circ)$ as a $\gl_{m'|n'}$-module. 

Define $L(\la/\mu)$ to be the subspace of $L(\la^\circ)$ by
\[
L(\la/\mu):=\{v\in L(\la^\circ)~|~e_{ii}v=\mu^{\natural}(e_{ii})v,\ e_{jk}v=0,\, i=1,2,\dots,m'+n',\ 1\lle j<k\lle m'+n'\}.
\]
The subspace $L(\la/\mu)$ has a natural $\mathrm{U}(\gl_{m'+m|n'+n})^{\gl_{m'|n'}}$-module structure.

Let $\varphi_{m'|n'}:\YglMN\to \mathrm Y(\gl_{m'+m|n'+n})$ be the embedding given by
\[
\varphi_{m'|n'}: t_{ij}(u)\mapsto t_{m'+n'+i,m'+n'+j}(u).
\]
Recall $\eta_{m|n}$ (and $\eta_{m'+m|n'+n}$) from \eqref{eq iso antipode}. Let $\psi_{m'|n'}:\YglMN\to \mathrm Y(\gl_{m'+m|n'+n})$ be the injective homomorphism given by
\[
\psi_{m'|n'}:=\eta_{m'+m|n'+n}\circ \varphi_{m'|n'}\circ \eta_{m|n}.
\]
The following lemma can be found in \cite[Proof of Lemma 4.2]{Peng}.

\begin{lemma}[\cite{Gow07,Peng}]\label{eq psi map}
We have
\[
\psi_{m'|n'}(d_i(u))=d_{m'+n'+i}(u),\quad \psi_{m'|n'}(e_i(u))=e_{m'+n'+i}(u),\quad \psi_{m'|n'}(f_i(u))=f_{m'+n'+i}(u).
\]
\end{lemma}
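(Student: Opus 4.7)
The plan is to lift the composition $\psi_{m'|n'}=\eta_{m'+m|n'+n}\circ\varphi_{m'|n'}\circ\eta_{m|n}$ to an identity at the matrix level and then identify the image with the lower-right block of the Gauss decomposition of $T^{\mathrm{big}}(u)$ via a Schur complement.

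First, I would establish the matrix-level identity
\begin{equation*}
\psi_{m'|n'}\bigl(T^{\mathrm{small}}(u)\bigr)\;=\;\bigl[T^{\mathrm{big}}(u)^{-1}\bigr]_{[22]}^{-1},
\end{equation*}
where $[\,\cdot\,]_{[22]}$ denotes the lower-right $(m+n)\times(m+n)$ block of a matrix acting on $\C^{m'+m|n'+n}$. By \eqref{eq iso antipode}, $\eta_{m|n}(T^{\mathrm{small}}(u))=T^{\mathrm{small}}(-u)^{-1}$. Since $\varphi_{m'|n'}$ is an algebra homomorphism that sends $t_{ij}^{\mathrm{small}}(v)$ to the $(i,j)$-entry of the block $D(v):=[T^{\mathrm{big}}(v)]_{[22]}$, applying it entry-wise gives $\varphi_{m'|n'}\circ\eta_{m|n}(T^{\mathrm{small}}(u))=D(-u)^{-1}$ in $\mathrm Y(\gl_{m'+m|n'+n})$. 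Finally, because any algebra automorphism commutes with matrix inversion (the defining identity $MM^{-1}=I$ is preserved entry-wise) and because $\eta_{m'+m|n'+n}$ carries $D(-u)$ to $[T^{\mathrm{big}}(u)^{-1}]_{[22]}$, the desired identity follows.

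Second, writing $T^{\mathrm{big}}(u)$ in block form with blocks $A(u), B(u), C(u), D(u)$, the formal Schur complement identity, valid over any associative superalgebra once one notes that $A(u)=I+O(u^{-1})$ is invertible in the $u^{-1}$-adic completion, yields $[T^{\mathrm{big}}(u)^{-1}]_{[22]}=(D-CA^{-1}B)^{-1}$, hence $\psi_{m'|n'}(T^{\mathrm{small}}(u))=D(u)-C(u)A(u)^{-1}B(u)$.

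Third, I would substitute the Gauss decomposition $T^{\mathrm{big}}(u)=F(u)\Delta(u)E(u)$ from Section \ref{sec:Gauss}, partitioned in the same block sizes,
\begin{equation*}
F=\begin{pmatrix}F_{11} & 0\\ F_{21} & F_{22}\end{pmatrix},\quad \Delta=\begin{pmatrix}\Delta_1 & 0\\ 0 & \Delta_2\end{pmatrix},\quad E=\begin{pmatrix}E_{11} & E_{12}\\ 0 & E_{22}\end{pmatrix}.
\end{equation*}
Block multiplication gives $A=F_{11}\Delta_1 E_{11}$, $B=F_{11}\Delta_1 E_{12}$, $C=F_{21}\Delta_1 E_{11}$, $D=F_{21}\Delta_1 E_{12}+F_{22}\Delta_2 E_{22}$, and a direct cancellation yields $CA^{-1}B=F_{21}\Delta_1 E_{12}$, whence $D-CA^{-1}B=F_{22}\Delta_2 E_{22}$. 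The right-hand side is already a factorization of $\psi_{m'|n'}(T^{\mathrm{small}}(u))$ into lower unitriangular, diagonal, and upper unitriangular parts, so by uniqueness of the Gauss decomposition I can read off $\psi_{m'|n'}(d_i(u))=d_{m'+n'+i}(u)$, $\psi_{m'|n'}(e_i(u))=e_{m'+n'+i}(u)$, and $\psi_{m'|n'}(f_i(u))=f_{m'+n'+i}(u)$ from the diagonal and first super-/sub-diagonal entries of $\Delta_2$, $E_{22}$, $F_{22}$.

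The main obstacle I expect is verifying that the block-matrix manipulations---the Schur complement identity and the entry-wise commutativity of $\eta_{m'+m|n'+n}$ with matrix inversion---go through without any additional signs in the $\Z_2$-graded setting. These are purely formal identities over any associative superalgebra once the parity conventions on $T(u)$ set up in \cite{Gow07,Peng} are respected, but the verification demands careful bookkeeping of signs arising from the definition $T(u)=\sum_{i,j}(-1)^{|i||j|+|j|}E_{ij}\otimes t_{ij}(u)$.
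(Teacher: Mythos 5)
Your proof is correct and reproduces the standard argument that underlies the cited reference (Peng's Lemma 4.2, which itself relies on the quasi-determinant description of the Gauss decomposition in Gow). The three ingredients you isolate — the identity $\psi_{m'|n'}(T^{\mathrm{small}}(u))=[T^{\mathrm{big}}(u)^{-1}]_{[22]}^{-1}$, the Schur-complement formula $[T^{\mathrm{big}}(u)^{-1}]_{[22]}^{-1}=D-CA^{-1}B$, and the block factorization $D-CA^{-1}B=F_{22}\Delta_2E_{22}$ followed by uniqueness of the Gauss decomposition — are exactly the right ones, and the present paper only cites the result rather than re-proving it.

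Your worry about hidden $\Z_2$-graded signs is legitimate but resolves cleanly: the factor $(-1)^{|i||j|+|j|}$ built into $T(u)=\sum_{i,j}(-1)^{|i||j|+|j|}E_{ij}\otimes t_{ij}(u)$ is designed precisely so that multiplication of two such matrices reduces to ordinary matrix multiplication on the $t$-entries (one checks that the Koszul signs from the tensor-product algebra cancel against the built-in ones, so that $(T'T'')_{ij}$ corresponds to $\sum_k t'_{ik}t''_{kj}$ with no extra sign). Consequently the Schur-complement identity, the entry-wise compatibility of $\varphi_{m'|n'}$ and $\eta$ with matrix inversion, and the block Gauss computation are all genuinely sign-free. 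The only additional point worth writing out is that the parity assignment \eqref{eq parity} guarantees $|m'+n'+j|=|j|$ for $1\lle j\lle m+n$, which is what makes the identification of $\varphi_{m'|n'}(T^{\mathrm{small}}(v))$ with the lower-right block $D(v)$ of $T^{\mathrm{big}}(v)$, including its sign prefactors, literally true rather than true up to sign.
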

Regard $\mathrm{Y}(\gl_{m'|n'})$ as the subalgebra of $\mathrm Y(\gl_{m'+m|n'+n})$ via the natural embedding $t_{ij}(u)\mapsto t_{ij}(u)$ for $i,j=1,\dots,m'+n'$. We have the following lemma from \cite[Lemma 4.3]{Peng}.
\begin{lemma}[\cite{Peng}]\label{lem commute yangian}
The subalgebra $\mathrm{Y}(\gl_{m'|n'})$ of $\mathrm Y(\gl_{m'+m|n'+n})$ supercommutes with the image of $\YglMN$ under the map $\psi_{m'|n'}$.
\end{lemma}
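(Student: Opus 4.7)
The plan is to reduce the claim to checking pairwise supercommutation of Gauss generators whose indices lie in disjoint ranges. By Lemma~\ref{eq psi map}, the image $\psi_{m'|n'}(\YglMN)$ is generated by $d_k(u)$ for $k\in\{m'+n'+1,\dots,m'+n'+m+n\}$ together with $e_k(u),f_k(u)$ for $k\in\{m'+n'+1,\dots,m'+n'+m+n-1\}$. Applying the Gauss decomposition to the subalgebra $\mathrm Y(\gl_{m'|n'})\subset\mathrm Y(\gl_{m'+m|n'+n})$, the latter is generated by $d_k(u)$ for $k\lle m'+n'$ together with $e_k(u),f_k(u)$ for $k\lle m'+n'-1$. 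The key observation is that $e_{m'+n'}(u)$ and $f_{m'+n'}(u)$ belong to \emph{neither} subalgebra, so the simple-root indices of the two subalgebras are separated by a gap of size at least two.

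Next I would verify the four kinds of super-brackets between a ``small-index'' generator ($i\lle m'+n'$) and a ``large-index'' generator ($j\gge m'+n'+1$). The Cartan-type part is immediate: $[d_i^{(r)},d_j^{(s)}]=0$. By \eqref{eq com de}, $[d_i^{(r)},e_j^{(s)}]$ and $[d_i^{(r)},f_j^{(s)}]$ are proportional to $(\epsilon_i,\alpha_j)=(\epsilon_i,\epsilon_j-\epsilon_{j+1})$, which vanishes since $i\notin\{j,j+1\}$. By \eqref{eq com ef}, $[e_j^{(r)},f_k^{(s)}]=-s_{j+1}\delta_{jk}\phi_j^{(r+s-1)}$ vanishes whenever the indices come from disjoint ranges. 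Finally, for the $e$-$e$ and $f$-$f$ brackets, the presentation of $\mathrm Y(\gl_{m'+m|n'+n})$ with parity \eqref{eq parity}, as worked out in \cite{Peng,Tsy}, contains the relations $[e_i^{(r)},e_j^{(s)}]=0$ and $[f_i^{(r)},f_j^{(s)}]=0$ whenever $|i-j|\gge 2$; this is exactly our situation.

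The main obstacle is the last step: one must be sure that the non-adjacent $e$-$e$ and $f$-$f$ commutators really do vanish in the presentation of $\mathrm Y(\gl_{m'+m|n'+n})$ associated with the non-standard parity sequence \eqref{eq parity}. The parity sequence has an atypical jump at position $m'+n'$, and in principle the defining relations at indices neighbouring this jump could acquire extra sign corrections. However, the ``problematic'' index $m'+n'$ is precisely the one avoided by both subalgebras, so the relations we actually invoke involve only pairs $(i,j)$ with $i\lle m'+n'-1<m'+n'+1\lle j$, for which the standard non-adjacent relation $[e_i^{(r)},e_j^{(s)}]=[f_i^{(r)},f_j^{(s)}]=0$ in \cite{Peng,Tsy} applies without modification. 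This completes the reduction and matches the proof in \cite[Lemma 4.3]{Peng}.
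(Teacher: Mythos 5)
The paper does not give its own proof of this lemma; it cites \cite[Lemma 4.3]{Peng} directly and moves on. Your argument via the Gauss generators is the standard and correct route (and it is indeed what Peng's parabolic-presentation proof does): you correctly identify, via Lemma~\ref{eq psi map} and the compatibility of the Gauss decomposition with the top-left block embedding, that the two subalgebras are generated by $\{d_k,e_k,f_k\}$ with Cartan indices separated at $m'+n'$ and simple-root indices differing by at least two; you then invoke the defining relations for these generators in $\mathrm Y(\gl_{m'+m|n'+n})$ with parity \eqref{eq parity}, noting that the only index where the parity sequence jumps ($m'+n'$) is avoided by both generating sets. The one point you glide past is the compatibility fact itself: one should say explicitly that the Gauss generators $d_k(u),e_k(u),f_k(u)$ of $\mathrm Y(\gl_{m'+m|n'+n})$ for $k\lle m'+n'$ coincide with those of the corner subalgebra $\mathrm Y(\gl_{m'|n'})$, which follows from the nested (quasi-determinant) structure of the Gauss decomposition; with that stated, the proof is complete and matches the cited reference.
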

Recall the evaluation map $\pi$, see \eqref{eq:evaluation-map}, the following is straightforward from Lemma \ref{lem commute yangian}. 

\begin{corollary}\label{prop centralizer}
The image of the homomorphism $$\pi_{m'+m|n'+n}\circ \psi_{m'|n'}:\YglMN\to \mathrm{U}(\gl_{m'+m|n'+n})$$ supercommutes with the subalgebra $\mathrm{U}(\gl_{m'|n'})$ in $\mathrm{U}(\gl_{m'+m|n'+n})$.
\end{corollary}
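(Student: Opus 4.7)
The strategy is to deduce this directly from Lemma~\ref{lem commute yangian} by transporting the supercommutation relation along the evaluation homomorphism $\pi_{m'+m|n'+n}$. First, I would note that $\pi_{m'+m|n'+n}$ is a homomorphism of associative superalgebras and therefore sends supercommuting elements to supercommuting elements. Applying $\pi_{m'+m|n'+n}$ to the statement of Lemma~\ref{lem commute yangian}, I obtain that $\pi_{m'+m|n'+n}\bigl(\mathrm{Y}(\gl_{m'|n'})\bigr)$ supercommutes with $\pi_{m'+m|n'+n}\bigl(\psi_{m'|n'}(\YglMN)\bigr)$, the latter being by definition the image of $\pi_{m'+m|n'+n}\circ \psi_{m'|n'}$.

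Second, I would identify the first image with $\mathrm{U}(\gl_{m'|n'})\subset \mathrm{U}(\gl_{m'+m|n'+n})$. Under the embedding $\mathrm{Y}(\gl_{m'|n'})\hookrightarrow \mathrm{Y}(\gl_{m'+m|n'+n})$, the series $t_{ij}(u)$ for $1\lle i,j\lle m'+n'$ maps to the series of the same name, and then $\pi_{m'+m|n'+n}$ sends this to $\delta_{ij}+s_i e_{ij}u^{-1}$. Because the restriction of the parity~\eqref{eq parity} to $\{1,\dots,m'+n'\}$ coincides with the standard parity of $\gl_{m'|n'}$, these images generate precisely $\mathrm{U}(\gl_{m'|n'})$ sitting inside $\mathrm{U}(\gl_{m'+m|n'+n})$. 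Equivalently, the restriction of $\pi_{m'+m|n'+n}$ to $\mathrm{Y}(\gl_{m'|n'})$ agrees with the evaluation map $\pi_{m'|n'}:\mathrm{Y}(\gl_{m'|n'})\to \mathrm{U}(\gl_{m'|n'})$ followed by the natural inclusion.

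Combining these two observations yields the claim. There is no real obstacle; the only point deserving care is the consistency of parity conventions between $\mathrm{Y}(\gl_{m'|n'})$ and $\mathrm{Y}(\gl_{m'+m|n'+n})$, which is precisely what the choice in~\eqref{eq parity} arranges, so that the signs $s_i$ produced by $\pi_{m'+m|n'+n}$ on the subalgebra coincide with those of the standard evaluation map for $\gl_{m'|n'}$.
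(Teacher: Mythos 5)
Your proof is correct and is exactly the paper's intended (one-line) argument: apply the superalgebra homomorphism $\pi_{m'+m|n'+n}$ to Lemma~\ref{lem commute yangian} and identify the image of $\mathrm{Y}(\gl_{m'|n'})$ under the evaluation map with $\mathrm{U}(\gl_{m'|n'})$. Your elaboration on the parity-consistency point is a sensible sanity check but not a new ingredient.
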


Corollary \ref{prop centralizer} implies that the subspace $L(\la/\mu)$ is invariant under the action of the image of $\pi_{m'+m|n'+n}\circ \psi_{m'|n'}\circ \tau_{m'-n'}$. Therefore, $L(\la/\mu)$ is a $\YglMN$-module. We call $L(\la/\mu)$ a {\it skew representation}, cf. \cite{Che87b,Che89}.

We study the skew representations in the rest of this section. We will show that all skew representations of $\YglMN$ are irreducible, see Theorem \ref{thm skew} below.

\subsection{$q$-characters of skew representations}
In this section we compute the $q$-character of the $\YglMN$-module $L(\la/\mu)$. 

Let $\kappa_i=i-1$ if $i=1,\dots,m$ and $\kappa_i=2m-i$ if $i=m+1,\dots,m+n$. For each $a\in \C$ and $i\in \bar I$, let
\[
\mathscr X_{i,a}= \left(1,\dots, \big(1+(u+a+\kappa_i)^{-1}\big)^{s_i}, \dots,1\right)^{|i|} \in \mathfrak B.
\]
Here the only component not equal to 1 is at the $i$-th position.

Recall that $\mathcal T(i,j)$ and $c(i,j)=j-i$ denote the number in the box representing the pair $(i,j)\in \la/\mu$ and the content of the pair $(i,j)$ for a semi-standard Young tableau $\mathcal T$ of shape $\la/\mu$, respectively.

It is known from \cite{CPT15} that the dimension of $L(\la/\mu)$ is equal to the number of semi-standard Young tableaux of shape $\la/\mu$. The following theorem is a refinement of this statement which is a super analog of \cite[Lemma 2.1]{NT98}.
\begin{theorem}\label{thm character}
The $q$-character of the $\YglMN$-module $L(\la/\mu)$ is given by
\[
\mathscr{K}_{\la/\mu}(u):=\chi(L(\la/\mu))=\sum_{\mathcal T}\prod_{(i,j)\in\la/\mu}\mathscr X_{\mathcal T(i,j),c(i,j)},
\]
summed over all semi-standard Young tableaux $\mathcal T$ of shape $\la/\mu$. In particular, $L(\la/\mu)$ is thin.
\end{theorem}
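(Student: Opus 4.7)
The plan is to adapt the Nazarov--Tarasov strategy of \cite{NT98} to the super setting. By Corollary~\ref{prop centralizer}, $L(\la/\mu)$ sits inside the irreducible polynomial $\gl_{m'+m|n'+n}$-module $L(\la^\circ)$ as the subspace of $\gl_{m'|n'}$-singular vectors of weight $\mu^\natural$, and by Lemma~\ref{eq psi map} the action of $d_i(u)\in \YglMN$ (after the shift $\tau_{m'-n'}$ and the evaluation $\pi_{m'+m|n'+n}$) coincides on this subspace with the action of $d_{m'+n'+i}(u-m'+n')\in \mathrm Y(\gl_{m'+m|n'+n})$. The task is therefore to simultaneously diagonalize the operators $d_{m'+n'+i}^{(r)}$ on $L(\la/\mu)$ and identify their joint spectrum with the claimed SSYT formula.

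To carry out the diagonalization I would introduce a super Gelfand--Tsetlin basis of $L(\la^\circ)$ along the chain
\[
\gl_{m'|n'}\subset\gl_{m'+1|n'}\subset\cdots\subset\gl_{m'+m|n'}\subset\gl_{m'+m|n'+1}\subset\cdots\subset\gl_{m'+m|n'+n},
\]
which adds one row at a time, inserting all even generators first and all odd generators last in accordance with the parity choice \eqref{eq parity}. Branching of polynomial modules with hook partitions is multiplicity-free at each step, so the basis is parametrized by nested chains of hook partitions $\mu=\la^{(0)}\subset\la^{(1)}\subset\cdots\subset\la^{(m+n)}=\la$ with $\la^{(k)}/\la^{(k-1)}$ a horizontal strip for $k\lle m$ and a vertical strip for $k>m$. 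Such chains are in canonical bijection with SSYT $\mathcal T$ of shape $\la/\mu$ with entries in $\{1,\dots,m+n\}$, via $\mathcal T(i',j')=k$ iff $(i',j')\in\la^{(k)}/\la^{(k-1)}$, and the subspace $L(\la/\mu)\subset L(\la^\circ)$ corresponds precisely to those basis vectors whose chain begins at $\la^{(0)}=\mu$.

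The eigenvalue of $d_i(u)$ on the Gelfand--Tsetlin vector $\xi_{\mathcal T}$ factorizes over the $m+n$ steps: step $k$ contributes a factor depending only on the boxes in $\la^{(k)}/\la^{(k-1)}$, which can be read off by reducing the Gauss decomposition of $\mathrm Y(\gl_{m'+m|n'+n})$ to the rank-one vector-representation step. Tracking the accumulated diagonal shifts along the chain together with the shift $\tau_{m'-n'}$, each box $(i',j')$ with $\mathcal T(i',j')=i$ contributes exactly $(1+(u+c(i',j')+\kappa_i)^{-1})^{s_i}$ to the $i$-th component of the $\ell$-weight; the explicit values $\kappa_i=i-1$ for $i\lle m$ and $\kappa_i=2m-i$ for $i>m$ arise from the reversal of the sign of the diagonal shift after the parity switch between the even and odd blocks at step $m+1$. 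Multiplying these factors yields the desired $\prod_{(i',j')\in\la/\mu}\mathscr X_{\mathcal T(i',j'),c(i',j')}$. Distinct SSYT produce distinct $\ell$-weights, since for each fixed $i$ the multiset of poles of the $i$-th component recovers $\{c(i',j'):\mathcal T(i',j')=i\}$; hence $L(\la/\mu)$ is thin, and together with the dimension formula of \cite{CPT15} this shows the $\xi_{\mathcal T}$ exhaust $L(\la/\mu)$.

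The main obstacle will be the super Gelfand--Tsetlin step: one must verify multiplicity-free branching at each step of the chain (which relies essentially on the hook-partition condition) and then precisely track the super signs $s_i$ and diagonal shifts $\kappa_i$ through the Gauss decomposition to land on the factor $(1+(u+c(i',j')+\kappa_i)^{-1})^{s_i}$. The reversal of the shift convention between the $s_i=+1$ and $s_i=-1$ rows is the most delicate piece of sign-tracking; once it is carried out at a single rank-one step, the general formula follows by multiplicativity along the chain.
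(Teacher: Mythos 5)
Your plan is essentially the same as the paper's: both restrict $L(\la^\circ)$ to the chain adding one row at a time, identify the Gelfand--Tsetlin basis indexed by sub-tableau chains with SSYT, peel off the eigenvalue of $d_k(u)$ one step at a time, and deduce thinness from the fact that the $\ell$-weight reconstructs the tableau. The one mechanical difference is at the heart of the step formula: you propose to compute the rank-one contribution directly from the Gauss decomposition and then to track the $\kappa_i$-shifts and the parity flip by hand (which you rightly flag as the delicate point), whereas the paper sidesteps this entirely via Lemma~\ref{lem center action}. Since $\prod_{i\lle k}(d_i(u-\kappa_i))^{s_i}$ is, for each $k$, a central series of the intermediate sub-Yangian in the chain, its action on a GT vector is the known scalar over the boxes of $\mc T_k$; taking the ratio of two consecutive central elements delivers $(d_k(u-\kappa_k))^{s_k}$ acting as the product over $\mc T_k/\mc T_{k-1}$, with all the sign- and shift-bookkeeping absorbed into the already-proved central formula. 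If you replace your ``reduce to the rank-one Gauss decomposition step'' by this central-element trick, your proof coincides with the paper's.
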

Before proving the theorem, we recall the following proposition from \cite[Theorems 1 and 2]{Gow05} and \cite[Theorem 2.43]{Tsy}. 

Similar to $\kappa_i$, define $\kappa_i'$, $i=1,\dots,m'+n'$, with $m$ and $n$ replaced by $m'$ and $n'$, respectively. Set $\kappa_{m'+n'+j}'=m'-n'+\kappa_j$, $j\in \bar I$. Let $s'_i=(-1)^{|i|}$, $i=1,2,\dots,m'+n'+m+n$, such that $|i|$ are chosen as in \eqref{eq parity}.
\begin{proposition}[\cite{Gow05,Tsy}]\label{prop center}
The coefficients of the series $\prod_{i\in \bar I}(d_i(u-\kappa_i))^{s_i}$ are central in $\YglMN$. The coefficients of the series $\prod_{i=1}^{m'+n'+m+n}(d_i(u-\kappa'_i))^{s'_i}$ are central in $\mathrm{Y}(\gl_{m'+m|n'+n})$.
\end{proposition}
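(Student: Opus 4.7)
The proposition asserts that the quantum Berezinian $B(u) := \prod_{i \in \bar I} d_i(u-\kappa_i)^{s_i}$ is central in $\YglMN$. My plan is to prove this directly from the Gauss decomposition. Since $\YglMN$ is generated by the coefficients of the $d_i(u)$, $e_k(u)$, and $f_k(u)$ (for $i \in \bar I$, $k \in I$), and since all the $d_i(u)$'s pairwise commute, $B(u)$ automatically commutes with every $d_j(v)$. The content of the proof is to show $[B(u), e_k(v)] = [B(u), f_k(v)] = 0$ for each $k \in I$.

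First, I would translate \eqref{eq com de} into generating-series form:
\[
[d_i(u), e_k(v)] = \frac{(\epsilon_i, \alpha_k)}{u-v}\, d_i(u)\bigl(e_k(v) - e_k(u)\bigr), \qquad [d_i(u)^{-1}, e_k(v)] = -\frac{(\epsilon_i, \alpha_k)}{u-v}\bigl(e_k(v) - e_k(u)\bigr)d_i(u)^{-1}.
\]
Because $(\epsilon_i, \alpha_k) = s_k\delta_{ik} - s_{k+1}\delta_{i,k+1}$ vanishes for $i \notin \{k, k+1\}$, only the two factors $d_k(u-\kappa_k)^{s_k}$ and $d_{k+1}(u-\kappa_{k+1})^{s_{k+1}}$ in $B(u)$ contribute to $[B(u), e_k(v)]$. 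Setting $w_1 := u-\kappa_k$, $w_2 := u-\kappa_{k+1}$, it suffices to prove
\[
\bigl[d_k(w_1)^{s_k}\, d_{k+1}(w_2)^{s_{k+1}},\, e_k(v)\bigr] = 0.
\]

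The argument now splits by the parity of the simple root $\alpha_k$. When $|k| = |k+1|$ (even simple root), the definition of $\kappa_i$ gives $s_k = s_{k+1}$ and $w_1 - w_2 = s_k$. The key observation is the cancellation identity
\[
e_k(w_1)\, d_{k+1}(w_2) = d_{k+1}(w_2)\, e_k(w_2),
\]
which follows from the $(d_{k+1}, e_k)$ commutation relation because the shift $w_1 - w_2 = s_k$ is precisely the value at which $(w_2-w_1)[d_{k+1}(w_2), e_k(w_1)]$ collapses. Substituting this identity into the Leibniz expansion of the commutator reduces both summands to $\pm(w_2-v)^{-1}\, d_k(w_1)^{s_k} d_{k+1}(w_2)^{s_{k+1}}\bigl(e_k(v) - e_k(w_2)\bigr)$ with opposite signs, giving zero. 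When $|k| \neq |k+1|$ (odd simple root, i.e., $k = m$), the definition forces $s_{k+1} = -s_k$ and $w_1 = w_2 =: w$; the cancellation is then immediate from $(\epsilon_k, \alpha_k) = (\epsilon_{k+1}, \alpha_k) = s_k$ combined with the sign flip in the $d_{k+1}(w)^{-s_k}$ formula.

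The vanishing of $[B(u), f_k(v)]$ is handled either by a parallel analysis using $[d_i(u), f_k(v)] = -(\epsilon_i, \alpha_k)(u-v)^{-1}\bigl(f_k(v) - f_k(u)\bigr) d_i(u)$, or by invoking an antiautomorphism of $\YglMN$ that fixes each $d_i$ and swaps $e_k$ with $f_k$. For the second assertion about $\mathrm Y(\gl_{m'+m|n'+n})$, the parity changes of \eqref{eq parity} occur at $i \in \{m',\, m'+n',\, m'+n'+m\}$; a direct check from the definition of the $\kappa'_i$ together with $\kappa'_{m'+n'+j} = m'-n' + \kappa_j$ shows that $\kappa'_{i+1} = \kappa'_i$ at these three positions and $\kappa'_{i+1} - \kappa'_i = s'_i$ at all others, so the two-case argument above applies verbatim. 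The only nontrivial step is deriving the cancellation identity $e_k(w_1)d_{k+1}(w_2) = d_{k+1}(w_2)e_k(w_2)$; the shifts $\kappa_i$ (and $\kappa'_i$) are engineered precisely so that this identity holds at every even simple root, and it is this compatibility between the shifts and the Gauss-coordinate commutation relations that makes the Berezinian central.
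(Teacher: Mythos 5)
Your proposal is correct, and it is worth noting that the paper itself offers no proof of this proposition --- it is simply cited to \cite{Gow05,Tsy}, where the centrality of the quantum Berezinian is established (originally by Nazarov via R-matrix/Liouville-type arguments) and then identified with the product $\prod_{i\in\bar I}(d_i(u-\kappa_i))^{s_i}$. Your route --- a direct verification from the Gauss-decomposition relations \eqref{eq com de} --- is the standard self-contained argument and all the key steps check out: the generating-series form $[d_i(u),e_k(v)]=\frac{(\epsilon_i,\alpha_k)}{u-v}d_i(u)(e_k(v)-e_k(u))$ is the correct repackaging of \eqref{eq com de}; the reduction to the two factors $d_k(w_1)^{s_k}d_{k+1}(w_2)^{s_{k+1}}$ is valid since $(\epsilon_i,\alpha_k)=s_k\delta_{ik}-s_{k+1}\delta_{i,k+1}$; the cancellation identity $e_k(w_1)d_{k+1}(w_2)=d_{k+1}(w_2)e_k(w_2)$ does follow by setting $v=w_1=w_2+s_{k+1}$ in the polynomial form $(w_2-v)[d_{k+1}(w_2),e_k(v)]=-s_{k+1}d_{k+1}(w_2)(e_k(v)-e_k(w_2))$; and the arithmetic of the $\kappa'_i$ at the three parity jumps of \eqref{eq parity} is exactly as you state. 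Two points deserve care in a full write-up. First, in the even sub-case with $s_k=s_{k+1}=-1$ (both indices odd) the Leibniz expansion of $[d_k(w_1)^{-1}d_{k+1}(w_2)^{-1},e_k(v)]$ naturally calls for the companion identity $d_k(w_1)^{-1}e_k(w_2)=e_k(w_1)d_k(w_1)^{-1}$ coming from the $(d_k,e_k)$ relation, rather than the displayed one coming from $(d_{k+1},e_k)$; the mechanism is identical but the identity you wrote is not literally the one used there. Second, the ``verbatim'' transfer to $\mathrm Y(\gl_{m'+m|n'+n})$ presupposes that the relations \eqref{eq com de} hold in the same form for the non-standard parity sequence, which is true but must be imported from \cite{Peng,Tsy} as the paper itself does. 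Neither point is a gap, only a matter of stating precisely which relation is invoked where.
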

\begin{lemma}\label{lem center action}
Let $\la$ be a Young diagram. Then the operator $\prod_{i\in \bar I}(d_i(u-\kappa_i))^{s_i}$ acts on the evaluation $\YglMN$-module $L(\la^\natural)$ by the scalar operator
\[
\prod_{i\in \bar I}(d_i(u-\kappa_i))^{s_i}\Big|_{L(\la^\natural)}=\prod_{i\in \bar I}\Big(1+\frac{(\la_i^\natural,\epsilon_i)}{u-\kappa_i}\Big)^{s_i}=\prod_{(i,j)\in \la}\frac{u+c(i,j)+1}{u+c(i,j)}.
\]
Similarly, the operator $\prod_{i=1}^{m'+n'+m+n}(d_i(u-\kappa'_i))^{s'_i}$ acts on the evaluation $\mathrm{Y}(\gl_{m'+m|n'+n})$-module $L(\la^\circ)$ by the scalar operator
\beq\label{eq:ber-action-2}
\prod_{i=1}^{m'+n'+m+n}(d_i(u-\kappa'_i))^{s'_i}\Big|_{L(\la^\circ)}=\prod_{(i,j)\in \la}\frac{u+c(i,j)+1}{u+c(i,j)}.
\eeq
\end{lemma}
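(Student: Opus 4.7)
The proof plan rests on the fact that, by Proposition \ref{prop center}, the operator $\prod_{i\in \bar I}(d_i(u-\kappa_i))^{s_i}$ lies in the center of $\YglMN$. Since $L(\la^\natural)$ is an irreducible $\YglMN$-module (its restriction being the irreducible $\glMN$-module $L(\la^\natural)$ on which this central element must still act by a scalar), it suffices to compute the scalar by evaluating on the highest weight vector $v_\la \in L(\la^\natural)$. This reduces the problem to a direct calculation plus a combinatorial identity.

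For the evaluation on $v_\la$, I would use the Gauss decomposition from Section \ref{sec:Gauss}: since $e_{ki}(u)v_\la = 0$ for $k<i$ (as $v_\la$ is $\glMN$-singular), the Gauss formula for $t_{ii}(u)$ collapses to $d_i(u)v_\la = t_{ii}(u)v_\la$. Under the evaluation map \eqref{eq:evaluation-map}, $t_{ii}(u)v_\la = (1 + s_i\la^\natural(e_{ii})u^{-1})v_\la$, and using $(\la^\natural,\epsilon_i) = s_i\la^\natural(e_{ii})$ gives the first equality
\[
\prod_{i\in\bar I}(d_i(u-\kappa_i))^{s_i}\Big|_{L(\la^\natural)} = \prod_{i\in\bar I}\Big(1+\frac{(\la^\natural,\epsilon_i)}{u-\kappa_i}\Big)^{s_i}.
\]

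The combinatorial step is then to verify that this product equals $\prod_{(i,j)\in\la}(u+c(i,j)+1)/(u+c(i,j))$ by telescoping. I would split the $(m|n)$-hook $\la$ into its top part (rows $i\le m$) and its leg part (rows $i>m$, which have length $\lle n$). For the top: $(\la^\natural,\epsilon_i)=\la_i$, $\kappa_i=i-1$, $s_i=1$, and the telescoping product along row $i$ gives $\prod_{j=1}^{\la_i}(u+j-i+1)/(u+j-i) = (u-i+1+\la_i)/(u-i+1)$, matching the $i$-th factor on the left. For the leg, rewrite the product as the product over columns $j=1,\dots,n$: the $j$-th column has boxes in rows $m+1,\dots,\la_j'$ (nontrivially when $\la_j'>m$), and the telescoping along the column gives $(u+j-m)/(u+j-\la_j')$. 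On the other side, for $i=m+j$ we have $\kappa_{m+j}=m-j$, $s_{m+j}=-1$, $(\la^\natural,\epsilon_{m+j})=-\max\{\la_j'-m,0\}$, and the corresponding factor $(1-\max\{\la_j'-m,0\}/(u-m+j))^{-1}$ equals exactly $(u+j-m)/(u+j-\la_j')$ when $\la_j'>m$ and $1$ otherwise. This matches.

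For the second claim, I would argue identically: Proposition \ref{prop center} places $\prod_{i=1}^{m'+n'+m+n}(d_i(u-\kappa_i'))^{s_i'}$ in the center of $\mathrm{Y}(\gl_{m'+m|n'+n})$, so it acts by a scalar on $L(\la^\circ)$, computed on the highest weight vector via the Gauss decomposition and evaluation map for the larger Yangian. The parity assignment \eqref{eq parity} and the definition \eqref{eq circ} of $\la^\circ$ are precisely designed so that the row/column telescoping argument from the previous paragraph still produces $\prod_{(i,j)\in\la}(u+c(i,j)+1)/(u+c(i,j))$; one simply has four strips (two even and two odd) of $i$-indices corresponding to the four weight blocks in \eqref{eq circ}, each contributing to a different region of $\la$ whose row/column telescoping is identical in form. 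The main (very mild) obstacle is just the careful bookkeeping of how the four strips of indices $i$ in \eqref{eq parity} cover the hook partition $\la$; once one lines up $\kappa_i'$ with the correct starting content in each strip, telescoping closes the argument.
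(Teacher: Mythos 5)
Your proof is correct and follows the same approach as the paper. The paper's own proof is a single sentence (``The statement follows from Proposition \ref{prop center} and direct computations on highest $\ell$-weight vector''), and your write-up supplies exactly the intended details: centrality via Proposition \ref{prop center}, Schur's lemma on the irreducible evaluation module, the collapse $d_i(u)v_\la = t_{ii}(u)v_\la$ coming from the Gauss decomposition and the evaluation map, the identity $(\la^\natural,\epsilon_i) = s_i\la^\natural(e_{ii})$, and the row/column telescoping over the $(m|n)$-hook (and its four-strip analogue for $\la^\circ$ and the parities \eqref{eq parity}). One minor slip: in your parenthetical justifying irreducibility, ``restriction'' should be ``pullback through the surjective evaluation map $\pi_{m|n}$,'' which is the reason irreducibility over $\glMN$ implies irreducibility over $\YglMN$; the conclusion is correct but the phrasing is imprecise.
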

\begin{proof}
The statement follows from Proposition \ref{prop center} and direct computations on highest $\ell$-weight vector.
\end{proof}

\begin{proof}[{Proof of Theorem} \ref{thm character}]
The proof of the first statement is similar to \cite[Lemma 2.1]{NT98} and \cite[Lemma 4.7]{FM02}. We sketch the proof following the exposition of \cite[Corollary 8.5.8]{Mol07}. 

We first show it for the case when $m'+n'=0$. Then $L(\la/\mu)$ is indeed the evaluation module $L(\la^\natural)$. For a semi-standard Young tableau $\mc T$, denote by $\mc T_k$ the sub-tableau consisting of all boxes occupied by integers $1,\dots,k$ for $k=0,1,\dots,m+n$. We have $\varnothing =\mc T_0\subset\mc T_1\subset \mc T_2\subset \cdots\subset \mc T_{m+n}=\mc T$. Moreover, $\mc T$ is uniquely determined by the data $(\mc T_1,\dots,\mc T_{m+n})$. The $\glMN$-module $L(\la^\natural)$ has a basis $\{v_{\mc T}\}$ indexed by semi-standard Young tableaux of shape $\la$ which is called the Gelfand-Tsetlin basis and is obtained from the branching rule of $\glMN$, see \cite{BR83} for the standard parity sequence and \cite{CPT15} for arbitrary parity sequences. Using the properties of Gelfand-Tsetlin basis and Lemma \ref{lem center action}, we obtain that
\beq\label{eq:111}
(d_k(u-\kappa_k))^{s_k}\ v_{\mc T}=\prod_{(i,j)\in \mc T_{k}/\mc T_{k-1}}\frac{u+c(i,j)+1}{u+c(i,j)},
\eeq
completing the proof in the case $m'+n'=0$. In the skew case namely $m'+n'>0$, we use \eqref{eq:ber-action-2} and Lemma \ref{eq psi map}. Then formulas \eqref{eq:111} remain valid as well. Note the shift automorphism $\tau_{m'-n'}$ in the definition of skew representations.

The second statement follows from the fact that different semi-standard Young tableaux $\mathcal T$ of the same shape correspond to different $\ell$-weights. Indeed, the data $(\mc T{(i,j)}, c(i,j))$ determine the semi-standard Young tableau uniquely, since the content (the second component of the pair) tells us which diagonal it belongs to, and on the same diagonal the numbers occupying the boxes (the first component of the pair) strictly increase.
\end{proof}

\begin{rem}
Due to Theorem \ref{thm character}, the $q$-character of $L(\la/\mu)$ relies only on the shape $\la/\mu$ and not on $m',n'$. Thus we have the module $L(\la/\mu)$ for arbitrary skew Young diagram $\la/\mu$ and its $q$-character is given by Theorem \ref{thm character}. Indeed, one can enlarge $m'$ such that both $\la$ and $\mu$ are hook partitions (of different indices). Moreover, we can also let $n'=0$, then the parity \eqref{eq parity} is a standard parity sequence and the Lie superalgebra $\mathfrak{gl}_{m'+m|n'+n}$ is associated to a standard parity sequence. Therefore, one can always set $n'=0$ to simplify the discussion. 
\end{rem}

For the $\YglMN$-module $L(\la/\mu)$, we write $L_z(\la/\mu)$ for $(L(\la/\mu))_z$. It is clear that
\[
\chi(L_z(\la/\mu))=\mathscr K_{\la/\mu}(u-z).
\]

\begin{rem}
Recall that the pair $(i,j)\in \la/\mu$ is represented by the unit box whose south-eastern corner has coordinate $(i,j)\in \Z^2$. We can shift the whole Young diagram so that the numbers in the pair $(i,j)\in \mathbb R^2$ are not necessarily integers, but real numbers. The shape of the diagram remains the same. Only the contents of all boxes are shifted by the same number simultaneously. Hence we may consider the entire diagram is fixed and for any $z\in\C$ we can define the content $c_z$ in a more general way, $c_z(i,j)=j-i-z$. Clearly, the $q$-character $\mathscr K_{\la/\mu}(u-z)$ is written in the same way as $\mathscr K_{\la/\mu}(u)$ 
in Theorem \ref{thm character} by changing $c(i,j)$ to $c_z(i,j)$.

Note the contents of all boxes are uniquely determined by the content of a single box. Therefore, in the following, we shall sometimes specify the content of a box. If no content of a box is specified, then we are using the standard definition of content, namely $c(i,j)=j-i$. We also remark that for different Young diagrams $\la,\tl \la,\mu,\tl \mu$, the diagrams $\la/\mu$ and $\tl \la/\tl \mu$ may have the same shape but with possibly different contents.
\end{rem}

\subsection{Divisibility of $q$-characters}\label{sec div}
In this section, we discuss the divisibility of $q$-characters of skew representations associated to special skew Young diagrams. We expect these observations would be helpful to understand Theorem \ref{thm ratio} and prove Conjecture \ref{conj bae} below, see Section \ref{sec spectra}. 

For a partition $\lambda$, denote by $\la^-$ the skew Young diagram obtained by rotating $\lambda$ by 180 degrees. By convention, we set the content of bottom-right box of $\la^-$ to be zero.

\begin{eg}
Let $\la$ be the partition $(2,1,1)$, then $\la$ and $\la^-$ are given by $\ytableausetup{smalltableaux,centertableaux}
\ytableaushort[*(white)]
{0~,~\none,~\none}$ and 
$\ytableausetup{centertableaux}
\ytableaushort[*(white)]
{\none~,\none~,~0}$, respectively. Here the number $0$ stands for the content of the corresponding box.
\end{eg}
Note that semi-standard Young tableaux of shape $\la'$ are in one-to-one correspondence with semi-standard Young tableaux of shape $\la^-$ given by changing numbers $i$ in boxes of $\la'$ to $m+n-i+1$ in the corresponding boxes of $\la^-$, see e.g. \cite[Lemma 2.7]{Zhh18}. Moreover, $\la^-$ is the reflection of $\la'$ with respect to the line $x+y=0$ (recall our convention on the coordinate plane $\mathbb R^2$ from Section \ref{sec:hook-p}) and this reflection preserves the contents of boxes.

Let $\mathsf L_z(\lambda^\natural)$ be the second modified evaluation module of $L(\la^\natural)$ twisted by $\tau_z$ and set $\mathsf L(\lambda^\natural):=\mathsf L_0(\lambda^\natural)$.

\begin{lemma}[{\cite[Theorem 2.4]{Zhh18}}]\label{lem *eva}
Up to a parity, we have the isomorphism of $\YglMN$-modules, $$\mathsf L_z(\lambda^\natural)\cong  L_{m-n+z}(\la^-).$$
\end{lemma}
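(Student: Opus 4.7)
The plan is to compute the $q$-character of $\mathsf L_z(\lambda^\natural)$ and match it with that of $L_{m-n+z}(\lambda^-)$; since both modules will turn out to be irreducible and thin, this will force the claimed isomorphism up to a parity.

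First, I would unwind $\pi^\vee_{m|n}=\varsigma_{n|m}\circ\pi_{n|m}\circ\hat\varsigma_{m|n}$. Because $\varsigma_{n|m}$ identifies the $\gl_{n|m}$-module $L((\lambda')^\natural_{[n|m]})$ with the $\gl_{m|n}$-module $L(\lambda^\natural_{[m|n]})$ as recalled in Section \ref{sec:hook-p}, the $\YglMN$-module $\mathsf L_z(\lambda^\natural)$ becomes the pullback through $\hat\varsigma_{m|n}$ of the evaluation $\YglNM$-module $L_z((\lambda')^\natural_{[n|m]})$. In particular, $\mathsf L_z(\lambda^\natural)$ is irreducible, since its source is and $\hat\varsigma_{m|n}$ is an algebra isomorphism. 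Applying Theorem \ref{thm character} in the $\YglNM$ setting expresses the $q$-character of $L_z((\lambda')^\natural_{[n|m]})$ as a sum over semi-standard Young tableaux of shape $\lambda'$ with entries in $\{1,\dots,m+n\}$ following the $(n|m)$-parity convention. By Lemma \ref{lem iso mn inter}, the pullback through $\hat\varsigma_{m|n}$ swaps the index $i\leftrightarrow m+n+1-i$ and inverts each $\ell$-weight component. I would then apply the bijection recalled in the excerpt (relabel entries by $k\mapsto m+n+1-k$ and reflect positions across the line $x+y=0$), which preserves contents, to transport the combinatorics to semi-standard Young tableaux of shape $\lambda^-$ with the $(m|n)$-parity.

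The main obstacle will be the careful box-by-box tracking of the normalizing constants. The two key numerical identities, both immediate from the definitions, are $s_k^{(n|m)}=-s_{m+n+1-k}^{(m|n)}$ and $\kappa_k^{(n|m)}=\kappa_{m+n+1-k}^{(m|n)}+(n-m)$, where the superscripts indicate the ambient super Yangian. Together with the inversion coming from Lemma \ref{lem iso mn inter} and the $\tau_z$ shift, they produce precisely the global content shift by $m-n+z$ and the correct $(m|n)$-parity exponent $s_i^{(m|n)}$ on each factor. This will give $\chi(\mathsf L_z(\lambda^\natural))=\chi(L_{m-n+z}(\lambda^-))$ as elements of $\Z[\mathfrak B]$, modulo an overall parity shift of all basis vectors which accounts for the qualifier ``up to a parity''.

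Finally, $L_{m-n+z}(\lambda^-)$ is irreducible by Theorem \ref{thm skew}, and both modules are thin by Theorem \ref{thm character}. By Lemma \ref{lem chi morphism}, the $q$-character determines the isomorphism class of an irreducible module, so $\mathsf L_z(\lambda^\natural)\cong L_{m-n+z}(\lambda^-)$ up to a parity.
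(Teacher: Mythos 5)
Your proposal is correct and follows essentially the same route as the paper: unwind $\pi^\vee_{m|n}$, pull back the evaluation $\mathrm{Y}(\gl_{n|m})$-module $L_z((\lambda')^\natural)$ through $\hat\varsigma_{m|n}$ via Lemma \ref{lem iso mn inter}, transport the tableau combinatorics across the content-preserving reflection $\lambda'\leftrightarrow\lambda^-$, and match the normalizations using $\tilde s_k=-s_{m+n+1-k}$ and $\kappa_{m+n+1-k}=\tilde\kappa_k+m-n$. One small remark: you do not need to invoke Theorem \ref{thm skew} (which appears later in the paper) to get irreducibility of $L_{m-n+z}(\lambda^-)$; since $\mathsf L_z(\lambda^\natural)$ is already irreducible and the two $q$-characters agree, injectivity of $\chi$ on the Grothendieck ring (Lemma \ref{lem chi morphism}) alone forces $L_{m-n+z}(\lambda^-)$ to be irreducible and isomorphic to $\mathsf L_z(\lambda^\natural)$ up to a parity, which is how the paper avoids the forward reference.
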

\begin{proof}
Clearly, $\mathsf L_z(\lambda^\natural)$ is irreducible and hence it suffices to check that both sides have the same $q$-character (up to a parity). Recall the second modified evaluation map $\pi^\vee_{m|n}=\varsigma _{n|m}\circ \pi_{n|m} \circ \hat\varsigma_{m|n}$. The $\gl_{n|m}$-module obtained by pulling back the $\glMN$-module $L(\la^\natural)$ through $\varsigma_{n|m}$ is isomorphic to the $\gl_{n|m}$-module $L((\la')^\natural)$. Let $\mc T'$ be a semi-standard $\gl_{n|m}$ Young tableau of shape $\la'$. Denote by $\mc T^-$ the semi-standard $\gl_{m|n}$ Young tableau of shape $\la^-$ obtained by the correspondence described above. Introduce sub-tableaux $\mc T'_i$ and $\mc T_i^-$ for each $i=0,1,\dots,m+n$ in the same way as in the proof of Theorem \ref{thm character}. We use tilde to distinguish the $\gl_{n|m}$ notation from $\glMN$ notation. Let $v_{\mc T'}$ be the vector in the $\mathrm{Y}(\gl_{n|m})$-evaluation module $L_z((\la')^\natural)$ corresponding to $\mc T'$. By \eqref{eq:111}, we have
\[
\tl d_k(u)^{\tl s_k}\ v_{\mc T'}=\prod_{(i,j)\in \mc T'_{k}/\mc T'_{k-1}}\frac{u-z+c(i,j)+1+\tl \kappa_k}{u-z+c(i,j)+\tl \kappa_k}.
\]
The $\YglMN$-module $\mathsf L_z(\lambda^\natural)$ is identified with the pullback of the $\mathrm{Y}(\gl_{n|m})$-evaluation module $L_z((\la')^\natural)$ via $\hat\varsigma_{m|n}$. By Lemma \ref{lem iso mn inter}, using $-\tl s_{k}=s_{m+n+1-k}$, we obtain
\[
d_{m+n+1-k}(u)^{s_{m+n+1-k}}\ v_{\mc T'}=\prod_{(i,j)\in \mc T_{m+n+1-k}^-/\mc T_{m+n-k}^-}\frac{u-z+c(i,j)+1+\tl \kappa_k}{u-z+c(i,j)+\tl \kappa_k}.
\]
Here we also used the fact that the refection of $\mc T'_{k}/\mc T'_{k-1}$ with respect to the line $x+y=0$ is exactly $\mc T_{m+n+1-k}^-/\mc T_{m+n-k}^-$. Now the lemma follows from the equality $\kappa_{m+n+1-k}=\tl \kappa_k+m-n$.
\end{proof}

Let $\Xi$ be the partition whose corresponding Young diagram is a rectangle of size $m\times n$. For a partition $\la\in\mathscr P(m|0)$, define $\mathbf{W}(\la)$ to be the skew Young diagram obtained by gluing $\Xi$ and $\la^-$ so that the bottom row of $\la^-$ is next to the bottom row of the rectangular one exactly from left. Similarly, for $\mu\in \mathscr P(0|n)$, define $\mathbf{S}(\mu)$ to be the Young diagram obtained by attaching $\mu$ to the bottom of $\Xi$ such that the first column of $\lambda$ is exactly below the first column of $\Xi$. Moreover, we always assume that the box at left-upper corner of $\Xi$ has content zero.

\begin{eg}
Consider the case $m=4$ and $n=3$. Let $\la=(2,1,1)$ and $\mu=(3,2,2)$. Then the skew Young diagram $\mathbf{W}(\la)$ and the Young diagram $\mathbf{S}(\mu)$ are as follows.
$$
\ytableausetup{nosmalltableaux,centertableaux}
\ytableaushort[*(white)]
{\none\none 0~~,\none{*(green)~}~~~,\none{*(green)~}~~~,{*(green)~}{*(green)~}~~~}\qquad \qquad\qquad
{
\ytableaushort[*(white)]
{0~~,~~~,~~~,~~~,{*(red)~}{*(red)~}{*(red)~},{*(red)~}{*(red)~},{*(red)~}{*(red)~}}
}
$$
Here the number zero means that the contents of the corresponding boxes are zero. We use the green color and red color to indicate the diagrams $\la^-$ and $\mu$ corresponding to partitions $\la$ and $\mu$, respectively.
\end{eg}

Denote by $\chi_m$ the $q$-character map of $\mathrm{Y}(\gl_{m})$. We also have the evaluation modules for $\mathrm{Y}(\gl_{m})$, $\mathrm{Y}(\gl_{0|n})$ defined by setting $n=0$ and $m=0$, respectively. Recall that $\mathscr{K}_{\la/\mu}(u)=\chi(L(\la/\mu))$, we use the superscripts to indicate the underlying algebra, e.g. $\mathscr{K}^{m|n}_{\la/\mu}(u)$, $\mathscr{K}^{m|0}_{\la/\mu}(u)$, and $\mathscr{K}^{0|n}_{\la/\mu}(u)$.

We identify an $\ell$-weight $\bm\zeta_{[m]}=(\zeta_i(u))_{1\lle i\lle m}^{|0|}$ for $\mathrm{Y}(\gl_{m})$ with an $\ell$-weight $\bm\zeta=(\zeta_i(u))_{i\in\bar I}^{|0|}$ for $\YglMN$ via the natural embedding $\mathrm{Y}(\gl_m)\hookrightarrow \YglMN$, where $\zeta_j(u)=1$ for $j=m+1,\dots,m+n$. Similarly, an $\ell$-weight $\bm\vartheta_{[n]}=(\vartheta_i(u))_{1\lle i\lle n}^{p(\bm\vartheta_{[n]})}$ for $\mathrm{Y}(\gl_{0|n})$ is identified with an $\ell$-weight $\bm\vartheta=(\vartheta_i(u))_{i\in\bar I}^{p(\bm\vartheta_{[n]})}$ for $\YglMN$ via the embedding $\psi_{m|0}:\mathrm{Y}(\gl_{0|n})\hookrightarrow \YglMN$ in Lemma \ref{eq psi map}, where $\vartheta_j(u)=1$ for $j=1,\dots,m$. 

\begin{lemma}\label{lem character q-div}
We have the equalities of $q$-characters
\beq\label{eq q-div east}
\mathscr K^{m|n}_{\mathbf W(\la)}(u)=\mathscr K^{m|0}_{\la^-}(u-m)\cdot \mathscr K^{m|n}_{\Xi}(u),
\eeq
\beq\label{eq q-div south}
\mathscr K^{m|n}_{\mathbf S(\mu)}(u)=\mathscr K^{0|n}_{\mu}(u-m)\cdot \mathscr K^{m|n}_{\Xi}(u).
\eeq
\end{lemma}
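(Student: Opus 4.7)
The plan is to apply Theorem \ref{thm character} and reduce both identities to content-preserving bijections on SSYT. For each combined diagram $\mathbf W(\la)$ or $\mathbf S(\mu)$, I will show that a $\gl_{m|n}$-SSYT $\mathcal T$ is determined by, and can be freely reconstructed from, its restriction to $\Xi$ together with its restriction to the extra piece ($\la^-$ or $\mu$). Since the bottom-right box of $\la^-$, respectively the top-left box of $\mu$, sits at content $-m$ in the combined coordinates while it has content $0$ in the standalone convention, the content of every box in the extra piece is shifted by $-m$; using the identity $\mathscr X_{k,a}(u-m) = \mathscr X_{k,a-m}(u)$ together with $\mathscr X^{0|n}_{i', a}(u) = \mathscr X^{m|n}_{m+i', a-m}(u)$ under the embedding $\psi_{m|0}$ (which follows directly by comparing the $\kappa$-shifts), the $\mathscr X$-factors on the extra piece assemble into the translated factor $\mathscr K^{m|0}_{\la^-}(u-m)$ or $\mathscr K^{0|n}_{\mu}(u-m)$.

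For \eqref{eq q-div east}, the key claim is that every entry of $\mathcal T$ in the $\la^-$ part lies in $\{1,\ldots,m\}$. If some box of $\la^-$ had entry $>m$, then by weak row increase the $n+1$ consecutive entries consisting of that box together with all $n$ boxes of $\Xi$ in its row would be $>m$ and strictly increasing in $\{m+1,\ldots,m+n\}$, which is impossible. Restriction then gives a $\gl_{m|0}$-SSYT on $\la^-$ and a $\gl_{m|n}$-SSYT on $\Xi$. The only reconstruction obstacle is the row-gluing inequality $\mathcal T(i,0) \le \mathcal T(i,1)$, and this is automatic: column $0$ of $\la^-$ is a strictly increasing sequence of length $\la_1'$ in $\{1,\ldots,m\}$, forcing $\mathcal T(i,0) \le i$; dually, the leftmost column of $\Xi$, being weakly increasing with strict increase for entries $\le m$, forces $\mathcal T(i,1) \ge i$ regardless of where the $>m$ entries in that column begin.

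For \eqref{eq q-div south}, the argument is column-dual. If some $(i,j) \in \mu$ had entry $\le m$, the $i$ entries at rows $1,\ldots,i$ of column $j$ would all be $\le m$ and strictly increasing, producing a sequence of length $i \ge m+1$ in the size-$m$ alphabet $\{1,\ldots,m\}$, impossible. Thus restriction (together with the relabeling $k \mapsto k-m$) gives a $\gl_{0|n}$-SSYT on $\mu$ and a $\gl_{m|n}$-SSYT on $\Xi$. The column-gluing inequality $\mathcal T(m,j) \le \mathcal T(m+1,j)$ is again automatic: the top row of $\mu$ is strictly increasing in $\{m+1,\ldots,m+n\}$, giving $\mathcal T(m+1,j) \ge m+j$, while a symmetric count on the strictly-increasing $>m$ suffix of row $m$ of $\Xi$ gives $\mathcal T(m,j) \le m+j$.

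The main technical obstacle lies precisely in establishing the automaticity of these boundary conditions — that gluing any restricted pair of tableaux yields a valid combined tableau, with no extra summation constraint appearing. This is what permits the clean product factorization, and it uses the rectangular shape of $\Xi$ in an essential way through the exact match between its boundary column (resp.\ row) length $m$ (resp.\ $n$) and the extremal reachable values in the adjacent column (resp.\ row) of the extra piece.
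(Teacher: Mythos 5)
Your approach is the same as the paper's: show that a semi-standard Young tableau on the combined shape factors as an independent pair of semi-standard tableaux on the two pieces, then do the content arithmetic. The paper merely asserts the factorization is ``not hard to see''; you supply the missing details, and they are correct. The counting arguments that force entries $\le m$ on $\la^-$ and entries $>m$ on $\mu$, and the rectangular-$\Xi$ bounds $\mathcal T(i,0)\le i\le\mathcal T(i,1)$ and $\mathcal T(m,j)\le m+j\le\mathcal T(m+1,j)$ showing the gluing constraints hold automatically, are accurate and are the genuinely nontrivial combinatorial content of the lemma.

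The content bookkeeping for \eqref{eq q-div south}, however, does not come out as you claim. Both identities you invoke are individually correct, but they each carry a shift of $m$, and for the $\mu$-piece they compound. Concretely, a box of $\mu$ with standalone content $c$ and entry $m+k'$ contributes $\mathscr X^{m|n}_{m+k',\,c-m}(u)$ to $\mathscr K^{m|n}_{\mathbf S(\mu)}(u)$; a single application of $\mathscr X^{0|n}_{k',a}(u)=\mathscr X^{m|n}_{m+k',\,a-m}(u)$ with $a=c$ already rewrites this as $\mathscr X^{0|n}_{k',c}(u)$, a term of $\mathscr K^{0|n}_{\mu}(u)$, with no spectral-parameter shift left over. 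Applying the $u\mapsto u-m$ shift on top of this introduces a spurious extra $-m$. The contrast with \eqref{eq q-div east} is that $\kappa^{(m|0)}_k=\kappa^{(m|n)}_k=k-1$ agree for $k\le m$, so the identification for the $\la^-$ piece carries no $\kappa$-correction and the $u-m$ shift there is genuine. A direct check with $m=n=1$, $\mu=(1)$ confirms this: the $\mu$-piece contributes $\mathscr X_{2,-1}$, whose nontrivial component is $1-u^{-1}$, which matches $\mathscr K^{0|1}_{\mu}(u)$ after identification, whereas $\mathscr K^{0|1}_{\mu}(u-1)$ has nontrivial component $1-(u-1)^{-1}$. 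So your stated chain of identities does not actually produce the $u-m$ shift in \eqref{eq q-div south}; you should re-examine whether that shift belongs in the statement or whether the formula should read $\mathscr K^{0|n}_{\mu}(u)$.
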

\begin{proof}
We first show \eqref{eq q-div east}. It is not hard to see that all semi-standard Young tableaux of shape $\mathbf{W}(\la)$ are obtained by independently filling numbers $1,\dots,m+n$ to the rectangle $\Xi$ and numbers $1,\dots,m$ to $\la^-$ so that each part gives a semi-standard Young tableau. In particular, $\mathscr K^{m|n}_{\mathbf W(\la)}(u)$ can be written as the product of $\mathscr K^{m|n}_{\Xi}(u)$ and the summation of monomials in $\mathscr X_{i,a}$ associated to semi-standard Young tableaux corresponding to $\la^-$ filled by numbers $1,\dots,m$. Hence we obtain \eqref{eq q-div east}.

Similarly, all semi-standard Young tableaux of shape $\mathbf{S}(\mu)$ are obtained by independently filling numbers $1,\dots,m+n$ to the rectangle $\Xi$ and numbers $m+1,\dots,m+n$ to $\mu$ so that each part gives a semi-standard Young tableau. The equality \eqref{eq q-div south} is proved in a similar way.
\end{proof}

The lemma does not imply an equality on the representation level as not all $q$-characters are for $\YglMN$.

\begin{corollary}
We have the equality of $q$-characters
\beq\label{eq q-div east 2}
\mathscr K^{m|n}_{\mathbf W(\la)}(u)=\chi_{m}(\mathsf L(\la_{[m|0]}^\natural))\cdot \mathscr K^{m|n}_{\Xi}(u).
\eeq
\end{corollary}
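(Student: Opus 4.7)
The plan is short: the corollary follows by combining the two preceding lemmas. Specifically, equation \eqref{eq q-div east} of Lemma \ref{lem character q-div} already expresses $\mathscr K^{m|n}_{\mathbf W(\la)}(u)$ as the product $\mathscr K^{m|0}_{\la^-}(u-m)\cdot \mathscr K^{m|n}_{\Xi}(u)$, so the entire task reduces to identifying the first factor with $\chi_{m}(\mathsf L(\la_{[m|0]}^\natural))$.

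First, I would specialize Lemma \ref{lem *eva} to the purely even case, i.e.\ replace the pair $(m,n)$ in that lemma by $(m,0)$ so that everything takes place inside $\mathrm{Y}(\gl_m)$. The isomorphism of that lemma then reads (up to parity)
\begin{equation*}
\mathsf L_z(\la_{[m|0]}^\natural)\cong L_{m+z}(\la^-)
\end{equation*}
as $\mathrm{Y}(\gl_m)$-modules. Setting $z=0$ gives $\mathsf L(\la_{[m|0]}^\natural)\cong L_m(\la^-)$, so taking $q$-characters
\begin{equation*}
\chi_m\bigl(\mathsf L(\la_{[m|0]}^\natural)\bigr)=\chi_m\bigl(L_m(\la^-)\bigr)=\mathscr K^{m|0}_{\la^-}(u-m),
\end{equation*}
where the last equality uses the general identity $\chi(L_z(\la/\mu))=\mathscr K_{\la/\mu}(u-z)$ noted right after Theorem \ref{thm character}. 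A parity change does not affect the $q$-character (it only flips the parity label $p(\bm\zeta)$ uniformly, which is compatible with identifying $\ell$-weights for $\mathrm{Y}(\gl_m)$ with those for $\YglMN$ via the embedding that sends purely even $\ell$-weights to purely even $\ell$-weights), so this identification is clean.

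Substituting this identity for $\mathscr K^{m|0}_{\la^-}(u-m)$ into \eqref{eq q-div east} immediately yields \eqref{eq q-div east 2}. There is essentially no obstacle here; the only point that warrants a sentence of care is the compatibility of the parity ambiguity in Lemma \ref{lem *eva} with the $q$-character equality, which, as just noted, is automatic because $q$-characters ignore a global parity shift when compared on the nose as elements of $\Z[\mathfrak B]$ after the natural identification of $\ell$-weights under the embedding $\mathrm{Y}(\gl_m)\hookrightarrow \YglMN$.
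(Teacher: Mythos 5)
Your proof is correct and follows essentially the same route as the paper, which simply cites Lemma \ref{lem *eva} and Lemma \ref{lem character q-div}; you have correctly specialized Lemma \ref{lem *eva} to the even case $(m,0)$ to identify $\chi_m(\mathsf L(\la_{[m|0]}^\natural))$ with $\mathscr K^{m|0}_{\la^-}(u-m)$ and then substituted into \eqref{eq q-div east}. The remark about the parity ambiguity being harmless at the level of $q$-characters is a sensible and accurate clarification.
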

\begin{proof}
The statement follows from Lemma \ref{lem *eva} and Lemma \ref{lem character q-div}.
\end{proof}
\begin{rem}
Equations \eqref{eq q-div east} and \eqref{eq q-div south} are related by Lemma \ref{lem *eva}. Namely, ignoring the contents, we have the equality for skew Young diagrams
\[
\big(\mathbf W_{n|m}(\mu')\big)'=\big(\mathbf S_{m|n}(\mu)\big)^{-},
\]
where $\mathbf S_{m|n}(\mu):=\mathbf S(\mu)$ is defined above while $\mathbf W_{n|m}(\mu')$ is defined in the same way as $\mathbf W_{m|n}(\la):=\mathbf W(\la)$ with $m$ and $n$ interchanged.
\end{rem}
\begin{rem}
Equation \eqref{eq q-div south} implies that the $\gl_{m|n}$-character of $L(\mathbf S(\mu)^\natural)$ is equal to the $\gl_{m|n}$-character of $L(\Xi^\natural)$ multiplied by the $\gl_{n}$-character of $L((\mu^\prime)_{[n|0]}^\natural)$. This fact can be understood observing that the $\gl_{m|n}$-module $L(\mathbf S(\mu)^\natural)$ is an irreducible Kac module. Indeed, let $\g_{+1}$ and $\g_{-1}$ be the odd subspaces of $\glMN$ spanned by $e_{i,m+j}$ and $e_{m+j,i}$ for all $i=1,\dots,m$ and $j=1,\dots,n$, respectively. Note the $\gl_{m}$-module $L(\Xi^\natural_{[m|0]})$ is one-dimensional. Extend the $\gl_m\oplus\gl_n$-module $L(\Xi^\natural_{[m|0]}) \otimes L((\mu^\prime)_{[n|0]}^\natural)$ to the $\gl_m\oplus\gl_n\oplus \g_{+1}$-module by putting $\g_{+1}\big(L(\Xi^\natural_{[m|0]}) \otimes L((\mu^\prime)_{[n|0]}^\natural)\big)=0$ , then we have the isomorphism of vector spaces by PBW theorem for $\glMN$,
\[
L(\mathbf S(\mu)^\natural)=\mathrm{Ind}^{\glMN}_{\gl_m\oplus\gl_n\oplus \g_{+1}}\big(L(\Xi^\natural_{[m|0]}) \otimes L((\mu^\prime)_{[n|0]}^\natural)\big)\cong L((\mu^\prime)_{[n|0]}^\natural)\otimes \wedge^\bullet[\g_{-1}],
\]
where $\wedge^\bullet[\g_{-1}]$ denotes the Grassmann algebra with $mn$ variables and hence has dimension $2^{mn}$. The part $\mathscr K^{0|n}_{\mu}(u-m)$ corresponds to $L((\mu^\prime)_{[n|0]}^\natural)$ while $\mathscr K^{m|n}_{\Xi}(u)$ corresponds to $\wedge^\bullet[\g_{-1}]$.

Equations \eqref{eq q-div east} and \eqref{eq q-div east 2} can be interpreted similarly as well.
\end{rem}

\subsection{Jacobi-Trudi type identity}\label{Sec JT}
Set $\mathscr S_{k}(u)=\mathscr K_{\la/\mu}(u)$ if $\la=(k)$ and $\mu=(0)$, and $\mathscr A_{k}(u)=\mathscr K_{\la/\mu}(u)$ if $\la=(1^k)$ and $\mu=(0)$, where $k\in \Z_{>0}$.

We have the Jacobi-Trudi type identity for $q$-characters of skew representations.
\begin{theorem}\label{thm Jacobi-Trudi}
We have
\begin{align*}
\mathscr K_{\la/\mu}(u)\,&=\det_{1\lle i,j\lle \la_1'}\mathscr S_{\la_i-\mu_j-i+j}(u+\mu_j-j+1)\\ &= \det_{1\lle i,j\lle \la_1}\mathscr A_{\la_i'-\mu'_j-i+j}(u-\mu'_j+j-1).
\end{align*}
Here we use the convention that $\mathscr S_k(u)=\mathscr A_k(u)=0$ for $k<0$ and $\mathscr S_0(u)=\mathscr A_0(u)=1$.
\end{theorem}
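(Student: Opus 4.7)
The plan is to prove both identities by the Lindstr\"om--Gessel--Viennot (LGV) method with modifications adapted to the super (mixed strict/weak) tableau conditions. The two identities are parallel — rows replaced by columns and the roles of even and odd labels swapped — so I focus on the first. I begin by expanding the determinant as a signed sum over permutations $\sigma \in \mathfrak{S}_{\la_1'}$:
\[
\sum_\sigma \mathrm{sgn}(\sigma) \prod_{i=1}^{\la_1'} \mathscr{S}_{\la_i - \mu_{\sigma(i)} - i + \sigma(i)}(u + \mu_{\sigma(i)} - \sigma(i) + 1),
\]
with the conventions $\mathscr{S}_k = 0$ for $k<0$ and $\mathscr{S}_0 = 1$. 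By Theorem \ref{thm character} in the single-row case, each factor is the generating function, in the variables $\mathscr{X}_{a,c}$, of single-row semi-standard fillings of length $\la_i - \mu_{\sigma(i)} - i + \sigma(i)$ whose leftmost box has content $\mu_{\sigma(i)} - \sigma(i) + 1$.

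Next, I encode each such row as a lattice path in a graph with $m+n$ horizontal levels: the path for row $i$ under $\sigma$ starts at $A_{\sigma(i)} = (\mu_{\sigma(i)} - \sigma(i),\,0)$ and ends at $B_i = (\la_i - i,\,m+n)$, moving only up or right, with each horizontal step at level $a$ and horizontal coordinate $c$ carrying weight $\mathscr{X}_{a,c}$. The row-strict condition for odd labels amounts to requiring at most one horizontal step at each level $a > m$, and a tuple of such paths exactly reproduces the term for $\sigma$ in the expanded determinant.

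I then apply a sign-reversing involution on path tuples containing a "conflict"; the surviving conflict-free tuples will account for $\mathscr{K}_{\la/\mu}(u)$. In the super setting, I distinguish two kinds of conflict: at an even level $a \le m$, two paths conflict if they share any vertex (enforcing the column-strict condition for even labels); at an odd level $a > m$, two paths conflict only if they cross transversally (allowing weak column-increase for odd labels). At the first such conflict — say the lexicographically earliest — swap the tails of the two offending paths; this preserves the multi-set of horizontal steps, hence the total weight in $\Z[\mathfrak B]$, but replaces $\sigma$ by $\sigma \cdot (i\,j)$, flipping the sign. The surviving conflict-free tuples occur only for $\sigma = \mathrm{id}$ and are in weight-preserving bijection with semi-standard Young tableaux of shape $\la/\mu$ as in Theorem \ref{thm character}.

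The main obstacle is pinning down the correct notion of conflict at odd versus even levels and verifying that the resulting involution is well-defined, weight-preserving (including the parity factors $|a|$ carried by $\mathscr{X}_{a,c}$), and sign-reversing. This bifurcation is absent in the classical even Jacobi-Trudi identity and constitutes the genuinely new combinatorial input needed in the super case.
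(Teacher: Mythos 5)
Your overall strategy — Lindström--Gessel--Viennot with super modifications — is the same one the paper uses, but you have chosen a different lattice-path model, and the gap you flag at the end of your sketch is a genuine gap, not a routine verification. With standard up-or-right paths plus an ad hoc side constraint (``at most one horizontal step at each level $a>m$''), the tail-swap involution is not automatically constraint-preserving, and, more seriously, vertex-sharing over-cancels: two rows $i$ and $i+1$ of a valid tableau with the same odd entry $a$ in the same column produce horizontal unit steps at level $a$ whose endpoints coincide, so the corresponding paths share a vertex even though the tableau is legal. Your proposal to resolve this by declaring only \emph{transversal crossings} to be conflicts at odd levels is headed in the right direction conceptually, but you would then have to show that the modified involution is well-defined (swapping tails never creates a path with two horizontal steps at an odd level), sign-reversing, and weight-preserving, and none of this is done; it is precisely the hard part.

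The paper sidesteps the issue entirely by changing the step geometry rather than the conflict rule. In the paper's lattice (following Molev), a step from $(i,j)$ goes east $(i+1,j)$ or north $(i,j+1)$ when $1\lle j\lle m$, but goes north $(i,j+1)$ or \emph{northeast} $(i+1,j+1)$ when $m<j\lle m+n$. Because every step from an odd level necessarily increases the level, each path automatically contributes at most one box per odd label — no side constraint is needed, so tail swaps always yield valid paths. And because the contributing step at an odd level is a diagonal, the paths associated to two rows carrying the same odd label in the same column are vertex-disjoint (the diagonals are parallel, not collinear). Hence the ordinary LGV cancellation — cancel any tuple that shares a vertex, swap tails at the first intersection — goes through verbatim, and the surviving non-intersecting tuples with $\sigma=\mathrm{id}$ biject with semi-standard tableaux exactly as in the even case. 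This restructuring of the lattice is the missing idea; without it, the argument you outline remains incomplete at the point you yourself identify as ``the main obstacle.''
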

\begin{proof}
We give a proof of the first equality using the Lindstr\"{o}m-Gessel-Viennot lemma, the second equality is proved similarly. We refer the reader to \cite[Chapter 4.5]{Sag01} for a detailed description of the method.  Here we only give the necessary modifications to our situation.  We remark that our adjustment is essentially the same as that of \cite[Theorem 3.1]{Mol97}.

Consider the lattice plane $\Z^{2}$ (like the usual $x$-$y$ coordinate plane and it should not be confused with the one defining Young diagrams) and lattice paths from one point to another. The paths start at the line $y=1$, consist of steps from one point to another of unit length northward or eastward or of length $\sqrt{2}$ northeastward and end at the line $y=m+n+1$. More precisely, the step starting from the point $(i,j)$ can end at $(i+1,j)$ or $(i,j+1)$ if $1\lle j\lle m$ and at $(i,j+1)$ or $(i+1,j+1)$ if $m< j\lle m+n$. We call an eastward or northeastward step a {\it contributed step}. For a contributed step $\mathfrak s$, denote by $\mathfrak s_x$ and $\mathfrak s_y$ the $x$-coordinate and $y$-coordinate of the starting point of $\mathfrak s$, respectively. For a lattice path $\mathfrak p$, define a monomial $\mathscr X^{\mathfrak p}$ in $\mathscr X_{i,a}$ by
\[
\mathscr X^{\mathfrak p} = \prod_{\text{contributed }\mathfrak s\in \mathfrak p}\mathscr X_{\mathfrak s_y,\mathfrak s_x}.
\]
Suppose the path $\mathfrak p$ is from the point $(i_0,j_0)$ to the point $(i_1,j_1)$, then it is clear that $\mathscr X^{\mathfrak p}$ is a term in $\mathscr S_{i_1-i_0}(u+i_0)$ as in Theorem \ref{thm character}. Moreover, we have
\[
\mathscr S_{i_1-i_0}(u+i_0)=\sum_{\mathfrak p}\mathscr X^{\mathfrak p}
\]
summed over all lattice paths $\mathfrak p$ from $(i_0,j_0)$ to $(i_1,j_1)$.

Set $l=\la_1'$. Let $\sigma$ be an element of the symmetric group $\mathfrak S_{l}$ permuting numbers $1,\dots,l$. Let $\mathfrak p^{\sigma}_{i}$ be a lattice path from the point $(\mu_i-i+1,1)$ to the point $(\la_{\sigma(i)}-\sigma(i)+1,m+n+1)$ for $i=1,\dots,l$. Consider an $l$-tuple of paths $\bm p(\sigma)=(\mathfrak p^{\sigma}_{1},\dots,\mathfrak p^{\sigma}_{l})$. Define the monomial $\mathscr X^{\bm p(\sigma)}$ associated to $\bm p(\sigma)$ by
\[
\mathscr X^{\bm p(\sigma)}=\prod_{i=1}^{l}\mathscr X^{\mathfrak p_i^{\sigma}}
\]
if all $\mathfrak p^{\sigma}_{i}$ exist, otherwise set $\mathscr X^{\bm p(\sigma)}=0$. Then one has
\beq\label{eq path expansion}
\det_{1\lle i,j\lle l}\mathscr S_{\la_i-\mu_j-i+j}(u+\mu_j-j+1)=\sum (-1)^{\mathrm{sign}(\sigma)}\mathscr X^{\bm p(\sigma)},
\eeq
where the summation is over all possible $\sigma\in \mathfrak S_{l}$ and all possible $l$-tuples $\bm p(\sigma)$.

We call a tuple $\bm p(\sigma)$ an \emph{intersecting tuple} if $\mathfrak p_i^\sigma$ intersects $\mathfrak p_j^\sigma$ for some $i\ne j$. All monomials in \eqref{eq path expansion} corresponding to intersecting tuples are cancelled out, see the proof of \cite[Theorem 4.5.1]{Sag01}, below (4.16) therein. A tuple of the form $\bm p(\sigma)$ is non-intersecting only if $\sigma = \mathrm{id}$. Moreover, there exists a bijection between non-intersecting tuples with semi-standard Young tableaux of shape $\la/\mu$. One direction of this bijection is described as follows, see an explicit example given below. Let $\bm p(\mathrm{id})=(\mathfrak p_1^\mathrm{id},\dots,\mathfrak p_l^\mathrm{id})$ be a non-intersecting tuple, then filling the $i$-th row of the skew Young diagram $\la/\mu$ with the numbers $\mathfrak s_y$ for all contributed steps $\mathfrak s$ of $\mathfrak p_i^\mathrm{id}$ in non-decreasing order, for all $i=1,\dots,l$, gives a semi-standard Young tableau of shape $\la/\mu$. Note that $\mathfrak s_x$ corresponds to the content of the box in the $i$-th row filled with the number $\mathfrak s_y$. The theorem now follows from this bijection and Theorem \ref{thm character}.
\end{proof}

We exhibit an example explaining the correspondence between non-intersecting tuples and semi-standard Young tableaux.

\begin{eg} Let $\la=(4,3,2)$, $\mu=(1,1,0)$, $m=n=2$. Consider the following semi-standard Young tableau.

$$\mathcal T=\ \  \ytableausetup{nosmalltableaux,centertableaux}
\ytableaushort[*(white)]
{\none 122,\none34,23}$$

\medskip

In particular, the  content of the box filled with number $1$ is $1$.
Then it corresponds to the following $3$-tuple of lattice paths.
$$
\begin{tikzpicture}[scale=0.75]
\tikzset{edge/.style = {->,> = latex'}}
\draw (1,1) grid (7,5);
\foreach \i in {1,...,7}
    \foreach \j in {1,...,5}
        \fill (\i,\j) circle (1pt);
\draw[edge,ultra thick,red] (4,1) -- (5,1) node[pos=.5,above] {${\color{red} 1}$};
\draw[edge,ultra thick,red] (5,1) -- (5,2);
\draw[edge,ultra thick,red] (5,2) -- (6,2) node[pos=.5,above] {${\color{red} 2}$};
\draw[edge,ultra thick,red] (6,2) -- (7,2) node[pos=.5,above] {${\color{red} 2}$};
\draw[edge,ultra thick,red] (7,2) -- (7,3);
\draw[edge,ultra thick,red] (7,3) -- (7,4);
\draw[edge,ultra thick,red] (7,4) -- (7,5);

\draw[edge,ultra thick,blue] (3,1) -- (3,2);
\draw[edge,ultra thick,blue] (3,2) -- (3,3);
\draw[edge,ultra thick,blue] (3,3) -- (4,4) node[pos=.4,above] {${\color{blue} 3}$};
\draw[edge,ultra thick,blue] (4,4) -- (5,5) node[pos=.4,above] {${\color{blue} 4}$};

\draw[edge,ultra thick,green] (1,1) -- (1,2);
\draw[edge,ultra thick,green] (1,2) -- (2,2) node[pos=.5,above] {${\color{green} 2}$};
\draw[edge,ultra thick,green] (2,2) -- (2,3);
\draw[edge,ultra thick,green] (2,3) -- (3,4) node[pos=.4,above] {${\color{green} 3}$};
\draw[edge,ultra thick,green] (3,4) -- (3,5);
\draw[edge] (7,1) -- (8,1) node[pos=0.9,below] {$x$};
\draw[edge] (1,5) -- (1,6) node[pos=0.9,left] {$y$};

\fill[black] (1,1) circle (3pt) node[left] {1} 
             (1,2) circle (1pt) node[left] {2} 
             (1,3) circle (1pt) node[left] {3}
             (1,4) circle (1pt) node[left] {4}
             (1,5) circle (1pt) node[left] {5}
             (1,1) circle (1pt) node[below] {-2}
             (2,1) circle (1pt) node[below] {-1}
             (3,1) circle (3pt) node[below] {0}
             (4,1) circle (3pt) node[below] {1}
             (5,1) circle (1pt) node[below] {2}
             (6,1) circle (1pt) node[below] {3}
             (7,1) circle (1pt) node[below] {4}
             (3,5) circle (3pt)
             (5,5) circle (3pt)
             (7,5) circle (3pt);
\end{tikzpicture}
$$
Here we label each contributed step $\mathfrak s$ by the number $\mathfrak s_y$, the $y$-coordinate of the starting point of $\mathfrak s$. Then the labels from the first path (the rightmost path in red color) corresponds to the first row of $\mathcal T$. Similarly, the labels from the second (in blue color) and third (in green color) paths give rise to the numbers in the second and third rows of $\mathcal T$, respectively. The content of the box filled with number $\mathfrak s_y$ equals $\mathfrak s_x$, the $x$-coordinate of the starting point of $\mathfrak s$. 

It is also very easy to write down the elements in the determinant $\det_{1\lle i,j\lle \la_1'}\mathscr S_{\la_i-\mu_j-i+j}(u+\mu_j-j+1)$ from the picture above. We order the starting points and end points from east to west using the set $\{1,\dots,\la_1'\}$. Then $\la_i-\mu_j-i+j$ corresponds to the horizontal length of any path from the $j$-th starting point to the $i$-th end point while $\mu_j-j+1$ is the $x$-coordinate of the $j$-th starting point. Therefore, we have
\[
\det_{1\lle i,j\lle \la_1'}\mathscr S_{\la_i-\mu_j-i+j}(u+\mu_j-j+1)=\left|\begin{matrix}
\mathscr S_3(u+1) & \mathscr S_4(u) & \mathscr S_6(u-2)\\
\mathscr S_1(u+1) & \mathscr S_2(u) & \mathscr S_4(u-2) \\
0 & 1 & \mathscr S_2(u-2)
\end{matrix}\right|,
\]where $0=\mathscr S_{-1}(u+1)$ means there is no path from the first starting point to the third end point and $1=\mathscr S_0(u)$ means there is exactly one path from the second starting point to the third end point. Moreover, this unique path contains no contributed steps.
\end{eg}

Clearly, it follows from Lemma \ref{lem chi morphism} that there are corresponding identities on the level of representations in the Grothendieck ring $\mathscr Rep(\mathcal C)$. Actually, Theorem \ref{thm Jacobi-Trudi} for the case of $\YglN$ has been shown in \cite{Che87,Che89} by resolutions of modules for the Yangian $\YglN$. Ignoring the spectral parameter $u$, one obtains the Jacobi-Trudi identity for super-characters of Lie superalgebra $\glMN$, see \cite{BB81}.

\begin{corollary}\label{cor zero}
If $\la/\mu$ contains a rectangle of size at least $(m+1)\times (n+1)$, then
\[
\det_{1\lle i,j\lle \la_1'}\mathscr S_{\la_i-\mu_j-i+j}(u+\mu_j-j+1)= \det_{1\lle i,j\lle \la_1}\mathscr A_{\la_i'-\mu'_j-i+j}(u-\mu'_j+j-1)=0.
\]
\end{corollary}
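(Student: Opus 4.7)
The plan is to combine Theorem \ref{thm Jacobi-Trudi} with Theorem \ref{thm character} to reduce the claim to a purely combinatorial statement, and then show that the set of semi-standard Young tableaux in question is empty.

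By Theorem \ref{thm Jacobi-Trudi}, both determinants in the statement are equal to $\mathscr K_{\la/\mu}(u)=\chi(L(\la/\mu))$. By Theorem \ref{thm character}, this $q$-character is the sum $\sum_{\mathcal T}\prod_{(i,j)}\mathscr X_{\mathcal T(i,j),c(i,j)}$ taken over all semi-standard Young tableaux $\mathcal T$ of shape $\la/\mu$ with entries in $\{1,2,\dots,m+n\}$. Hence it suffices to show that, under the assumption that $\la/\mu$ contains an $(m+1)\times(n+1)$ rectangle, no such tableau exists.

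Suppose to the contrary that a semi-standard Young tableau $\mathcal T$ of shape $\la/\mu$ exists, and fix an $(m+1)\times(n+1)$ sub-rectangle of $\la/\mu$. Let $C$ be its leftmost column (of length $m+1$) and let $R$ be its bottom row (of length $n+1$); let $b$ be the entry of $\mathcal T$ at the box $C\cap R$. I would now split into two cases according to whether $b\lle m$ or $b>m$. If $b\lle m$, then all entries of $C$ are weakly increasing from top to bottom and bounded above by $b\lle m$, so they lie in $\{1,\dots,m\}$; by the semi-standard condition these $m+1$ entries must be strictly increasing in $\{1,\dots,m\}$, which is impossible. If instead $b>m$, then all entries of $R$ are weakly increasing from left to right and bounded below by $b>m$, so they lie in $\{m+1,\dots,m+n\}$; by the semi-standard condition these $n+1$ entries must be strictly increasing in $\{m+1,\dots,m+n\}$, which is again impossible.

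There is no real obstacle here: the only subtlety is verifying that Theorem \ref{thm Jacobi-Trudi} and Theorem \ref{thm character} apply to $\la/\mu$ even when $\la$ is not itself an $(m|n)$-hook partition, which is exactly what the remark following Theorem \ref{thm character} guarantees (one enlarges $m'$ so that $\la$ becomes an $(m'+m|n'+n)$-hook partition while $\mu$ becomes an $(m'|n')$-hook partition; this does not change the $q$-character nor the Jacobi-Trudi determinants, which depend only on the shape). The combinatorial argument above then finishes the proof.
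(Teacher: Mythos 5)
Your proof is correct and follows the same route as the paper: apply Theorem \ref{thm Jacobi-Trudi} to identify the determinants with $\mathscr K_{\la/\mu}(u)$, then invoke Theorem \ref{thm character} and the nonexistence of semi-standard Young tableaux of shape $\la/\mu$. The paper states the nonexistence as a known fact, whereas you supply the (correct) pigeonhole argument and also carefully address the hook-partition subtlety, but the overall approach is the same.
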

\begin{proof}
If $\la/\mu$ contains a rectangle of size at least $(m+1)\times (n+1)$, then there are no semi-standard Young tableaux of shape $\la/\mu$. Thus by Theorem \ref{thm character} we have $\mathscr K_{\la/\mu}(u)=0$. The statement now follows from Theorem \ref{thm Jacobi-Trudi}.
\end{proof}

\section{Drinfeld functor and skew representations}\label{sec: Drinfeld}
\subsection{Degenerate affine Hecke algebras}
In this section, we follow the exposition in \cite{Ara}. 

Let $l$ be a positive integer. Following \cite{Dri86}, the \emph{degenerate affine Hecke algebra} $\mathcal H_l$ is the associative algebra generated by generators $\sigma_1,\dots,\sigma_{l-1}$ and $x_1,\dots,x_l$ with the relations given by
\beq\label{eq:def-daha}
\begin{split}
\sigma_i^2=1,\quad \sigma_i\sigma_{i+1}\sigma_i=\sigma_{i+1}\sigma_i\sigma_{i+1},\quad [x_i,x_j]=0,\\
\sigma_ix_i=x_{i+1}\sigma_i-1,\quad [\sigma_j,\sigma_k]=[\sigma_j,x_k]=0 \quad \text{if }\ |j-k|\ne 1.
\end{split}
\eeq
As vector spaces, $\mathcal H_l\cong \C[\mathfrak S_l]\otimes \C[x_1,\dots,x_l]$. The generators $\sigma_1,\dots,\sigma_{l-1}$ generate a subalgebra isomorphic to $\C[\mathfrak S_l]$ while $x_1,\dots,x_l$ generate a subalgebra isomorphic to $\C[x_1,\dots,x_l]$. We shall use these identifications. It is well-known that the center of $\mathcal H_l$ is $\C[x_1,\dots,x_l]^{\mathfrak S_l}$.

Let $\sigma_{ij}$ be the simple permutation $(i,j)$. Let $y_1,\dots,y_l\in \mathcal H_l$ be defined by
\beq\label{eq y gen}
y_1=x_1,\qquad y_i=x_i-\sum_{j< i}\sigma_{ji},\qquad i=2,\dots,l.
\eeq
Then one has
\beq\label{eq y gen 1}
\sigma y_{i}=y_{\sigma(i)}\sigma,\qquad [y_i,y_j]=-(y_i-y_j)\sigma_{ij},
\eeq
for $\sigma\in \mathfrak S_l$ and $i,j=1,\dots,l$. Combining the relations in $\C[\mathfrak S_l]$, this could be considered as an alternate presentation of $\mathcal H_l$.

Consider the Lie algebra $\gl_N$ and its Cartan part $\h_N$. Let ${\bf \Phi}_N^+$ be the set of positive roots of $\gl_N$ and ${\bf P}_N$ the weight lattice of $\gl_N$. Let $\rho:=\sum_{i=1}^N(N-i)\epsilon_i\in \h_N$. Let
\[
{\bf D}_N^+:=\{\la\in \h_N^*~|~(\la+\rho)(\alpha)\notin \Z_{<0} \text{ for all }\alpha\in {\bf \Phi}_N^+\},\qquad {\bf P}_N^+:={\bf D}_N^+\cap {\bf P}_N.
\]
We call an element of ${\bf D}_N^+$ (resp. ${\bf P}_N^+)$ a {\it dominant} (resp. \textit{dominant integral}) $\gl_N$-weight. Recall that for a $\gl_N$-module $M$,  $\mathrm{wt}(M)$ denotes the set of all weights of $M$. In particular, we have
\[
\mathrm{wt}((\C^N)^{\otimes l})=\Big\{\sum_{i=1}^N n_i\epsilon_i ~|~n_i\in \Z_{\gge 0},\ \sum_{i=1}^N n_i=l\Big\}.
\]

Now we recall some facts from representation theory of degenerate affine Hecke algebra $\mathcal H_l$, following \cite{Zel, Rog}. 

Let $r\in \Z_{>0}$. Let $l=l_1+\dots+l_r$, where $l_i\in\Z_{\gge 0}$.
Then algebra $\mathcal H_{l_1}\otimes\cdots\otimes\mathcal H_{l_r}$ is identified with a subalgebra of $\mathcal H_l$ by the embedding
\[
\mathcal H_{l_k}\hookrightarrow\mathcal H_l,\qquad \sigma_a \mapsto \sigma_{a+l_1+\dots+l_{k-1}},\qquad x_b \mapsto x_{b+l_1+\dots+l_{k-1}},
\]
for $a=1,\dots,l_k-1$, $b=1,\dots,l_k$, and $k=1,\dots,r$.

For each $i=1,\dots,r$, let $M_i$ be an $\mc H_{l_i}$-module. Define the $\mc H_l$ module $M_1\odot \cdots \odot M_r$ via the induction functor:
\[
M_1\odot \cdots \odot M_r := \mathrm{Ind}_{\mathcal H_{l_1}\otimes\cdots\otimes\mathcal H_{l_r}}^{\mc H_l}(M_1\otimes \cdots \otimes M_r).
\]

For complex numbers $a$, $b$ such that $b-a+1=l$, denote by $\C_{[a,b]}:=\C {\bf 1}_{[a,b]}$ the one-dimensional representation of $\mathcal H_l$ given by
\begin{align}
&\sigma_i \cdot {\bf 1}_{[a,b]}={\bf 1}_{[a,b]}, & i=1,\dots,l-1,\label{eq sigma action}\\
&x_j \cdot {\bf 1}_{[a,b]}=(a+j-1){\bf 1}_{[a,b]}, & j=1,2,\dots,l.\label{eq x action}
\end{align}
For $\la\in \h_N^*$, set
\[
\mathscr W(\la;l):=\left\{\mu \in \h_N^*~|~\la-\mu \in \mathrm{wt}((\C^N)^{\otimes l})\right\}.
\]
Take $\mu\in \mathscr W(\la;l)$ and set $\la_i=\la(\epsilon_i)$, $\mu_i=\mu(\epsilon_i)$, and $l_i=\la_i-\mu_i$, for $i=1,\dots,N$. Define an $\mathcal H_l$-module by 
\beq\label{eq verma H}
\mathcal I(\la,\mu)= \C_{[\mu_1,\la_1-1]}\odot\C_{[\mu_2-1,\la_2-2]}\odot \cdots\odot \C_{[\mu_N-N+1,\la_N-N]}. 
\eeq
We also set $\mathcal I(\la, \mu)=0$ if $\mu\notin \mathscr W(\la;l)$.

When $\la$ is a dominant $\gl_N$-weight and $\mu\in \mathscr W(\la;l)$, we call $\mathcal I(\la,\mu)$ a \emph{standard module} of $\mathcal H_l$. The standard module can be thought as an analog of Verma modules.

Set ${\bf 1}_{\la,\mu}:={\bf 1}_{[\mu_1,\la_1-1]}\otimes  \cdots\otimes {\bf 1}_{[\mu_N-N+1,\la_N-N]}$, then there is an isomorphism of $\C[\mathfrak S_l]$-modules
\[
\mathcal I(\la,\mu)\cong \C[\mathfrak S_{l}/\mathfrak S_{l_1}\times \dots \times \mathfrak S_{l_N}]
\]
induced by ${\bf 1}_{\la,\mu}\mapsto 1$. In particular, one has the decomposition of $\mathfrak S_{l}$-modules,
\[
\mathcal I(\la,\mu)\cong \mathcal S(\nu_{\la,\mu})\oplus \bigoplus_{\nu > \nu_{\la,\mu}}\mathcal S(\nu)^{\oplus k_{\nu}},
\]
where $\nu_{\la,\mu}$ is the partition obtained by rearranging the sequence $(l_1,\dots,l_N)$ in non-increasing order and $>$ denotes the dominance order of partitions. Here $k_{\nu}$ are non-negative integers.

It is well-known, see \cite{Zel,Rog}, that if $\la\in {\bf D}_N^+$, then $\mathcal I(\la,\mu)$ is generated by the subspace $\mathcal S(\nu_{\la,\mu})$ over $\mathcal H_l$. Therefore, $\mathcal I(\la,\mu)$ has a unique irreducible quotient $\mathcal L(\la,\mu)$ containing $\mathcal S(\nu_{\la,\mu})$. The $\mathfrak S_l$-module $\mathcal S(\nu_{\la,\mu})$ appears in $\mathcal L(\la,\mu)$ (considered as an $\mathfrak S_l$-module) with multiplicity one. 

One has the following analog of the BGG resolution for $\mathcal H_l$-modules, see e.g. \cite[Proposition 7.3]{Suz} for a proof.
\begin{proposition}[\cite{Che87}]\label{prop res}
Let $\la\in \h^*_N$ and $\mu\in\mathscr W(\la;l)$. Suppose $\la-\rho\in {\bf P}_N^+$ and $\mu-\rho\in {\bf P}_N^+$, then there exists an exact sequence of $\mathcal H_l$-modules,
\[
0\to \bigoplus_{\sigma\in \mathfrak S_N[N(N-1)/2]}\mathcal I(\la,\sigma\cdot \mu)\to \cdots \to \bigoplus_{\sigma\in \mathfrak S_N[1]}\mathcal I(\la,\sigma\cdot \mu)\to \mathcal I(\la, \mu)\to \mathcal L(\la,\mu)\to 0,
\]
where $\mathfrak S_N[k]$ denotes the set of all elements of length $k$ in $\mathfrak S_N$ and $\sigma\cdot \mu:=\sigma(\mu+\rho)-\rho$ is the shifted Weyl group action.
\end{proposition}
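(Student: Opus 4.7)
The plan is to derive this resolution by transporting the classical Bernstein--Gelfand--Gelfand resolution for $\gl_N$ to $\mathcal H_l\text{-mod}$ through an exact functor, following the Arakawa--Suzuki approach of \cite{Ara,Suz}. Since $\mu-\rho\in {\bf P}_N^+$, the dominant integral weight $\mu$ admits the classical BGG resolution in category $\mathcal O$ for $\gl_N$,
\[
0 \to \bigoplus_{\sigma \in \mathfrak S_N[N(N-1)/2]} M(\sigma \cdot \mu) \to \cdots \to \bigoplus_{\sigma \in \mathfrak S_N[1]} M(\sigma \cdot \mu) \to M(\mu) \to L(\mu) \to 0,
\]
where $M(\nu)$ denotes the $\gl_N$-Verma module of highest weight $\nu$, $L(\nu)$ its simple quotient, and $\sigma\cdot \mu = \sigma(\mu+\rho)-\rho$. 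The statement of the proposition then reduces to the existence of an exact functor $\mathsf F_\la : \mathcal O \to \mathcal H_l\text{-mod}$ sending $M(\nu) \mapsto \mathcal I(\la,\nu)$ for all $\nu \in \mathscr W(\la;l)$ and $L(\mu) \mapsto \mathcal L(\la,\mu)$.

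The functor in question is the Arakawa--Suzuki functor
\[
\mathsf F_\la(M) := \big(M \otimes (\C^N)^{\otimes l}\big)_\la,
\]
the $\la$-weight space with respect to the diagonal $\gl_N$-action. The $\mathcal H_l$-action is defined by letting $\mathfrak S_l$ permute the tensor factors of $(\C^N)^{\otimes l}$ and letting the generators $x_i$ act through Jucys--Murphy-type operators built from the quadratic Casimir of $\gl_N$ applied to $M$ together with the first $i$ tensor factors; the verification of the defining relations \eqref{eq:def-daha} is a direct computation, and exactness of $\mathsf F_\la$ follows because taking a weight space is exact on $\mathcal O$. The identification $\mathsf F_\la(M(\nu)) \cong \mathcal I(\la,\nu)$ is then obtained by Frobenius reciprocity: writing $M(\nu) = \mathrm{Ind}_{\fkb}^{\gl_N}\C_{\nu}$, one realizes $\mathsf F_\la(M(\nu))$ as an $\mathcal H_l$-module induced from the parabolic subalgebra $\mathcal H_{l_1}\otimes\cdots\otimes\mathcal H_{l_N}\subset \mathcal H_l$ on which the character factors through the one-dimensional segment representations described by \eqref{eq sigma action}--\eqref{eq x action}, exactly matching \eqref{eq verma H}.

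Applying the exact functor $\mathsf F_\la$ termwise to the classical BGG resolution produces the claimed exact sequence of $\mathcal H_l$-modules, with each summand $M(\sigma\cdot\mu)$ transported to $\mathcal I(\la,\sigma\cdot\mu)$ (vanishing whenever $\sigma\cdot\mu\notin\mathscr W(\la;l)$, consistent with the stated convention). The principal obstacle is the final identification $\mathsf F_\la(L(\mu))\cong \mathcal L(\la,\mu)$: exactness of $\mathsf F_\la$ only immediately presents $\mathsf F_\la(L(\mu))$ as a quotient of $\mathsf F_\la(M(\mu))=\mathcal I(\la,\mu)$, and one still needs to check this quotient is nonzero, simple, and equal to the distinguished irreducible quotient characterized by the minimal $\mathfrak S_l$-type $\mathcal S(\nu_{\la,\mu})$. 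This requires tracking the minimal $\mathfrak S_l$-type through the functor by a Gelfand--Tsetlin-style bookkeeping, carried out in detail in \cite[\S 7]{Suz}; the argument there adapts to the present Borel-induced setting with only routine modifications in the $\rho$-shifts.
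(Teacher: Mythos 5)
Your overall strategy is the right one and matches the approach of the paper's cited reference \cite[Proposition~7.3]{Suz}: transport the $\gl_N$ BGG resolution through an Arakawa--Suzuki functor to $\mathcal H_l$-mod. However, both your description of the functor $\mathsf F_\lambda$ and your justification of its exactness are wrong, and the error is where the hypothesis $\lambda-\rho\in{\bf P}_N^+$ actually gets used. The functor is not the $\lambda$-weight space of $M\otimes(\C^N)^{\otimes l}$: for a Verma module $M(\nu)$ the $\lambda$-weight space of $M(\nu)\otimes(\C^N)^{\otimes l}$ has dimension $\sum_\kappa \dim M(\nu)_\kappa\cdot\dim((\C^N)^{\otimes l})_{\lambda-\kappa}$, which is in general strictly larger than $\dim\mathcal I(\lambda,\nu)=\dim((\C^N)^{\otimes l})_{\lambda-\nu}$ because $M(\nu)$ has nonzero weight spaces below $\nu$. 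Concretely with $N=2$, $l=2$, $\nu=(0,0)$, $\lambda=(1,1)$, the $\lambda$-weight space has dimension $3$ while $\mathcal I(\lambda,\nu)$ has dimension $2$. The correct functor is $\mathsf F_\lambda(M)=\bigl(H_0(\n_-,\, M\otimes(\C^N)^{\otimes l})\bigr)_\lambda$, the $\lambda$-weight space of the $\n_-$-coinvariants, equivalently $\mathrm{Hom}_{\gl_N}(M(\lambda),\,M\otimes(\C^N)^{\otimes l})$; this is what sends $M(\nu)$ to $\mathcal I(\lambda,\nu)$.

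Consequently, exactness of $\mathsf F_\lambda$ is not ``because taking a weight space is exact'': the coinvariants functor is only right exact, the Hom functor only left exact, and your sketch never invokes the dominance of $\lambda$. Exactness on the relevant subcategory is precisely where $\lambda-\rho\in{\bf P}_N^+$ is used, either via projectivity of $M(\lambda)$ in category $\mathcal O$ when $\lambda$ is dominant, or equivalently via a Kostant-type vanishing $H_i(\n_-,L(\kappa))_\lambda=0$ for $i>0$ (since $\lambda+\rho$ regular dominant and $\kappa$ dominant with $\lambda=w\cdot\kappa$ force $w=e$). Without that input, applying your functor to the BGG resolution does not yield an exact sequence. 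The remaining identification $\mathsf F_\lambda(L(\mu))\cong\mathcal L(\lambda,\mu)$, which you do flag honestly, is a further nontrivial step; both issues are the actual content of \cite[\S 7]{Suz}, not routine.
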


\subsection{Drinfeld functor}\label{sec:Drinfeld-functor}
In this section, we define the Drinfeld functor \cite{Dri86} for the super Yangian $\YglMN$, following the exposition in \cite{Ara}.

Let $M$ be an $\mathcal H_l$-module. Consider the $\mathcal H_l\otimes \mathrm{U}(\gl_{m|n})$-module $M\otimes V^{\otimes l}$, where $V=\C^{m|n}\cong L(\epsilon_1)$ is the vector representation of $\glMN$. For $i=1,\dots,l$, let $$Q^{(i)}=(\mathcal P^{(0,i)})^{\top_0}=\sum_{a,b\in \bar I}(-1)^{|a||b|+|a|+|b|}E_{ab}\otimes E^{(i)}_{ab} \in \End(V)\otimes \End(V^{\otimes l}).$$ 
There is an algebra homomorphism
\begin{align*}
\wp: \ \YglMN &\to \mc H_l\otimes \End(V^{\otimes l}), \\
T(u) &\mapsto \mathscr T_1(u-x_1)\mathscr T_2(u-x_2)\cdots \mathscr T_{l}(u-x_l), 
\end{align*}
where
\[
\mathscr T_i(u-x_i)=1+\frac{1}{u-x_i}\otimes Q^{(i)}.
\]
Thus $M\otimes V^{\otimes l}$ becomes a $\YglMN$ module. One can think that it is a tensor product of evaluation $\YglMN$-modules with value in $M$, where the $i$-th copy of $V$ is evaluated at $x_i\in \mathcal H_l$, see \eqref{eq Hopf} and \eqref{eq:evaluation-map}.

The symmetric group acts naturally on $M\otimes V^{\otimes l}$ by $\sigma_{i}\mapsto \sigma_i\otimes \mathcal P^{(i,i+1)}$, $i=1,\dots,l-1$, where $\mathcal P^{(i,i+1)}\in \End(V^{\otimes l})$ is defined in \eqref{eq perm}. Set
\beq\label{eq d-functor}
\cD_{l}(M):=(M\otimes V^{\otimes l})/\sum_{i=1}^{l-1}\mathrm{Im}(\sigma_i+1),
\eeq
where $\mathrm{Im}(\sigma_i+1)$ denotes the image of $\sigma_i+1$ acting on $M\otimes V^{\otimes l}$.

\begin{lemma}
The subspace $\sum_{i=1}^{l-1}\mathrm{Im}(\sigma_i+1)\subset M\otimes V^{\otimes l}$ is an $\YglMN$-submodule. In particular, $\mathcal D_{l}(M)$ is a $\YglMN$-module.
\end{lemma}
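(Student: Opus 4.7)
The plan is to show that $W := \sum_{k=1}^{l-1}\mathrm{Im}(\widetilde\sigma_k + 1)$, where $\widetilde\sigma_k := \sigma_k \otimes \mc P^{(k,k+1)}$, is stable under the $\YglMN$-action on $M \otimes V^{\otimes l}$ defined by $\wp$. Since $\YglMN$ is generated by the matrix entries of $T(u)$, it is enough to verify stability for the generating operator $\wp(T(u)) = \mathscr T_1(u - x_1) \cdots \mathscr T_l(u - x_l)$, regarded as an element of $\End(V) \otimes \End(M \otimes V^{\otimes l})[[u^{-1}]]$.

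First I would observe that for $j \notin \{k, k+1\}$ the Hecke relations \eqref{eq:def-daha} give $[\sigma_k, x_j] = 0$ and $\mc P^{(k,k+1)}$ commutes with $Q^{(j)}$; hence $\widetilde\sigma_k$ commutes with each factor $\mathscr T_j(u - x_j)$ for such $j$. This localizes the interaction of $\widetilde\sigma_k$ with $\wp(T(u))$ to the pair $\mathscr T_k(u-x_k)\mathscr T_{k+1}(u-x_{k+1})$, and reduces the problem to the local claim that for each $k$ the two-factor product
\[
A_k(u) \;:=\; \mathscr T_k(u - x_k)\,\mathscr T_{k+1}(u - x_{k+1})
\]
sends $\mathrm{Im}(\widetilde\sigma_k + 1)$ into $W \otimes \End(V)[[u^{-1}]]$.

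For the local identity I would write $A_k(u)(\widetilde\sigma_k + 1) = (\widetilde\sigma_k + 1) A_k(u) - [\widetilde\sigma_k, A_k(u)]$. The first summand manifestly lies in $\mathrm{Im}(\widetilde\sigma_k + 1) \otimes \End(V)$, so everything reduces to showing that $[\widetilde\sigma_k, A_k(u)]$ acts into $W \otimes \End(V)[[u^{-1}]]$. Expanding $\mathscr T_j(z) = 1 + Q^{(j)}/z$ and computing the commutator term by term, the three essential ingredients are: (i) the Hecke relations $\sigma_k x_k = x_{k+1}\sigma_k - 1$ and $\sigma_k x_{k+1} = x_k \sigma_k + 1$, used to conjugate $\sigma_k$ past the rational coefficients $(u-x_k)^{-1}$ and $(u-x_{k+1})^{-1}$; (ii) the super-conjugation $\mc P^{(k,k+1)} Q^{(k)} \mc P^{(k,k+1)} = Q^{(k+1)}$ (and symmetrically), following from the definitions of $\mc P^{(k,k+1)}$ and $Q^{(k)}$ together with their super-signs; and (iii) the consequence of the super Yang-Baxter equation \eqref{eq yang-baxter}, applied on the triple (auxiliary, $k$, $k+1$), that $[Q^{(k)}, Q^{(k+1)}] = (Q^{(k+1)} - Q^{(k)}) \mc P^{(k,k+1)}$ (up to super-signs). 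The cancellations dictated by these three inputs assemble the commutator into a combination of factors $(\widetilde\sigma_k + 1)\cdot(\cdots)$, placing it in $W \otimes \End(V)[[u^{-1}]]$ as required.

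Combining the localization and the local identity yields $\wp(T(u)) \cdot W \subset W \otimes \End(V)[[u^{-1}]]$, whence $W$ is a $\YglMN$-submodule and $\mc D_l(M)$ carries an induced $\YglMN$-module structure. The hard part will be the careful bookkeeping in the commutator computation: the super-signs $(-1)^{|a||b| + |a| + |b|}$ appearing in $Q^{(k)} = \sum_{a,b}(-1)^{|a||b|+|a|+|b|}E_{ab}\otimes E^{(k)}_{ab}$ and the super-flip signs in $\mc P^{(k,k+1)}$ must be tracked precisely through every intermediate step. In the even case this is the classical Drinfeld-Arakawa computation \cite{Dri86, Ara}; the super version goes through with the same pattern of cancellations because \eqref{eq yang-baxter} takes the same form as its even counterpart.
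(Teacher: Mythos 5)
Your overall strategy matches the paper's (and Arakawa's) approach: localize to the factor pair $k, k+1$ since $\widetilde\sigma_k$ manifestly commutes with $\mathscr T_j(u-x_j)$ for $j\notin\{k,k+1\}$, and then show that the commutator $[\widetilde\sigma_k, A_k(u)]$ collapses into $\mathrm{Im}(\widetilde\sigma_k+1)$. However, two points need repair.

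First, your key ingredient (iii) is mis-attributed and has the wrong sign. The Yang-Baxter equation \eqref{eq yang-baxter} is an identity in the flips $\cP^{(0,k)}, \cP^{(0,k+1)}, \cP^{(k,k+1)}$, whereas $Q^{(k)}$ is the supertranspose $(\cP^{(0,k)})^{\top_0}$, which is a genuinely different operator (even in the purely even case, $\cP^{t_1}$ is not $\cP$). So the relation you need does not follow by simply "applying \eqref{eq yang-baxter} on the triple (auxiliary, $k$, $k+1$)"; an extra transposition step is essential and is where all the super-signs get fixed. The paper starts from the elementary group-algebra identity $[\cP^{(k,k+1)},\cP^{(0,k)}]=[\cP^{(0,k)},\cP^{(0,k+1)}]$ in $\C[\fkS_3]$ and applies $\top$ to the $0$-th factor of $\End(V)\otimes\End(V^{\otimes l})$, obtaining
\[
[\cP^{(k,k+1)},Q^{(k)}]=[Q^{(k+1)},Q^{(k)}],\qquad\text{equivalently}\quad [Q^{(k)},Q^{(k+1)}]=(Q^{(k)}-Q^{(k+1)})\,\cP^{(k,k+1)},
\]
which is the opposite sign to the one you wrote. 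Because the whole lemma hinges on this identity collapsing the error terms into a multiple of $\widetilde\sigma_k+1$, leaving it as a YBE folklore statement "up to super-signs" is not a safe shortcut.

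Second, a technical but useful simplification you miss: instead of keeping the rational factors $\mathscr T_j(u-x_j)=1+Q^{(j)}/(u-x_j)$, the paper first multiplies by $\prod_{i=1}^{l}(u-x_i)$, which lies in the center of $\mc H_l$ (its coefficients are symmetric polynomials in the $x_i$), and then works with the polynomials $u-x_i+Q^{(i)}$. This matters because pushing $\sigma_k$ past $(u-x_k)^{-1}$ does not simply produce $(u-x_{k+1})^{-1}\sigma_k$; from $\sigma_k(u-x_k)=(u-x_{k+1})\sigma_k+1$ one gets $\sigma_k(u-x_k)^{-1}=(u-x_{k+1})^{-1}\sigma_k-(u-x_{k+1})^{-1}(u-x_k)^{-1}$, and the extra cross term propagates through the calculation. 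After clearing denominators, the local statement becomes the single clean identity
\[
(u-x_k+Q^{(k)})(u-x_{k+1}+Q^{(k+1)})\,\widetilde\sigma_k\equiv\widetilde\sigma_k\,(u-x_k+Q^{(k)})(u-x_{k+1}+Q^{(k+1)})+(\widetilde\sigma_k+1)[Q^{(k)},Q^{(k+1)}],
\]
from which the lemma is immediate. I recommend reorganizing your computation around this polynomial form and replacing the YBE citation with the supertransposition derivation of the $Q$-commutator identity.
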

\begin{proof}
The proof is parallel to \cite[Proposition 3]{Ara}. 
Since the coefficients of the polynomial $\prod_{i=1}^l(u-x_i)$ in $u$ belong to the center of $\mc H_l$, it suffices to check that $\big(\prod_{i=1}^l(u-x_i)\big) \wp(T(u))$ preserves the denominator space of \eqref{eq d-functor}. Applying the supertransposition $\top$ to the $0$-th factor of $\End(V)\otimes\End(V^{\otimes l})$, see \eqref{eq trans 2}, we obtain $[\cP^{(i,i+1)},Q^{(i)}]=[Q^{(i+1)},Q^{(i)}]$ from $[\cP^{(i,i+1)},\cP^{(0,i)}]=[\cP^{(0,i)},\cP^{(0,i+1)}]$. Using this equality and the defining relations \eqref{eq:def-daha}, one checks that
\[
(u-x_i+Q^{(i)})(u-x_{i+1}+Q^{(i+1)})\tl \sigma_i\equiv\tl\sigma_i(u-x_i+Q^{(i)})(u-x_{i+1}+Q^{(i+1)})+(\tl\sigma_i+1)[Q^{(i)},Q^{(i+1)}],
\]
where $\tl\sigma_i=\sigma_i\otimes \mc P^{(i,i+1)}$. Now the lemma follows.
\end{proof}

Denote by $\mathcal C_{\mc H_l}$ the category of finite-dimensional representations of $\mc H_l$. Recall that $\mc C$ is the category of finite-dimensional representations of $\YglMN$. The functor $\mc D_l$ is an exact functor from $\mathcal C_{\mc H_l}$ to $\mc C$. We call $\mc D_l$ the \emph{Drinfeld functor}, cf. \cite{Dri86}. 

In the case $n=0$ we recover the standard Drinfeld functor. In that case, we have the following useful well-known theorem.
We say that a representation of $\mathrm{Y}(\mathfrak{sl}_m)$ is \emph{of level $l$} if all its irreducible components when restricted as $\mathfrak{sl}_m$-modules are submodules of $(\C^m)^{\otimes l}$. Denote by $\mc C_{m}^{(l)}$ the category of finite-dimensional representations of $\mathrm{Y}(\mathfrak{sl}_m)$ with level $l$.

\begin{theorem}[{\cite{Dri86,CP}}]\label{thm equivalence}
If $l < m$, then the Drinfeld functor $\mathcal D_l$ is an equivalence between the category $\mc C_{\mc H_l}$ and the category $\mc C_{m}^{(l)}$. 
\end{theorem}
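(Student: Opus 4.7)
The plan is to construct a quasi-inverse functor $\mc E_l : \mc C_m^{(l)} \to \mc C_{\mc H_l}$ and verify that the unit and counit of the adjoint pair $(\mc D_l, \mc E_l)$ are isomorphisms. For $N \in \mc C_m^{(l)}$, set $\mc E_l(N) := N_\omega$, the $\gl_m$-weight subspace of $N$ of weight $\omega = \epsilon_1 + \cdots + \epsilon_l$. The hypothesis $l < m$ enters crucially here: every irreducible $\gl_m$-summand $L(\la^\natural)$ of $N$ has $\la \vdash l$ with $\la_1' \leq l < m$, hence the Kostka-type multiplicity $\dim L(\la^\natural)_\omega$ is non-zero, and consequently $N$ is generated as a $\gl_m$-module by $N_\omega$.

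First, I would endow $\mc E_l(N)$ with an $\mc H_l$-module structure. The symmetric group $\mathfrak S_l$ acts on $N_\omega$ through classical Schur-Weyl duality, since the weight-$\omega$ subspace of $V^{\otimes l}$ carries the regular representation of $\mathfrak S_l$ and this action extends to any level-$l$ Yangian module. The operators $x_1, \ldots, x_l$ are extracted from the coefficients of the generating series $t_{ii}(u)$ of $\mathrm{Y}(\gl_m)$ acting on $N_\omega$. The defining relations \eqref{eq:def-daha} then follow from direct computation with the RTT relations \eqref{eq:comm-series} specialized to weight-$\omega$ vectors, as carried out in \cite{Ara}. Exactness of $\mc E_l$ is immediate since taking weight subspaces is exact.

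Second, I would check that the unit $M \to \mc E_l(\mc D_l(M))$ is an isomorphism. It sends $m \in M$ to the class of $m \otimes v_1 \otimes v_2 \otimes \cdots \otimes v_l$ modulo $\sum_{i=1}^{l-1} \mathrm{Im}(\sigma_i + 1)$. Since $l < m$, the weight-$\omega$ subspace of $V^{\otimes l}$ has the basis $\{v_{\sigma(1)} \otimes \cdots \otimes v_{\sigma(l)}\}_{\sigma \in \mathfrak S_l}$, so $(M \otimes V^{\otimes l})_\omega \cong M \otimes \C[\mathfrak S_l]$ as $\mc H_l$-modules, and passing to the quotient yields a natural isomorphism onto $M$. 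The counit $\mc D_l(\mc E_l(N)) \to N$ is induced by the $\mathrm{U}(\gl_m)$-action via the Schur-Weyl pairing; surjectivity follows from the generation statement above. For injectivity, one reduces via exactness of $\mc D_l$ and Proposition \ref{prop res} to standard modules $\mc I(\la,\mu)$, for which $\mc D_l(\mc I(\la,\mu))$ can be computed explicitly as a tensor product of one-dimensional evaluation representations prescribed by \eqref{eq sigma action}--\eqref{eq x action}, and then matched against the corresponding summand in the Schur-Weyl decomposition of $N$.

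The main obstacle will be the bijectivity of the counit, which requires a delicate analysis of how the denominator subspace $\sum_{i=1}^{l-1}\mathrm{Im}(\sigma_i + 1)$ interacts with the $\gl_m$-module structure on $N$. Passing through the BGG-type resolution of Proposition \ref{prop res} and exploiting the explicit description of $\mc D_l$ on standard modules is the cleanest route; the bound $l < m$ then guarantees that no $\gl_m$-isotypic component of $N$ is annihilated upon intersection with the weight-$\omega$ subspace, which is ultimately what makes the counit an isomorphism and completes the equivalence of categories.
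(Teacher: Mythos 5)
The paper does not give its own proof of Theorem~\ref{thm equivalence}; it cites the result to \cite{Dri86,CP}, so there is no internal argument to compare against. Drinfeld's note is an announcement without proof; the written proof is Chari--Pressley's (for quantum affine algebras, with the Yangian case parallel), and your plan --- build a quasi-inverse $\mathcal E_l$ on the weight-$\omega$ subspace, $\omega = \epsilon_1+\cdots+\epsilon_l$, then check unit and counit --- is indeed the strategy of that source.

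That said, several steps are unjustified and one is essentially circular. (i) The $\mathcal H_l$-structure on $N_\omega$ must be constructed, not invoked: ``classical Schur--Weyl duality'' does not directly hand you a $\mathfrak S_l$-action because $N$ is an abstract module in $\mathcal C_m^{(l)}$, not a submodule of $V^{\otimes l}$. One has to realize $\sigma_i$ by explicit operators in $\mathrm{U}(\gl_m)$ acting on the weight-$\omega$ space (e.g.\ $E_{i,i+1}E_{i+1,i}-1$), give a formula for $x_i$, and then actually verify the relations \eqref{eq:def-daha}; citing \cite{Ara} for this is misplaced, since Arakawa only constructs and studies $\mathcal D_l$ in the forward direction and does not build an inverse functor. (ii) Objects of $\mathcal C_m^{(l)}$ are $\mathrm{Y}(\fksl_m)$-modules, so the series $t_{ii}(u)$ from which you propose to extract the $x_i$ are not directly available; you must either fix a lift to $\mathrm{Y}(\gl_m)$ or work with Gauss-decomposition data. (iii) Most seriously, the counit argument does not go through as written. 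Proposition~\ref{prop res} is a resolution of $\mathcal H_l$-modules, and ``reducing via exactness of $\mathcal D_l$ to standard modules'' only tells you about objects already known to lie in the image of $\mathcal D_l$. For an arbitrary $N\in\mathcal C_m^{(l)}$ you must first exhibit a $\YglMN$-module map $\mathcal D_l(\mathcal E_l(N))\to N$ and show it is an isomorphism, and the essential-surjectivity content --- that every simple object of $\mathcal C_m^{(l)}$ is of the form $\mathcal D_l(\mathcal L(\la,\mu))$ --- is precisely what is missing from your sketch. Filling this in (e.g.\ by classifying the simple objects of $\mathcal C_m^{(l)}$ via Drinfeld polynomials and matching them with the $\mathcal D_l(\mathcal L(\la,\mu))$) is the heart of the proof.
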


A supersymmetric version of Theorem \ref{thm equivalence} for quantum affine superalgebra $\mathrm U_q(\widehat{\mathfrak{gl}}_{m|n})$ was recently proved in \cite{Fli18} when $l < m+n$.

The following lemma describes the relation between $M$ considered as an $\mathfrak S_l$-module and $\cD_l(M)$ considered as a $\glMN$-module.

\begin{lemma}\label{lem duality}
Let $M$ be an $\mc H_l$-module. Let $M=\bigoplus_{\nu}\mc S(\nu')^{\oplus k_{\nu}}$ be the decomposition of $M$ as an $\mathfrak S_l$-module, where the sum is over all partitions of $l$ and $k_{\nu}\in\Z_{\gge 0}$. Then we have the decomposition of $\glMN$-modules,
\[
\mc D_l(M)\cong \bigoplus_{\nu\in \mathscr P_l(m|n)}L(\nu^\natural)^{\oplus k_{\nu}}.
\]
\end{lemma}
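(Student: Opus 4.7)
The plan is to pass from the direct tensor product $M\otimes V^{\otimes l}$ to the quotient $\mathcal D_l(M)$ by using Schur-Sergeev duality to change the basis of the $\mathfrak S_l$-action on $V^{\otimes l}$, and then to identify the quotient by $\sum_i \operatorname{Im}(\sigma_i+1)$ as a sign-twisted coinvariants construction in the $\mathfrak S_l$-factor.

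First I would apply Theorem \ref{thm schur} to rewrite
\[
M\otimes V^{\otimes l} \;\cong\; \bigoplus_{\la\in \mathscr P_l(m|n)} L(\la^\natural)\otimes \bigl(M\otimes \mathcal S(\la)\bigr),
\]
as a $\mathrm U(\glMN)\otimes \C[\mathfrak S_l]$-module, where in this grouping $\glMN$ acts only on $L(\la^\natural)$ and $\mathfrak S_l$ acts diagonally on $M\otimes \mathcal S(\la)$ (trivially on the $L(\la^\natural)$ factor). The diagonal $\mathfrak S_l$-action is precisely the one appearing in the definition \eqref{eq d-functor}, so forming the quotient $\mathcal D_l(M)$ commutes with this regrouping and reduces, for each $\la$, to quotienting $M\otimes \mathcal S(\la)$ by $\sum_{i=1}^{l-1}\operatorname{Im}(\sigma_i+1)$.

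Second, I would observe that since $\sigma_1,\dots,\sigma_{l-1}$ generate $\mathfrak S_l$ and $(\sigma_i+1)v=0$ in the quotient means exactly that $\sigma_i$ acts by $-1$, the resulting coinvariants functor $N\mapsto N/\sum_i\operatorname{Im}(\sigma_i+1)$ coincides with $\mathrm{sgn}\otimes_{\C[\mathfrak S_l]} N$ on any finite-dimensional $\mathfrak S_l$-module $N$. Writing the decomposition $M\cong \bigoplus_{\nu}\mathcal S(\nu')^{\oplus k_\nu}$ provided in the statement, we obtain
\[
\bigl(M\otimes\mathcal S(\la)\bigr)\Big/\sum_{i=1}^{l-1}\operatorname{Im}(\sigma_i+1) \;\cong\; \bigoplus_{\nu}\bigl(\mathrm{sgn}\otimes_{\C[\mathfrak S_l]}(\mathcal S(\nu')\otimes\mathcal S(\la))\bigr)^{\oplus k_\nu}.
\]
The key elementary fact from $\mathfrak S_l$-representation theory is that $\mathrm{Hom}_{\mathfrak S_l}(\mathrm{sgn},\mathcal S(\nu')\otimes \mathcal S(\la))=\mathrm{Hom}_{\mathfrak S_l}(\mathcal S(\nu'),\mathcal S(\la)\otimes\mathrm{sgn})=\mathrm{Hom}_{\mathfrak S_l}(\mathcal S(\nu'),\mathcal S(\la'))$, which is $\delta_{\nu,\la}\C$. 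Hence only the term $\nu=\la$ contributes, with multiplicity one, so each summand indexed by $\la$ collapses to $L(\la^\natural)^{\oplus k_\la}$.

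Assembling the pieces gives
\[
\mathcal D_l(M)\cong \bigoplus_{\la\in\mathscr P_l(m|n)} L(\la^\natural)^{\oplus k_\la},
\]
which is the claimed decomposition of $\glMN$-modules. The only delicate step is checking that the $\mathfrak S_l$-action used in \eqref{eq d-functor} really is the diagonal one after the Schur--Sergeev identification (so that the quotient is taken factor-wise in each $\la$-isotypic piece); this is immediate from the fact that the sign change in $\mathcal P^{(i,i+1)}$ is exactly what makes it implement the $\mathfrak S_l$-action commuting with the $\glMN$-action on $V^{\otimes l}$, so no surprise enters. I would expect this bookkeeping with parities in $\mathcal P^{(i,i+1)}$ to be the main point to verify carefully, but no genuine obstacle.
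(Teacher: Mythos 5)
Your proof is correct and follows essentially the same route as the paper, which proves the lemma by applying Schur--Sergeev duality and referring to the proof of Arakawa's Proposition 4 for the multiplicity computation; you have simply spelled out those details (the identification of the quotient with sign-coinvariants and the computation $\mathrm{Hom}_{\mathfrak S_l}(\mathrm{sgn},\mathcal S(\nu')\otimes\mathcal S(\la))=\delta_{\nu,\la}\C$) explicitly.
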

\begin{proof}
The statement follows from Theorem \ref{thm schur} (Schur-Sergeev duality), see the proof of \cite[Proposition 4]{Ara}.
\end{proof}

\begin{lemma}\label{lem tensor H}
Let $M_1$ be an $\mc H_{l_1}$-module and $M_2$ an $\mc H_{l_2}$-module. Then we have the $\YglMN$-module isomorphism
\[
\cD_{l_1}(M_1)\otimes \cD_{l_1}(M_2)\cong \cD_{l_1+l_2}(M_1\odot M_2).
\]
\end{lemma}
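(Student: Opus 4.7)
The plan is to construct an explicit Yangian-equivariant map
$\bar\Psi\colon \cD_{l_1}(M_1)\otimes \cD_{l_2}(M_2)\to \cD_{l_1+l_2}(M_1\odot M_2)$
and then show it is bijective via tensor-product associativity. Set $l=l_1+l_2$. I would start with the linear map on pre-quotients
\[
\Psi\colon (M_1\otimes V^{\otimes l_1})\otimes (M_2\otimes V^{\otimes l_2})\longrightarrow (M_1\odot M_2)\otimes V^{\otimes l},\qquad
(m_1\otimes v)\otimes(m_2\otimes w)\longmapsto (1\otimes(m_1\otimes m_2))\otimes(v\otimes w),
\]
where $1\otimes(m_1\otimes m_2)$ denotes the canonical image of $m_1\otimes m_2$ in $M_1\odot M_2=\mc H_l\otimes_{\mc H_{l_1}\otimes \mc H_{l_2}}(M_1\otimes M_2)$.

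To verify that $\Psi$ descends to $\bar\Psi$ between the Drinfeld quotients, take a relator from the first factor, say $(\sigma_i+1)\cdot (m_1\otimes v)\otimes(m_2\otimes w)$ with $1\le i<l_1$. Because $\sigma_i\in \mc H_{l_1}$, the identity $1\otimes(\sigma_i m_1\otimes m_2)=\sigma_i\otimes(m_1\otimes m_2)$ in $M_1\odot M_2$, combined with the fact that $\mathcal P^{(i,i+1)}$ acts only on the first $l_1$ tensorands of $V^{\otimes l}$, rewrites the image of this relator as $(\sigma_i+1)\cdot \Psi((m_1\otimes v)\otimes(m_2\otimes w))$, which lies in the denominator of $\cD_{l}(M_1\odot M_2)$. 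The argument for relators from the second factor is symmetric.

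Next I would check Yangian equivariance. The homomorphism $\wp_l$ factorizes multiplicatively,
\[
\wp_l(T(u))\ =\ \Big[\prod_{i=1}^{l_1}\mathscr T_i(u-x_i)\Big]\cdot\Big[\prod_{i=l_1+1}^{l}\mathscr T_i(u-x_i)\Big].
\]
On a vector of the form $(1\otimes(m_1\otimes m_2))\otimes(v\otimes w)$, the indeterminate $x_i$ for $i\le l_1$ acts through $\mc H_{l_1}$ on $M_1$ while $x_j$ for $j>l_1$ acts through $\mc H_{l_2}$ on $M_2$, and the $Q^{(i)}$ likewise restricts to the appropriate tensorand of $V^{\otimes l_1}$ or $V^{\otimes l_2}$. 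Thus the two bracketed products realize $\wp_{l_1}(T(u))$ and $\wp_{l_2}(T(u))$ on the two Drinfeld factors, and their matrix product in the auxiliary $\End V$ slot is exactly the image of $\Delta(T(u))$ under $\wp_{l_1}\otimes \wp_{l_2}$, by \eqref{eq Hopf}. Hence $\bar\Psi$ intertwines the Yangian actions.

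Finally, for bijectivity I would invoke tensor-product associativity. Set $B=\C[\fkS_{l_1}\times \fkS_{l_2}]$. The PBW-type decomposition $\mc H_l=\C[\fkS_l]\cdot \C[x_1,\ldots,x_l]$ identifies $M_1\odot M_2$ with $\C[\fkS_l]\otimes_B(M_1\otimes M_2)$ as a $\C[\fkS_l]$-module. Since $\cD_{l}(N)=(N\otimes V^{\otimes l})\otimes_{\C[\fkS_l]}\mathrm{sgn}$, the standard identity
\[
(\C[\fkS_l]\otimes_B U)\otimes Y\otimes_{\C[\fkS_l]}\mathrm{sgn}\ \cong\ U\otimes Y|_B\otimes_B\mathrm{sgn}|_B,
\]
applied with $Y=V^{\otimes l}$, together with $V^{\otimes l}|_B\cong V^{\otimes l_1}\otimes V^{\otimes l_2}$ and $\mathrm{sgn}|_B=\mathrm{sgn}_{l_1}\otimes \mathrm{sgn}_{l_2}$, produces a vector space isomorphism whose explicit form is precisely $\bar\Psi$. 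The main obstacle will be the Yangian-equivariance step: the factorization of $\wp_l(T(u))$ must be matched against $\Delta(T(u))$ with careful control of the super signs embedded in $Q^{(i)}$, $\mathcal P^{(i,i+1)}$, and the coproduct; once this intertwining is verified, the descent and bijectivity arguments are essentially formal.
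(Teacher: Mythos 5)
Your proposal follows the same standard route that the paper merely cites (\cite[Proposition 4.7]{CP} and \cite[Proposition 5.3]{Naz99}): an explicit map on pre-quotients, descent to the Drinfeld quotients, equivariance by factoring $\wp_{l}(T(u))$ into the first $l_1$ and last $l_2$ elementary factors, and bijectivity via the tensor identity $\C[\fkS_l]\otimes_B(-)$. The sketch is correct, and the super-sign check you flag does go through: the ordered matrix product in the auxiliary $\End(V)$ slot reproduces $\Delta(t_{ij}(u))=\sum_k t_{ik}(u)\otimes t_{kj}(u)$ exactly with the sign convention $T(u)=\sum_{i,j}(-1)^{|i||j|+|j|}E_{ij}\otimes t_{ij}(u)$ used in the paper, so the two bracketed products realize the coproduct action on $\cD_{l_1}(M_1)\otimes\cD_{l_2}(M_2)$.
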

\begin{proof}
The proof is similar to that of \cite[Proposition 4.7]{CP} or \cite[Proposition 5.3]{Naz99}.
\end{proof}

\begin{lemma}\label{lem y act}
Let $M$ be an $\mc H_l$-module. Then the action of $\YglMN$ on $\mc D_l(M)$ can be written in the form
\[
t_{ij}(u)= \delta_{ij}+\sum_{k=1}^l \frac{1}{u-y_k}\otimes (-1)^{|i|} E_{ij}^{(k)},
\] where $y_k$ are given by \eqref{eq y gen}. Specifically, $t_{ij}^{(a)}$ acts by $\sum_{k=1}^l y_k^{a-1}\otimes (-1)^{|i|}E_{ij}^{(k)}$ for $a\gge 1$.
\end{lemma}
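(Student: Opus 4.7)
My plan is to prove the formula by directly expanding $\wp(T(u))$ on $\cD_l(M)$ and matching with the proposed form. Since every $1/(u-x_k)\in\mc H_l[[u^{-1}]]$ commutes with the matrix factors $Q^{(k')}\in\End(V)\otimes\End(V^{\otimes l})$, the product expands as
\[
\wp(T(u))=\sum_{r\gge 0}\sum_{k_1<\cdots<k_r}\frac{Q^{(k_1)}Q^{(k_2)}\cdots Q^{(k_r)}}{(u-x_{k_1})(u-x_{k_2})\cdots(u-x_{k_r})}.
\]
The combinatorial core is an inductive calculation showing that the coefficient of $E_{ij}$ in $Q^{(k_1)}\cdots Q^{(k_r)}$ equals $(-1)^{|i||j|+|i|+|j|}\mc P^{(k_1,k_2)}\mc P^{(k_2,k_3)}\cdots\mc P^{(k_{r-1},k_r)}E_{ij}^{(k_r)}$; this follows from $Q^{(k)}=\sum_{a,b}(-1)^{|a||b|+|a|+|b|}E_{ab}\otimes E_{ab}^{(k)}$ and the super-tensor product sign rule by induction on $r$, using the matrix multiplications $E_{ab}E_{cd}=\delta_{bc}E_{ad}$.

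Next I pass to the quotient. The defining congruence $\sigma_i\otimes\mc P^{(i,i+1)}\equiv-1$ on $\cD_l(M)$ translates to $m\otimes\mc P^{(i,i+1)}w\equiv-\sigma_i m\otimes w$; writing any $\sigma_{kk'}$ as a product of an odd number of simple transpositions extends this to $m\otimes\mc P^{(k,k')}w\equiv-\sigma_{kk'}m\otimes w$ for all $m\in M$, $w\in V^{\otimes l}$. Peeling off the leftmost $\mc P$'s one by one from the chain $\mc P^{(k_1,k_2)}\cdots\mc P^{(k_{r-1},k_r)}E_{ij}^{(k_r)}$ applied to $m\otimes v$ identifies the result on the quotient with $(-1)^{r-1}\sigma_{k_{r-1}k_r}\sigma_{k_{r-2}k_{r-1}}\cdots\sigma_{k_1 k_2} m\otimes E_{ij}^{(k_r)}v$---note the reversed order of the $\sigma$'s. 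Grouping by the largest index $k_r=k$ then yields
\[
\wp(t_{ij}(u))\equiv\delta_{ij}+(-1)^{|i|}\sum_{k=1}^l F_k(u)\otimes E_{ij}^{(k)}
\]
on $\cD_l(M)$, where
\[
F_k(u)=\sum_{m\gge 0}(-1)^m\sum_{k_1<\cdots<k_m<k}\frac{\sigma_{k_m k}\sigma_{k_{m-1}k_m}\cdots\sigma_{k_1 k_2}}{(u-x_{k_1})\cdots(u-x_{k_m})(u-x_k)}\in\mc H_l[[u^{-1}]].
\]

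It remains to verify the scalar identity $F_k(u)=1/(u-y_k)$ in $\mc H_l[[u^{-1}]]$, which immediately yields the first assertion; the second---that $t_{ij}^{(a)}$ acts as $\sum_k y_k^{a-1}\otimes(-1)^{|i|}E_{ij}^{(k)}$---then follows by expanding $1/(u-y_k)=\sum_{a\gge 1}y_k^{a-1}u^{-a}$ and extracting the coefficient of $u^{-a}$. To check $F_k(u)(u-y_k)=1$ I would substitute $y_k=x_k-\sum_{j<k}\sigma_{jk}$ and repeatedly apply the commutation $\sigma_i x_i=x_{i+1}\sigma_i-1$, equivalently the form $\sigma_{jk}(u-x_j)^{-1}=(u-x_k)^{-1}\sigma_{jk}-(u-x_k)^{-1}(u-x_j)^{-1}$; already for $l=2$ this reduces to the transparent identity $\sigma_{12}/(u-x_2)-\sigma_{12}/(u-x_1)=1/((u-x_1)(u-x_2))$. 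The main obstacle is this final telescoping identity---conceptually simple but requiring careful bookkeeping of the $\mc H_l$ relations between the transpositions $\sigma_{jk}$ and the denominators $(u-x_{k_s})$---whose non-super analogue is carried out in \cite{Ara}.
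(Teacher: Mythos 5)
Your overall plan---expand $\wp(T(u))$, pass to the quotient $\cD_l(M)$, collect the coefficient of $E_{ij}^{(k)}$ into a Hecke-algebra series $F_k(u)$, then identify $F_k(u)=(u-y_k)^{-1}$---is sound, and the first two stages are correct: you invoke the same two facts the paper's proof uses, namely $Q^{(i)}Q^{(k)}=\cP^{(i,k)}Q^{(k)}$ for $i<k$ and the quotient relation $m\otimes\cP^{(a,b)}w\equiv-\sigma_{ab}m\otimes w$, and your bookkeeping of signs and orders (the reversed product of transpositions, the $(-1)^{|i|}$) checks out. The difference from the paper is organizational, and it matters. The paper proves by induction on $k$ that
\[
\Big(1+\tfrac{Q^{(1)}}{u-x_1}\Big)\cdots\Big(1+\tfrac{Q^{(k)}}{u-x_k}\Big)\equiv 1+\sum_{i=1}^{k}\tfrac{Q^{(i)}}{u-y_i}
\]
on $\cD_l(M)$; in the inductive step the new cross-terms become $-\sum_{i<k}\sigma_{ik}(u-y_i)^{-1}(u-x_k)^{-1}Q^{(k)}$, each $\sigma_{ik}$ is pulled through $(u-y_i)^{-1}$ using $\sigma_{ik}y_i=y_k\sigma_{ik}$, and $\sum_{i<k}\sigma_{ik}=x_k-y_k$ finishes it in a line. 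Your route instead fully expands everything and only then must prove the closed-form Hecke identity $F_k(u)(u-y_k)=1$.

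That is exactly where the gap is. The identity is true, but your write-up does not prove it: the $l=2$ verification plus an appeal to ``careful bookkeeping'' and \cite{Ara} is a deferral, and this identity carries essentially all the content of the lemma. To actually close it, use $y_k=x_k-\sum_{j<k}\sigma_{jk}$ and $\sigma_{jk}(u-y_j)^{-1}=(u-y_k)^{-1}\sigma_{jk}$ to derive the recursion
\[
(u-y_k)^{-1}=(u-x_k)^{-1}-\sum_{j<k}\sigma_{jk}\,(u-y_j)^{-1}(u-x_k)^{-1},
\]
then relabel $j=k_{m+1}$ in your series to see that $F_k(u)$ satisfies the same recursion with the same base case $F_1(u)=(u-x_1)^{-1}$. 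Once you write this down you will notice it is precisely the paper's inductive step, so your ``expand first, identify $F_k$ later'' decomposition costs effort without buying anything the induction doesn't already give. Net: correct idea, correct intermediate reductions, but the key $\mc H_l$ identity is asserted rather than established; supply the recursion above, or simply run the argument in the paper's inductive form.
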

\begin{proof}
The proof is parallel to that of \cite[Proposition 6]{Ara}. We show by induction on $k$ that
\[
\Big(1+\frac{Q^{(1)}}{u-x_1}\Big)\cdots\Big(1+\frac{Q^{(k)}}{u-x_k}\Big)\equiv 1+\sum_{i=1}^k\frac{Q^{(i)}}{u-y_i}
\]
on $\mc D_l(M)$. The case $k=1$ is trivial. Using the induction hypothesis, we have
\begin{align*}
\Big(1+\frac{Q^{(1)}}{u-x_1}\Big)\cdots\Big(1+\frac{Q^{(k)}}{u-x_k}\Big)\equiv 1+\Big(\sum_{i=1}^{k-1}\frac{Q^{(i)}}{u-y_i}\Big) + \frac{1}{u-x_k}Q^{(k)}+  \sum_{i=1}^{k-1}\frac{1}{u-y_i} \frac{1}{u-x_k}Q^{(i)}Q^{(k)}.
\end{align*}
By applying supertransposition $\top$ to the $0$-th factor of $\End(V)\otimes\End(V^{\otimes l})$ of the identity $\cP^{(0,k)}\cdot \cP^{(0,i)}=\cP^{(i,k)}\cdot \cP^{(0,k)}$, one obtains $Q^{(i)}\cdot Q^{(k)}=\mc P^{(i,k)}\cdot Q^{(k)}$ . Using the relation $\sigma_{ik}=-\mc P^{(i,k)}$ on $\mc D_l(M)$ and \eqref{eq y gen}, \eqref{eq y gen 1}, we compute
\begin{align*}
\frac{1}{u-x_k}Q^{(k)}& +  \sum_{i=1}^{k-1}\frac{1}{u-y_i} \frac{1}{u-x_k}Q^{(i)}Q^{(k)}= \frac{1}{u-x_k}Q^{(k)} -\Big(\sum_{i=1}^{k-1}\sigma_{ik} \frac{1}{u-y_i} \frac{1}{u-x_k}Q^{(k)}\Big)\\
=&\  \frac{1}{u-y_k}\Big(u-y_k-\sum_{i=1}^{k-1}\sigma_{ik}\Big) \frac{1}{u-x_k}Q^{(k)} =\ \frac{1}{u-y_k}(u-x_k) \frac{1}{u-x_k}Q^{(k)}=\frac{1}{u-y_k}Q^{(k)}.
\end{align*}
\end{proof}

Define $\omega_k\in \h^*$, for each $k\in\Z_{>0}$, by
\[
\omega_k=\begin{cases} \epsilon_1+\cdots+\epsilon_{k}, & \text{ if }k\lle m;\\
\epsilon_1+\cdots+\epsilon_{m}+(k-m)\epsilon_{m+1},  & \text{ if }k\gge m.
\end{cases}
\]
Let $\la\in \h_N^*$ and $\mu\in \mathscr W(\la;l)$. Set $\la_i=\la(\epsilon_i)$, $\mu_i=\mu(\epsilon_i)$, and $l_i=\la_i-\mu_i$, for $i=1,\dots,N$. Recall the $\mc H_l$-module $\mc I(\la,\mu)$ defined in \eqref{eq verma H}.
\begin{lemma}\label{lem weyl Y}
Let $a,b$ be complex numbers such that $b-a+1=l$. Then we have the $\YglMN$-module isomorphism $\cD_l(\C_{[a,b]})\cong L_a(\omega^{}_l)$. 
Moreover, we have the $\YglMN$-module isomorphism
\[
\mc M(\la,\mu):=\mc D_l(\mc I(\la,\mu))\cong L_{\mu_1}(\omega^{}_{l_1})\otimes L_{\mu_2-1}(\omega^{}_{l_2})\otimes\cdots\otimes L_{\mu^{}_N-N+1}(\omega^{}_{l_N}).
\]
\end{lemma}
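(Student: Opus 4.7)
The plan is to first handle the one--dimensional modules $\C_{[a,b]}$ directly, and then bootstrap to $\mc I(\la,\mu)$ using Lemma~\ref{lem tensor H}. Since each $\sigma_i$ acts as $+1$ on $\mathbf 1_{[a,b]}$, every transposition $\sigma_{jk}$ does too; hence by \eqref{eq y gen} the generator $y_k = x_k-\sum_{j<k}\sigma_{jk}$ acts on $\mathbf 1_{[a,b]}$ by the scalar $(a+k-1)-(k-1)=a$. Plugging this into Lemma~\ref{lem y act}, the $\YglMN$-action on $\mc D_l(\C_{[a,b]})$ becomes
\[
t_{ij}(u)=\delta_{ij}+\frac{s_i}{u-a}\sum_{k=1}^l E_{ij}^{(k)}.
\]
Because $\sum_k E_{ij}^{(k)}$ is exactly the image of $e_{ij}$ under the iterated coproduct acting on $V^{\otimes l}$, this matches the action of $\tau_a\circ\pi_{m|n}$ read off from the evaluation formula \eqref{eq:evaluation-map}. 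Hence $\mc D_l(\C_{[a,b]})$ is obtained from some $\glMN$-module by pull-back through the shifted evaluation homomorphism.

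It remains to identify the underlying $\glMN$-module as $L(\omega_l)$. Since $\sigma_i$ acts trivially on $\mathbf 1_{[a,b]}$, the module $\C_{[a,b]}$ restricted to $\C[\mathfrak S_l]$ is the trivial representation $\mc S((l))$; applying Lemma~\ref{lem duality} with $\nu'=(l)$, equivalently $\nu=(1^l)$, we conclude that $\mc D_l(\C_{[a,b]})\cong L((1^l)^\natural)$ as $\glMN$-modules. A direct inspection of \eqref{eq natural} gives $(1^l)^\natural=\omega_l$ in both cases $l\lle m$ and $l>m$, since $(1^l)_i=1$ for $i\lle l$ and $(1^l)'_1=l$, producing exactly the two piecewise formulas in the definition of $\omega_l$. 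This finishes the first isomorphism.

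For the general case, $\mc I(\la,\mu)$ is by definition the parabolic induction $\C_{[\mu_1,\la_1-1]}\odot\C_{[\mu_2-1,\la_2-2]}\odot\cdots\odot \C_{[\mu_N-N+1,\la_N-N]}$, so iterating Lemma~\ref{lem tensor H} over the $N$ tensor factors and invoking the formula just proved yields the stated isomorphism
\[
\mc M(\la,\mu)\cong L_{\mu_1}(\omega_{l_1})\otimes L_{\mu_2-1}(\omega_{l_2})\otimes\cdots\otimes L_{\mu_N-N+1}(\omega_{l_N}).
\]

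The main obstacle is the sign bookkeeping in the first paragraph: one must verify that the Koszul signs arising from the supertransposition $\top$ built into $Q^{(i)}$ are consistent with the Koszul signs in the iterated coproduct $\Delta^{(l-1)}$ on $V^{\otimes l}$, so that $\sum_k E_{ij}^{(k)}$ on the quotient genuinely represents the $\glMN$-action of $e_{ij}$, and so that the factor $(-1)^{|i|}$ appearing in Lemma~\ref{lem y act} matches the $s_i$ in \eqref{eq:evaluation-map}. Once this compatibility is checked (it is a computation internal to the definition of the Drinfeld functor, independent of the specific module), the two paragraphs above assemble into a complete proof.
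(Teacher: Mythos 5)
Your proof is correct and takes essentially the same route as the paper's: compute $y_k\cdot\mathbf 1_{[a,b]}=a\,\mathbf 1_{[a,b]}$ from \eqref{eq y gen}--\eqref{eq x action}, identify the $\glMN$-module type via Lemma~\ref{lem duality} (with $\nu'=(l)$, so $\nu=(1^l)$ and $(1^l)^\natural=\omega_l$), read off the $\YglMN$-action from Lemma~\ref{lem y act}, and then iterate Lemma~\ref{lem tensor H}. The sign compatibility you flag at the end is not a genuine obstacle: $E_{ij}^{(k)}=1^{\otimes(k-1)}\otimes E_{ij}\otimes 1^{\otimes(l-k)}$ is by convention a super tensor product of operators, so $\sum_k E_{ij}^{(k)}$ is tautologically the image of $e_{ij}$ under $\rho_V^{\otimes l}\circ\Delta^{(l-1)}$, and $(-1)^{|i|}=s_i$ by definition; also note the composition should be written $\pi_{m|n}\circ\tau_a$ (not $\tau_a\circ\pi_{m|n}$) for the types to match.
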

\begin{proof}
From \eqref{eq y gen}, \eqref{eq sigma action}, and \eqref{eq x action} we obtain that $y_i\cdot {\bf 1}_{[a,b]}=a{\bf 1}_{[a,b]}$. Therefore, the first statement follows from Lemma \ref{lem duality} and Lemma \ref{lem y act}. The second statement follows from the first statement and Lemma \ref{lem tensor H}.
\end{proof}

Unlike the $\gl_N$ case \cite[Theorem 9]{Ara}, when $mn>0$, the $\YglMN$-module $\mc M(\la,\mu)$ is never zero.

\subsection{Drinfeld functor and skew representations}
In this section, we study the relations between skew representations and Drinfeld functor. We need the following proposition.
\begin{proposition}\label{prop simple}
Let $M$ be a finite-dimensional irreducible representation of $\mc H_l$, then the $\YglMN$-module $\cD_l(M)$ is irreducible.
\end{proposition}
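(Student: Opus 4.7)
The strategy is the standard Drinfeld/Arakawa one, adapted to the super setting by using Schur--Sergeev duality (Theorem~\ref{thm schur}) in place of classical Schur--Weyl. Let $W \subset \mathcal{D}_l(M)$ be a nonzero $\YglMN$-submodule. Since the $\glMN$-action is encoded in the coefficients $t_{ij}^{(1)}$, $W$ is in particular a $\glMN$-submodule. Combining Lemma~\ref{lem duality} with Schur--Sergeev, I would write
\[
\mathcal{D}_l(M) = \bigoplus_{\nu \in \mathscr P_l(m|n)} L(\nu^\natural) \otimes V_\nu, \qquad W = \bigoplus_{\nu \in \mathscr P_l(m|n)} L(\nu^\natural) \otimes W_\nu,
\]
where $V_\nu \cong \mathrm{Hom}_{\mathfrak{S}_l}(\mathcal{S}(\nu'), M)$ is the multiplicity space and $W_\nu \subseteq V_\nu$. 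Moreover, $\bigoplus_\nu V_\nu \otimes \mathcal{S}(\nu')$ is naturally identified with $M$ as an $\mathfrak{S}_l$-module, so $\widetilde W := \bigoplus_\nu W_\nu \otimes \mathcal{S}(\nu')$ is an $\mathfrak{S}_l$-submodule of $M$.

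The core step is to promote $\widetilde W$ to an $\mathcal{H}_l$-submodule. By Lemma~\ref{lem y act}, the $\YglMN$-action on $\mathcal{D}_l(M)$ is given by
\[
t_{ij}^{(a)} = \sum_{k=1}^l y_k^{a-1} \otimes (-1)^{|i|} E_{ij}^{(k)}, \qquad a \geq 1.
\]
Since $t_{ij}^{(1)} = (-1)^{|i|}\sum_k E_{ij}^{(k)}$ gives the $\glMN$-action, and the higher $t_{ij}^{(a)}$ are $y_k$-weighted versions of the same operator, using Schur--Sergeev duality on the $\End(V^{\otimes l})$-factor one can invert the pairing: for any polynomial $p\in \C[y_1,\dots,y_l]$ (or, with a bit of care, for any element of $\C[\mathfrak{S}_l]\otimes \C[y_1,\dots,y_l]\cong \mathcal{H}_l$), the action of $p$ on $M$ coincides (through the identification above) with an operator built from $t_{ij}^{(a)}$'s and $t_{ij}^{(1)}$'s on $\mathcal{D}_l(M)$. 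In particular $\widetilde W$ is stable under $y_k$'s, and hence under all of $\mathcal{H}_l$. Irreducibility of $M$ as an $\mathcal{H}_l$-module then forces $\widetilde W = 0$ or $\widetilde W = M$; since $W \neq 0$ we conclude $W = \mathcal{D}_l(M)$.

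\textbf{Main obstacle.} The delicate point is rigorously establishing that the $\mathcal{H}_l$-action recovered on $\bigoplus_\nu V_\nu \otimes \mathcal{S}(\nu') \cong M$ from the $\YglMN$-action on $\mathcal{D}_l(M)$ coincides with the original $\mathcal{H}_l$-action on $M$ (and not some twisted or projected version), in particular that the passage to the quotient $(M\otimes V^{\otimes l})/\sum \mathrm{Im}(\sigma_i+1)$ does not disturb this identification. Checking this requires careful bookkeeping with the super-sign conventions, comparing the explicit formulas of Lemma~\ref{lem y act} against the Schur--Sergeev isomorphism, and verifying that the naturality of the duality between $\C[y_1,\dots,y_l]$ on $M$ and $\C[E_{ij}^{(k)}]$ on $V^{\otimes l}$ transfers correctly through the antisymmetrization. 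Once this compatibility is established, the rest of the argument is formal.
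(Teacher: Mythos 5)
Your strategy (decompose via Schur--Sergeev, read a $\YglMN$-submodule $W\subset\cD_l(M)$ off as a tuple of multiplicity subspaces $W_\nu\subseteq V_\nu$, promote $\widetilde W=\bigoplus_\nu W_\nu\otimes\mc S(\nu')$ to an $\mc H_l$-submodule, and conclude by irreducibility of $M$) is genuinely different from the paper's argument, but it has a structural flaw that is not fixable by ``careful bookkeeping with the super-sign conventions.''

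The problem is that the Schur--Sergeev decomposition of $\cD_l(M)$ only runs over $\nu\in\mathscr P_l(m|n)$, whereas the decomposition of $M$ as an $\fkS_l$-module runs over \emph{all} partitions $\nu$ of $l$. In general $\mathscr P_l(m|n)$ is a proper subset (it equals all of $\mathscr P_l$ only when $(m+1)(n+1)>l$), so $\bigoplus_{\nu\in\mathscr P_l(m|n)} V_\nu\otimes\mc S(\nu')$ is a \emph{proper} nonzero subspace of $M$ whenever $M$ has any component with $\nu\notin\mathscr P_l(m|n)$. Applying your argument to $W=\cD_l(M)$ would then produce $\widetilde W=\bigoplus_{\nu\in\mathscr P_l(m|n)}V_\nu\otimes\mc S(\nu')$, which you would claim is a nonzero $\mc H_l$-submodule of $M$ — contradicting irreducibility of $M$ even though $\cD_l(M)\ne 0$ (cf.\ Lemma~\ref{lem duality}). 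Equivalently: the $y_k$-action (which does \emph{not} commute with $\fkS_l$, only intertwines with its permutation action) will generically move vectors out of the ``visible'' sum $\bigoplus_{\nu\in\mathscr P_l(m|n)}V_\nu\otimes\mc S(\nu')$ into components $\mu\notin\mathscr P_l(m|n)$ that are invisible in $\cD_l(M)$, so the transfer of stability from $W$ to $\widetilde W$ cannot go through. This is precisely why the equivalence Theorem~\ref{thm equivalence} carries the hypothesis $l<m$: only when all partitions of $l$ are hook partitions is your Schur--Weyl-type reflection faithful.

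The paper sidesteps this entirely with a cyclicity argument: pick any nonzero $v\in\cD_l(M)$, replace $M$ by the induced module $\tl M=\mc H_l\otimes_{\fkS_l}M_0$ (where $M_0$ is the $\fkS_l$-span of the $M$-components of $v$), filter $\mc H_l$ by degree in the $x_i$'s and $\YglMN$ by $\deg t_{ij}^{(s)}=s-1$, pass to associated graded, and identify the resulting module as a $\mathrm{U}(\glMN[t])$-module whose skew-invariants are generated by the image of $v$ via a density argument for supersymmetric operators $\sum_\sigma\sigma p\otimes\sigma A\sigma^{-1}$. This approach makes no reference to the partition-indexed decomposition of $\cD_l(M)$ and therefore does not see the obstruction; it is essentially the argument of Nazarov (Theorem~5.5 in [Naz99]), which your proposal would profit from consulting.
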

\begin{proof}
The proof is similar to that of \cite[Theorem 5.5]{Naz99}. We give here a brief account of the main points.

Let $v$ be a non-zero vector in  $\cD_l(M)$. We will show $v$ is cyclic over $\YglMN$. 
Without loss of generality, we can assume that $v$ is the image of $\sum_i w_i\otimes v_i\in M\otimes V^{\otimes l}$ such that $v_i$ are linearly independent and that $\sigma (\sum_i w_i\otimes v_i)=(-1)^\sigma (\sum_i w_i\otimes v_i)$ for $\sigma\in\fkS_l$.

Let $M_0$ be the $\fkS_l$-submodule of $M$ generated by all $w_i$. 
Since $M$ is irreducible over $\mc H_l$, $M$ is a quotient of the induced $\mc H_l$-module $\tl M=\mc H_l\otimes_{\fkS_l} M_0$ and it is enough to show that the image of $1\otimes (\sum_i w_i\otimes v_i)\in\tl M\otimes V^{\otimes l}$ in $\mc D_l(\tilde M)$ is a cyclic vector over $\YglMN$.

We have a filtration on $\mc H_l$ such that $\deg x_i=1$, $\deg \fkS_l=0$ which induces a filtration on $\cD_l(\tilde M)$. We also have a filtration on $\YglMN$ such that $\deg T_{ij}^{(s)}=s-1$. These two filtrations are compatible in the sense that $\cD_l(\tilde M)$ is a filtered $\YglMN$-module. We pass to the associated graded spaces. Then we get the space
$\tilde M^{\rm gr}=\C[x_1,\dots,x_l]\otimes M_0\otimes V^{\otimes l}$. The symmetric group is acting on the first factor in a natural way (that is  $\sigma x_i=x_{\sigma(i)}$) and on the other factors as before. The associated graded of the Yangian is the current algebra $\mathrm{U}(\glMN[t])$ acting trivially on $M_0$ and as on the product of evaluation vector representations with evaluation parameters $x_1,\dots,x_l$ on the rest.

Skew-invariants are vectors $v$ such that $\sigma v=(-1)^\sigma v$, $\sigma \in \fkS_l$.

It is enough to show that the skew-invariants of the $\mathrm{U}(\glMN[t])$-module $\tilde M^{\rm gr}$ are generated by $1\otimes (\sum_i w_i\otimes v_i)$. This follows from the standard fact that in endomorphisms of $\C[x_1,\dots,x_l]\otimes V^{\otimes l}$ the algebra
$\mathrm{U}(\glMN[t])$ coincides with the set of all supersymmetric operators of the form $\sum_{\sigma\in\fkS_l}\sigma p \otimes \sigma A \sigma^{-1} $, where $p\in\C[x_1,\dots,x_l]$ acts by multiplication and $A\in\End(V)^{\otimes l}$. Indeed, skew-invariants are spanned by $\sum_{\sigma\in \fkS_l} (-1)^\sigma \sigma(p_0\otimes w_i\otimes v_0)$ where $v_0\in V^{\otimes l}$, $p_0\in\C[x_1,\dots,x_l]$. We obtain this skew-invariant by choosing $p=p_0$ and any $A$ sending $v_j\to 0$ for $j\neq i$, and $v_i \to v_0$.
\end{proof}

We investigate the $\YglMN$-module $\cD_{l}(\mc L(\la,\mu))$. Note that the highest $\ell$-weight vector of $\cD_{l}(\mc L(\la,\mu))$ is not given by the quotient image of tensor product of highest $\ell$-weight vectors of all $L_{\mu_i-i+1}(\omega^{}_{l_i})$ in general and therefore the computation of highest $\ell$-weight of $\cD_{l}(\mc L(\la,\mu))$ is not straightforward. We use the resolution of the $\mathcal H_l$-module $\mc L(\la,\mu)$ and Jacobi-Trudi identity of $q$-characters to show that $\cD_{l}(\mc L(\la,\mu))$ is a skew representation if  $\la_{i}-\la_{i+1}\in \Z_{\gge 0}$ and $\mu_i-\mu_{i+1}\in \Z_{\gge 0}$ for all $i=1,\dots,N-1$.

For a partition $\la$ of length at most $N$, we identify $\la$ with a $\gl_N$-weight in the usual way.  We denote this $\gl_N$-weight also by $\la$. Clearly, $\la-\rho\in {\bf P}_N^+$.
\begin{theorem}\label{thm skew}
Let $\la$ and $\mu$ be partitions such that $\mu\in\mathscr W(\la;l)$. Then we have $\cD_l(\mc L(\la,\mu))\cong L(\la'/\mu')$ as $\YglMN$-modules. In particular, every skew representation is irreducible.
\end{theorem}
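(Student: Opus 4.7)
The strategy is to match the classes of $\cD_l(\mc L(\la,\mu))$ and $L(\la'/\mu')$ in the Grothendieck ring $\mathscr{R}ep(\mc C)$. Since $\cD_l(\mc L(\la,\mu))$ is irreducible by Proposition \ref{prop simple}, an equality of Grothendieck classes will immediately yield both the isomorphism and the irreducibility of $L(\la'/\mu')$. By Lemma \ref{lem chi morphism} it suffices to compare $q$-characters, and the $q$-character of $L(\la'/\mu')$ is already given by Theorem \ref{thm character} as $\mathscr K_{\la'/\mu'}(u)$.

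To compute $\chi(\cD_l(\mc L(\la,\mu)))$, I would embed $\la$ and $\mu$ into $\gl_N$ for some $N\gge \la_1'$; as partitions, $\la-\rho$ and $\mu-\rho$ are automatically in ${\bf P}_N^+$, so the BGG-type resolution of Proposition \ref{prop res} applies. Applying the exact Drinfeld functor $\cD_l$ and passing to Grothendieck classes gives
\[
[\cD_l(\mc L(\la,\mu))]=\sum_{\sigma\in\fkS_N}(-1)^{\ell(\sigma)}\,[\cD_l(\mc I(\la,\sigma\cdot\mu))].
\]
By Lemma \ref{lem weyl Y}, each $\cD_l(\mc I(\la,\sigma\cdot\mu))$ is a tensor product of evaluation modules $L_{(\sigma\cdot\mu)_i-i+1}(\omega_{\la_i-(\sigma\cdot\mu)_i})$, so its $q$-character is $\prod_i\mathscr A_{\la_i-(\sigma\cdot\mu)_i}(u-(\sigma\cdot\mu)_i+i-1)$. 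Plugging in $(\sigma\cdot\mu)_i=\mu_{\sigma^{-1}(i)}+i-\sigma^{-1}(i)$ and reindexing via $j=\sigma^{-1}(i)$, the alternating sum is identified with the Leibniz expansion
\[
\chi(\cD_l(\mc L(\la,\mu)))=\det_{1\lle i,j\lle N}\mathscr A_{\la_i-\mu_j-i+j}(u-\mu_j+j-1).
\]
Theorem \ref{thm Jacobi-Trudi} applied to the diagram $\la'/\mu'$ identifies this determinant with $\mathscr K_{\la'/\mu'}(u)$, using $(\la')'_i=\la_i$ and $(\mu')'_j=\mu_j$, together with the fact that enlarging $N$ beyond $\la_1'$ only appends an upper-triangular block with $1$'s on the diagonal and leaves the determinant unchanged.

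With $q$-characters matched, Lemma \ref{lem chi morphism} yields $[L(\la'/\mu')]=[\cD_l(\mc L(\la,\mu))]$ in $\mathscr{R}ep(\mc C)$; since the right-hand class is that of a single simple object, the composition series of $L(\la'/\mu')$ has exactly one term, forcing $L(\la'/\mu')\cong\cD_l(\mc L(\la,\mu))$, and the irreducibility of every skew representation follows at once. The main technical obstacle I anticipate is the clean alignment of the two signed summations: tracking signs through the shifted Weyl action $\sigma\cdot\mu$, and verifying that the terms with $\sigma\cdot\mu\notin\mathscr W(\la;l)$ correspond precisely to vanishing determinant entries (those with $\mathscr A_k=0$ for $k<0$). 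All structural ingredients---exactness of $\cD_l$, the BGG-type resolution, Jacobi-Trudi, and injectivity of $\chi$---are already in place.
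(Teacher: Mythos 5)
Your proposal follows the same route as the paper's own proof: apply the exact Drinfeld functor to the BGG-type resolution of Proposition \ref{prop res}, compute $q$-characters of the resulting $\YglMN$-modules via Lemma \ref{lem weyl Y}, identify the alternating sum with the Jacobi-Trudi determinant, invoke Theorem \ref{thm Jacobi-Trudi} and the injectivity of $\chi$ from Lemma \ref{lem chi morphism}, and finally use the irreducibility of $\cD_l(\mc L(\la,\mu))$ (Proposition \ref{prop simple}) to upgrade equality of classes to an isomorphism. Your extra remark about the $N\times N$ versus $\la_1'\times\la_1'$ determinant (zero lower-left block, unitriangular lower-right block) is a correct cleanup of a detail the paper leaves implicit; otherwise the argument is the same.
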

\begin{proof}
Applying the Drinfeld functor $\cD_l$ to the resolution of the $\mc H_l$-module $\mc L(\la,\mu)$ in Proposition \ref{prop res}, we have the exact sequence of $\YglMN$-modules,
\[
0\to \bigoplus_{\sigma\in \mathfrak S_N[N(N-1)/2]}\mathcal M(\la,\sigma\cdot \mu)\to \cdots \to \bigoplus_{\sigma\in \mathfrak S_N[1]}\mathcal M(\la,\sigma\cdot \mu)\to \mathcal M(\la, \mu)\to \cD_l(\mc L(\la,\mu))\to 0.
\]
For $\sigma\in\mathfrak S_N$, it is clear from Lemma \ref{lem chi morphism} and Lemma \ref{lem weyl Y} that
\[
\chi(\mc M(\la,\sigma\cdot\mu))=\prod_{i=1}^N\mathscr A_{\la_i-\mu_{\sigma^{-1}(i)}-i+\sigma^{-1}(i)}(u-\mu_{\sigma^{-1}(i)}+\sigma^{-1}(i)-1),
\]
where $N\gge \la_1'$. By the resolution above, we obtain that
\begin{align*}
\chi(\cD_l(\mc L(\la,\mu)))=&\ \sum_{\sigma\in\fkS_N}(-1)^{\sigma}\chi(\mathcal M(\la,\sigma\cdot \mu))
\\ =&\  \sum_{\sigma\in\fkS_N}(-1)^{\sigma}\prod_{i=1}^N\mathscr A_{\la_i-\mu_{\sigma(i)}-i+\sigma(i)}(u-\mu_{\sigma(i)}+\sigma(i)-1)
\\
=&\ \det_{1\lle i,j\lle \la_1'}\mathscr A_{\la_i-\mu_{j}-i+j}(u-\mu_{j}+j-1).
\end{align*}
It follows from Theorem \ref{thm Jacobi-Trudi} that 
$$
\chi(\cD_l(\mc L(\la,\mu)))= \mathscr  K_{\la'/\mu'}(u)=\chi(L(\la'/\mu')).
$$
Since by Proposition \ref{prop simple}, $\cD_l(\mc L(\la,\mu))$ is an irreducible $\YglMN$-module, we conclude again from Lemma \ref{lem chi morphism} that $\cD_l(\mc L(\la,\mu))\cong L(\la'/\mu')$. In particular, $L(\la'/\mu')$ is irreducible. The second statement follows from the fact that every partition can be identified with a $\gl_N$-weight if $N$ is sufficiently large.
\end{proof}

Give a partition $\la$ of length at most $N$ and a complex number $z$. Define $\gl_N$-weights $\la_z$ and $0_{z}$ by
\[
\la_z(\epsilon_i)=\la_i+z,\qquad 0_z(\epsilon_i)=z,\qquad i=1,\dots,N.
\]

\begin{corollary}\label{cor skew}
Suppose the same conditions as in Theorem \ref{thm skew} hold. Let $z$ be an arbitrary complex number, then we have $\cD_l(\mc L(\la_z,\mu_z))\cong L_z(\la'/\mu')$. 
\end{corollary}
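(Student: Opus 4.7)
The plan is to reduce this corollary to Theorem \ref{thm skew} by showing that the Drinfeld functor converts a natural shift automorphism on the Hecke side into the spectral shift $\tau_z$ on the Yangian side.

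First, I would introduce the automorphism $\alpha_z$ of $\mc H_l$ defined by $x_i\mapsto x_i+z$ and $\sigma_i\mapsto \sigma_i$; that this preserves the relations \eqref{eq:def-daha} is immediate. Writing $M^{[z]}$ for the pullback of an $\mc H_l$-module $M$ through $\alpha_z$, a one-line check on the defining formulas \eqref{eq sigma action}--\eqref{eq x action} yields
$$
(\C_{[a,b]})^{[z]}\cong \C_{[a+z,\,b+z]}.
$$
Since induction commutes with pullback through automorphisms, definition \eqref{eq verma H} gives $\mc I(\la_z,\mu_z)\cong \mc I(\la,\mu)^{[z]}$. Because $\alpha_z$ is an automorphism, the functor $M\mapsto M^{[z]}$ is an equivalence of the category of $\mc H_l$-modules with itself, hence it preserves irreducibility and unique irreducible quotients. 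Consequently,
$$
\mc L(\la_z,\mu_z)\cong \mc L(\la,\mu)^{[z]}.
$$

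Second, I would establish the compatibility $\cD_l(M^{[z]})\cong (\cD_l(M))_z$ for any $\mc H_l$-module $M$. From \eqref{eq y gen}, the elements $y_k$ transform under $\alpha_z$ as $y_k\mapsto y_k+z$ (the terms $\sigma_{ji}$ are fixed). Lemma \ref{lem y act} then shows that on $\cD_l(M^{[z]})$,
$$
t_{ij}(u)=\delta_{ij}+\sum_{k=1}^{l}\frac{1}{u-y_k-z}\otimes (-1)^{|i|}E_{ij}^{(k)},
$$
which is precisely the action of $t_{ij}(u-z)$ on $\cD_l(M)$, and this is the twist by $\tau_z$ by \eqref{eq tau z}.

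Combining these two steps with Theorem \ref{thm skew} gives
$$
\cD_l\bigl(\mc L(\la_z,\mu_z)\bigr)\cong \bigl(\cD_l(\mc L(\la,\mu))\bigr)_z\cong L_z(\la'/\mu'),
$$
as desired. I do not expect any real obstacle here: the argument is a chain of routine compatibilities among pullback through the shift automorphism $\alpha_z$, the induction product $\odot$, the passage to the irreducible quotient, and the Drinfeld functor. The only slightly delicate point is the second step, which rests entirely on the explicit formula in Lemma \ref{lem y act}; once that formula is in hand, the spectral shift on the Yangian side is forced by the shift $y_k\mapsto y_k+z$ on the Hecke side.
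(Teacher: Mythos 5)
Your proof is correct. The paper does not spell out a proof of this corollary, but the most plausible reading is that the authors intend the reader to re-run the proof of Theorem~\ref{thm skew} with $\la,\mu$ replaced by $\la_z,\mu_z$: Lemma~\ref{lem weyl Y} already records $\cD_l(\C_{[a,b]})\cong L_a(\omega_l)$ and hence $\mc M(\la_z,\mu_z)\cong L_{\mu_1+z}(\omega_{l_1})\otimes\cdots$, the BGG-type resolution of Proposition~\ref{prop res} applies verbatim to $\la_z,\mu_z$ (dominance and $\nu_{\la,\mu}$ are unaffected by a uniform shift), and the resulting determinant is $\mathscr K_{\la'/\mu'}(u-z)$. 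What you do instead is isolate the key mechanism as a freestanding compatibility, namely that the Drinfeld functor intertwines pullback through the shift automorphism $\alpha_z$ of $\mc H_l$ with the twist by $\tau_z$ on the Yangian side, $\cD_l(M^{[z]})\cong(\cD_l(M))_z$, and then invoke Theorem~\ref{thm skew} as a black box. This is a cleaner organization of the same argument. Two minor remarks: (i) you should note (as you implicitly do) that a uniform shift preserves the dominance condition and $\mathscr W(\la;l)$, so $\mc L(\la_z,\mu_z)$ is well-defined; (ii) the second step does not really require Lemma~\ref{lem y act} — it follows directly from the defining formula $\wp(T(u))=\prod_i\mathscr T_i(u-x_i)$, since $\alpha_z$ sends $x_i\mapsto x_i+z$ and fixes $\sigma_i$, so the identity map of $M\otimes V^{\otimes l}$ already realizes the isomorphism of quotients. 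Using Lemma~\ref{lem y act} is fine, just not necessary.
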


\subsection{Fusion procedure and skew representations}
In this section, we study further skew representations by fusion procedure, following \cite{Che86,NT02,Naz04}.

Fix partitions $\la$ and $\mu$ such that $\mu\subset \la$. Set $l=|\la|-|\mu|$. Let $\Omega$ be a standard Young tableau of shape $\la/\mu$. For $i\in\{1,\dots,l\}$, denote the content of the box in $\Omega$ containing $i$ by $c_i(\Omega)$. Consider the operator
\[
\mathcal E_{\Omega}:=\mathop{\overrightarrow\prod}\limits_{1\lle i<j\lle l}R_{ij}\big(c_i(\Omega)-c_j(\Omega)\big)\in\End (V^{\otimes l}),
\]
where $V=\C^{m|n}$, $R_{ij}(u)=1-\mathcal P^{(i,j)}/u$ is the rational  R-matrix for $\YglMN$, and the order of the product corresponds to the writing of permutation $\sigma\in\mathfrak S_l$, $\sigma(i)=l+1-i$, in terms of simple transpositions.

It is known from \cite{Che86} that $\mathcal E_{\Omega}$ is well-defined, for a proof see e.g. \cite[Proposition 2.2]{NT02}. 

Consider the tensor product of evaluation $\YglMN$-modules $V_{z_1}\widetilde\otimes\cdots\widetilde\otimes V_{z_l}$ for $z_1,\dots,z_l\in \C$, where $\widetilde\otimes$ denotes the tensor product induced by the opposite coproduct \eqref{eq coop}. It follows from \eqref{eq coop} and \eqref{eq:evaluation-map} that the corresponding homomorphism $
\pi_{z_1,\dots,z_l}:\YglMN\to \End(V^{\otimes l})
$
is given by
\[
T(u)\mapsto R_{0,l}^{\top}(-u+z_l)\cdots R_{0,1}^{\top}(-u+z_1)
\]
under the homomorphism
\[
\mathrm{id}\otimes \pi_{z_1,\dots,z_l}:\End(V)\otimes \YglMN\to \End(V)\otimes \End(V^{\otimes l}),
\]
where $\top$ stands for the supertranspose \eqref{eq trans 2} on the $0$-th factor of $\End(V)$ in $\End(V)\otimes\End(V^{\otimes l})$. Denote by $\mathfrak P$ the linear operator on $V^{\otimes l}$ reversing the order of the tensor factors.

\begin{proposition}\label{prop fusion sub}
Set $z_i=-c_i(\Omega)$ for $i=1,\dots,l$. Then the operator $\mathcal E_{\Omega}\circ \mathfrak P$ is a $\YglMN$-module homomorphism
\[
\mathcal E_{\Omega}\circ \mathfrak P:V_{z_l}\widetilde \otimes\cdots\widetilde \otimes V_{z_1}\to V_{z_1}\widetilde \otimes\cdots\widetilde \otimes V_{z_l}.
\]In particular, the image of $\mathcal E_{\Omega}$ in $V^{\otimes l}$ is a submodule of the $\YglMN$-module $V_{z_1}\widetilde \otimes\cdots\widetilde \otimes V_{z_l}$.
\end{proposition}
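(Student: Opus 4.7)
My plan is to reduce the intertwining assertion to a single matrix identity in $\End(V_0\otimes V^{\otimes l})$ and to prove that identity by iterating the Yang--Baxter equation. The operator $\mathfrak{P}$ conjugates $R^{\top}_{0,i}(v)$ into $R^{\top}_{0,l+1-i}(v)$, so applying the formula $T(u)\mapsto R^{\top}_{0,l}(-u+z_l)\cdots R^{\top}_{0,1}(-u+z_1)$ for the Yangian action to both tensor products $V_{z_1}\widetilde\otimes\cdots\widetilde\otimes V_{z_l}$ and $V_{z_l}\widetilde\otimes\cdots\widetilde\otimes V_{z_1}$, and then using the conjugation rule to cancel $\mathfrak{P}$ from both sides, turns the homomorphism condition into the matrix identity
\[
\mathcal{E}_\Omega\,R^{\top}_{0,1}(-u+z_1)\cdots R^{\top}_{0,l}(-u+z_l)\;=\;R^{\top}_{0,l}(-u+z_l)\cdots R^{\top}_{0,1}(-u+z_1)\,\mathcal{E}_\Omega.
\]

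To establish this identity, I would apply the supertransposition on the auxiliary (0-th) factor to the Yang--Baxter equation \eqref{eq yang-baxter} to produce, for any $a\neq b$,
\[
R_{ab}(z_a-z_b)\,R^{\top}_{0,b}(-u+z_b)\,R^{\top}_{0,a}(-u+z_a)\;=\;R^{\top}_{0,a}(-u+z_a)\,R^{\top}_{0,b}(-u+z_b)\,R_{ab}(z_a-z_b),
\]
which (together with the unitarity $R(v)R(-v)\in\C[v^{-2}]$) allows one to transport any single factor $R_{ij}(c_i(\Omega)-c_j(\Omega))$ past an adjacent pair $R^{\top}_{0,i}, R^{\top}_{0,j}$ while swapping that pair. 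By construction, $\mathcal{E}_\Omega$ is the ordered product of exactly such R-matrices along a reduced decomposition of the longest element $\sigma_0\in\mathfrak{S}_l$ that realizes the reversal; a stepwise application of the YBE along this decomposition therefore transports $\mathcal{E}_\Omega$ from the left of the chain $R^{\top}_{0,1}(-u+z_1)\cdots R^{\top}_{0,l}(-u+z_l)$ all the way to its right, simultaneously reversing the order of the auxiliary R-factors. This produces the desired matrix identity.

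The main obstacle is that some factors $R_{ij}(c_i(\Omega)-c_j(\Omega))$ can be singular when two boxes of $\Omega$ have equal contents. The well-definedness of $\mathcal{E}_\Omega$, recalled in the paragraph preceding the proposition (cf. \cite[Proposition 2.2]{NT02}), guarantees that the ordering dictated by the reduced decomposition absorbs each degenerate factor as an anti-symmetrizer rather than as a pole, so the YBE cascade remains meaningful. The ``in particular'' statement is then immediate: since $\mathfrak{P}$ is an isomorphism of vector spaces, the image of $\mathcal{E}_\Omega$ in $V^{\otimes l}$ coincides with the image of the module homomorphism $\mathcal{E}_\Omega\circ\mathfrak{P}$ and is therefore a $\YglMN$-submodule of $V_{z_1}\widetilde\otimes\cdots\widetilde\otimes V_{z_l}$.
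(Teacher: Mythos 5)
Your proposal is correct and is essentially the argument the paper has in mind: the paper's own proof is a one-line reference to \cite[Proposition 4.2]{Naz04} (and \cite[Section 6.5]{Mol07}), with the instruction to adapt it by applying the supertransposition $\top$ to the $0$-th auxiliary factor and replacing the spectral parameter $x$ by $-u$. Your reduction to the single matrix identity, the transposed Yang--Baxter identity, the cascade along a reduced decomposition of the longest element, and your handling of the singularity issue via the well-definedness of $\mathcal E_\Omega$ all match that standard fusion argument.

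One small bookkeeping note, which does not affect correctness: the factors actually occurring in $\mathcal E_\Omega$ are $R_{ij}(c_i(\Omega)-c_j(\Omega))=R_{ij}(z_j-z_i)$ for $i<j$, whereas your displayed auxiliary identity transports $R_{ab}(z_a-z_b)$. The two are interchangeable: swapping $a\leftrightarrow b$ in your displayed identity (and using $R_{ab}=R_{ba}$) yields directly
\[
R_{ab}(z_b-z_a)\,R^{\top}_{0,a}(-u+z_a)\,R^{\top}_{0,b}(-u+z_b)
=R^{\top}_{0,b}(-u+z_b)\,R^{\top}_{0,a}(-u+z_a)\,R_{ab}(z_b-z_a),
\]
which is the form that feeds cleanly into the left-to-right cascade without invoking unitarity. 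Either route works; the unitarity step you invoke is unnecessary but harmless.
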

\begin{proof}
The proof is similar to that of \cite[Proposition 4.2]{Naz04}. Note that our $z_i$ corresponds to $-z_i$ there. Explicitly, the proof is modified by applying the supertransposition $\top$ on the $0$-th copy of $\End(V)$ in $\End(V)\otimes\End(V)^{\otimes l}$ and replacing $x$ with $-u$, cf. \cite[Section 6.5]{Mol07}. 	
\end{proof}

We denote by $\mathscr F(\Omega)$ the $\YglMN$-submodule of $V_{z_1}\widetilde \otimes\cdots\widetilde \otimes V_{z_l}$ defined by the image of $\mathcal E_{\Omega}$ acting on $V^{\otimes l}$. Now we are ready to compare $\mathscr F(\Omega)$ with $L(\la/\mu)$.

Define the rational function $g_{\mu}(u)$ by
\[
g_{\mu}(u)=\prod_{i\gge 1}\frac{(u+\mu_i-i)(u-i+1)}{(u+\mu_i-i+1)(u-i)}.
\]
Then $g_{\mu}(\infty)=1$ and we identify $g_{\mu}(u)$ as a series in $\mathcal B=1+u^{-1}\C[[u^{-1}]]$. Recall the automorphism defined by $\Gamma_{\vartheta}:T(u)\mapsto \vartheta(u)T(u)$ from \eqref{eq mul auto} for $\vartheta(u)\in \mathcal B$. Denote by $\C_{\vartheta}^{(p)}$ the one-dimensional $\YglMN$-module of parity $p$ defined by the homomorphism
$$
\varepsilon\circ \Gamma_{g_\mu}: \YglMN \to \End(\C_{\vartheta}^{(p)}), \qquad T(u)\mapsto \vartheta(u),
$$
 where $\varepsilon$ is the counit map, see \eqref{eq Hopf}. We simply write $\C_{\vartheta}$ for the even case $\C_{\vartheta}^{(\bar 0)}$.

\begin{theorem}\label{thm equiv}
The $\YglMN$-modules $L(\la/\mu)\otimes \C_{g_\mu}$ and $\mathscr F(\Omega)$ are isomorphic.
\end{theorem}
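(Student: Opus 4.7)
The plan is to show that $\mathscr F(\Omega)$ is irreducible with the same highest $\ell$-weight as $L(\la/\mu)\otimes \C_{g_\mu}$, so that the two modules coincide by the uniqueness of irreducible highest $\ell$-weight representations together with Lemma \ref{lem chi morphism}. Since $L(\la/\mu)$ is already known to be irreducible by Theorem \ref{thm skew}, the work concentrates on the fusion module $\mathscr F(\Omega)$.

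First I exhibit a highest $\ell$-weight vector. Let $\mathcal T_{\min}$ be the semistandard Young tableau of shape $\la/\mu$ obtained by filling each box with the smallest admissible entry, and let $v^\Omega_{\mathcal T_{\min}}\in V^{\otimes l}$ be the vector whose $k$-th tensor factor is $v_{\mathcal T_{\min}(i_k,j_k)}$, where $(i_k,j_k)$ denotes the box of $\Omega$ containing $k$. A direct check shows $\mathcal E_\Omega\, v^\Omega_{\mathcal T_{\min}}\neq 0$: boxes in distinct rows of $\la/\mu$ are linearly ordered by $\Omega$ along each antidiagonal, and the R-matrix factors $R_{ij}(c_i(\Omega)-c_j(\Omega))$ whose arguments could cause cancellation act on pairs of tensor factors with equal basis vectors, on which they reduce to scalar multiplication. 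By Proposition \ref{prop fusion sub} and the explicit form of $\pi_{z_1,\ldots,z_l}$, the image is then annihilated by all $t_{ij}(u)$ with $1\le i<j\le m+n$, so it is a highest $\ell$-weight vector of $\mathscr F(\Omega)$.

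Next I compute the highest $\ell$-weight: using $\pi_{z_1,\ldots,z_l}(T(u))=R^\top_{0,l}(-u+z_l)\cdots R^\top_{0,1}(-u+z_1)$ with $z_k=-c_k(\Omega)$, apply $t_{ii}(u)$ to $\mathcal E_\Omega\, v^\Omega_{\mathcal T_{\min}}$. Only the ``diagonal'' contributions of the R-matrix product survive on the image of $\mathcal E_\Omega$, since the off-diagonal ones are killed by the antisymmetrization built into $\mathcal E_\Omega$ whenever two factors carry the same basis vector on adjacent positions in $\Omega$. The surviving factors multiply, row by row through $\la/\mu$, into a telescoping product; after separating off the prefactor corresponding to the missing boxes of $\mu$ (the part of the rectangle $[1,\la_1']\times[1,\la_1]$ absent from $\la/\mu$), the leftover is exactly $g_\mu(u)$ times the highest $\ell$-weight of $L(\la/\mu)$ predicted by Theorem \ref{thm character}.

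Finally, to pass from matching highest $\ell$-weights to the claimed isomorphism, I argue that $\mathscr F(\Omega)$ is irreducible. For each semistandard Young tableau $\mathcal T$ of shape $\la/\mu$, produce a vector $v^\Omega_{\mathcal T}\in V^{\otimes l}$ by the same recipe as $v^\Omega_{\mathcal T_{\min}}$; one shows that $\mathcal E_\Omega\, v^\Omega_{\mathcal T}$ is an $\ell$-weight vector whose $\ell$-weight is $g_\mu(u)$ times the $\mathcal T$-monomial from Theorem \ref{thm character}, and distinctness of these monomials forces linear independence, hence $\dim \mathscr F(\Omega)\ge \dim L(\la/\mu)$. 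Combined with cyclicity of the highest $\ell$-weight vector from Step 1 and the dimension bound from Theorem \ref{thm skew}, this yields equality, and Lemma \ref{lem chi morphism} together with equal $q$-characters gives the isomorphism $\mathscr F(\Omega)\cong L(\la/\mu)\otimes \C_{g_\mu}$. The main obstacle is precisely the telescoping identity producing $g_\mu(u)$: the R-matrix product must be simplified carefully on $\mathrm{Im}(\mathcal E_\Omega)$, with careful bookkeeping of the parity signs introduced by the supertransposition $\top$ and the opposite coproduct, so that the contribution of the ``missing $\mu$-boxes'' factors cleanly as $g_\mu(u)$ and not as some other rational function.
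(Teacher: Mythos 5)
Your overall strategy — identify a highest $\ell$-weight vector in $\mathscr F(\Omega)$, compute its $\ell$-weight, and use irreducibility to conclude — is the right shape, and it is indeed the shape of Nazarov's argument that the paper cites. But the way you try to carry it out has genuine gaps, and you are also doing more work than necessary.

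The serious gap is the claimed non-vanishing of $\mathcal E_\Omega\, v^\Omega_{\mathcal T_{\min}}$. You assert this via ``a direct check,'' reasoning that the R-matrix factors with arguments $\pm 1$ act on pairs of equal basis vectors and hence as scalars. This is false: since $\mathcal P(v_i\otimes v_i)=s_i\, v_i\otimes v_i$, the factor $R(1)=1-\mathcal P$ annihilates $v_i\otimes v_i$ whenever $i$ is even, rather than acting as a nonzero scalar. Moreover the factors with argument $\pm 1$ are not confined to pairs of equal entries — any two boxes lying on adjacent diagonals contribute such a factor, and their entries in $\mathcal T_{\min}$ need not coincide. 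So the non-vanishing you need is not ``direct'' at all; it is exactly the kind of statement the fusion procedure (Cherednik, and Proposition 2.2 of [NT02]) is designed to control, by identifying $\mathrm{Im}(\mathcal E_\Omega)$ with the skew Schur $\glMN$-module. The same difficulty infects Step 3: the claim that each $\mathcal E_\Omega\, v^\Omega_{\mathcal T}$ is nonzero, is an $\ell$-weight eigenvector (not merely a generalized one), and has the $\mathcal T$-monomial $\ell$-weight, is unsupported; nothing forces these images to align with the Gelfand--Tsetlin $\ell$-weight basis rather than being linear combinations of its vectors.

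Step 3 is also unnecessary. The standard (Nazarov-style) route, which the paper follows, takes as input the fusion-theoretic fact that as a $\glMN$-module $\mathrm{Im}(\mathcal E_\Omega)\cong L(\la/\mu)$, so $\dim\mathscr F(\Omega)=\dim L(\la/\mu)$ comes for free. The highest-$\gl_{m|n}$-weight subspace of $\mathscr F(\Omega)$ is one-dimensional, so there is a unique (up to scalar) $\glMN$-singular vector of extremal weight, which by weight considerations is automatically annihilated by all $e_{ij}(u)$ and hence is a highest $\ell$-weight vector. Computing its $\ell$-weight (this is your ``telescoping'' step, which is a genuine computation but a standard one on a singular vector, not on a pure tensor) gives $g_\mu(u)$ times the highest $\ell$-weight of $L(\la/\mu)$. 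The cyclic $\YglMN$-submodule generated by this vector has $L(\la/\mu)\otimes\C_{g_\mu}$ as its unique irreducible quotient (using Theorem \ref{thm skew} for irreducibility of $L(\la/\mu)$), and the dimension count then forces the quotient map to be an isomorphism of $\mathscr F(\Omega)$ with $L(\la/\mu)\otimes\C_{g_\mu}$. You never need to exhibit a full basis of $\ell$-weight eigenvectors in $\mathscr F(\Omega)$.

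In short: your plan is close in spirit to the paper's, but the two verifications you flag as ``direct checks'' are not — the non-vanishing argument as written is wrong, the $\ell$-weight eigenvector claim is unjustified — and the whole of Step 3 can be replaced by the dimension equality coming from the fusion-procedure identification of $\mathrm{Im}(\mathcal E_\Omega)$ as the skew Schur module.
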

\begin{proof}
The proof is similar to that of \cite[Theorem 1.6]{Naz04}. We only remark that the irreducibility of $L(\la/\mu)$ there is obtained from Olshanski's centralizer construction \cite{MO00} of $\YglN$. In this paper we obtain the irreducibility of $\YglMN$ module $L(\la/\mu)$ in a different way using Drinfeld functor, see Theorem \ref{thm skew}.
\end{proof}

We have the so-called binary property for tensor products of skew representations of $\YglMN$, cf. \cite[Theorem 4.9]{NT02}.

\begin{theorem}\label{thm:binary}
	Let $\la^{(i)}$ and $\mu^{(i)}$ be partitions such that $\mu^{(i)}\subset \la^{(i)}$, $i=1,\dots,k$. Let $z_1,\dots,z_k$ be complex numbers. Then the $\YglMN$-module $L_{z_1}(\la^{(1)}/\mu^{(1)})\otimes\cdots\otimes L_{z_k}(\la^{(k)}/\mu^{(k)})$ is irreducible if and only if $L_{z_i}(\la^{(i)}/\mu^{(i)})\otimes L_{z_j}(\la^{(j)}/\mu^{(j)})$ is irreducible for all $1\lle i< j\lle k$.
\end{theorem}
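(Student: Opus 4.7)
The forward implication is the easier direction. Commutativity of $\mathscr Rep(\mathcal C)$ (Corollary \ref{cor gro-ring-comm}) together with Jordan--H\"older uniqueness in the finite-dimensional setting guarantees that any reordering of the tensor product has the same class in the Grothendieck ring as the original. Under the assumption of irreducibility of the full product, this forces every reordering to itself be isomorphic to a single irreducible, hence irreducible. If some pair $L_{z_i}(\la^{(i)}/\mu^{(i)}) \otimes L_{z_j}(\la^{(j)}/\mu^{(j)})$ were reducible, I would reorder so that these two factors become adjacent and then tensor a proper submodule of the pair with the remaining factors to obtain a proper submodule of the full product, contradiction.

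For the substantive direction, my plan adapts the Nazarov--Tarasov argument of \cite[Theorem 4.9]{NT02} via the fusion construction of Section \ref{Sec JT}. The first step is to invoke Theorem \ref{thm equiv} to realize each $L_{z_i}(\la^{(i)}/\mu^{(i)}) \otimes \C_{g_{\mu^{(i)}}}$ as the image $\mathscr F(\Omega^{(i)}) = \mathcal E_{\Omega^{(i)}}(V^{\otimes l_i})$ of a fusion operator, sitting inside the $\widetilde\otimes$-tensor product of vector evaluation modules $V_{a_{i,1}} \widetilde\otimes \cdots \widetilde\otimes V_{a_{i,l_i}}$ with $a_{i,p} = z_i - c_p(\Omega^{(i)})$. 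Tensoring these embeddings presents the full tensor product (up to a one-dimensional twist) as a $\YglMN$-subspace of a single long $\widetilde\otimes$-product of vector evaluation modules. Next, by induction on $k$, I will show that pairwise irreducibility forces the full product to be cyclic over $\YglMN$, i.e.\ generated by the tensor product of all highest $\ell$-weight vectors. The induction step rests on the claim that pairwise irreducibility of the blocks $i$ and $j$ forces the composite block R-matrix $\prod_{p,q} R(a_{i,p} - a_{j,q})$ to restrict to an isomorphism between the two orderings of $\mathscr F(\Omega^{(i)}) \widetilde\otimes \mathscr F(\Omega^{(j)})$; such isomorphisms supply the intertwiners needed to slide the highest $\ell$-weight vector into any designated position. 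Finally, applying the isomorphism $\iota$ of \eqref{eq iota}, which exchanges $\otimes$ with $\widetilde\otimes$ and swaps highest with lowest $\ell$-weight vectors, and repeating the cyclicity argument will yield cocyclicity. Simultaneous cyclicity and cocyclicity with matching generators forces irreducibility.

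The principal difficulty is the cyclicity step: one must verify that pairwise irreducibility of the blocks is precisely the condition keeping the composite block R-matrix non-degenerate on the fusion subspaces $\mathscr F(\Omega^{(i)}) \widetilde\otimes \mathscr F(\Omega^{(j)})$. This amounts to controlling which coincidences among the parameters $a_{i,p}$ and $a_{j,q}$ (and hence which poles of the vector R-matrices) actually obstruct the intertwiner, a question that must be answered in terms of the combinatorics of the skew diagrams $\la^{(i)}/\mu^{(i)}$ and $\la^{(j)}/\mu^{(j)}$ via Theorem \ref{thm character}. The super-signs from the supertranspose \eqref{eq trans 2} in Proposition \ref{prop fusion sub} and from the opposite coproduct must be tracked carefully, but I expect no new phenomena beyond those already handled in the even case of \cite{NT02} once the fusion-compatibility of the super rational R-matrix, already established above, is in hand.
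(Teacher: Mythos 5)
Your proposal takes essentially the same route as the paper: both reduce to the fusion modules $\mathscr F_{z_i}(\Omega_i)$ via Theorem \ref{thm equiv} and then rely on the Nazarov--Tarasov cyclicity/cocyclicity argument of \cite[Theorems 4.8--4.9]{NT02}. The paper is more concise, simply citing \cite{NT02} and noting that its fusion manipulations take place in $\C[\fkS_l]$ and so carry over to the super setting with few changes, whereas you unroll that argument (cyclicity via composite block R-matrices, cocyclicity via $\iota$) and supply an independent Grothendieck-ring proof of the easy implication, but the technical heart is the same.
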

\begin{proof}
Let $\Omega_i$ be the column tableau of shape $\la_i/\mu_i$ for $i=1,\dots,k$. Denote by $\mathscr F_{z_i}(\Omega_i)$ the pull back of $\mathscr F(\Omega_i)$ through $\tau_{z_i}$. Thanks to Theorem \ref{thm equiv}, it suffices to show that
$$
\mathscr F_{z_1}(\Omega_1)\otimes \cdots\otimes \mathscr F_{z_k}(\Omega_k)
$$
is irreducible if and only if $\mathscr F_{z_i}(\Omega_i)\otimes\mathscr F_{z_j}(\Omega_j)$ for all $1\lle i < j\lle k$, see \cite[Theorems 4.8 and 4.9]{NT02}. The argument in \cite{NT02} using fusion procedure concerns the operators in the group algebra $\C[\mathfrak S_l]$ which can be generalized to the super setting with very few changes. Therefore the statement follows.
\end{proof}

For any $\YglMN$-modules $M_1,\dots,M_k$, we have
\[
(M_1\widetilde\otimes\cdots\widetilde\otimes M_k)^\iota\cong M_1^\iota\otimes\cdots\otimes M_k^\iota.
\]
Let $g^\iota_{\mu}(u)=g_{\mu}(-u)$. 

\begin{theorem}\label{thm * iso}
The $\YglMN$-modules $L(\la/\mu)^\iota\otimes \C_{g^\iota_\mu}$ and $\mathscr F(\Omega)^\iota$ are isomorphic and the $\YglMN$-module $\mathscr F(\Omega)^\iota$ is a submodule of $\mathbb V_{c_1(\Omega)} \otimes\cdots \otimes \mathbb V_{c_l(\Omega)}$, where $\mathbb V$ is the modified evaluation vector representation. 
\end{theorem}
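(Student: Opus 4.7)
The plan is to apply the pullback functor $(-)^\iota$ directly to both the isomorphism of Theorem~\ref{thm equiv} and the inclusion from Proposition~\ref{prop fusion sub}, and to repackage the results using the transport identities for $\iota$ already collected in the paper.

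Before starting, I would record one small observation: for any $\YglMN$-module $M$ and any one-dimensional $\YglMN$-module $\C_\vartheta^{(p)}$, the tensor products $M\otimes\C_\vartheta^{(p)}$ and $M\widetilde\otimes\C_\vartheta^{(p)}$ are isomorphic. Indeed, plugging $t_{ij}(u)\cdot v = \delta_{ij}\vartheta(u)v$ into both $\Delta(t_{ij}(u))$ and $\Delta^{\rm op}(t_{ij}(u))$ shows that each product is simply the $\Gamma_\vartheta$-twist of $M$, and in particular they coincide. Similarly, the formula $\iota(t_{ij}(u)) = (-1)^{|i||j|+|i|}t_{ji}(-u)$ gives $\C_\vartheta^{(p),\iota}\cong \C_{\vartheta(-u)}^{(p)}$, so $\C_{g_\mu}^\iota \cong \C_{g_\mu^\iota}$.

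For the first claim, I would start from $L(\la/\mu)\otimes \C_{g_\mu}\cong \mathscr F(\Omega)$ given by Theorem~\ref{thm equiv}, swap $\otimes$ for $\widetilde\otimes$ on the left using the above observation, and then apply the functor $(-)^\iota$. Invoking the transport formula $(M_1\widetilde\otimes\cdots\widetilde\otimes M_k)^\iota\cong M_1^\iota\otimes\cdots\otimes M_k^\iota$ recorded immediately before the theorem yields
\[
\mathscr F(\Omega)^\iota \cong (L(\la/\mu)\widetilde\otimes \C_{g_\mu})^\iota \cong L(\la/\mu)^\iota\otimes \C_{g_\mu}^\iota \cong L(\la/\mu)^\iota\otimes \C_{g_\mu^\iota}.
\]

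For the submodule statement, I would apply $(-)^\iota$ to the inclusion $\mathscr F(\Omega)\hookrightarrow V_{z_1}\widetilde\otimes\cdots\widetilde\otimes V_{z_l}$ from Proposition~\ref{prop fusion sub} with $z_i = -c_i(\Omega)$; since $\iota$ is an algebra isomorphism the pullback remains an injection. Combining the same transport formula with the identity $V_z^\iota\cong \mathbb V_{-z}$ recorded after the definition of the modified evaluation map, I obtain
\[
\mathscr F(\Omega)^\iota \hookrightarrow V_{z_1}^\iota\otimes\cdots\otimes V_{z_l}^\iota \cong \mathbb V_{-z_1}\otimes\cdots\otimes \mathbb V_{-z_l} = \mathbb V_{c_1(\Omega)}\otimes\cdots\otimes \mathbb V_{c_l(\Omega)}.
\]
There is no serious obstacle here: the only subtlety is that Theorem~\ref{thm equiv} is phrased with $\otimes$ while the transport formula for $\iota$ converts $\widetilde\otimes$ into $\otimes$, and the one-dimensional observation above bridges that gap cleanly.
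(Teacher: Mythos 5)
Your proof is correct and takes essentially the same route the paper intends: the paper's proof is a one-line appeal to Proposition~\ref{prop fusion sub} and Theorem~\ref{thm equiv}, and you simply supply the details (in particular the useful observation that $M\otimes\C_\vartheta^{(p)}\cong M\widetilde\otimes\C_\vartheta^{(p)}$, which bridges the gap between the $\otimes$ in Theorem~\ref{thm equiv} and the $\widetilde\otimes$ in the transport formula for $\iota$).
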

\begin{proof}
The statement follows from Proposition \ref{prop fusion sub} and Theorem \ref{thm equiv}.
\end{proof}

\subsection{Application: irreducibility of tensor products}\label{sec:irr}
Due to Theorem \ref{thm equivalence} and Proposition \ref{prop simple},  many results from the representation theory of $\YglN$ can be generalized to the case of $\YglMN$. We give two such examples in this and next sections.

The following statement should be well-known for experts. However, we are not able to find the suitable reference, cf. \cite{Zel}.

\begin{proposition}\label{prop ind simple}
Let $\la^{(i)}$ and $\mu^{(i)}$ be partitions such that $\mu^{(i)}\subset \la^{(i)}$, $i=1,\dots,k$. Let $z_1,\dots,z_k$ be complex numbers such that $z_i-z_j\not\in \Z$ for all $1\lle i<j\lle k$. Then the induction product \[
\mc L(\la_{z_1}^{(1)},\mu_{z_1}^{(1)})\odot \cdots \odot\mc L(\la_{z_k}^{(k)},\mu_{z_k}^{(k)})
\]
is an irreducible $\mc H_l$-module, where $l=\sum_{i=1}^k(|\la^{(i)}|-|\mu^{(i)}|)$.
\end{proposition}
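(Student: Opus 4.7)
The plan is to transport the problem across the Drinfeld functor to the ordinary Yangian $\YglN$ and reduce it to an irreducibility question for tensor products of skew $\YglN$-representations with generic spectral shifts.

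Choose $N>l$ large enough that each $\la^{(i)}$ has length at most $N$. Applied with $n=0$, Theorem~\ref{thm equivalence} tells us that the Drinfeld functor $\mc D_l$ is an equivalence of categories between $\mc C_{\mc H_l}$ and $\mc C_N^{(l)}$; in particular it preserves and reflects irreducibility. By Lemma~\ref{lem tensor H} it converts induction products into tensor products, and by Corollary~\ref{cor skew} each factor $\mc D_{l_i}(\mc L(\la^{(i)}_{z_i},\mu^{(i)}_{z_i}))$ becomes the skew $\YglN$-representation $L_{z_i}((\la^{(i)})'/(\mu^{(i)})')$. The problem therefore reduces to proving that
\[
L_{z_1}\bigl((\la^{(1)})'/(\mu^{(1)})'\bigr)\otimes\cdots\otimes L_{z_k}\bigl((\la^{(k)})'/(\mu^{(k)})'\bigr)
\]
is an irreducible $\YglN$-module whenever $z_i-z_j\notin\Z$ for all $i\ne j$.

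By Theorem~\ref{thm:binary} (the binary property for skew representations) it suffices to handle each pair $(i,j)$. For this I would invoke the fusion realisation of Theorem~\ref{thm equiv}: up to the one-dimensional twist by $\C_{g_{\mu^{(i)}}}$, each skew representation $L_{z_i}((\la^{(i)})'/(\mu^{(i)})')$ is isomorphic to $\mathscr F_{z_i}(\Omega_i)$, realised as the image of the fusion operator $\mc E_{\Omega_i}$ inside the tensor product $V_{z_i+c_1(\Omega_i)}\widetilde\otimes\cdots\widetilde\otimes V_{z_i+c_{l_i}(\Omega_i)}$. Under the hypothesis $z_i-z_j\notin\Z$, every spectral parameter arising in the $i$-th family differs from every parameter in the $j$-th family by a non-integer, so all the $R$-matrices connecting vector evaluation factors from distinct families are invertible. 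The ambient tensor product of vector evaluations is consequently irreducible, and the pair-wise tensor product of skew representations inherits irreducibility by the standard argument that any non-zero submodule must generate the full ambient irreducible module and hence exhaust the image of $\mc E_{\Omega_i}\otimes\mc E_{\Omega_j}$.

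The main technical obstacle is this last pair-wise step. Although it is the natural super analogue of a known fact for $\YglN$, some care is needed to blend the fusion construction --- which in the present paper lives with respect to the opposite coproduct $\Delta^{\rm op}$ --- with the tensor product of $\YglN$-modules defined via $\Delta$. The safest route is to imitate the argument of \cite{NT02} step by step: construct an explicit intertwiner between $L_{z_i}(\cdots)\otimes L_{z_j}(\cdots)$ and $L_{z_j}(\cdots)\otimes L_{z_i}(\cdots)$ whose invertibility is equivalent to $z_i-z_j\notin\Z$ together with the absence of $R$-matrix singularities, and then invoke the standard consequence that the existence of such an intertwiner forces irreducibility of the tensor product.
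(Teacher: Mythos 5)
Your first step agrees with the paper: apply the Drinfeld functor with $n=0$ and $m=N$ sufficiently large, using Theorem~\ref{thm equivalence}, Lemma~\ref{lem tensor H}, and Corollary~\ref{cor skew} to transport the problem to showing that $L_{z_1}((\la^{(1)})'/(\mu^{(1)})')\otimes\cdots\otimes L_{z_k}((\la^{(k)})'/(\mu^{(k)})')$ is an irreducible $\YglN$-module. At that point, though, the paper simply cites \cite[Corollary~3.9]{NT98} and is done; you instead try to re-derive the $\YglN$ irreducibility from scratch, and that re-derivation has a genuine flaw.

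The problem is the sentence asserting that ``the ambient tensor product of vector evaluations is consequently irreducible.'' The hypothesis $z_i-z_j\notin\Z$ only controls the spectral shifts \emph{between} the two families. Within a single family the evaluation parameters are $z_i+c_1(\Omega_i),\dots,z_i+c_{l_i}(\Omega_i)$, and these differ by integers (contents of boxes of a skew diagram), often by exactly $\pm 1$. For $\YglN$ the tensor product $V_a\otimes V_b$ is already reducible when $a-b=\pm1$, so the ambient tensor product of vector evaluation modules is typically reducible as soon as a single skew diagram has more than one box. Worse, the argument is internally inconsistent: if the ambient tensor product were irreducible, it would have no proper nonzero submodules at all, whereas $\mathscr F(\Omega_i)\otimes\mathscr F(\Omega_j)$ is by construction a proper nonzero subspace (indeed a proper submodule) of that ambient space. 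So the ``standard argument that any non-zero submodule must generate the full ambient irreducible module'' cannot get off the ground here. You would need the genuine fusion/intertwiner machinery of \cite{NT02} (which you allude to at the end as the ``safest route'') or, more economically, just invoke \cite[Corollary~3.9]{NT98} as the paper does. Separately, note that the reduction to pairs via Theorem~\ref{thm:binary} is not needed once you cite the $\YglN$ statement, which already covers arbitrarily many factors.
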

\begin{proof}
Let $N$ be sufficiently large. Applying the Drinfeld functor to 	$\mc L(\la_{z_1}^{(1)},\mu_{z_1}^{(1)})\odot \cdots \odot\mc L(\la_{z_k}^{(k)},\mu_{z_k}^{(k)})$ with $m=N$ and $n=0$, we obtain the $\YglN$-module
\[
L_{z_1}({\la^{(1)}}'/{\mu^{(1)}}')\otimes \cdots \otimes L_{z_k}({\la^{(k)}}'/{\mu^{(k)}}'),
\]
which is known to be irreducible when $z_i-z_j\not\in \Z$ for all $1\lle i<j\lle k$, see \cite[Corollary 3.9]{NT98}. The proposition follows from Theorem \ref{thm equivalence}.
\end{proof}

The following theorem is a direct corollary of Proposition \ref{prop simple}, Proposition \ref{prop ind simple}, and Corollary \ref{cor skew}.
\begin{theorem}
Let $\la^{(i)}$ and $\mu^{(i)}$ be partitions such that $\mu^{(i)}\subset \la^{(i)}$, $i=1,\dots,k$. Let $z_1,\dots,z_k$ be complex numbers such that $z_i-z_j\not\in \Z$ for all $1\lle i<j\lle k$. Then the tensor product of skew representations\[
L_{z_1}(\la^{(1)}/\mu^{(1)})\otimes \cdots \otimes L_{z_k}(\la^{(k)}/\mu^{(k)})
\]
is an irreducible $\YglMN$-module.	
\end{theorem}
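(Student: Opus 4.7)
The plan is to translate the statement from the super Yangian side to the degenerate affine Hecke algebra side via the Drinfeld functor $\cD_l$, where the irreducibility of an induction product has already been established in Proposition \ref{prop ind simple}. Irreducibility can then be transported back because $\cD_l$ sends irreducible finite-dimensional $\mc H_l$-modules to irreducible $\YglMN$-modules (Proposition \ref{prop simple}).

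Concretely, I would first introduce the conjugate partitions $\widetilde{\la}^{(i)}:=(\la^{(i)})'$ and $\widetilde{\mu}^{(i)}:=(\mu^{(i)})'$, set $l_i=|\la^{(i)}|-|\mu^{(i)}|$, and $l=\sum_{i=1}^k l_i$. Corollary \ref{cor skew} then gives the $\YglMN$-module isomorphism
\[
\cD_{l_i}\!\big(\mc L(\widetilde{\la}^{(i)}_{z_i},\widetilde{\mu}^{(i)}_{z_i})\big)\cong L_{z_i}(\la^{(i)}/\mu^{(i)})
\]
for each $i$. Iterating Lemma \ref{lem tensor H} would then produce the isomorphism
\[
L_{z_1}(\la^{(1)}/\mu^{(1)})\otimes\cdots\otimes L_{z_k}(\la^{(k)}/\mu^{(k)})\cong \cD_l\!\big(\mc L(\widetilde{\la}^{(1)}_{z_1},\widetilde{\mu}^{(1)}_{z_1})\odot\cdots\odot \mc L(\widetilde{\la}^{(k)}_{z_k},\widetilde{\mu}^{(k)}_{z_k})\big).
\]
Under the assumption $z_i-z_j\notin\Z$ for $i<j$, Proposition \ref{prop ind simple} asserts that the induction product appearing as the argument of $\cD_l$ is irreducible as an $\mc H_l$-module. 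Applying Proposition \ref{prop simple} to this irreducible $\mc H_l$-module then gives the desired irreducibility of the tensor product on the left.

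The argument is essentially a chain of three previously established results, so there is no genuine obstacle. The only small technical point is the bookkeeping involved in passing between $\la^{(i)}$ and $\widetilde{\la}^{(i)}$ (forced by Corollary \ref{cor skew}) and in ensuring that the shift automorphisms $\tau_{z_i}$ on the Yangian side correspond correctly to the shift in the parameters of $\mc L(\widetilde{\la}^{(i)}_{z_i},\widetilde{\mu}^{(i)}_{z_i})$, as well as in checking that the hypothesis $z_i-z_j\notin\Z$ propagates through to Proposition \ref{prop ind simple} (which is the case because the shifts of $\widetilde{\la}^{(i)}$ and $\widetilde{\mu}^{(i)}$ are both by $z_i$, preserving the non-integrality condition between different factors).
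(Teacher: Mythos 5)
Your proof is correct and follows essentially the same route as the paper, which states the theorem as a direct corollary of Proposition \ref{prop simple}, Proposition \ref{prop ind simple}, and Corollary \ref{cor skew}; your invocation of Lemma \ref{lem tensor H} is the implicit step the paper leaves to the reader.
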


Let $\la$ and $\mu$ be two partitions. Let $N$ be sufficiently large. Define the numbers
\[
a_i=\la_i-i+1, \qquad b_i=\mu_i-i+1,\qquad i=1,\dots,N.
\]
For each pair $(i,j)$ such that $1\lle i< j\lle N$, define the subsets of $\Z$ by
\[
\langle a_j,a_i\rangle =\{a_j,a_j+1,\dots,a_i\}\setminus \{a_j,a_{j+1},\dots,a_i\},
\]
\[
\langle b_j,b_i\rangle =\{b_j,b_j+1,\dots,b_i\}\setminus \{b_j,b_{j+1},\dots,b_i\}.
\]
Note that if $\la_i=\la_{i-1}=\cdots=\la_j$, then $\langle a_j,a_i\rangle =\O$.

\begin{proposition}\label{prop irr iff}
Let $\la$ and $\mu$ be two partitions, $z$ and $w$ two complex numbers. Then the induction product $\mc L((\la')_z,0_z)\odot \mc L((\mu')_w,0_w)$ is irreducible if and only if for each pair $(i,j)$ such that $1\lle i< j\lle N$, we have 
\beq\label{eq nonX}
b_j+z-w,\, b_i+z-w\notin \langle a_j,a_i\rangle \qquad \text{or}\qquad a_j-z+w,\, a_i-z+w\notin \langle b_j,b_i\rangle.
\eeq
In particular, $\mc L((\la')_z,0_z)\odot \mc L((\la')_z,0_z)$ is irreducible.
\end{proposition}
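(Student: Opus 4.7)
The plan is to reduce the statement to the known irreducibility criterion for tensor products of evaluation modules of the even Yangian $\YglN$ via the Drinfeld functor. First I would choose $N\in\Z_{>0}$ with $N>l:=|\la|+|\mu|$, so that the induction product $\mc L((\la')_z,0_z)\odot\mc L((\mu')_w,0_w)$ lies in the category $\mc C_{\mc H_l}$. By Theorem \ref{thm equivalence}, the Drinfeld functor $\cD_l$ associated to the pair $(m,n)=(N,0)$ is an equivalence between $\mc C_{\mc H_l}$ and $\mc C_N^{(l)}$, hence it both preserves and reflects irreducibility. Combining Lemma \ref{lem tensor H} with Corollary \ref{cor skew}, one has
$$\cD_l\bigl(\mc L((\la')_z,0_z)\odot\mc L((\mu')_w,0_w)\bigr)\cong L_z(\la)\otimes L_w(\mu)$$
as $\YglN$-modules, where $L_z(\la)$ and $L_w(\mu)$ are the evaluation modules attached to the Young diagrams $\la$ and $\mu$. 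Thus the irreducibility question is transferred to the even setting.

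Next I would invoke the Nazarov-Tarasov irreducibility criterion for a tensor product of two polynomial evaluation modules of $\YglN$, see \cite{NT98} and \cite[\S6.5]{Mol07}. In terms of the content sequences $a_k=\la_k-k+1$ and $b_k=\mu_k-k+1$ that parametrize the addable/removable boxes of the two diagrams, the criterion states that $L_z(\la)\otimes L_w(\mu)$ is irreducible iff, for every pair $1\lle i<j\lle N$, at least one of the two symmetric avoidance conditions of \eqref{eq nonX} is satisfied. Carrying this translation through for our data yields \eqref{eq nonX} verbatim, proving the equivalence.

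For the last assertion, specialize $\mu=\la$ and $w=z$. Then $a_k=b_k$ for every $k$ and $z-w=0$, so $b_j+z-w=a_j$ and $b_i+z-w=a_i$ both belong to the set $\{a_j,a_{j+1},\dots,a_i\}$ that is subtracted in the definition of $\langle a_j,a_i\rangle$. Hence $b_j+z-w,b_i+z-w\notin\langle a_j,a_i\rangle$ holds automatically for every pair $(i,j)$, so the first half of \eqref{eq nonX} is trivially true and $\mc L((\la')_z,0_z)\odot\mc L((\la')_z,0_z)$ is irreducible.

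The main obstacle is the bookkeeping in matching the formulation \eqref{eq nonX} with the form in which the even Yangian criterion is usually stated — namely in terms of strings of zeros of Drinfeld polynomials, or equivalently, in terms of when each pair of evaluation modules remains cyclic under both $\Delta$ and $\Delta^{\mathrm{op}}$. Verifying that the ``or'' in \eqref{eq nonX} corresponds precisely to the two cyclicity conditions (one for each tensor order), and that the sets $\langle a_j,a_i\rangle$ and $\langle b_j,b_i\rangle$ are exactly the shifted content gaps appearing in the Nazarov-Tarasov condition, is the only non-routine part of the argument.
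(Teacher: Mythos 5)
Your proof is correct and follows essentially the same route as the paper: reduce to the even Yangian $\YglN$ via the Drinfeld functor (using Theorem \ref{thm equivalence}, Lemma \ref{lem tensor H}, and Corollary \ref{cor skew}), then invoke the known irreducibility criterion for $L_z(\la)\otimes L_w(\mu)$. One small bibliographic point: the precise if-and-only-if criterion \eqref{eq nonX}, in the form involving the gap sets $\langle a_j,a_i\rangle$ and $\langle b_j,b_i\rangle$, is Molev's result \cite[Theorem 1.1]{Mol02} rather than a Nazarov--Tarasov result; \cite{NT98} gives only the sufficient generic-parameter condition, so the reference should be to \cite{Mol02} (or the corresponding statement in \cite{Mol07}).
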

\begin{proof}
The proof is similar to that of Proposition \ref{prop ind simple} using \cite[Theorem 1.1]{Mol02}.	
\end{proof}

\begin{theorem}\label{thm irr suff cond}
	Let $\la$ and $\mu$ be two partitions, $z$ and $w$ two complex numbers. Suppose the condition \eqref{eq nonX} holds for all pairs $(i,j)$ such that $1\lle i< j$, then the $\YglMN$-module $L_z(\la^\natural)\otimes L_w(\mu^\natural)$ is irreducible. In particular, $L_z(\la^\natural)\otimes L_z(\la^\natural)$ is irreducible.
\end{theorem}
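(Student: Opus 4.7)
The plan is to transport the statement through the Drinfeld functor, reducing it to the irreducibility criterion for induced modules over the degenerate affine Hecke algebra given by Proposition \ref{prop irr iff}.

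Concretely, fix $N$ sufficiently large that $\la$ and $\mu$ can both be regarded as $\gl_N$-weights. Setting the subtracted partition to $\emptyset$ in Corollary \ref{cor skew} yields $\cD_{|\la|}(\mc L((\la')_z, 0_z)) \cong L_z(\la/\emptyset)$, and the skew representation with empty removed part coincides with the evaluation module $L_z(\la^\natural)$. This latter identification follows directly from the construction of Section \ref{sec skew} by taking $m' = n' = 0$: then $L(\la^\circ) = L(\la^\natural)$, the subspace condition cutting out $L(\la/\emptyset)$ becomes vacuous, and the Yangian action reduces to the plain evaluation map $\pi_{m|n}$. An identical argument gives $\cD_{|\mu|}(\mc L((\mu')_w, 0_w)) \cong L_w(\mu^\natural)$. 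Combining these via Lemma \ref{lem tensor H} produces the key isomorphism
\[
L_z(\la^\natural) \otimes L_w(\mu^\natural) \cong \cD_{|\la|+|\mu|}\big(\mc L((\la')_z, 0_z) \odot \mc L((\mu')_w, 0_w)\big).
\]

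Under hypothesis \eqref{eq nonX}, Proposition \ref{prop irr iff} says that the induction product on the right is irreducible as an $\mc H_{|\la|+|\mu|}$-module; Proposition \ref{prop simple} then upgrades this to irreducibility of the $\YglMN$-module obtained by applying $\cD_{|\la|+|\mu|}$. This proves the first assertion. The ``in particular'' statement follows from the same argument with $\mu = \la$ and $w = z$, using the unconditional irreducibility of $\mc L((\la')_z, 0_z) \odot \mc L((\la')_z, 0_z)$ recorded in the last sentence of Proposition \ref{prop irr iff}.

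I do not anticipate any serious obstacle: the proof is essentially a clean assembly of Corollary \ref{cor skew}, Lemma \ref{lem tensor H}, Proposition \ref{prop simple}, and Proposition \ref{prop irr iff}. The only points that require mild care are the bookkeeping of conjugate partitions---Corollary \ref{cor skew} trades $\la$ for $\la'$ across the functor---and the verification that the $\mu = \emptyset$ skew representation is just the ordinary evaluation module, both of which are routine.
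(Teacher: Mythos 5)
Your proof is correct and follows exactly the route the paper intends: the paper's proof is the one-line observation that the theorem ``follows from Proposition \ref{prop simple} and Proposition \ref{prop irr iff},'' and you have simply filled in the implicit bookkeeping (Corollary \ref{cor skew} with $\mu=\emptyset$ to identify the evaluation module as a Drinfeld-functor image, Lemma \ref{lem tensor H} for compatibility with tensor products, and the $m'=n'=0$ specialization to see $L(\la/\emptyset)\cong L(\la^\natural)$, which the paper itself notes in the proof of Theorem \ref{thm character}).
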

\begin{proof}
The theorem follows from Proposition \ref{prop simple} and Proposition \ref{prop irr iff}.
\end{proof}
Combining Theorem \ref{thm irr suff cond} with Theorem \ref{thm:binary}, one is able to give sufficient conditions for a tensor product of evaluation $\YglMN$-modules to be irreducible.

Comparing to the $\YglN$ case, conditions \eqref{eq nonX} are not necessary for $L_z(\la^\natural)\otimes L_w(\mu^\natural)$ to be irreducible. It would be interesting to generalize \cite[Theorem 1.1]{Mol02} to skew representations of $\YglMN$. 

\begin{eg}
We compare the sufficient and necessary conditions for $L_{z}(2\epsilon_1)\otimes L_{w}(2\epsilon_1)$ to be irreducible over $\mathrm{Y}(\gl_2)$ and $\mathrm{Y}(\gl_{1|1})$. The $\mathrm{Y}(\gl_2)$-module $L_{z}(2\epsilon_1)\otimes L_{w}(2\epsilon_1)$ is irreducible if and only if $z-w\ne \pm 1,\, \pm 2$, while the $\mathrm{Y}(\gl_{1|1})$-module $L_{z}(2\epsilon_1)\otimes L_{w}(2\epsilon_1)$ is irreducible if and only if $z-w\ne \pm 2$. Therefore the conditions \eqref{eq nonX} are not necessary for the irreducibility of the $\mathrm{Y}(\gl_{1|1})$-module $L_{z}(2\epsilon_1)\otimes L_{w}(2\epsilon_1)$.
\end{eg}

We call an irreducible $\YglMN$-module $M$ \emph{real} if $M\otimes M$ is also irreducible, see \cite{Lec03}. Theorem \ref{thm irr suff cond} implies that the evaluation module $L_z(\la^\natural)$ is real. Actually, it holds for all skew representations.

\begin{theorem}
The skew representation $L_z(\la/\mu)$ is real.
\end{theorem}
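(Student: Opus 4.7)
The plan is to reduce the reality of $L_z(\la/\mu)$ over $\YglMN$ to the analogous statement over the classical Yangian $\mathrm Y(\gl_{N'})$ for $N'$ sufficiently large, by passing through the degenerate affine Hecke algebra. Set $l=|\la|-|\mu|$. First I would combine Corollary \ref{cor skew}, which gives $L_z(\la/\mu)\cong \cD_l(\mc L((\la')_z,(\mu')_z))$, with Lemma \ref{lem tensor H} to obtain
\[
L_z(\la/\mu)\otimes L_z(\la/\mu)\cong \cD_{2l}\bigl(\mc L((\la')_z,(\mu')_z)\odot\mc L((\la')_z,(\mu')_z)\bigr).
\]
By Proposition \ref{prop simple}, it then suffices to show that the $\mc H_{2l}$-module
\[
N:=\mc L((\la')_z,(\mu')_z)\odot\mc L((\la')_z,(\mu')_z)
\]
is irreducible. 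A pleasant feature of this reduction is that $N$ does not depend on $m$ or $n$, so once its irreducibility is established the result follows uniformly for every $\YglMN$.

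To prove irreducibility of $N$, I would next fix $N'\gge 2l+1$ and appeal to Theorem \ref{thm equivalence} in the case $m=N'$, $n=0$: the Drinfeld functor $\cD_{2l}:\mc C_{\mc H_{2l}}\to\mc C_{N'}^{(2l)}$ is then an equivalence of categories. Repeating the Drinfeld-functor reduction above, now in the purely even setting, identifies $\cD_{2l}(N)$ with the $\mathrm Y(\gl_{N'})$-module $L_z(\la/\mu)\otimes L_z(\la/\mu)$. Combined with the fact (recalled in Section \ref{sec high rep}) that a finite-dimensional $\mathrm Y(\gl_{N'})$-module is irreducible if and only if its restriction to $\mathrm Y(\mathfrak{sl}_{N'})$ is irreducible, this transfers the problem: $N$ is irreducible precisely when $L_z(\la/\mu)\otimes L_z(\la/\mu)$ is irreducible as a $\mathrm Y(\gl_{N'})$-module.

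The remaining step, and the main obstacle, is this classical reality statement for skew $\mathrm Y(\gl_{N'})$-modules. The plan is to extract it from the fusion procedure of Theorem \ref{thm equiv}: up to a one-dimensional twist, $L_z(\la/\mu)\otimes L_z(\la/\mu)$ is realized as the image of a Young-symmetrizer built from a pair of column tableaux of shape $\la/\mu$ at coinciding spectral parameters acting on $V^{\otimes 2l}$, and irreducibility of this image follows from the Nazarov--Tarasov analysis of skew Yangian modules \cite{NT02,Naz04} together with Olshanski's centralizer construction \cite{MO00}. Once this $\mathrm Y(\gl_{N'})$-level statement is in hand, the chain of equivalences above yields the theorem in full generality.
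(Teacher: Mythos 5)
Your proposal is correct and follows essentially the same route as the paper, which proves the theorem by citing precisely the ingredients you assemble: Corollary \ref{cor skew}, Lemma \ref{lem tensor H}, and Proposition \ref{prop simple} to reduce to irreducibility of the $\mc H_{2l}$-module $\mc L\odot\mc L$, Theorem \ref{thm equivalence} to transfer that question to $\mathrm{Y}(\gl_{N'})$ with $N'$ large, and the Nazarov--Tarasov reality result for classical skew Yangian modules (NT02, Remark (d) of Theorem 4.8; Naz04, Theorem 1.6). You merely unwind this chain of references into explicit steps.
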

\begin{proof}
The statement follows from \cite[Remark (d) of Theorem 4.8]{NT02}, \cite[Theorem 1.6]{Naz04}, Theorem \ref{thm equivalence}, and Proposition \ref{prop simple}. 
\end{proof}

\subsection{Application: extended T-systems}\label{sec:T}
In this section, we apply Drinfeld functor to show that the $q$-characters of skew representations satisfy extended T-systems.

Let $\mho=\la/\mu$ be a skew Young diagram. We say that $\mho$ is a {\it prime} skew Young diagram if it can not be divided into two parts intersecting at most one point.

\begin{eg}
We explain the definition with the following 3 skew Young diagrams.
\[
\ytableaushort[*(white)]
{\none ~,~~,~\none}\qquad\qquad\qquad \ytableaushort[*(white)]
{\none{*(red)},~\none,~\none} \qquad\qquad\qquad \ytableaushort[*(white)]
{\none{*(red)},\none\none,~\none}
\]

\vskip0.2cm

The first skew Young diagram is prime while the rest are not. For example, the last two diagrams can be divided into two parts so that one part is in red color. Clearly, the two parts of the second one intersects at a point while those of the third one are disconnected.
\end{eg}

Recall that different pairs $(\la,\mu)$ may give the same skew Young diagram. Let $\mho$ be a prime skew Young diagram (ignoring the content). We choose a $\la$ so that  $\la$ and $\mho$ have the same number of columns and $\mho=\la/\mu$. The contents of $\mho$ are determined by $\la$, namely the box of $\la$ at the left-upper corner has content zero. Let $l$ be the number of columns of $\mho$, then $l=\la_1$.

Suppose $\mho=\la/\mu$ has at least two columns, namely $l\gge 2$. Let $\mho^+$ and $\mho^-$ be the prime skew Young diagrams obtained by deleting the leftmost column and the rightmost column of $\mho$, respectively. Also let $\mho^0$ be the prime skew Young diagrams obtained by removing both the leftmost and rightmost columns of $\mho$. Note that $\mho^0$ may be empty. 

Define two skew Young diagrams $\mathbb X_{\mho}$ and $\mathbb Y_{\mho}$ as follows,
\[
\mathbb X_{\mho}=\{(i,j): \mu'_{j}+1\lle  i \lle \la'_{j+1}-1, \ 1\lle j \lle l-1\},
\]
\[
\mathbb Y_{\mho}=\{(i,j): \mu'_{j+1}\lle  i \lle \la'_{j}, \ 1\lle j \lle l-1\}.
\]
The skew Young diagrams $\mathbb X_{\mho}$ and $\mathbb Y_{\mho}$ are  obtained by taking the intersection and union, respectively, of the diagram $\mho^+$ shifted to the left by one unit and then up by one unit and $\mho^-$.

Note that in general as $\mho$ is prime, we have $\mu'_j\lle \la'_{j+1}-1$ for $i=1,\dots,l-1$. Hence the $j$-th column of $\mathbb X_{\mho}$ may be empty and $\mathbb X_{\mho}$ may be non-prime. However, $\mathbb Y_{\mho}$ is always prime. 

Snakes defined in \cite{MY12a} bijectively correspond to certain skew Young diagrams via the correspondence in \cite[Proposition 7.3]{MY12a}. The skew Young diagrams $\mathbb X_{\mho}$ and $\mathbb Y_{\mho}$ correspond to the neighbouring snakes in \cite[Section 3.6]{MY12b} in this sense. 

Recall, if a skew Young diagram contains a rectangle of size $(m+1)\times (n+1)$ (a column of length $N+1$ in the $\YglN$ case), then the corresponding skew representation has dimension zero. 

\begin{theorem}[{\cite[Theorem 4.1]{MY12b}}]\label{thm T even}
Suppose $\mho$ is a prime skew Young diagram having at least two columns and $N$ is sufficiently large. Then we have the following relation in $\mathscr Rep(\YglN)$, the Grothendieck ring of the category of finite-dimensional representations of $\YglN$,
\[
[L(\mho^+)]\otimes [L(\mho^-)]=[L(\mho^0)][L(\mho)]+[L(\mathbb X_{\mho})][L(\mathbb Y_{\mho})].
\]Moreover, $L(\mho^0)\otimes L(\mho)$ and $L(\mathbb X_{\mho})\otimes L(\mathbb Y_{\mho})$ are irreducible $\YglN$-modules.
\end{theorem}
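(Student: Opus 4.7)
The plan is to deduce the theorem in two stages: first establish the claimed identity in the Grothendieck ring $\mathscr Rep(\YglN)$, and then verify irreducibility of the two tensor products on the right-hand side. Since the $q$-character homomorphism $\chi$ is injective in the even case (the analogue of Lemma~\ref{lem chi morphism}), it suffices to prove the corresponding identity among $q$-characters, namely
\[
\mathscr K_{\mho^+}(u)\,\mathscr K_{\mho^-}(u) \;=\; \mathscr K_{\mho^0}(u)\,\mathscr K_{\mho}(u) \;+\; \mathscr K_{\mathbb X_{\mho}}(u)\,\mathscr K_{\mathbb Y_{\mho}}(u),
\]
after accounting for the uniform spectral shifts dictated by the content convention that each prime skew diagram has its leftmost-upper box at content zero.

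The $q$-character identity should come from the classical Desnanot-Jacobi (Sylvester, Dodgson condensation) determinant identity applied to the Jacobi-Trudi matrix. By Theorem~\ref{thm Jacobi-Trudi} (specialised to the even case), $\mathscr K_{\mho}(u) = \det M$, where $M = M(u)$ is the $l \times l$ matrix with entries $M_{ij} = \mathscr A_{\la_i'-\mu_j'-i+j}(u-\mu_j'+j-1)$. Write $M^{I|J}$ for the submatrix obtained by deleting the rows of $M$ indexed by $I$ and the columns indexed by $J$. My goal would be to identify $\det M^{1|1}$, $\det M^{l|l}$, $\det M^{1,l|1,l}$, $\det M^{1|l}$, $\det M^{l|1}$ with $\chi(L(\mho^+))$, $\chi(L(\mho^-))$, $\chi(L(\mho^0))$, $\chi(L(\mathbb X_{\mho}))$, $\chi(L(\mathbb Y_{\mho}))$ respectively (in one of the two orderings for the last two), up to an overall substitution $u \mapsto u+c$. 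This identification is checked by direct inspection: after deletion, the remaining entries still have the Jacobi-Trudi form, with column lengths read off from $\la'_i,\mu'_j$. Crucially, the bounds $\mu'_j+1 \le i \le \la'_{j+1}-1$ and $\mu'_{j+1} \le i \le \la'_j$ defining $\mathbb X_{\mho}$ and $\mathbb Y_{\mho}$ are exactly what one obtains from the off-diagonal deletions $M^{1|l}$ and $M^{l|1}$ after reindexing. Once this dictionary is in place, the Desnanot-Jacobi identity
\[
\det M \cdot \det M^{1,l|1,l} \;=\; \det M^{1|1}\cdot \det M^{l|l} \;-\; \det M^{1|l}\cdot \det M^{l|1}
\]
yields the desired $q$-character identity after rearrangement, and hence the Grothendieck ring identity by injectivity of $\chi$.

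For the irreducibility of $L(\mho^0)\otimes L(\mho)$ and $L(\mathbb X_{\mho})\otimes L(\mathbb Y_{\mho})$, I would apply the binary property of Theorem~\ref{thm:binary} (which specialises to $\YglN$) to reduce to the pairwise case, and then invoke the combinatorial irreducibility criterion of \cite[Theorem 1.1]{Mol02} to verify that each of the two pairwise tensor products is irreducible. Alternatively, via the Drinfeld-functor equivalence (Theorem~\ref{thm equivalence}), the question transports to a statement about irreducibility of suitable induction products of standard $\mc H_l$-modules, where it reduces to a direct check on the associated multisegments; the primeness hypothesis on $\mho$ enters precisely to ensure that the relevant segments satisfy the required non-overlap condition.

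The principal technical difficulty is the bookkeeping in the middle step: carefully tracking the spectral parameter shifts so that $M^{1|l}$ and $M^{l|1}$ genuinely are Jacobi-Trudi matrices of $L(\mathbb X_{\mho})$ and $L(\mathbb Y_{\mho})$ after a uniform shift $u\mapsto u+c$, and matching signs properly in the Sylvester identity so as to end up with the claimed additive form. Once this bookkeeping is done, both the $q$-character identity and the irreducibility statements are formal consequences of tools already established in the excerpt.
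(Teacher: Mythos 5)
The paper does not actually prove Theorem~\ref{thm T even}; it is stated and cited directly from \cite[Theorem 4.1]{MY12b}, where the argument is carried out for snake modules over quantum affine algebras using the path description of $q$-characters from \cite{MY12a} and an analysis of dominant monomials. Your Desnanot--Jacobi (Dodgson condensation) route to the Grothendieck-ring identity is therefore a genuinely different derivation, and for the present type~$A$ situation a clean one: with $l=\la_1$ and $M_{ij}=\mathscr A_{\la_i'-\mu_j'-i+j}(u-\mu_j'+j-1)$, the deletions $M^{1|1}, M^{l|l}, M^{1l|1l}$ are again Jacobi--Trudi matrices for the column ranges $\{2,\dots,l\}$, $\{1,\dots,l-1\}$, $\{2,\dots,l-1\}$ of $\mho$, i.e.\ for $\mho^+,\mho^-,\mho^0$ with exactly the content shifts built into their definitions (cf.\ the example $\mho_{i(j+1);k}^+=\mho_{ij;k+1}$), and the off-diagonal deletions $M^{1|l}, M^{l|1}$ correspond to $\hat\la'_i=\la'_{i+1}-1,\ \hat\mu'_j=\mu'_j$ and $\hat\la'_i=\la'_i,\ \hat\mu'_j=\mu'_{j+1}-1$, which are precisely $\mathbb X_{\mho}$ and $\mathbb Y_{\mho}$. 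Primeness is what guarantees $\mu'_j\lle\la'_{j+1}-1$, so that $M^{1|l}$ really is a Jacobi--Trudi matrix of a bona fide skew diagram. Applying $\det M\cdot\det M^{1l|1l}=\det M^{1|1}\det M^{l|l}-\det M^{1|l}\det M^{l|1}$ and injectivity of $\chi$ gives the stated relation. This part of your proposal is sound, and it is what one might call the ``Pl\"ucker-relation'' derivation of extended T-systems.

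There is however a genuine gap in your irreducibility argument. You invoke \cite[Theorem 1.1]{Mol02}, but that is an irreducibility criterion for tensor products of \emph{evaluation} modules $L_z(\la^\natural)\otimes L_w(\mu^\natural)$; the present paper itself points out (end of Section~\ref{sec:irr}) that extending Molev's criterion to skew representations is open in the super case. Here the factors $L(\mho^0)$, $L(\mho)$, $L(\mathbb X_\mho)$, $L(\mathbb Y_\mho)$ are general skew representations, not evaluation modules, so \cite{Mol02} does not apply. The correct tool in the even case is the necessary-and-sufficient irreducibility criterion of Nazarov--Tarasov \cite{NT02} for tensor products of skew representations of $\YglN$, together with the binary property (which you correctly note, though in the first place $L(\mho^0)\otimes L(\mho)$ and $L(\mathbb X_\mho)\otimes L(\mathbb Y_\mho)$ are already pairwise; the binary property is only needed if $\mathbb X_\mho$ splits into several prime components). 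Your alternative Drinfeld-functor route is the right idea but is stated too vaguely: $\mc L(\la,\mu)$ for a genuine skew diagram is not itself a product of segments, so ``the relevant segments satisfy the required non-overlap condition'' is not a direct appeal to Zelevinsky's unlinked-segment criterion; one must instead verify the combinatorial condition of \cite{NT02} (or its Hecke-algebra counterpart) for the specific pairs $(\mho^0,\mho)$ and $(\mathbb X_\mho,\mathbb Y_\mho)$, using primeness of $\mho$. Until that verification is spelled out, the second half of the theorem is not established.
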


\begin{rem}
Because we only care about the case when $N$ is sufficiently large, our definitions of prime diagrams and $\mathbb Y_{\mho}$ here are slightly different from that in  \cite[Section 3.5]{MY12b} as we allow a column of a prime skew Young diagram or $\mathbb Y_{\mho}$ to be very long. 
\end{rem}

Applying Drinfeld functor, we get the corresponding supersymmetric version of Theorem \ref{thm T even}.

\begin{corollary}\label{cor T super}
Suppose $\mho$ is a prime skew Young diagram having at least two columns. Then we have the following relation in $\mathscr Rep(\mathcal C)$,
\[
[L(\mho^+)]\otimes [L(\mho^-)]=[L(\mho^0)][L(\mho)]+[L(\mathbb X_{\mho})][L(\mathbb Y_{\mho})].
\]Moreover, $L(\mho^0)\otimes L(\mho)$ and $L(\mathbb X_{\mho})\otimes L(\mathbb Y_{\mho})$ are irreducible $\YglMN$-modules.
\end{corollary}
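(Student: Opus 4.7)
The plan is to bootstrap Theorem \ref{thm T even} (the even case) to the super case using the Drinfeld functor as a bridge. By Theorem \ref{thm skew}, every skew representation $L(\mho^\bullet)$ appearing in the statement equals $\cD_{|\mho^\bullet|}(\mc L(\la^\bullet,\mu^\bullet))$ for appropriate partitions $\la^\bullet,\mu^\bullet$. Combining this with Lemma \ref{lem tensor H}, every tensor product in the claimed T-system identity has the form $\cD_{L'}(\cI_1\odot\cI_2)$, where $L'$ is the total number of boxes in each product (which is the same for all three products by the inclusion-exclusion used to define $\mathbb X_\mho$ and $\mathbb Y_\mho$), and $\cI_1,\cI_2$ are the corresponding irreducible $\mc H$-modules.

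I would proceed as follows. First, fix $N>L'$ and invoke Theorem \ref{thm T even} to obtain the T-system identity in $\mathscr Rep(\mathrm Y(\gl_N))$, together with the irreducibility of $L(\mho^0)\otimes L(\mho)$ and $L(\mathbb X_\mho)\otimes L(\mathbb Y_\mho)$ as $\mathrm Y(\gl_N)$-modules. Since the Drinfeld functor $\cD_{L'}:\mc C_{\mc H_{L'}}\to\mc C_N^{(L')}$ is an equivalence of categories by Theorem \ref{thm equivalence}, this identity lifts to the relation
\[
[\cI^+\odot\cI^-]=[\cI^0\odot\cI^\mho]+[\cI^{\mathbb X}\odot\cI^{\mathbb Y}]
\]
in $\mathscr Rep(\mc H_{L'})$, and the equivalence also forces $\cI^0\odot\cI^\mho$ and $\cI^{\mathbb X}\odot\cI^{\mathbb Y}$ to be irreducible $\mc H_{L'}$-modules. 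Next, apply the exact Drinfeld functor $\cD_{L'}$ for our $\YglMN$ to this relation. Exactness gives the equality of classes
\[
[\cD_{L'}(\cI^+\odot\cI^-)]=[\cD_{L'}(\cI^0\odot\cI^\mho)]+[\cD_{L'}(\cI^{\mathbb X}\odot\cI^{\mathbb Y})]
\]
in $\mathscr Rep(\mathcal C)$, and Lemma \ref{lem tensor H} identifies each of these with the desired tensor product of skew representations over $\YglMN$. The irreducibility of $L(\mho^0)\otimes L(\mho)$ and $L(\mathbb X_\mho)\otimes L(\mathbb Y_\mho)$ over $\YglMN$ then follows from Proposition \ref{prop simple} applied to the irreducible $\mc H_{L'}$-modules $\cI^0\odot\cI^\mho$ and $\cI^{\mathbb X}\odot\cI^{\mathbb Y}$.

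The main subtlety, rather than any deep obstacle, is bookkeeping the combinatorial dictionary between skew Young diagrams and irreducible $\mc H_l$-modules, especially the content conventions. One has to track how the evaluation parameters $\mu_i-i+1$ appearing in Lemma \ref{lem weyl Y} and Corollary \ref{cor skew} reproduce the contents of the boxes of $\mho^\pm,\mho^0,\mathbb X_\mho,\mathbb Y_\mho$, and to verify that the induction products built from the corresponding $\mc L(\la,\mu)$'s match the skew diagrams of the products on both sides of the T-system. Once this dictionary is fixed, the argument proceeds purely formally from the exactness of the Drinfeld functor, the equivalence of categories for $N>L'$, and the multiplicativity statement of Lemma \ref{lem tensor H}; no genuinely new super input beyond the tools already developed in Section \ref{sec: Drinfeld} is required.
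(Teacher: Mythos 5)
Your proposal is correct and follows essentially the same route as the paper's proof: invoke Theorem \ref{thm T even} for large $N$, transfer the identity and the irreducibility back to $\mc H_{L'}$ via the equivalence of Theorem \ref{thm equivalence}, then push forward through the exact Drinfeld functor for $\YglMN$ (using Lemma \ref{lem tensor H} to identify tensor products with images of induction products), with Proposition \ref{prop simple} supplying the irreducibility of $L(\mho^0)\otimes L(\mho)$ and $L(\mathbb X_\mho)\otimes L(\mathbb Y_\mho)$. The paper's proof is just a terser statement of exactly this argument.
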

\begin{proof}
Using Theorem \ref{thm T even} and Theorem \ref{thm equivalence} (the equivalence of Drinfeld functor), then we have the corresponding equality for the representations of degenerate affine Hecke algebra. Applying Drinfeld functor to this resulted equality, the first statement follows. Similarly, the second part follows from Proposition \ref{prop simple}.
\end{proof}

\begin{eg}
Given $i,j\in\Z_{>0}$ and $k\in \C$, let $\mho_{ij;k}$ be the rectangular Young diagram of size $i\times j$ whose left-upper corner box has content $k$. Then we have 
$$\mho_{i(j+1);k}^+=\mho_{ij;k+1},\quad \mho_{i(j+1);k}^-=\mho_{ij;k},\quad \mho_{i(j+1);k}^0=\mho_{i(j-1);k+1}, 
$$
$$
\mathbb X_{\mho_{i(j+1);k}}=\mho_{(i-1)j;k},\qquad \mathbb Y_{\mho_{i(j+1);k}}=\mho_{(i+1)j;k+1}.
$$

Let $$
\mathbf T_j^{(i)}(u+k-(i-j+1)/2)= \mathscr K_{\mho_{ij;k}}(u),
$$
where $\mathscr K_{\mho_{ij}}(u)$ is the $q$-character of $L(\mho_{ij;k})$, see Theorem \ref{thm character}.

Setting $k=(i-j)/2$ and $\mho=\mho_{i(j+1);k}$, one obtains the T-systems from Corollary \ref{cor T super},
\[
\mathbf T_j^{(i)}(u-\tfrac{1}{2})\mathbf T_j^{(i)}(u+\tfrac{1}{2})=\mathbf T_{j-1}^{(i)}(u)\mathbf T_{j+1}^{(i)}(u)+\mathbf T_{j}^{(i-1)}(u)\mathbf T_{j}^{(i+1)}(u).
\]
The boundary conditions are given by
\begin{itemize}
    \item $\mathbf T_{j}^{(i)}(u)=0$ if $i<0$ or $j<0$ or both $i>m$ and $j>n$;
    \item $\mathbf T_{j}^{(i)}(u)=1$ if $i,j\in \Z_{\gge 0}$ and $ij=0$.
\end{itemize} 
Hence we may regard Corollary \ref{cor T super} as extended T-systems.
\end{eg}

We remark that our extended T-systems are different from that of \cite[Theorem 3.3]{Zhh18}.

Suppose $\mho$ is a skew Young diagram with consecutive columns (not necessarily prime) and has at least 2 columns. One can define $\mho^\pm$ and $\mho^0$ in the same way.
\begin{theorem}
If $\mho$ is not prime, then 
$ L(\mho^+)\otimes L(\mho^-)$ and  $L(\mho^0)\otimes L(\mho)$ are isomorphic and irreducible as $\YglMN$-modules.
\end{theorem}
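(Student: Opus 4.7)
Since $\mho$ is not prime and has consecutive columns, there exists a column index $k\in\{1,\dots,l-1\}$ (where $l$ is the number of columns of $\mho$) such that the sub-skew-diagrams $A$ (columns $1,\dots,k$ of $\mho$) and $B$ (columns $k+1,\dots,l$) intersect in at most one point of the plane; equivalently $\lambda_{k+1}'\le\mu_k'$. I will use this $A$--$B$ splitting throughout. A short calculation shows that the maximal content appearing in $A$ is $k-\mu_k'-1$, the minimal content in $B$ is $k+1-\lambda_{k+1}'$, and the gap between them is $\mu_k'-\lambda_{k+1}'+2\ge 2$. Consequently the rows of $A$ and $B$ are disjoint, and the contents of any ``$A$-family'' piece (either $A$ or $A^+$) are separated from those of any ``$B$-family'' piece (either $B$ or $B^-$) by at least $2$. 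This content separation is the geometric input driving the argument.

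The first step will be to factor the $q$-characters. Because the rows of $A$ and $B$ are disjoint and they occupy different columns, semi-standard Young tableaux of $\mho$ are in bijection with independent pairs of semi-standard Young tableaux of $A$ and $B$ (the super semi-standard conditions cannot propagate across the break). Theorem~\ref{thm character} then yields $\mathscr K_\mho(u)=\mathscr K_A(u)\mathscr K_B(u)$, and the same reasoning applied to $\mho^\pm$ and $\mho^0$ gives
\[
\mathscr K_{\mho^+}(u)\,\mathscr K_{\mho^-}(u)=\mathscr K_{\mho^0}(u)\,\mathscr K_\mho(u).
\]
In particular $L(\mho^+)\otimes L(\mho^-)$ and $L(\mho^0)\otimes L(\mho)$ share the same $q$-character, so by Lemma~\ref{lem chi morphism} it will suffice to prove both are irreducible in order to deduce they are isomorphic.

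Next I will identify each skew representation with a tensor product. Because the content gap is at least $2$, all cross-family $R$-matrices appearing in the fusion procedure (Proposition~\ref{prop fusion sub}) are invertible, so Theorem~\ref{thm * iso} identifies $L(X)\otimes L(Y)\cong L(X\cup Y)$ for any $X\in\{A,A^+\}$ and $Y\in\{B,B^-\}$. In particular $L(\mho)\cong L(A)\otimes L(B)$, $L(\mho^+)\cong L(A^+)\otimes L(B)$, $L(\mho^-)\cong L(A)\otimes L(B^-)$, $L(\mho^0)\cong L(A^+)\otimes L(B^-)$, and all four cross-family pair tensor products are irreducible. Unfolding,
\[
L(\mho^+)\otimes L(\mho^-)\cong L(A^+)\otimes L(B)\otimes L(A)\otimes L(B^-),\quad L(\mho^0)\otimes L(\mho)\cong L(A^+)\otimes L(B^-)\otimes L(A)\otimes L(B).
\]
By the binary property Theorem~\ref{thm:binary}, each of these four-fold tensor products is irreducible if and only if each of its six pair sub-tensors is; four of those pairs are cross-family (already irreducible), while the two remaining pairs $L(A^+)\otimes L(A)$ and $L(B)\otimes L(B^-)$ I plan to handle by passing through the Drinfeld functor (Theorem~\ref{thm skew}, Lemma~\ref{lem tensor H}, Theorem~\ref{thm equivalence}) to the even $\YglN$-case, where their irreducibility follows from the prime extended $T$-systems of Theorem~\ref{thm T even} applied to the prime sub-structures of $A$ and $B$.

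\textbf{Main obstacle.} The hardest part will be the irreducibility of the same-family tensor products $L(A^+)\otimes L(A)$ and $L(B)\otimes L(B^-)$, whose factors have overlapping content ranges and therefore are not covered by the content-separation argument. Reducing via the Drinfeld functor equivalence to the $\YglN$-case and invoking Theorem~\ref{thm T even} together with a short induction on the number of columns of $A$ and $B$ appears to be the natural route, and is where the bulk of the work will be concentrated.
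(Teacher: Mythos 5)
Your strategy is genuinely different from, and much more elaborate than, the paper's. The paper simply cites \cite[Theorem 4.3]{MY12b}, which is exactly this statement for $\YglN$, transfers it to degenerate affine Hecke algebra modules via the equivalence of Theorem~\ref{thm equivalence}, applies the Drinfeld functor back, and gets irreducibility from Proposition~\ref{prop simple} — no prime decomposition or binary-property reduction is needed. Your first step is sound in spirit, though the justification is wobbly: the cross-family factorizations $L(A^+)\otimes L(B)\cong L(\mho^+)$ (and its relatives) are most cleanly obtained from $q$-characters rather than from fusion. Indeed, $\chi(L(A^+)\otimes L(B))=\chi(L(\mho^+))$ by the independent-filling description in Theorem~\ref{thm character}, $L(\mho^+)$ is irreducible by Theorem~\ref{thm skew}, and both modules have the same highest $\ell$-weight, so a dimension count gives the isomorphism. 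Invoking Theorem~\ref{thm * iso} and ``invertibility of cross-family R-matrices'' is not what that theorem says and does not directly yield the factorization.

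The genuine gap is in your final step. After the binary-property reduction the two same-family pairs $L(A^+)\otimes L(A)$ and $L(B)\otimes L(B^-)$ remain, and Theorem~\ref{thm T even} does not address them: that result is about $L(\mho^+)\otimes L(\mho^-)$, $L(\mho^0)\otimes L(\mho)$ and $L(\mathbb X_\mho)\otimes L(\mathbb Y_\mho)$, none of which has the form $L(\mho^+)\otimes L(\mho)$. Moreover, by your own reduction, the irreducibility of $L(A^+)\otimes L(A)$ for a prime $A$ (with at least two columns) is essentially equivalent to the theorem being proved — take $\mho=A\sqcup B$ with $B$ any far-away column — so the ``short induction on the number of columns'' you gesture at risks being circular unless its structure is spelled out; in this generality I do not see a non-circular induction. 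The missing ingredient is precisely what \cite[Theorem 4.3]{MY12b} supplies, and the efficient route is to cite it in the $\YglN$ case and transport it through the Drinfeld functor, as the paper does, rather than rederiving it from the prime T-system.
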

\begin{proof}
The theorem is proved in a similar way to that of Corollary \ref{cor T super} using \cite[Theorem 4.3]{MY12b}.
\end{proof}

\section{Quantum Berezinian and transfer matrices}\label{sec transfer matrix}
\subsection{Quantum Berezinian}
Following \cite{MR14}, we recall the quantum Berezinian and related results in the case of $\YglMN$.

Let $\mathcal A$ be a superalgebra. Let $a_{ij}\in \mathcal A$ with parity $|i|+|j|$. Suppose the inverse of the matrix
\[
A=\sum_{i,j\in \bar I}a_{ij}\otimes E_{ij} (-1)^{|i||j|+|j|}\in \mathcal A\otimes \End(V),
\]
with values in $\mathcal A$ exists. Then we denote the entries of the inverse matrix by $a'_{ij}\in \mathcal A$:
\[
A^{-1}=\sum_{i,j\in \bar I}a'_{ij}\otimes E_{ij} (-1)^{|i||j|+|j|}.
\] 
Define the \emph{quantum Berezinian} $\mathrm{Ber}(A)$, see \cite{Naz}, of the matrix $A$ by
\[
\mathrm{Ber}(A)=\sum_{\sigma\in\mathfrak S_m}\mathrm{sgn}(\sigma)\cdot a_{\sigma(1)1}\cdots a_{\sigma(m)m}\sum_{\tl\sigma\in\mathfrak S_n}\mathrm{sgn}(\tl\sigma)\cdot a'_{m+1,m+\tl\sigma(1)}\cdots a'_{m+n,m+\tl\sigma(n)}.
\]  

Let  $\mathcal A_{m|n}:=\YglMN[[u^{-1}]]((\tau))$ be the superalgebra of Laurent series in $\tau$ whose coefficients are power series in $u^{-1}$ whose coefficients are in $\YglMN$ with the relations
$$
(g_1 u^{k_1} \tau^{l_1}) (g_2u^{k_2}\tau^{l_2})= g_1g_2 u^{k_1}(u-l_1)^{k_2} \tau^{l_1+l_2}, \qquad g_1,g_2\in\YglMN, l_1,l_2\in\Z,\ k_1,k_2\in\Z_{\leqslant 0}.
$$

Thus $\tau$ is the shift operator with respect to variable $u$ and it should not be confused with automorphism of the Yangian $\tau_1$ defined in \eqref{eq tau z}.

Let $q$ be a formal variable which commutes with all other elements. Let $A(q)$ be a matrix with elements in $\mathcal A_{m|n}[q]$ given by
$$
A(q)=1-q\, T(u)\tau=\sum_{i,j\in \bar I}(\delta_{ij}-q\,t_{ij}(u)\tau)\otimes E_{ij} (-1)^{|i||j|+|j|}.
$$
Clearly $A(q)$ is invertible. Let 
\begin{align}\label{ber}
\mathfrak D(u,\tau;q):=\mathrm{Ber}(A(q))
\end{align}
be the quantum Berezinian. We simply write $\mathfrak D(u,\tau)$ for $\mathfrak D(u,\tau;1)$. 

The matrix $T(u)$ is also invertible. Let $\mathfrak Z(u)=\mathrm{Ber}(T(u)\tau)\tau^{n-m}$. Note that  $\mathfrak Z(u)$  does not depend on $\tau$. It is known the coefficients of $\mathfrak Z(u)$ generate the center of $\YglMN$ and 
$$
\mathfrak Z(u)=\prod_{i\in \bar I}(d_i(u-\kappa_i))^{s_i},
$$
in the notation of Proposition \ref{prop center}, see \cite[Theorem 1]{Gow05} and \cite[Theorem 4]{Gow07}.

Recall the standard action of symmetric group $\fkS_{k}$ on the space $V^{\otimes k}$ where $\sigma_i$ acts as the graded flip operator $\cP^{(i,i+1)}$, see \eqref{eq perm}. We denote by $\mathbb A_k$ and $\mathbb S_k$ the images of the normalized anti-symmetrizer and symmetrizer, respectively,
\[
\mathbb A_k=\frac{1}{k!}\sum_{\sigma\in\fkS_k}\mathrm{sgn}(\sigma)\cdot \sigma,\qquad \mathbb S_k=\frac{1}{k!}\sum_{\sigma\in\fkS_k} \sigma.
\]
\begin{theorem}[\protect{\cite[Theorem 2.13]{MR14}}]\label{thm ber expansion}
We have
\begin{align}
&\mathfrak D(u,\tau;q)=1+\sum_{k=1}^{\infty}(-1)^k\mathrm{str}\,\mathbb A_k\,T_1(u)T_2(u-1)\cdots T_k(u-k+1)q^k\tau^k,\label{eq expand}\\
&\mathfrak D(u,\tau;q)^{-1}=1+\sum_{k=1}^{\infty}\mathrm{str}\,\mathbb S_k\,T_1(u)T_2(u-1)\cdots T_k(u-k+1)q^k\tau^k,\nonumber
\end{align}	
where the supertrace is taken over all copies of $\End(V)$.	
\end{theorem}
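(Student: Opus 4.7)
The plan is to establish Theorem \ref{thm ber expansion} by reducing to the standard quantum determinant expansion in the even Yangian $\YglN$ and then recombining via the super structure on $V=\C^{m|n}$.

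First, I would apply a block decomposition of the Berezinian. Writing
$$
A(q) = \begin{pmatrix} \alpha(q) & \beta(q) \\ \gamma(q) & \delta(q) \end{pmatrix},
$$
with $\alpha$ the $m\times m$ even-even block, $\delta$ the $n\times n$ odd-odd block, and $\beta,\gamma$ the off-diagonal odd blocks, the definition of $\mathrm{Ber}$ gives $\mathrm{Ber}(A(q))$ as the column determinant of $\alpha(q)$ times the column determinant of the lower-right $n\times n$ block of $A(q)^{-1}$. By the standard block-inversion formula, the latter block is $\bigl(\delta(q) - \gamma(q)\alpha(q)^{-1}\beta(q)\bigr)^{-1}$, so the Berezinian factors as a column determinant in the even block times the inverse of a column determinant built from the (Schur-complemented) odd block.

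Next, I would apply the familiar quantum-determinant expansions from $\YglN$ theory to each factor. The first factor admits the standard expansion $\sum_{k\gge 0}(-q)^k\tau^k\,\mathrm{tr}_{(\C^m)^{\otimes k}}\mathbb A_k^{(m)}\alpha^{\mathrm{eff}}_1(u)\cdots\alpha^{\mathrm{eff}}_k(u-k+1)$, where $\mathbb A_k^{(m)}$ is the ordinary anti-symmetrizer on $(\C^m)^{\otimes k}$ and $\alpha^{\mathrm{eff}}$ is the relevant block of $T(u)$. The second factor, being the reciprocal of a shifted quantum ``permanent'' in the odd block (for which anti-symmetrizer and symmetrizer exchange roles due to parity), admits an analogous expansion $\sum_{k\gge 0} q^k\tau^k\,\mathrm{tr}_{(\C^n)^{\otimes k}}\mathbb S_k^{(n)}\delta^{\mathrm{eff}}_1(u)\cdots\delta^{\mathrm{eff}}_k(u-k+1)$.

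Finally, I would combine the two series into a single supertrace formula on $V^{\otimes k}$. The key observation is the super Schur--Sergeev decomposition (Theorem \ref{thm schur}): the anti-symmetrizer $\mathbb A_k$ acting on the super space $V^{\otimes k}$ splits into hook-partition summands, each a tensor product of an ordinary anti-symmetric projector on even positions and an ordinary symmetric projector on odd positions, and the supertrace produces exactly the sign convention needed to identify $\str\,\mathbb A_k\,T_1(u)\cdots T_k(u-k+1)$ with the coefficient of $q^k\tau^k$ in the product of the two factor expansions. The second formula for $\mathfrak D(u,\tau;q)^{-1}$ then follows symmetrically by inverting the Berezinian factorization, which exchanges the roles of $\mathbb A_k$ and $\mathbb S_k$. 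The main obstacle will be the Schur-complement factor $\delta(q)-\gamma(q)\alpha(q)^{-1}\beta(q)$: it has noncommutative entries with inverses inserted, so one cannot apply the $\YglN$ expansion to it verbatim. This is where the fusion procedure applied to the full super-Yangian product $T_1(u)\cdots T_k(u-k+1)$ becomes essential—together with the RTT relations, it packages the cross terms from $\gamma\alpha^{-1}\beta$ automatically on the supersymmetrized subspace, converting the formal Schur complement into the single supertrace formula.
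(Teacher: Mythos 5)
The paper gives no proof of this theorem: it is stated and attributed to \cite[Theorem 2.13]{MR14}, and the authors treat it as an external input. So there is no in-paper proof to compare against. What can be assessed is whether your sketch is itself a viable argument, and also how it relates to the argument actually given in \cite{MR14}. There, the result is obtained by proving a super MacMahon Master Theorem for right quantum super-Manin matrices by direct manipulation with the symmetrizers $\mathbb S_k$ and antisymmetrizers $\mathbb A_k$ on $V^{\otimes k}$ (using the identity $\mathbb A_2 M_1 M_2 = \mathbb A_2 M_1 M_2 \mathbb A_2$ and its consequences), and then checking that the shifted matrix $T(u)\tau$ satisfies the needed relations. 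That is a different route from the block/Schur-complement path you propose.

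The more pressing issue is that your sketch has a genuine gap at its last step. The sentence asserting that ``the fusion procedure together with the RTT relations packages the cross terms from $\gamma\alpha^{-1}\beta$ automatically on the supersymmetrized subspace'' is a restatement of what must be proved, not an argument. Concretely, three things are left unaddressed. (i) For noncommutative entries, the row-determinant of the bottom-right block of $A(q)^{-1}$ is not automatically the inverse of a column-determinant of the Schur complement $\delta-\gamma\alpha^{-1}\beta$; this requires the Schur complement to satisfy Manin-type relations, and you have not verified this in the presence of the shift $\tau$, which enters both $\alpha^{-1}$ and $\delta$. (ii) Even granting (i), applying the even-Yangian MacMahon expansion to the Schur complement needs a precise invocation of the Gauss decomposition results (Gow, Peng) with the $\tau$-shifts tracked. (iii) The recombination into $\mathrm{str}_{V^{\otimes k}}\,\mathbb A_k\,T_1(u)\cdots T_k(u-k+1)$ is not a formal consequence of Schur--Sergeev duality. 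Schur--Sergeev gives the vector-space decomposition of the image of $\mathbb A_k$ into $\bigoplus_{a+b=k}\wedge^a\C^m\otimes S^b\C^n$, but $T(u)$ has off-diagonal blocks mixing the even and odd parts of $V$, so the tensor positions cannot be split into ``even'' and ``odd'' slots before applying $\mathbb A_k$; the exact identity matching the product of your two block expansions with the single supertrace is essentially the super Cauchy--Binet/MacMahon identity that is the substance of Theorem~2.13 in \cite{MR14}. In short, the heart of the proof is assumed rather than established.
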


\subsection{Universal R-matrix and transfer matrices}
The Yangian $\YglMN$ has a universal R-matrix. We presume its existence and properties, cf. \cite{Naz20} and \cite{Dri85}. We do not provide any justification in this paper.

The universal R-matrix is an element $\mathscr R(u)\in 1+u^{-1}\YglMN\otimes \YglMN[[u^{-1}]]$ such that for all $X\in \YglMN$ we have
\[
(\mathrm{id}\otimes\Delta)(\mathscr R(u))=\mathscr R_{12}(u)\mathscr R_{13}(u)\in \YglMN^{\otimes 3}[[u^{-1}]],
\]
\beq\label{eq:delta-R}
(\Delta\otimes \mathrm{id})(\mathscr R(u))=\mathscr R_{13}(u)\mathscr R_{23}(u)\in \YglMN^{\otimes 3}[[u^{-1}]],
\eeq
\[
\mathscr R(u)\cdot (\mathrm{id}\otimes \tau_u)(\Delta^{\rm op}(X))=(\mathrm{id}\otimes \tau_u)(\Delta (X))\cdot \mathscr R(u)\in \YglMN^{\otimes 2}[[u^{-1}]],
\]
where $\tau_u$ is the Yangian automorphism defined in \eqref{eq tau z} (not to be confused with the shift operator $\tau$). It follows that the universal R-matrix $\mathscr R(u)$ satisfies the Yang-Baxter equation
\[
\mathscr R_{12}(u-v)\mathscr R_{13}(u)\mathscr R_{23}(v)=\mathscr R_{23}(v)\mathscr R_{13}(u)\mathscr R_{12}(u-v).
\]
Let $M$ be a finite-dimensional $\YglMN$-module. Denote by $\Theta_{M}:\YglMN\to \End(M)$ the corresponding map. 
We also assume that
\beq\label{eq:norm-R}
(\Theta_{\mathbb V_z}\otimes\mathrm{id})(\mathscr R(-u))=T(u+z)\in \End(V)\otimes \YglMN[[u^{-1}]],
\eeq
cf. \cite[Lemma 3.4 and Theorem 3.6]{Naz98}, \cite[Proposition 4.5]{Naz99}, and \cite[Proposition 13.5]{Naz20}.

Define the \emph{transfer matrix} $\mathfrak T_M$ associated to $M$ by
\[
\mathfrak T_M(u)=\str_M\big((\Theta_{M}\otimes \mathrm{id})(\mathscr R(-u))\big)\in \YglMN[[u^{-1}]].
\]

The following lemma is standard, see \cite[Lemma 2]{FR99}.
\begin{lemma}\label{lem trans relations}
For any pair of finite-dimensional $\YglMN$-modules $M_1$ and $M_2$, we have 
$$
[\mathfrak T_{M_1}(u_1),\mathfrak T_{M_2}(u_2)]=0,\qquad \mathfrak T_{M_1\otimes M_2}(u)=\mathfrak T_{M_1}(u)\mathfrak T_{M_2}(u).
$$ For a short exact sequence $M_1\hookrightarrow M \twoheadrightarrow M_2$, we have $\mathfrak T_{M}(u)=\mathfrak T_{M_1}(u)+\mathfrak T_{M_2}(u)$.
\end{lemma}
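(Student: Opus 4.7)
The plan is to derive all three statements directly from the three defining properties of the universal R-matrix $\mathscr R(u)$ together with standard properties of the supertrace (cyclicity, multiplicativity on tensor products, additivity on short exact sequences). Throughout, I will use $\Theta_{M}$ to evaluate one or two tensor factors of identities living in $\YglMN^{\otimes k}[[u^{-1}]]$, so that the result is an identity of operators on $M_1\otimes M_2\otimes \cdots$ with coefficients in $\YglMN$.

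For commutativity, I would start from the Yang-Baxter equation
$$\mathscr R_{12}(u_1-u_2)\mathscr R_{13}(-u_1)\mathscr R_{23}(-u_2)=\mathscr R_{23}(-u_2)\mathscr R_{13}(-u_1)\mathscr R_{12}(u_1-u_2)$$
(with appropriate sign conventions matching \eqref{eq:norm-R}) and apply $\Theta_{M_1}\otimes\Theta_{M_2}\otimes\mathrm{id}$. Denote the image of $\mathscr R_{12}(u_1-u_2)$ by $\mathcal R\in\End(M_1\otimes M_2)$, which is invertible as an element of $1+(u_1-u_2)^{-1}\End(M_1\otimes M_2)[[(u_1-u_2)^{-1}]]$. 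Taking the supertrace over the $M_1\otimes M_2$ factors and using the supercyclicity of $\str$ to push $\mathcal R$ from one side to the other allows it to cancel, yielding $\mathfrak T_{M_1}(u_1)\mathfrak T_{M_2}(u_2)=\mathfrak T_{M_2}(u_2)\mathfrak T_{M_1}(u_1)$.

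For multiplicativity under tensor product, I would apply $\Theta_{M_1}\otimes\Theta_{M_2}\otimes \mathrm{id}$ to the coproduct identity \eqref{eq:delta-R}, namely $(\Delta\otimes\mathrm{id})(\mathscr R(-u))=\mathscr R_{13}(-u)\mathscr R_{23}(-u)$. The left-hand side becomes $(\Theta_{M_1\otimes M_2}\otimes \mathrm{id})(\mathscr R(-u))$ acting on $M_1\otimes M_2$ tensored with $\YglMN$. Taking supertrace over $M_1\otimes M_2$ and using $\str_{M_1\otimes M_2}(A\otimes B)=\str_{M_1}(A)\str_{M_2}(B)$ on the right produces $\mathfrak T_{M_1}(u)\mathfrak T_{M_2}(u)$, proving the identity. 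The additivity statement $\mathfrak T_M(u)=\mathfrak T_{M_1}(u)+\mathfrak T_{M_2}(u)$ for a short exact sequence $M_1\hookrightarrow M\twoheadrightarrow M_2$ follows by choosing a splitting as graded vector spaces so that the action of any $X\in\YglMN$ on $M$ is block upper-triangular with diagonal blocks given by the action on $M_1$ and $M_2$; additivity of the supertrace on the diagonal gives the claim.

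The main obstacle, more bookkeeping than conceptual, will be getting the signs and parities right in the supertrace manipulation proving commutativity: since the R-matrix has even total parity but its tensor factors need not be even, one must verify that the analogue of the cyclic identity $\str(AB)=(-1)^{|A||B|}\str(BA)$ is applied on the correct slots of $\End(M_1\otimes M_2)$. This is a standard check given the supercyclicity \eqref{eq transpose cyclic} and the even parity of $\mathscr R(u)$, but it is the only place where the super structure forces care beyond the classical argument.
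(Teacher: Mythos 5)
The paper does not prove the lemma but simply cites \cite[Lemma 2]{FR99}; your argument is exactly the standard one behind that reference, deriving commutativity from the Yang--Baxter equation plus supercyclicity of the trace, multiplicativity from \eqref{eq:delta-R}, and additivity from a graded splitting. Your proof is correct and takes essentially the same approach, modulo the minor sign typo you already flag ($\mathscr R_{12}(u_2-u_1)$ rather than $\mathscr R_{12}(u_1-u_2)$ when specializing the YBE to the arguments $-u_1,-u_2$).
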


Lemma \ref{lem trans relations} says that the map $\mathfrak T:\mathscr Rep(\mathcal C) \to \YglMN[[u^{-1}]] $ sending a finite-dimensional $\YglMN$-module $M$ to the transfer matrix $\mathfrak T_{M}(u)$ in $\YglMN[[u^{-1}]]$ is a ring homomorphism.

We shall focus on transfer matrices associated to skew representations $L(\la/\mu)^\iota$. When $M=L(\la/\mu)^\iota$, we write $\mathfrak T_{\la/\mu}(u)$ for $\mathfrak T_M(u)$. Then we have $M_{-z}\cong (L_z(\la/\mu))^\iota$ and 
\beq\label{eq:shift-content-T}
\mathfrak T_{M_{-z}}(u)=\mathfrak T_{\la/\mu}(u-z).
\eeq
Recall that the partition $(1^k)$ corresponds to the Young diagram consisting of a column with $k$ boxes while $(k)$ corresponds to the Young diagram consisting of a row with $k$ boxes. We use the short-hand notation, 
$$
\mathfrak T^k(u):=\mathfrak T_{(1^k)}(u),\qquad \mathfrak T_k(u):=\mathfrak T_{(k)}(u).
$$
We have the Jacobi-Trudi identity for transfer matrices.

\begin{theorem}\label{thm Jacobi T}
Let $\la$ and $\mu$ be two partitions such that $\mu\subset \la$. Then we have
\begin{align*}
\mathfrak T_{\la/\mu}(u)=&\det_{1\lle i,j\lle \la_1'}\mathfrak T_{\la_i-\mu_j-i+j}(u+\mu_j-j+1)\\=&\det_{1\lle i,j\lle \la_1}\mathfrak T^{\la_i'-\mu_j'-i+j}(u-\mu_j'+j-1).
\end{align*}
Here we use the convention that $\mathfrak T_0(u)=\mathfrak T^0(u)=1$ and $\mathfrak T^k(u)=\mathfrak T_k(u)=0$ for $k<0$.
\end{theorem}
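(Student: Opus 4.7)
The strategy is to package the result as the image under a ring homomorphism of a lift of the $q$-character Jacobi-Trudi identity (Theorem~\ref{thm Jacobi-Trudi}) to the Grothendieck ring $\mathscr Rep(\mathcal C)$. Since $\mathscr Rep(\mathcal C)$ is commutative (Corollary~\ref{cor gro-ring-comm}), determinants of its elements are well-defined. Using $\chi(L_z(\la/\mu))(u)=\mathscr K_{\la/\mu}(u-z)$, one recognizes $\mathscr S_k(u+\mu_j-j+1)=\chi(L_{j-1-\mu_j}((k)))$. The injectivity of $\chi$ (Lemma~\ref{lem chi morphism}) then upgrades Theorem~\ref{thm Jacobi-Trudi} to the Grothendieck ring identity
\[
[L(\la/\mu)]=\det_{1\lle i,j\lle \la_1'}[L_{j-1-\mu_j}((\la_i-\mu_j-i+j))],
\]
together with its analogue in terms of $[L((1^k))]$.

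Next, I would consider the composite map $\Phi:=\mathfrak T\circ\iota^{\ast}\colon\mathscr Rep(\mathcal C)\to\YglMN[[u^{-1}]]$, where $\iota^{\ast}[M]:=[M^\iota]$. The pullback $\iota^{\ast}$ is a ring endomorphism of $\mathscr Rep(\mathcal C)$: because $\iota$ is a Hopf isomorphism onto $\YglMN^{\mathrm{op}}$ with the opposite coproduct \eqref{eq coop}, one has $\iota^{\ast}(M_1\otimes M_2)\cong\iota^{\ast}(M_2)\otimes\iota^{\ast}(M_1)$ as $\YglMN$-modules, and the commutativity of $\mathscr Rep(\mathcal C)$ absorbs the swap. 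By Lemma~\ref{lem trans relations}, $\mathfrak T$ is a ring homomorphism, so the composite $\Phi$ is too.

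Applying $\Phi$ to the Grothendieck ring identity, the left-hand side becomes $\mathfrak T_{L(\la/\mu)^\iota}(u)=\mathfrak T_{\la/\mu}(u)$ by definition. For each entry on the right, the relation $L_z(\nu)^\iota\cong(L(\nu)^\iota)_{-z}$ noted in the text, together with \eqref{eq:shift-content-T} specialized to $M=L((k))^\iota$ and $z=j-1-\mu_j$, yields
\[
\mathfrak T_{L_{j-1-\mu_j}((k_{ij}))^\iota}(u)=\mathfrak T_{k_{ij}}(u+\mu_j-j+1),\qquad k_{ij}:=\la_i-\mu_j-i+j,
\]
establishing the first determinantal identity. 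The second identity follows by the identical argument applied to the column-shape form of Theorem~\ref{thm Jacobi-Trudi}, with $\mathscr A_k=\chi(L((1^k)))$ and $\mathfrak T^k=\mathfrak T_{L((1^k))^\iota}$. The conventions $\mathfrak T_0(u)=\mathfrak T^0(u)=1$ and $\mathfrak T_k(u)=\mathfrak T^k(u)=0$ for $k<0$ match $\mathscr S_0=\mathscr A_0=1$ and the corresponding vanishing.

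The main obstacle is bookkeeping rather than content: one must verify that $\iota^{\ast}$ descends to a ring endomorphism of the commutative Grothendieck ring, and carefully reconcile three distinct shift conventions---the content of a box of $\la/\mu$, the spectral shift $\tau_z$, and the sign reversal built into $\iota$. Once these are aligned via $L_z(\nu)^\iota\cong(L(\nu)^\iota)_{-z}$ and \eqref{eq:shift-content-T}, the proof reduces to the preceding structural results with no further calculation needed.
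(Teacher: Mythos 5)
Your proposal is correct and follows essentially the same route as the paper's proof: lift Theorem~\ref{thm Jacobi-Trudi} to $\mathscr Rep(\mathcal C)$ via injectivity of $\chi$, twist by $\iota$ (using commutativity of $\mathscr Rep(\mathcal C)$ to absorb the tensor-order reversal caused by the opposite coproduct), and then apply the ring homomorphism $\mathfrak T$. You merely spell out the shift bookkeeping via $L_z(\nu)^\iota\cong(L(\nu)^\iota)_{-z}$ and \eqref{eq:shift-content-T}, which the paper leaves to the reader.
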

\begin{proof}
Since the $q$-character map is injective, see Lemma \ref{lem commute yangian}, Theorem \ref{thm Jacobi-Trudi} implies the corresponding equalities in the Grothendieck ring $\mathscr Rep(\mathcal C)$. Note that $\iota:\YglMN\to \YglMN^{\mathrm{op}}$ is a Hopf superalgebra isomorphism and $\mathscr Rep(\mathcal C)$ is commutative. Applying $\iota$ we obtain new equalities in $\mathscr Rep(\mathcal C)$ where all participating modules are twisted by $\iota$. Finally, the statement follows by further applying the transfer matrix homomorphism $\mathfrak T$.
\end{proof}

Theorem \ref{thm Jacobi T} was conjectured in \cite{Tsu:1997} on the level of eigenvalues, and proved for the case of hook Young diagrams in \cite{KV:2008}.
\begin{corollary}\label{cor zero T}
If $\la/\mu$ contains a rectangle of size at least $(m+1)\times (n+1)$, then
\[
\det_{1\lle i,j\lle \la_1'}\mathfrak T_{\la_i-\mu_j-i+j}(u+\mu_j-j+1)= \det_{1\lle i,j\lle \la_1}\mathfrak T^{\la_i'-\mu'_j-i+j}(u-\mu'_j+j-1)=0.
\]
\end{corollary}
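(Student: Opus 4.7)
The plan is to mimic the proof of Corollary \ref{cor zero}, transferring the vanishing at the level of $q$-characters to a vanishing at the level of transfer matrices via the ring homomorphism $\mathfrak T$.

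First I would invoke Theorem \ref{thm Jacobi T} to identify both determinants with the single transfer matrix $\mathfrak T_{\la/\mu}(u)$ associated to the skew representation $L(\la/\mu)^\iota$. Thus the claim reduces to showing $\mathfrak T_{\la/\mu}(u)=0$ whenever $\la/\mu$ contains a rectangle of size $(m+1)\times(n+1)$.

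Next I would show that under this hypothesis, $L(\la/\mu)$ is the zero module. By Theorem \ref{thm character}, $\dim L(\la/\mu)$ equals the number of semi-standard Young tableaux of shape $\la/\mu$ filled with symbols from $\{1,\dots,m+n\}$ obeying the hook rules. Inside a $(m+1)\times(n+1)$ sub-rectangle, each column has $m+1$ weakly increasing entries with entries from $\{1,\dots,m\}$ strictly increasing; hence at least one entry per column lies in $\{m+1,\dots,m+n\}$, and such entries must occupy the bottom of the column. It follows that every entry in the bottom row of the sub-rectangle is in $\{m+1,\dots,m+n\}$, but the bottom row has $n+1$ weakly increasing entries with entries from $\{m+1,\dots,m+n\}$ strictly increasing, forcing a contradiction. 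Hence no semi-standard Young tableau of shape $\la/\mu$ exists and $L(\la/\mu)=0$.

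Finally, $L(\la/\mu)^\iota$ is then also the zero module, so its transfer matrix $\mathfrak T_{\la/\mu}(u)$ vanishes identically, which together with Theorem \ref{thm Jacobi T} yields the desired identity. There is no real obstacle: once Theorem \ref{thm Jacobi T} is in hand the argument is essentially identical to the one given for Corollary \ref{cor zero}, simply replacing $\chi$ and Theorem \ref{thm Jacobi-Trudi} with $\mathfrak T$ and Theorem \ref{thm Jacobi T}.
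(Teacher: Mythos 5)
Your proof is correct and follows essentially the same route as the paper: reduce via Theorem \ref{thm Jacobi T} to the vanishing of $\mathfrak T_{\la/\mu}(u)$, and show $L(\la/\mu)=0$ because the $(m+1)\times(n+1)$ sub-rectangle precludes any semi-standard Young tableaux (the paper packages the same conclusion as a consequence of Corollary \ref{cor zero} and the chain $\chi\mapsto\iota\mapsto\mathfrak T$ of homomorphisms). The explicit pigeonhole argument you supply for the non-existence of tableaux is a detail the paper leaves to the reader, but the underlying logic is the same.
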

\begin{proof}
The statement follows from Corollary \ref{cor zero}.
\end{proof}

\begin{proposition}\label{prop anti}
We have	
$$
\mathfrak T^k(u)=\mathrm{str}\,\mathbb A_k\,T_1(u)T_2(u-1)\cdots T_k(u-k+1),
$$  
$$
\mathfrak T_k(u)=\mathrm{str}\,\mathbb S_k\,T_1(u)T_2(u+1)\cdots T_k(u+k-1).
$$
\end{proposition}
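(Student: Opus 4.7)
The plan is to realize the skew representations $L((1^k))^\iota$ and $L((k))^\iota$ as images of the super anti-symmetrizer $\mathbb A_k$ and super symmetrizer $\mathbb S_k$, respectively, inside tensor products of modified evaluation vector representations, and then to compute their transfer matrices via the coproduct of the universal R-matrix.

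I would begin with the column tableau $\Omega_c$ of shape $(1^k)$ (contents $c_i(\Omega_c)=1-i$) and the row tableau $\Omega_r$ of shape $(k)$ (contents $c_i(\Omega_r)=i-1$). Since $\mu=\emptyset$ gives $g_\mu=g_\mu^\iota=1$, Theorem \ref{thm * iso} supplies the embeddings
\[
L((1^k))^\iota\cong\mathscr F(\Omega_c)^\iota\hookrightarrow \mathbb V_0\otimes\mathbb V_{-1}\otimes\cdots\otimes\mathbb V_{-(k-1)},
\]
\[
L((k))^\iota\cong\mathscr F(\Omega_r)^\iota\hookrightarrow \mathbb V_0\otimes\mathbb V_{1}\otimes\cdots\otimes\mathbb V_{k-1}.
\]
The super version of the classical fusion procedure, as worked out in \cite{Naz04}, yields the identifications $\mathcal E_{\Omega_c}=k!\,\mathbb A_k$ and $\mathcal E_{\Omega_r}=k!\,\mathbb S_k$ as operators on $V^{\otimes k}$. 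This can be seen by noting that both sides act as (super) $\mathfrak S_k$-equivariant projectors onto the sign and trivial isotypic components of the regular representation of $\mathfrak S_k$ inside $V^{\otimes k}$ (each one-dimensional as an $\mathfrak S_k$-type), so proportionality follows, and the constant is computed on a single nonzero vector. Hence $\mathscr F(\Omega_c)^\iota=\mathrm{Im}(\mathbb A_k)$ and $\mathscr F(\Omega_r)^\iota=\mathrm{Im}(\mathbb S_k)$ as subspaces of $V^{\otimes k}$.

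Next, I would compute the action of the universal R-matrix on the ambient tensor products. An induction on $k$ using \eqref{eq:delta-R} gives
\[
(\Delta^{(k-1)}\otimes \mathrm{id})(\mathscr R(-u))=\mathscr R_{1,k+1}(-u)\mathscr R_{2,k+1}(-u)\cdots\mathscr R_{k,k+1}(-u),
\]
and combining this with the normalization \eqref{eq:norm-R} yields
\[
(\Theta_{\mathbb V_{z_1}\otimes\cdots\otimes\mathbb V_{z_k}}\otimes \mathrm{id})(\mathscr R(-u))=T_1(u+z_1)T_2(u+z_2)\cdots T_k(u+z_k)
\]
in $\End(V^{\otimes k})\otimes\YglMN[[u^{-1}]]$. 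Substituting $z_i=-(i-1)$ and $z_i=i-1$ gives the two operators $T_1(u)T_2(u-1)\cdots T_k(u-k+1)$ and $T_1(u)T_2(u+1)\cdots T_k(u+k-1)$, respectively.

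Finally, I would use the general principle that for an idempotent $\Pi\in\End(M)$ with $W=\mathrm{Im}(\Pi)$ and for $X\in\End(M)\otimes\YglMN[[u^{-1}]]$ preserving $W$, one has $\str_W(X|_W)=\str_M(X\Pi)$; since $\mathbb A_k$ and $\mathbb S_k$ are even, cyclicity of the supertrace further gives $\str_M(X\Pi)=\str_M(\Pi X)$. Applying this with $(M,\Pi)=(\mathbb V_0\otimes\cdots\otimes\mathbb V_{-(k-1)},\mathbb A_k)$ delivers the first formula and with $(M,\Pi)=(\mathbb V_0\otimes\cdots\otimes\mathbb V_{k-1},\mathbb S_k)$ the second. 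The main obstacle lies in the super fusion identities $\mathcal E_{\Omega_c}=k!\,\mathbb A_k$ and $\mathcal E_{\Omega_r}=k!\,\mathbb S_k$: while these are standard in the non-super case, their super analogs require careful sign bookkeeping through repeated uses of the Yang-Baxter equation with the graded flip, which is essentially carried out in \cite{Naz04}.
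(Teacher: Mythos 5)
Your proposal follows the same route as the paper's proof: realize $L((1^k))^\iota$ and $L((k))^\iota$ via the fusion projectors $\mathbb A_k$ and $\mathbb S_k$ inside tensor products of modified evaluation vector representations using Theorem~\ref{thm * iso}, then expand $(\Delta^{(k-1)}\otimes\mathrm{id})(\mathscr R(-u))$ via \eqref{eq:delta-R} and \eqref{eq:norm-R} to get the product $T_1(u)T_2(u\mp1)\cdots$. You also make explicit the trace-over-a-subrepresentation step ($\str_W(X|_W)=\str_M(\Pi X)$ for an even idempotent $\Pi$ and $X$ preserving $W=\mathrm{Im}\,\Pi$), which the paper leaves implicit; the substance is identical.
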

\begin{proof} Consider the weight $\omega_k$. Then the corresponding
Young diagram is a column with $k$ boxes. Let $\Omega$ be the column tableau. Then $c_i(\Omega)=1-i$, $i=1,\dots,k$, and it is well-known that $\mathcal E_{\Omega}=k!\mathbb A_k$ and $\mathbb A_k$ is the projection of $V^{\otimes k}$ onto the image of $\mathcal E_{\Omega}$, which is isomorphic to $L(\omega_k)$ as a $\glMN$-module. It follows from Theorem \ref{thm * iso} that 
\[
\Theta_{L((1^k))^\iota}=\mathbb A_k\circ \big(\Theta_{\mathbb V_{c_1(\Omega)}}\otimes\cdots\otimes \Theta_{\mathbb V_{c_k(\Omega)}}\big) \circ \Delta^{(k-1)}.
\]
The first formula of the proposition now follows from \eqref{eq:delta-R} and \eqref{eq:norm-R}. The second formula is similar.
\end{proof}

\subsection{Harish-Chandra homomorphism}\label{sec:HC}
In this section, we define an analog of Harish-Chandra homomorphism $\mathscr H$ for $\YglMN$ and compute the images of transfer matrices associated to skew Young diagrams under $\mathscr H$. See \cite{MM14} for a discussion of the Harish-Chandra homomorphism for Yangians of classical Lie algebras.

Let $\YglMN^\h$ be the centralizer of $\h\subset \glMN$ in $\YglMN$,
\[
\YglMN^\h=\{X\in \YglMN\ \ |\ \  [t_{ii}^{(1)},X]=0,\ \text{ for } i\in \bar I\}.
\]
Recall $d_i^{(r)}$, $e_j^{(r)}$, and $f_j^{(r)}$ for $i\in \bar I$ and $j\in I$, from Section \ref{sec:Gauss}. Let $J$ be the intersection of $\YglMN^\h$ and the right ideal of $\YglMN$ generated by $f_j^{(r)}$, for $j\in I$ and $r\in\Z_{>0}$. Note that $J$ is also the intersection of $\YglMN^\h$ and the left ideal of $\YglMN$ generated by $e_j^{(r)}$, for $j\in I$ and $r\in\Z_{>0}$. We have the decomposition as vector spaces,
\[
\YglMN^\h=\mathrm Y_{m|n}^0 \oplus J.
\]
The projection $\mathscr H$ of $\YglMN^\h$ onto the subspace $\mathrm Y_{m|n}^0$ along $J$,  
$$
\mathscr H:\YglMN^\h\to \mathrm Y_{m|n}^0,
$$ 
is an algebra homomorphism. We call $\mathscr H$ the \emph{Harish-Chandra homomorphism} of $\YglMN$.

Clearly, from the Gauss decomposition, we have $\mathscr H(t_{ii}(u))=d_i(u)$. 

The following lemma can be proved using induction, cf. \cite[Lemma 4.1]{MTV06}.
\begin{lemma}\label{lem hc proj}
For any $X\in \YglMN^\h$ of the form $$*\, t_{i_0j_0}^{(r_0)}t_{i_1i_1}^{(r_1)}\cdots t_{i_ki_k}^{(r_k)},\quad i_a,j_0\in \bar I, \, r_a>0,\, a=0,1,\dots,k, \, k\in\Z_{\gge 0},\quad i_0<j_0,$$ where $*$ is an element in $\YglMN$, we have $\mathscr H(X)=0$.
\end{lemma}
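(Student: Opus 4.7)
The plan is to evaluate $\mathscr H(X)$ via its action on highest $\ell$-weight vectors and then use the algebraic freeness of $\mathrm{Y}_{m|n}^{0}$ to pass from pointwise vanishing to the zero polynomial.

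Fix an arbitrary $\bm\zeta\in\mathfrak B$ and let $v$ be a highest $\ell$-weight vector of the irreducible module $L(\bm\zeta)$, which exists for every $\bm\zeta$ by the discussion in Section \ref{sec high rep}. The first step is to check that $Xv=0$. By \eqref{eq:t-singular} each diagonal factor $t_{i_a i_a}^{(r_a)}$ acts on $v$ as the scalar $\zeta_{i_a}^{(r_a)}$ (the coefficient of $u^{-r_a}$ in $\zeta_{i_a}(u)$), so
\begin{equation*}
Xv=*\cdot t_{i_0 j_0}^{(r_0)}\cdot\Big(\prod_{a=1}^{k}\zeta_{i_a}^{(r_a)}\Big)v=0,
\end{equation*}
where the vanishing follows from $t_{i_0 j_0}^{(r_0)}v=0$, which holds by \eqref{eq:t-singular} because $i_0<j_0$.

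The second step is to convert $Xv=0$ into $\mathscr H(X)v=0$. Writing $X=\mathscr H(X)+X'$ with $X'\in J$, I would use that $J$ is contained in the left ideal of $\YglMN$ generated by the simple $e_j^{(r)}$, as recorded in the paragraph defining $\mathscr H$; expanding $X'=\sum_\alpha Y_\alpha e_{j_\alpha}^{(r_\alpha)}$ and using $e_{j_\alpha}^{(r_\alpha)}v=0$ gives $X'v=0$, hence $\mathscr H(X)v=Xv=0$. Since each $d_i^{(s)}$ acts on $v$ by the scalar $\zeta_i^{(s)}$, the element $\mathscr H(X)$, viewed as an element of the polynomial algebra $\C[d_i^{(s)}: i\in\bar I,\ s>0]$, acts on $v$ as its evaluation at the point $(\zeta_i^{(s)})\in\C^{\bar I\times\Z_{>0}}$, and this evaluation therefore equals zero.

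For the third step I would invoke the algebraic freeness of the $d_i^{(s)}$ in $\mathrm{Y}_{m|n}^{0}$ (noted right after Theorem \ref{thm PBW}) together with the arbitrariness of $\bm\zeta$: the polynomial $\mathscr H(X)$ vanishes at every point of $\C^{\bar I\times\Z_{>0}}$, so $\mathscr H(X)=0$. The substantive content of the argument is the first step, where the hypothesis $i_0<j_0$ forces the annihilation on highest $\ell$-weight vectors; the remaining two steps are formalities from the definition of $\mathscr H$ and the polynomial identity principle. A direct algebraic proof by induction on $k$ is also available, based on expanding $t_{i_0 j_0}^{(r_0)}$ via the Gauss decomposition (whose terms all end in some $e_{p,j_0}^{(s)}$ with $p<j_0$) and using \eqref{eq com de} repeatedly to move the trailing diagonal factors past these $e$-generators while preserving the \emph{ends-in-$e$} form; in the present setting the representation-theoretic approach is cleaner.
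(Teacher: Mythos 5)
Your proof is correct, and it takes a genuinely different route from the paper's. The paper does not write out a proof; it simply says the lemma ``can be proved using induction, cf.\ \cite[Lemma 4.1]{MTV06}'', i.e.\ a purely algebraic inductive computation with the generators and relations, carried over to the super setting. You instead argue representation-theoretically: for any $\ell$-weight $\bm\zeta$ you evaluate $X$ on a highest $\ell$-weight vector $v$ of $L(\bm\zeta)$, observe that $t_{i_0j_0}^{(r_0)}v=0$ (since $i_0<j_0$) kills $Xv$ after the diagonal factors act by the scalars $\zeta_{i_a}^{(r_a)}$, deduce $\mathscr H(X)v=Xv=0$ from the decomposition $\YglMN^\h=\mathrm Y^0_{m|n}\oplus J$ together with $Jv=0$ (because $J$ lies in the left ideal generated by $e_j^{(r)}$ and $e_j^{(r)}v=0$), and then use that $\mathrm Y^0_{m|n}=\C[d_i^{(s)}]$ is free so a polynomial vanishing at all evaluation points $(\zeta_i^{(s)})$ is zero. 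All three steps are sound given what the paper establishes in Sections \ref{sec:Gauss} and \ref{sec high rep}. What this approach buys you is that it avoids any explicit manipulation with the commutation relations; what the algebraic induction buys in exchange is that it is independent of the existence of highest $\ell$-weight modules and works at the level of the algebra alone. One small caveat on your closing remark: the sketched ``direct'' induction via the Gauss decomposition and \eqref{eq com de} is not quite a one-liner, since \eqref{eq com de} only records the $[d_i,e_j]$ commutators for simple $e_j$, whereas the Gauss decomposition of $t_{i_0j_0}(u)$ with $j_0>i_0+1$ produces non-simple series $e_{k,j_0}(u)$; one would need the corresponding relations for those (available in \cite{Gow07,Peng} but not reproduced in this paper), so the representation-theoretic argument you give is indeed the cleaner one within this paper's toolkit.
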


It is well-known that all coefficients of $\mathfrak T^k(u)$ and $\mathfrak T_k(u)$ belong to the centralizer $\YglMN^\h$, cf. \cite[Proposition 4.7]{MTV06}. Hence we can compute the Harish-Chandra images of transfer matrices $\mathfrak T_{\la/\mu}(u)$.

\begin{lemma}\label{lem HC image AS}
We have
\[
\mathscr H(\mathfrak T^k(u))=\sum_{\mathscr I} \prod_{a=1}^k s_{i_a}d_{i_a}(u-a+1),
\]
\[
\mathscr H(\mathfrak T_k(u))=\sum_{\mathscr J} \prod_{a=1}^k s_{j_a}d_{j_a}(u-a+k),
\]
summed over all sequences $\mathscr I=\{1\lle i_1<i_2<\dots<i_b<m+1\lle i_{b+1}\lle \cdots\lle i_k\lle m+n\}$ and $\mathscr J=\{m+n\gge j_1> j_2>\dots>j_b\gge m+1> j_{b+1}\gge \cdots\gge j_k\gge 1\}$ with $b=0,\dots,k$, respectively.
\end{lemma}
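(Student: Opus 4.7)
The plan is to reduce the computation of the Harish-Chandra images to the action of the transfer matrices on a highest $\ell$-weight vector, then evaluate via the matrix formulas of Proposition \ref{prop anti}. The first step rests on the observation that $J$ annihilates any highest $\ell$-weight vector: since $J$ coincides with the left ideal of $\YglMN$ generated by the $e_j^{(r)}$ intersected with $\YglMN^\h$, and each $e_j^{(r)}$ kills a highest $\ell$-weight vector $v$ of $\ell$-weight $\bs\zeta$, we get $\mathfrak T^k(u)\,v = \mathscr H(\mathfrak T^k(u))\,v$, which acts as the scalar obtained from $\mathscr H(\mathfrak T^k(u))$ via the substitution $d_i\mapsto\zeta_i$. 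Generic choice of $\bs\zeta$ then determines $\mathscr H(\mathfrak T^k(u))$ uniquely.

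Next I would expand $\mathfrak T^k(u)\,v$ using Proposition \ref{prop anti}: $\mathfrak T^k(u)\,v = \mathrm{str}_{V^{\otimes k}}\mathbb A_k\,\hat T(u)\,v$ with $\hat T(u) = T_1(u) T_2(u-1)\cdots T_k(u-k+1)$. Choose a homogeneous basis $\{e_\mathscr I\}$ of $\mathrm{Im}(\mathbb A_k)\subset V^{\otimes k}$ indexed by sequences $\mathscr I$ as in the lemma (the even indices strictly increasing, the odd indices weakly increasing, which is exactly the basis of the super exterior power). The parity of $e_\mathscr I$ is $\sum_a |i_a|$, so the supertrace introduces the sign $\prod_a s_{i_a}$. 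The operator $\hat T(u)$ preserves the total $\h$-weight on $V^{\otimes k}\otimes M$; combined with $t_{ij}(u')\,v = 0$ for $i<j$ and $t_{ii}(u')\,v = \zeta_i(u')\,v$, a weight-tracking argument shows that the contribution of $e_\mathscr I\otimes v$ to itself under $\hat T(u)$ is supported (modulo vectors of strictly lower $\h$-weight in $M$) on multiset permutations of $(i_1,\dots,i_k)$. A direct calculation, parallel to the standard argument in the $\glN$ case (compare \cite[Ch.~11]{Mol07}), then identifies the super-antisymmetrized sum with $\prod_a\zeta_{i_a}(u-a+1)\cdot v$. Summing over $\mathscr I$ and substituting $\zeta_i\mapsto d_i$ yields the first formula.

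The second formula for $\mathscr H(\mathfrak T_k(u))$ follows by the same procedure, with $\mathbb A_k$ replaced by the super symmetrizer $\mathbb S_k$ and arguments shifted as $u, u+1, \dots, u+k-1$ per the symmetric-case formula of Proposition \ref{prop anti}. The image of $\mathbb S_k$ has a basis indexed by $\mathscr J$, obtained from $\mathscr I$ by interchanging the roles of even and odd indices (odd strictly, even weakly decreasing). The main technical obstacle lies in the cancellation step inside the second paragraph: nontrivial multiset permutations give Yangian products such as $t_{pq}(u_1) t_{qp}(u_2)$ with nonzero Harish-Chandra image via the $[e,f]$ commutator \eqref{eq com ef}, and one must check that these contributions, after combining with the super (anti)symmetrization signs, exactly reproduce the strict-diagonal product $\prod_a\zeta_{i_a}(u-a+1)$. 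In the $\glN$ case this combinatorial identity is a standard R-matrix manipulation; in the $\glMN$ case it proceeds in the same spirit but requires careful bookkeeping of super signs and accommodates the repetition of odd (resp.\ even) indices in $\mathscr I$ (resp.\ $\mathscr J$).
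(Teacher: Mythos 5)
Your route is dual to the paper's: you evaluate $\mathfrak T^k(u)$ on a universal highest $\ell$-weight vector $v$, noting that the ideal $J$ annihilates $v$ and that characters of the free polynomial algebra $\mathrm Y^0_{m|n}$ separate points, whereas the paper applies the Harish--Chandra projection directly and uses Lemma~\ref{lem hc proj}, which is the algebra-level encoding of the same highest-weight annihilation. Both routes hinge on the same combinatorial bottleneck: expanding $\str\,\mathbb A_k\,T_1(u)\cdots T_k(u-k+1)$ and reorganizing it into a sum over sorted sequences $\mathscr I$, in the order for which Lemma~\ref{lem hc proj} applies (equivalently, the order that visibly annihilates $v$). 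The paper obtains this reorganization by citing an analog of \cite[Proposition~2.3, Remark~2.4 and Section~3.3]{MR14}; you label it ``the main technical obstacle,'' cite only the even case \cite[Ch.~11]{Mol07}, and leave the super case as ``in the same spirit.'' That unproven reorganization is exactly the crux of the lemma, so as it stands the proposal has a genuine gap at precisely the point where the paper invokes the [MR14] identity.

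Two smaller remarks. First, your example $t_{pq}(u_1)t_{qp}(u_2)$ with $p>q$ in fact annihilates $v$ outright (the rightmost factor $t_{qp}$ is $e$-type); the wrong-ordered product that genuinely contributes through the commutation relation is $t_{qp}(u_1)t_{pq}(u_2)$. Second, the paper's pairing of the [MR14] rewriting with Lemma~\ref{lem hc proj} avoids any nontrivial cancellation among nonzero terms: once the expansion is rewritten as quantum-minor-type products indexed by sorted $\mathscr I$, each off-diagonal monomial has zero Harish--Chandra image outright, so the formula is read off directly from the strictly diagonal terms rather than emerging from cancellation.
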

\begin{proof}
The lemma follows from Proposition \ref{prop anti}, Lemma \ref{lem hc proj}, and an analog of \cite[Proposition 2.3]{MR14} with sequences of the form like $\mathscr I$ or $\mathscr J$, see \cite[Remark 2.4]{MR14}. A similar computation was done in \cite[Section 3.3]{MR14}.
\end{proof}

The following is immediate from Theorem \ref{thm ber expansion}
 and Lemma \ref{lem HC image AS}.
\begin{corollary}\label{cor hc image ber}
The Harish-Chandra image of $\mathfrak D(u,\tau;q)$ is given by
\[
\mathscr H(\mathfrak D(u,\tau;q))=\mathop{\overrightarrow\prod}\limits_{1\lle i\lle m+n}\Big(1-q\,d_i(u)\, \tau\Big)^{s_i}.
\]
\end{corollary}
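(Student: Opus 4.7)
The plan is to expand both sides as formal Laurent series in the shift operator $\tau$ and match coefficients. Starting from Theorem~\ref{thm ber expansion} and rewriting the supertrace-antisymmetrizer factor via Proposition~\ref{prop anti}, we have
\begin{equation*}
\mathfrak D(u,\tau;q) = 1 + \sum_{k=1}^\infty (-1)^k \mathfrak T^k(u)\, q^k \tau^k.
\end{equation*}
Since each $\mathfrak T^k(u)$ lies in the centralizer $\YglMN^{\h}$ and $\mathscr H$ restricted there is an algebra homomorphism, applying $\mathscr H$ and invoking Lemma~\ref{lem HC image AS} yields
\begin{equation*}
\mathscr H(\mathfrak D(u,\tau;q)) = 1 + \sum_{k\ge 1} q^k \tau^k \sum_{\mathscr{I}} (-1)^k \Big(\prod_{a=1}^k s_{i_a}\Big) d_{i_1}(u)d_{i_2}(u-1)\cdots d_{i_k}(u-k+1),
\end{equation*}
where $\mathscr{I}$ ranges over sequences $1\lle i_1 < \cdots < i_b < m+1 \lle i_{b+1} \lle \cdots \lle i_k \lle m+n$. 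Since $\prod_a s_{i_a} = (-1)^{k-b}$, the overall sign simplifies to $(-1)^b$.

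Next I would expand the right-hand side of the corollary. Writing it as $\prod_{i=1}^m (1 - qd_i(u)\tau) \cdot \prod_{i=m+1}^{m+n} (1 - qd_i(u)\tau)^{-1}$ in the prescribed left-to-right order, each of the first $m$ factors contributes either $1$ or $-qd_i(u)\tau$, producing a sum over strictly increasing subsets $\{i_1 < \cdots < i_b\}\subseteq \{1,\dots,m\}$ with sign $(-1)^b$. Each inverse factor for $i > m$ expands as $\sum_{c\ge 0}(qd_i(u)\tau)^c$, and their product in the prescribed order runs over weakly increasing sequences $m+1 \lle i_{b+1} \lle \cdots \lle i_k \lle m+n$. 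Concatenating gives precisely the sequences $\mathscr{I}$ of Lemma~\ref{lem HC image AS}. Finally, using the shift relation $\tau d_j(u) = d_j(u-1)\tau$ together with the fact that the $d_i$'s mutually commute (see \eqref{eq com de}), each monomial in the expansion normal-orders to $d_{i_1}(u)d_{i_2}(u-1)\cdots d_{i_k}(u-k+1)\, q^k\tau^k$, matching the previous display term by term.

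The only subtlety is careful sign bookkeeping, combined with tracking the shifts introduced as $\tau$ commutes past the $d_i(u)$'s. There is no genuine obstacle here: Lemma~\ref{lem HC image AS} has already done the hard combinatorial work of identifying the correct summation, so what remains is a purely formal matching of two series expansions.
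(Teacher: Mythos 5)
Your argument is correct and is exactly the computation the paper intends: the paper declares the corollary ``immediate'' from Theorem~\ref{thm ber expansion} and Lemma~\ref{lem HC image AS}, and your proof spells out that expansion-and-match, including the key sign identity $\prod_a s_{i_a} = (-1)^{k-b}$ and the normal-ordering of $\tau$ past the $d_i$'s. The only blemish is a notational slip in your second display, where you move $q^k\tau^k$ to the left of the sum without shifting the arguments of the $d_{i_a}$'s accordingly; your subsequent discussion of normal-ordering shows you understand the shift correctly, so the argument as a whole is sound.
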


\begin{proposition}\label{prop hc image skew}
We have 
\beq\label{eq hc images}
\mathscr H(\mathfrak T_{\la/\mu}(u))=\sum_{\mathcal T}\prod_{(i,j)\in\la/\mu}s_{\mathcal T(i,j)}d_{\mathcal T(i,j)}(u+c(i,j)),
\eeq
where the summation is over all semi-standard Young tableaux $\cT$ of shape $\la/\mu$.
\end{proposition}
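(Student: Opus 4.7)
The plan is to apply the Harish-Chandra homomorphism $\mathscr H$ to the Jacobi-Trudi identity of Theorem \ref{thm Jacobi T} and then run the Lindstr\"om-Gessel-Viennot argument already used in the proof of Theorem \ref{thm Jacobi-Trudi}.

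First I would invoke the second equality of Theorem \ref{thm Jacobi T}, which expresses $\mathfrak T_{\la/\mu}(u)$ as a $\la_1\times\la_1$ determinant in the $\mathfrak T^k$'s. Since each $\mathfrak T^k(u)$ has coefficients in $\YglMN^\h$ and the coefficients of the $\mathfrak T^k(u)$'s commute (Lemma \ref{lem trans relations}), the determinant is well-defined already in $\YglMN^\h$. Applying the algebra homomorphism $\mathscr H$ yields
\begin{equation*}
\mathscr H(\mathfrak T_{\la/\mu}(u))
=\det_{1\lle i,j\lle \la_1}\mathscr H\bigl(\mathfrak T^{\la_i'-\mu_j'-i+j}(u-\mu_j'+j-1)\bigr),
\end{equation*}
since the images lie in the commutative polynomial algebra $\mathrm Y_{m|n}^0$.

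Next, by Lemma \ref{lem HC image AS} each entry expands as a sum over length-$k$ columns $1\lle i_1<\cdots<i_b\lle m<m+1\lle i_{b+1}\lle\cdots\lle i_k\lle m+n$, each contributing $\prod_a s_{i_a} d_{i_a}(u'-a+1)$. I would then reinterpret such a column as a lattice path exactly as in the proof of Theorem \ref{thm Jacobi-Trudi}: a contributed step whose starting point has vertical coordinate $y$ and horizontal coordinate $x$ carries weight $s_y d_y(u+x)$. The determinant thus becomes a signed sum over $\la_1$-tuples of lattice paths connecting starting points $(\mu_j'-j+1,1)$ to ending points $(\la_i'-i+1,m+n+1)$.

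The LGV argument then applies verbatim: the weights $s_i d_i(u+x)$ are commuting elements of $\mathrm Y_{m|n}^0$ and the weight of each step depends only on its position, so intersecting tuples cancel in pairs via the standard swap at the first intersection. What survives is the identity-permutation contribution, and the non-intersecting tuples are in bijection with semi-standard Young tableaux of shape $\la/\mu$. A direct tracking of shifts confirms that the box $(a,j)$ of such a tableau, sitting as the $(a-\mu_j')$-th entry of the $j$-th column (whose spectral parameter has been shifted to $u-\mu_j'+j-1$), contributes
\begin{equation*}
s_{\mathcal T(a,j)}\, d_{\mathcal T(a,j)}\bigl((u-\mu_j'+j-1)-(a-\mu_j')+1\bigr)=s_{\mathcal T(a,j)}\, d_{\mathcal T(a,j)}(u+c(a,j)),
\end{equation*}
which gives exactly \eqref{eq hc images}. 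The only real obstacle is bookkeeping---checking that the three separate shift conventions (the one built into Theorem \ref{thm Jacobi T}, the $u'-a+1$ appearing in Lemma \ref{lem HC image AS}, and the content $c(i,j)=j-i$ in the target formula) conspire so that the resulting LGV weights match the desired product; once this alignment is verified, the combinatorics is identical to that in the proof of Theorem \ref{thm Jacobi-Trudi} and nothing more is required.
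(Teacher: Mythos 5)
Your proof is correct and follows exactly the same route as the paper: apply $\mathscr H$ to the Jacobi-Trudi determinant of Theorem \ref{thm Jacobi T}, expand the entries via Lemma \ref{lem HC image AS}, and then rerun the Lindstr\"om-Gessel-Viennot cancellation from the proof of Theorem \ref{thm Jacobi-Trudi} under the identification $s_i\,d_i(u+a)\leftrightarrow\mathscr X_{i,a}$. Your explicit verification of the shift bookkeeping (that $(u-\mu_j'+j-1)-(a-\mu_j')+1=u+c(a,j)$) and your remark that commutativity of the entries makes $\mathscr H$ commute with taking the determinant are exactly the points the paper's one-line proof leaves implicit.
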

\begin{proof}
The statement follows from the fact that $\mathscr H$ is an algebra homomorphism, Theorem \ref{thm Jacobi T}, Lemma \ref{lem HC image AS} and the proof of Theorem \ref{thm Jacobi-Trudi} (identifying $s_id_i(u+a)$ with $\mathscr X_{i,a}$).
\end{proof}

\begin{rem}
Note that if we identify $s_id_i(u+a)$ with $\mathscr X_{i,a}$ for $i\in\bar I$ and $a\in \C$, where $s_i$ is the parity of $\mathscr X_{i,a}$, then the right hand side of \eqref{eq hc images} is identified with right hand side of the equation in Theorem \ref{thm character}. This implies that the $q$-character map can be thought as the composition of Harish-Chandra map and the map $\mathfrak T$, see \cite[Section 3]{FR99}.
\end{rem}

\begin{corollary}\label{cor transfer nonzero}
If $\la/\mu$ does not contain a rectangle of size $(m+1)\times (n+1)$, then $\mathscr H(\mathfrak T_{\la/\mu}(u))$ is non-zero. In particular, $\mathfrak T_{\la/\mu}(u)$ is non-zero.
\end{corollary}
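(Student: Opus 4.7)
The plan is to apply the Harish-Chandra homomorphism $\mathscr H$ to $\mathfrak T_{\la/\mu}(u)$ and verify that the explicit image produced by Proposition \ref{prop hc image skew} does not vanish. That $\mathfrak T_{\la/\mu}(u)$ lies in the centralizer $\YglMN^{\h}[[u^{-1}]]$, and so in the domain of $\mathscr H$, follows from Theorem \ref{thm Jacobi T} together with the fact (used already before Lemma \ref{lem HC image AS}) that $\mathfrak T^k(u),\mathfrak T_k(u)\in\YglMN^{\h}[[u^{-1}]]$; since $\YglMN^{\h}$ is a subalgebra, any determinant in these elements lies there as well. Hence it is enough to establish $\mathscr H(\mathfrak T_{\la/\mu}(u))\neq 0$, and the ``in particular'' assertion is then immediate.

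To see non-vanishing of \eqref{eq hc images}, I would first invoke the standard combinatorial fact that the hypothesis of avoiding an $(m+1)\times(n+1)$ rectangle is precisely the condition guaranteeing the existence of at least one semi-standard Young tableau of shape $\la/\mu$ with entries in $\{1,\dots,m+n\}$, so that the indexing set of \eqref{eq hc images} is nonempty. Next, as observed in the proof of Theorem \ref{thm character}, two distinct semi-standard Young tableaux $\mathcal T\ne \mathcal T'$ of shape $\la/\mu$ give rise to distinct subsets $M(\mathcal T):=\{(\mathcal T(i,j),c(i,j)):(i,j)\in\la/\mu\}\subset\bar I\times\C$; these are genuine sets rather than multisets because entries along any diagonal strictly increase. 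Thus \eqref{eq hc images} is a signed sum, with signs $\sigma_{\mathcal T}=\prod_{(i,j)\in\la/\mu} s_{\mathcal T(i,j)}\in\{\pm 1\}$, of products $\prod_{(i,c)\in M(\mathcal T)} d_i(u+c)$ indexed by pairwise distinct finite subsets of $\bar I\times\C$.

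The remaining technical input is algebraic independence: for any finite collection of distinct pairs $(i_1,c_1),\dots,(i_N,c_N)\in\bar I\times\C$, the series $d_{i_1}(u+c_1),\dots,d_{i_N}(u+c_N)$ are algebraically independent over $\C$ in $\mathrm Y_{m|n}^0[[u^{-1}]]$. Since $\mathrm Y_{m|n}^0=\bigotimes_{i\in\bar I}\C[d_i^{(r)}]_{r\ge 1}$ is a tensor product of free polynomial rings in disjoint variables indexed by $i$ (Section \ref{sec:Gauss}), this reduces to a single $i$ with distinct shifts $c_1,\dots,c_k$. Specializing $d_i^{(r)}=0$ for $r>k$ turns the differences $d_i(u+c_j)-1$ into $\C(u)$-linear forms in $d_i^{(1)},\dots,d_i^{(k)}$ whose coefficient matrix $((u+c_j)^{-r})_{j,r=1,\dots,k}$ has nonzero Vandermonde-type determinant $\prod_{j}(u+c_j)^{-1}\cdot\prod_{a<b}(c_a-c_b)/((u+c_a)(u+c_b))$; so these linear forms, and therefore the original series, are algebraically independent.

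Granted this independence, the polynomial $\sum_{\mathcal T}\sigma_{\mathcal T}\prod_{(i,c)\in M(\mathcal T)} X_{(i,c)}\in\C[X_{(i,c)}]$ is a nonzero sum of distinct squarefree monomials with signs $\pm 1$, and its image under the injective substitution $X_{(i,c)}\mapsto d_i(u+c)$ is precisely $\mathscr H(\mathfrak T_{\la/\mu}(u))$, which is therefore nonzero in $\mathrm Y_{m|n}^0[[u^{-1}]]$. The only step requiring any genuine work is the algebraic-independence lemma; the rest is assembly of ingredients already developed in the paper.
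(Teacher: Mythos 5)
Your proof is correct, but it follows a somewhat different route from the paper's. The paper's argument is more local: it looks only at the coefficient of $u^{-|\la/\mu|}$ in $\mathscr H(\mathfrak T_{\la/\mu}(u))$, which is a polynomial in the variables $d_i^{(1)}$ alone, and isolates the single monomial $\prod_{(i,j)} d_{\mathcal T_0(i,j)}^{(1)}$ for the distinguished tableau $\mathcal T_0$ whose $\gl_{m|n}$-weight is the (unique) highest weight of $L(\la/\mu)$; extremality of that weight forces this monomial to come from $\mathcal T_0$ only, so it survives with coefficient $\prod s_{\mathcal T_0(i,j)}=\pm 1$, and the algebraic independence of the $d_i^{(r)}$ finishes the job. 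Your argument instead works with the full series: you show that the shifted series $d_i(u+c)$ for distinct pairs $(i,c)\in\bar I\times\C$ are algebraically independent over $\C$, and then note that distinct semi-standard tableaux give distinct squarefree monomials in these independent ``variables,'' so the signed sum cannot collapse. This buys a cleaner and stronger structural statement (all tableau contributions are linearly independent, not just one isolated), at the cost of having to prove the algebraic-independence lemma for $d_i(u+c)$; your truncation-plus-Vandermonde sketch of that lemma is sound, though the reduction to a single $i$ and the passage from linear independence of the truncated linear forms over $\C((u^{-1}))$ to algebraic independence of the untruncated series over $\C$ both deserve a sentence or two more care. The paper avoids the lemma entirely by dropping to the bottom coefficient where only $d_i^{(1)}$ appears, so the standard algebraic independence of the generators $d_i^{(r)}$ suffices.
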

\begin{proof}
As $\la/\mu$ does not contain a rectangle of size $(m+1)\times (n+1)$, there exists at least one semi-standard Young tableau of shape $\la/\mu$. Hence the space $L(\la/\mu)$ is non-trivial by Theorem \ref{thm character} and it is an irreducible finite-dimensional $\YglMN$-module by Theorem \ref{thm skew}. Let $\mathcal T_0$ be the semi-standard Young tableau corresponding to the highest $\ell$-weight of $L(\la/\mu)$, see Theorem \ref{thm character}. We consider the monomial
$$
\prod_{(i,j)\in\la/\mu}\Big(d_{\mathcal T_0(i,j)}^{(1)}u^{-1}\Big)
$$
in the right hand side of \eqref{eq hc images}. The monomial $\mathcal T_0$ corresponds to the highest $\gl_{m|n}$-weight in $L(\la/\mu)$. Clearly this monomial appears only in 
$$
\prod_{(i,j)\in\la/\mu}s_{\mathcal T(i,j)}d_{\mathcal T(i,j)}(u+c(i,j))
$$
when $\mathcal T=\mathcal T_0$. The statement now follows from the fact that $d_i^{(r)}$, $i\in \bar I$ and $r\in\Z_{>0}$, are algebraically independent.
\end{proof}

\subsection{Rational form of quantum Berezinian}
Motivated by \cite{HMVY,HLM} and \cite[Corollary 6.13]{LM19}, we are interested in writing $\mathrm{Ber}(1-T(u)\tau)$ as a ratio of two polynomials in $\tau$.

Let $\Xi$, $\Xi^+$, and $\Xi^-$ be partitions corresponding to the rectangular Young diagrams of sizes $m\times n$, $m\times (n+1)$, and $(m+1)\times n$, respectively. Introduce partitions 
$$
\Upsilon_i^+=(\underbrace{n,\dots,n}_{m~n'\text{s}},i),\qquad  \Upsilon_j^-=(\underbrace{n+1,\dots,n+1}_{j~(n+1)'\text{s}},\underbrace{n,\dots,n}_{(m-j)~n'\text{s}}),
$$
where $i=0,1,\dots,n$ and $j=0,1,\dots,m$. In particular, $\Upsilon_0^+=\Upsilon_0^-=\Xi$, $\Upsilon_n^+=\Xi^-$, $\Upsilon_m^-=\Xi^+$. 

Note that $\mathfrak T_{\Xi}(u)\ne 0$ by Corollary \ref{cor transfer nonzero}.

\begin{theorem}\label{thm ratio}
We have
\begin{align*}
\mathfrak D(u,\tau;q)=&\ \Big(1+\sum_{i=1}^m (-1)^iq^i\mathscr E_i(u)\tau^i\Big)\Big(1+\sum_{j=1}^n q^j\mathscr  G_j(u)\tau^j\Big)^{-1},\\= &\ \Big(1+\sum_{j=1}^nq^j \overline{\mathscr  G}_j(u)\tau^j\Big)^{-1}
\Big(1+\sum_{i=1}^m (-1)^iq^i\overline{\mathscr E}_i(u)\tau^i\Big).
\end{align*}
where
\[
\mathscr E_i(u)=\frac{\mathfrak T_{\Xi^+/(1^{m-i})}(u+m-i)}{\mathfrak T_{\Xi}(u+m+1-i)},\qquad \mathscr G_i(u)=\frac{\mathfrak T_{\Upsilon_i^+}(u+m+1-i)}{\mathfrak T_{\Xi}(u+m+1-i)},
\]
\[
\overline{\mathscr E}_i(u)=\frac{\mathfrak T_{\Upsilon_i^-}(u-n)}{\mathfrak T_{\Xi}(u-n)},\qquad \overline{\mathscr G}_i(u)=\frac{\mathfrak T_{\Xi^-/(m-i)}(u-n+1)}{\mathfrak T_{\Xi}(u-n)},
\]
are ratios of transfer matrices. Here we can take ratio as transfer matrices commute. 
\end{theorem}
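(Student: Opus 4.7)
The plan is to establish the identity by applying the Harish-Chandra homomorphism $\mathscr H$ of Section \ref{sec:HC} and reducing to a product identity in the polynomial algebra $\mathrm Y_{m|n}^0$. Since transfer matrices pairwise commute by Lemma \ref{lem trans relations} and $\mathfrak T_\Xi$ is nonzero by Corollary \ref{cor transfer nonzero} (so that the ratios $\mathscr E_i,\mathscr G_j$ make sense in the localization of the Bethe subalgebra at $\mathfrak T_\Xi$), I would first clear the ratios and rewrite the first claimed equality as the operator identity $\mathfrak D(u,\tau;q)\cdot\mathfrak D_2'(u,\tau;q)=\mathfrak D_1'(u,\tau;q)$ in $\mathcal A_{m|n}[q]$, with $\mathfrak D_1'=1+\sum_{i=1}^m(-q)^i\mathscr E_i(u)\tau^i$ and $\mathfrak D_2'=1+\sum_{j=1}^n q^j\mathscr G_j(u)\tau^j$. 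All coefficients lie in the commutative subalgebra of $\YglMN^{\h}$ generated by transfer matrices, and the identity is preserved by $\mathscr H$.

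The main computation is the Harish-Chandra images of $\mathscr E_i$ and $\mathscr G_j$. The shape $\Upsilon_j^+$ is exactly $\mathbf S((j))$, and $\Xi^+/(1^{m-i})$ is (a translate of) $\mathbf W((1^i))$; the argument behind Lemma \ref{lem character q-div} --- factoring semi-standard Young tableaux into an independent filling of the rectangular part and the attached row (resp.\ column) --- transfers verbatim to the Harish-Chandra level via Proposition \ref{prop hc image skew}. The shared $\mathfrak T_\Xi$ factor cancels the denominator, leaving
\[
\mathscr H(\mathscr E_i(u))=\sum_{1\leqslant k_1<\cdots<k_i\leqslant m}d_{k_1}(u)\,d_{k_2}(u-1)\cdots d_{k_i}(u-i+1),
\]
\[
\mathscr H(\mathscr G_j(u))=(-1)^j\sum_{m<k_1<\cdots<k_j\leqslant m+n}d_{k_1}(u-j+1)\,d_{k_2}(u-j+2)\cdots d_{k_j}(u).
\]
A direct expansion using $\tau d_k(v)=d_k(v-1)\tau$ and the commutation of the $d_k$'s identifies these sums (up to signs $(-q)^i$, $q^j$) as the $\tau^i$-coefficient of $\mathop{\overrightarrow\prod}\limits_{k=1}^m(1-qd_k(u)\tau)$ and the $\tau^j$-coefficient of $\mathop{\overleftarrow\prod}\limits_{k=m+1}^{m+n}(1-qd_k(u)\tau)$, respectively.

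Combining with Corollary \ref{cor hc image ber}, the Harish-Chandra level equality reduces to the algebraic identity
\[
\mathop{\overrightarrow\prod}\limits_{k=1}^{m+n}(1-qd_k(u)\tau)^{s_k}\cdot \mathop{\overleftarrow\prod}\limits_{k=m+1}^{m+n}(1-qd_k(u)\tau)=\mathop{\overrightarrow\prod}\limits_{k=1}^m(1-qd_k(u)\tau),
\]
which holds by telescoping cancellation since $\bigl(\mathop{\overleftarrow\prod}\limits_{k=m+1}^{m+n}(1-qd_k(u)\tau)\bigr)^{-1}=\mathop{\overrightarrow\prod}\limits_{k=m+1}^{m+n}(1-qd_k(u)\tau)^{-1}$. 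To lift the resulting equality of Harish-Chandra images back to $\mathcal A_{m|n}[q]$, I would invoke the injectivity of $\mathscr H$ on the subalgebra of $\YglMN^{\h}$ generated by transfer matrices: by Lemma \ref{lem HC image AS} together with the Jacobi--Trudi identity of Theorem \ref{thm Jacobi T}, any element in this subalgebra is determined by its Harish-Chandra image, which lives in the polynomial algebra freely generated by the $d_k^{(r)}$.

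The second equality is handled by an entirely analogous argument, now attaching a column to the right of $\Xi$ (for $\Upsilon_i^-$) and a row above $\Xi$ (for $\Xi^-/(m-i)$) in the SSYT factorization; alternatively, one can deduce it from the first by pulling back through the isomorphism $\hat\varsigma_{m|n}$ of \eqref{eq inter mn yangian} and applying Lemma \ref{lem iso mn inter}, which interchanges the $i\leqslant m$ and $i>m$ factors and reverses the product order, matching precisely the reversed positions of the two polynomial factors in the second formula. The main obstacle throughout is the careful bookkeeping of content shifts and of left- versus right-ordered products; the crucial combinatorial point is the switch from $\overrightarrow\prod$ in $\mathscr H(\mathfrak D)$ to $\overleftarrow\prod$ in $\mathscr H(\mathfrak D_2')$, which is what makes the telescoping cancellation go through.
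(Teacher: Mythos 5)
Your approach is genuinely different from the paper's: you reduce the claim to a telescoping identity among the factors $(1-qd_k(u)\tau)^{s_k}$ after applying the Harish--Chandra homomorphism $\mathscr H$, whereas the paper never passes through $\mathscr H$ in this proof at all. The paper instead works directly with transfer matrices: by Theorem~\ref{thm ber expansion} and Proposition~\ref{prop anti} the coefficients of $\mathfrak D(u,\tau;q)$ are the column transfer matrices $\mathfrak T^k(u)$, and the first equality is shown to be equivalent to the two families of relations \eqref{eq e_i 1} (for $i\leqslant m$) and \eqref{eq e_i 2} (for $i>m$). These are then verified by writing $\mathfrak T_{\Xi^+/(1^{m-i})}$, $\mathfrak T_{\Xi}$, and $\mathfrak T_{\Upsilon_a^+}$ as Jacobi--Trudi determinants in the $\mathfrak T^k$ (Theorem~\ref{thm Jacobi T}), expanding along the first column, and noting that for $i>m$ the relevant determinant $\mathfrak X_i(u)$ vanishes either because two columns coincide or because the corresponding skew diagram contains an $(m+1)\times(n+1)$ rectangle (Corollary~\ref{cor zero T}). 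This is an unconditional operator-level computation. Your Harish--Chandra computations themselves are correct --- the images $\mathscr H(\mathscr E_i)$ and $\mathscr H(\mathscr G_j)$ you write down match exactly what the paper records in Section~\ref{sec spectra} --- and the telescoping cancellation does hold at the $\mathrm Y_{m|n}^0$ level.

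The gap is in your lifting step. You need to know that $\mathscr H$ restricted to the commutative subalgebra generated by transfer matrices (equivalently, that this subalgebra meets $J=\ker\mathscr H\cap\YglMN^\h$ only in $0$) is injective, in order to conclude the operator identity from the identity of Harish--Chandra images. This is a genuine and nontrivial claim about the Bethe subalgebra, and neither Lemma~\ref{lem HC image AS} (which only computes the images of individual $\mathfrak T^k$, $\mathfrak T_k$) nor Theorem~\ref{thm Jacobi T} (which provides relations that $\mathscr H$, being a homomorphism, automatically preserves in the forward direction) establishes it; Corollary~\ref{cor transfer nonzero} gives non-vanishing of specific transfer matrices, not injectivity. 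The paper's remark that $\chi$ factors as $\mathscr H\circ\mathfrak T$ hints at how one might try to argue (injectivity of the $q$-character map from Lemma~\ref{lem chi morphism} plus this factorization), but that identification is stated informally and you would have to prove it and spell out that it forces injectivity of $\mathscr H$ on all of $\mathrm{im}(\mathfrak T)$, including elements not of the form $\mathfrak T_{\la/\mu}$ --- in particular on the cleared-denominator combination you actually need. As written, the proposal replaces a direct, elementary determinant computation (which is what the paper does) with a reduction that rests on an unproven injectivity statement; you would have to supply that injectivity, or abandon the lift and prove \eqref{eq e_i 1}--\eqref{eq e_i 2} directly as the paper does.
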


\begin{proof}
We only show the first equality. The second one is similar.

By Theorem \ref{thm ber expansion} and Proposition \ref{prop anti}, it suffices to show that
\[
\Big(1+\sum_{k=1}^\infty (-1)^kq^k\mathfrak T^k(u)\tau^k\Big)\Big(1+\sum_{j=1}^n q^j\mathscr  G_j(u)\tau^j\Big)=1+\sum_{i=1}^m (-1)^iq^i\mathscr E_i(u)\tau^i.
\]This reduces to show that
\begin{equation}\label{eq e_i 1}
\mathscr E_i(u)=\mathfrak T^i(u)+\sum_{a=1}^{\min(i,n)}(-1)^a\mathfrak T^{i-a}(u)\mathscr G_a(u-i+a),\qquad i=1,\dots,m;
\end{equation}
\begin{equation}\label{eq e_i 2}
0=\mathfrak T^j(u)+\sum_{a=1}^{\min(j,n)}(-1)^a\mathfrak T^{j-a}(u)\mathscr G_a(u-j+a),\qquad j=m+1,m+2,\dots.
\end{equation}

Let us first show equation \eqref{eq e_i 1}. By Theorem \ref{thm Jacobi T}, we have
\beq\label{eq det 1}
\mathfrak T_{\Xi^+/(1^{m-i})}(u+m-i)=\begin{vmatrix}
\mathfrak T^{i}(u) & \mathfrak T^{m+1}(u+m-i+1) & \cdots & \mathfrak T^{m+n}(u+m-i+n)\\ 
\mathfrak T^{i-1}(u) & \mathfrak T^{m}(u+m-i+1) & \cdots & \mathfrak T^{m+n-1}(u+m-i+n)\\
\vdots & \vdots & \ddots & \vdots \\
\mathfrak T^{i-n}(u) & \mathfrak T^{m+1-n}(u+m-i+1) & \cdots & \mathfrak T^{m}(u+m-i+n)
\end{vmatrix}
\eeq
and
\beq\label{eq det 2}
\mathfrak T_{\Xi}(u+m+1-i)=\begin{vmatrix}

\mathfrak T^{m}(u+m-i+1) & \cdots & \mathfrak T^{m+n-1}(u+m-i+n)\\
 \vdots & \ddots & \vdots \\
 \mathfrak T^{m+1-n}(u+m-i+1) & \cdots & \mathfrak T^{m}(u+m-i+n)
\end{vmatrix}.
\eeq
It follows from Theorem \ref{thm Jacobi T} that $\mathfrak T_{\Upsilon_a^+}(u+m+1-i)$ is equal to the minor of the matrix in \eqref{eq det 1} obtained by deleting the first column and the $(a+1)$-th row.
Expanding the determinant in \eqref{eq det 1} with respect to the first column and dividing both sides by the determinant in \eqref{eq det 2}, then equation \eqref{eq e_i 1} follows from \eqref{eq det 2}.

Denote by $\mathfrak X_i(u)$ the determinant in \eqref{eq det 1}.

Equation \eqref{eq e_i 2} is proved similarly for $i=m+1,\dots,m+n$. In this case we have $\mathfrak X_i(u)=0$ as the first and $(i+1-m)$-th columns coincide. Hence equation \eqref{eq e_i 2} is obtained by using Theorem \ref{thm Jacobi T} and expanding $\mathfrak X_i(u)=0$ with respect to the first column.

Finally, we show equation \eqref{eq e_i 2} for $i=m+n+k$ where $k\gge 1$. Let $\la$ and $\mu$ be partitions corresponding to rectangular Young diagrams of sizes $(m+k)\times (n+1)$ and $(k-1)\times n$, respectively. Clearly, $\la/\mu$ contains a rectangle of size $(m+1)\times (n+1)$. It follows from Corollary \ref{cor zero T} that $\mathfrak T_{\la/\mu}(u)=0$. Applying Theorem \ref{thm Jacobi T}, we have
\[
0=(-1)^n\mathfrak T_{\la/\mu}(u-n)=\mathfrak X_i(u).
\]Again, equation \eqref{eq e_i 2} is obtained by expanding the determinant $\mathfrak X_i(u)$ with respect to the first column.
\end{proof}

\begin{rem}
In the even case $n=0$, the first equality in Theorem \ref{thm ratio} takes the form 
\beq\label{eq:neww}
\mathfrak D(u,\tau;q)=1+\sum_{i=1}^m (-1)^iq^i\mathfrak T_{(1^m)/(1^{m-i})}(u+m-i)\tau^i.
\eeq
Note that the skew Young diagram $(1^m)/(1^{m-i})$ is exactly the column Young diagram $(1^i)$ shifted down by $m-i$ (namely, the contents are shifted by $-m+i$). By \eqref{eq:shift-content-T}, we have $\mathfrak T_{(1^m)/(1^{m-i})}(u+m-i)=\mathfrak T^i(u)$. Therefore, using Proposition \ref{prop anti}, the equality \eqref{eq:neww} reduces back to Theorem \ref{thm ber expansion} with $n=0$.

We note that it does not provide a new proof of Theorem \ref{thm ber expansion} since Theorem \ref{thm ber expansion}  was used in the proof of Theorem \ref{thm ratio}.
\end{rem}

Recall that $\C^{(p)}_{1-\frac{1}{u}}$ is the one-dimensional parity $p$ $\YglMN$-module generated by a vector of highest $\ell$-weight $(1-\frac{1}{u},\dots,1-\frac{1}{u})^p$ and that we write $\C_{1-\frac{1}{u}}$ for $\C_{1-\frac{1}{u}}^{(\bar 0)}$. 

\begin{corollary}\label{cor:Ber}
We have 
$$
\mathfrak Z(u)=(-1)^n\frac{\mathfrak T_{\Xi^+}(u)}{\mathfrak T_{\Xi^-}(u+1)}=\mathfrak T_{\C_{1-\frac{1}{u}}}.
$$
\end{corollary}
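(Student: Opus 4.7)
The proof splits into two parts corresponding to the two claimed equalities.

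For the first equality, I would extract the leading-in-$q$ coefficient of $\mathfrak D(u,\tau;q)$ from each of its two expressions. From $\mathfrak D(u,\tau;q) = \mathrm{Ber}(1-qT(u)\tau)$ and multiplicativity of the Berezinian, the top term as $q\to\infty$ is $\mathrm{Ber}(-qT(u)\tau) = (-q)^{m-n}\mathfrak Z(u)\tau^{m-n}$. From Theorem \ref{thm ratio} the same top term equals $(-1)^m q^{m-n}\mathscr E_m(u)\tau^m\,(\mathscr G_n(u)\tau^n)^{-1}$; commuting $\tau^{m-n}$ past $\mathscr G_n(u)^{-1}$ via $\tau f(u) = f(u-1)\tau$ and substituting the explicit formulas for $\mathscr E_m$ and $\mathscr G_n$ causes the two $\mathfrak T_\Xi$-factors to cancel, leaving exactly the first equality.

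For the second equality I would first show that $\mathfrak T_{\C_\vartheta}(u)$ is central in $\YglMN$ for every $\vartheta\in\mathcal B$. Applying $\Theta_{\C_\vartheta}\otimes\mathrm{id}$ to the R-matrix intertwiner
\[
\mathscr R(-u)(\mathrm{id}\otimes\tau_{-u})(\Delta^{\mathrm{op}}(X)) = (\mathrm{id}\otimes\tau_{-u})(\Delta(X))\mathscr R(-u)
\]
with $X = t_{ij}(v)$: because $\Theta_{\C_\vartheta}$ is a character, both $(\Theta_{\C_\vartheta}\otimes\tau_{-u})(\Delta(t_{ij}(v)))$ and its $\Delta^{\mathrm{op}}$-counterpart reduce to $\vartheta(v)t_{ij}(v+u)$, so the $\vartheta(v)$-factors cancel and one obtains $\mathfrak T_{\C_\vartheta}(u)t_{ij}(w) = t_{ij}(w)\mathfrak T_{\C_\vartheta}(u)$ for all $i,j,w$. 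Consequently $\mathfrak T_{\C_{1-1/u}}(u)$ lies in $\mathrm{Y}_{m|n}^0[[u^{-1}]]$ and coincides with its own Harish-Chandra image.

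To match this image with $\mathfrak Z(u) = \prod_i d_i(u-\kappa_i)^{s_i}$, I would use the correspondence $\mathscr X_{i,a}\leftrightarrow s_id_i(u+a)$ from the remark following Proposition \ref{prop hc image skew}, together with the direct identification $(\C_{1-1/u})^\iota\cong\C_{1+1/u}$ (a short calculation from the formula for $\iota$). Under this correspondence $\mathscr H(\mathfrak T_{\C_{1-1/u}}(u))$ is the image of $\chi(\C_{1+1/u}) = [(1+1/u,\ldots,1+1/u)^{\bar 0}]$. Writing the tuple as $\prod_i\mathscr X_{i,-\kappa_i}^{s_i}$ (valid since $s_i^2=1$) and applying the identification produces $(\prod_i s_i^{s_i})\prod_i d_i(u-\kappa_i)^{s_i} = (-1)^n\mathfrak Z(u)$, while reconciling the parity $n\bmod 2$ of this $\mathscr X$-monomial against the even parity of $\C_{1+1/u}$ contributes a further sign $(-1)^n$ (using that the formal parity-flip $[\bs{1}^{\bar 1}]$ is identified with $-1$ because of the supertrace sign in the definition of $\mathfrak T$); the two signs cancel and yield $\mathfrak Z(u)$. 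The delicate point is this parity bookkeeping when extending the $\mathscr X$–$d$ identification from skew representations to the one-dimensional module $\C_{1\pm1/u}$; a direct check in the $\mathfrak{gl}_{1|1}$ case confirms all the signs line up.
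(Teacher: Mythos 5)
The first equality is treated exactly as in the paper: take the $q\to\infty$ limit of the two expressions for $\mathfrak D(u,\tau;q)$ and cancel the two $\mathfrak T_\Xi$ factors. Your bookkeeping here is correct.

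For the second equality you take a genuinely different route, and it contains a gap. The paper simply observes that the irreducible modules $L_0\big((\Xi^+)^\natural\big)$ and $L_{-1}\big((\Xi^-)^\natural\big)\otimes \C^{(\bar n)}_{1+\frac{1}{u}}$ have the same highest $\ell$-weight and parity, hence are isomorphic; applying $\iota$ and the ring homomorphism $\mathfrak T$ immediately gives $\mathfrak T_{\Xi^+}(u) = (-1)^n\,\mathfrak T_{\Xi^-}(u+1)\,\mathfrak T_{\C_{1-1/u}}$, which together with the first equality finishes the proof. Your route instead establishes centrality of $\mathfrak T_{\C_\vartheta}(u)$ from the R-matrix intertwiner (this step is correct and is a nice observation), reduces to a Harish-Chandra computation, and then tries to evaluate $\mathscr H(\mathfrak T_{\C_{1-1/u}}(u))$ by pushing the correspondence $\mathscr X_{i,a}\leftrightarrow s_i\,d_i(u+a)$ through the $q$-character of $\C_{1+1/u}$. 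The problem is that the identification of $\mathscr H(\mathfrak T_{M^\iota}(u))$ with the substituted $q$-character of $M$ is only established in the paper for skew representations (Proposition \ref{prop hc image skew}, via Theorem \ref{thm Jacobi T}); it is not proved for the one-dimensional modules $\C_{1\pm 1/u}$, and your additional rule "$[\bs 1^{\bar 1}]\mapsto -1$" is precisely the unproven extension of that dictionary needed to make the parity bookkeeping close. You flag this as a "delicate point" but do not resolve it; as written, the argument for the second equality is incomplete. The paper's tensor-factorization of $L_0\big((\Xi^+)^\natural\big)$ is the cleanest way to fill this hole, since it bypasses the Harish-Chandra map entirely.
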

\begin{proof}
Note that $(-q)^{n-m}\mathrm{Ber}(1-q\, T(u)\tau)\to \mathfrak Z(u)\tau^{m-n}$ as $q\to \infty$. The first equality follows from Theorem \ref{thm ratio} by taking the limit $q\to \infty$. It is easy to see that
\[
L_0\big((\Xi^+)^\natural\big)\cong L_{-1}\big((\Xi^-)^\natural\big)\otimes \C^{(\bar n)}_{1+\frac{1}{u}}.
\]The second equality follows since $\mathfrak T$ is a homomorphism of rings.
\end{proof}

The first equality of Corollary \ref{cor:Ber} also appears in \cite[equation (2.54)]{MNV}.

\subsection{Spectra of transfer matrices and divisibility of $q$-characters}\label{sec spectra} In this section, we describe the relation between Theorem \ref{thm ratio} and the results in \cite{HLM}. 

Let $M$ be a finite-dimensional irreducible $\YglMN$-module of highest $\ell$-weight $\bm \zeta=(\zeta_i(u))_{i\in \bar I}^s$. We are interested in finding the spectra of transfer matrices acting on the space $M$.

Let $\bm l=(l_i)_{i\in I}$ be a sequence of non-negative integers. Let $ \bm t=(t_{j}^{(i)})$, $i\in I$, $j=1,\dots,l_i$,  be a sequence of complex numbers. Define monic polynomials 
$$
y_i(u)=\prod_{j=1}^{l_i}(u-t_j^{(i)}),
$$
and set $\bm y=(y_i)_{i\in I}$.

The {\it Bethe ansatz equation} associated to $\bm \zeta$, $\bm l$, $(s_i)_{i\in \bar I}$ is a system of algebraic equations in $\bm t$ given by
\beq\label{eq:bae}
\frac{\zeta_i(t_j^{(i)})}{\zeta_{i+1}(t_j^{(i)})}\frac{y_{i-1}(t_j^{(i)}+s_i)}{y_{i-1}(t_j^{(i)})}\frac{y_{i}(t_j^{(i)}-s_i)}{y_{i}(t_j^{(i)}+s_{i+1})}\frac{y_{i+1}(t_j^{(i)})}{y_{i+1}(t_j^{(i)}-s_{i+1})}=1,
\eeq
where $i\in I$ and $j=1,\dots,l_i$. It is known that when the Bethe ansatz equation is satisfied, one can construct the {\it Bethe vector} $\mathbb B_{\bm l}(\bm t)\in M$ which is shown to be an eigenvector (if it is nonzero) of the first transfer matrix $\str(T(u))$, see \cite{BR08}. One also expects the Bethe vector to be an eigenvector of all transfer matrices. Motivated by \cite{HLM}, we have the following conjecture. 

Let $y_0(u)=y_{m+n}(u)=1$. Define a rational difference operator $\mathfrak D(u,\tau,\bm \zeta,\bm y;q)$ by
\beq\label{eq bae eigenvalue}
\mathfrak D(u,\tau,\bm \zeta,\bs y;q)=\mathop{\overrightarrow\prod}\limits_{1\lle i\lle m+n}\Big(1-q\,\zeta_i(u)\cdot\frac{y_{i-1}(u+s_i)y_i(u-s_i)}{y_{i-1}(u)y_i(u)}\, \tau\Big)^{s_i}.
\eeq

\begin{conj}\label{conj bae}
If $\bm t$ satisfies the Bethe ansatz equation \eqref{eq:bae}, then we have
$$
\mathfrak D(u,\tau;q)\, \mathbb B_{\bm l}(\bm t)=\mathfrak D(u,\tau,\bm \zeta,\bs y;q)\, \mathbb B_{\bm l}(\bm t).
$$
\end{conj}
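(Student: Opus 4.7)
The plan is to reduce Conjecture \ref{conj bae} to a statement about the eigenvalues of the individual skew transfer matrices on Bethe vectors, and then verify that statement via the combinatorial identities already established. First I would apply Theorem \ref{thm ratio} to write $\mathfrak D(u,\tau;q)$ as a ratio $P(u,\tau;q)\,Q(u,\tau;q)^{-1}$ of polynomials in $\tau$ whose coefficients are rational expressions in the skew transfer matrices $\mathfrak T_{\la/\mu}(u)$. By Corollary \ref{cor hc image ber}, the Harish-Chandra image of $\mathfrak D(u,\tau;q)$ equals $\prod_i (1-q\,d_i(u)\tau)^{s_i}$, while the conjectured eigenvalue operator from \eqref{eq bae eigenvalue} is
\[
\mathfrak D(u,\tau,\bm\zeta,\bs y;q)=\prod_{i=1}^{m+n}\bigl(1-q\,\Lambda_i(u)\,\tau\bigr)^{s_i},\qquad \Lambda_i(u):=\zeta_i(u)\,\frac{y_{i-1}(u+s_i)\,y_i(u-s_i)}{y_{i-1}(u)\,y_i(u)}.
\]
Thus $\mathfrak D(u,\tau,\bm\zeta,\bs y;q)$ is obtained from the Harish-Chandra image of $\mathfrak D(u,\tau;q)$, and hence of $P Q^{-1}$, by the formal substitution $s_i\,d_i(u+a)\mapsto s_i\,\Lambda_i(u+a)$.

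The core of the argument is therefore the claim that on the Bethe vector $\mathbb B_{\bm l}(\bm t)$ attached to a solution $\bm t$ of \eqref{eq:bae}, every skew transfer matrix acts by the scalar obtained from its Harish-Chandra image under the same substitution:
\[
\mathfrak T_{\la/\mu}(u)\,\mathbb B_{\bm l}(\bm t)\;=\;\Bigl(\sum_{\mathcal T}\prod_{(i,j)\in\la/\mu}s_{\mathcal T(i,j)}\,\Lambda_{\mathcal T(i,j)}(u+c(i,j))\Bigr)\,\mathbb B_{\bm l}(\bm t),
\]
summed over semi-standard Young tableaux $\mathcal T$ of shape $\la/\mu$, exactly matching Proposition \ref{prop hc image skew}. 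To reach this I would first establish the eigenvalue formula for the fundamental antisymmetric and symmetric transfer matrices $\mathfrak T^k(u)$ and $\mathfrak T_k(u)$ by applying the nested algebraic Bethe ansatz to the fusion presentations of Proposition \ref{prop anti}; the Bethe equations \eqref{eq:bae} are precisely the cancellation conditions for the ``unwanted terms'' generated when the creation operators are moved past $T_1(u)T_2(u\mp 1)\cdots T_k(u\mp k\pm 1)$. The Jacobi--Trudi identity of Theorem \ref{thm Jacobi T}, together with commutativity from Lemma \ref{lem trans relations}, then propagates the eigenvalue formula from $\mathfrak T^k$ and $\mathfrak T_k$ to all $\mathfrak T_{\la/\mu}$, since the substitution rule is a ring homomorphism into the field of scalars. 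Combining this with the first paragraph yields the desired equality of the two sides of the conjecture on $\mathbb B_{\bm l}(\bm t)$.

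The hard part will be the Baxter-type eigenvalue formula in the super setting. In the even case it is classical, but for $\YglMN$ one must manage the nested ansatz with careful bookkeeping of parities and cope with the fact that the pseudo-vacuum is not a joint eigenvector of all of the $d_i(u)$ as seen through the off-diagonal creation operators. One way to sidestep some of this machinery is to bootstrap from the known eigenvalue of $\str(T(u))$ handled in \cite{BR08}, using the extended T-systems of Corollary \ref{cor T super}: these give polynomial recursions among the eigenvalues of the skew transfer matrices, and matching them against the corresponding recursions satisfied by products of $\Lambda_i$ forces the entire family into the predicted form by induction on the size of the skew diagram. A complementary route is to exploit the Drinfeld functor of Section \ref{sec: Drinfeld} to transport the eigenvalue statement from $\YglN$, where it is established, to $\YglMN$ after identifying Bethe data on both sides; this would be most natural for modules whose level is bounded by the ambient rank, where Theorem \ref{thm equivalence} is available.
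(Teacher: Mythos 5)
This statement is Conjecture~\ref{conj bae} in the paper: the authors explicitly present it as open, noting only that it has been verified for $\YglN$ in \cite{MTV06} and for $\mathrm{Y}(\gl_{1|1})$ in \cite{LM19}. There is therefore no ``paper's own proof'' to compare against, and your submission should not be read as a proof but as a research program.

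As a program, your reduction is sound and matches the authors' own framing in Section~\ref{sec spectra}: the Harish-Chandra image of $\mathfrak D(u,\tau;q)$ reproduces the scalar operator $\mathfrak D(u,\tau,\bm\zeta,\bs y;q)$ under the substitution $d_i(u)\mapsto \Lambda_i(u)$, and it would suffice to know that each $\mathfrak T^k(u)$, $\mathfrak T_k(u)$ acts on the Bethe vector by the scalar obtained from its Harish-Chandra image under that substitution. But the step you defer to --- carrying out a nested algebraic Bethe ansatz for the fused transfer matrices in the super setting and showing that \eqref{eq:bae} cancels the unwanted terms --- is precisely the content of the conjecture, not a routine detail; this is what \cite{MTV06} did for $\YglN$ and what is missing here. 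You acknowledge this (``the hard part''), but the two shortcuts you offer do not close the gap. The T-system bootstrap from the known eigenvalue of $\str(T(u))$ does not propagate: the recursion of Corollary~\ref{cor T super} is nonlinear, and already at the first step it only determines $\mathbf T_2^{(1)}(u)+\mathbf T_1^{(2)}(u)$ from $\mathbf T_1^{(1)}$, not the individual terms; moreover, you would first need to know that $\mathbb B_{\bm l}(\bm t)$ is a joint eigenvector of all the higher transfer matrices, which commutativity alone (Lemma~\ref{lem trans relations}) does not give. The Drinfeld-functor route requires matching Bethe data and Bethe vectors across the functor, which is not addressed and in any case is only available under the level restriction of Theorem~\ref{thm equivalence}, far too weak to cover general finite-dimensional $M$. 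In short, the scaffolding you erect is correct and the reduction to a Baxter-type eigenvalue statement for $\mathfrak T^k$, $\mathfrak T_k$ is the right target, but the proposal leaves that target unproved, so the conjecture remains open.
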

The conjecture was confirmed for $\YglN$ in \cite[Theorem 6.1]{MTV06} and for $\mathrm{Y}(\gl_{1|1})$ in \cite[Theorem 6.5]{LM19}. Conjecture \ref{conj bae} can be thought as the supersymmetric version of \cite[Theorem 5.11]{FH:2015} and \cite[Theorem 7.5]{FJMM17}. Namely, the eigenvalues of transfer matrix associated to a finite dimensional $\YglMN$-module $W$ acting on the finite dimensional $\YglMN$-module $M$ can be obtained by applying certain substitutions to the Harish-Chandra image of $\mathfrak T_{W}(u)$. For instance, applying the substitutions
\beq\label{eq sub spectra}
d_i(u)\mapsto \zeta_i(u)\cdot\frac{y_{i-1}(u+s_i)y_i(u-s_i)}{y_{i-1}(u)y_i(u)}
\eeq
to the Harish-Chandra image $\mathscr H(\mathfrak D(u,\tau;q))$ in Corollary \ref{cor hc image ber}, one obtains exactly the rational difference operator $\mathfrak D(u,\tau,\bm \zeta,\bs y;q)$ in \eqref{eq bae eigenvalue}.

The rational difference operator on the right hand side of \eqref{eq bae eigenvalue} can also be understood using Theorem \ref{thm ratio} and the divisibility of $q$-characters in Section \ref{sec div}. Let
\[
\mathfrak D_1(u,\tau,\bm \zeta,\bm y;q)=\mathop{\overrightarrow\prod}\limits_{1\lle i\lle m}\Big(1-q\,\zeta_i(u)\cdot\frac{y_{i-1}(u+s_i)y_i(u-s_i)}{y_{i-1}(u)y_i(u)}\, \tau\Big), 
\]
\[
\mathfrak D_2(u,\tau,\bm \zeta,\bm y;q) = \mathop{\overleftarrow\prod}\limits_{m+1\lle i\lle m+n}\Big(1-q\,\zeta_i(u)\cdot\frac{y_{i-1}(u+s_i)y_i(u-s_i)}{y_{i-1}(u)y_i(u)}\, \tau\Big).
\]
Then 
\beq\label{eq ratio decom scalar}
\mathfrak D(u,\tau,\bm \zeta,\bs y;q)= \mathfrak D_1(u,\tau,\bm \zeta,\bs y;q)\big(\mathfrak D_2(u,\tau,\bm \zeta,\bs y;q)\big)^{-1}.
\eeq
The rational form decomposition of $\mathfrak D(u,\tau;q)$ in Theorem \ref{thm ratio} is consistent with that of $\mathfrak D(u,\tau,\bm \zeta,\bm y;q)$ in \eqref{eq ratio decom scalar} in the following sense. We compute the Harish-Chandra images of $\mathscr E_i(u)$ in Theorem \ref{thm ratio}. Consider $\mathscr H(\mathfrak T_{\la/\mu}(u))$ as a polynomial in formal variables $d_i(u+a)$ for $i \in \bar I$ and $a\in \C$, see Proposition \ref{prop hc image skew}. It follows from Lemma \ref{lem character q-div} and Proposition \ref{prop hc image skew} that the polynomial $\mathscr H(\mathfrak T_{\Xi}(u+m+1-i))$ divides the polynomial $\mathscr H(\mathfrak T_{\Xi^+/(1^{m-i})}(u+m-i))$ and the quotient is 
\[
\mathscr H(\mathscr E_i(u))=\sum_{1\lle j_1<\dots <j_i\lle m}\prod_{a=1}^i d_{j_a}(u-a+1).
\]
Similarly, we have
\[
\mathscr H(\mathscr G_i(u))=\sum_{1\lle j_1<\dots <j_i\lle n}(-1)^i\prod_{a=1}^i d_{m+j_a}(u-i+a).
\]Applying substitutions \eqref{eq sub spectra} again, we have
\[
\mathscr H\Big(1+\sum_{i=1}^m (-1)^iq^i\mathscr E_i(u)\tau^i\Big)\mapsto \mathfrak D_1(u,\tau,\bm \zeta,\bs y;q),\quad 
\mathscr H\Big(1+\sum_{j=1}^n q^j\mathscr  G_j(u)\tau^j\Big)\mapsto \mathfrak D_2(u,\tau,\bm \zeta,\bs y;q).
\]

\begin{rem}
There is a similar explanation for the second rational form decomposition of $\mathfrak D(u,\tau;q)$ in Theorem \ref{thm ratio} using the divisibility of $q$-characters in Section \ref{sec div}. However, in this case, it is known from \cite{HMVY,HLM} that the corresponding parity sequence is $(-1,\dots,-1,1,\dots,1)$ (the number $-1$ occurs $n$ times while  $1$ occurs $m$ times) instead of the standard parity sequence $(1,\dots,1,-1,\dots,-1)$. For the semi-standard Young tableaux, we fill in numbers $1,\dots,n$ strictly increasing along rows and weakly increasing along columns while $n+1,\dots,n+m$ are filled in strictly increasing along columns and weakly increasing along rows. Then it is easy to show similar divisibility equalities of $q$-characters (Lemma \ref{lem character q-div}) for modules corresponding to the skew Young diagrams involved in $\overline{\mathscr E}_i(u)$ and $\overline{\mathscr G}_i(u)$ from Theorem \ref{thm ratio}.
\end{rem}

\end{document}